\numberwithin{equation}{section}
\newlist{assumption}{enumerate}{1}
\setlist[assumption]{label=(\textsc{a}\arabic*)}
\crefname{assumptioni}{Assumption}{Assumptions}
\newcommand{\R}{\mathbb{R}}
\newcommand{\1}{\mathbbm{1}}
\newcommand{\Linop}{\mathbb{L}}
\newcommand{\dist}{\mathrm{dist}}
\newcommand{\norm}[1]{||#1||}
\newcommand{\nuv}{\bm{\nu}}		% the unit outward normal vector \nu 
\newcommand{\supp}{\mathrm{supp}}
\newcommand{\scalarprod}[2]{\left(#1 , #2\right)}
\DeclareMathOperator{\dom}{\mathrm{dom}}
\DeclareMathOperator{\meas}{\mathrm{meas}}
\newcommand{\dx}{\,\mathrm{d}x}
\newcommand{\Ha}{\mathcal{H}}           %%% Hausdorff measure
\renewcommand{\epsilon}{\varepsilon}
\date{}
\def\emptyset{\mbox{{\rm \O}}}
\def\bar{\overline}
\newtheorem{theorem}{Theorem}[section]
\newtheorem{lemma}[theorem]{Lemma}
\theoremstyle{definition}
\newtheorem{definition}[theorem]{Definition}
\newtheorem{proposition}[theorem]{Proposition}
\newtheorem{corollary}[theorem]{Corollary}
\theoremstyle{remark}
\newtheorem{remark}[theorem]{Remark}
\numberwithin{equation}{section}
\begin{document}
	%\title[Optimality conditions in terms of Bouligand generalized differentials...]{Optimality conditions in terms of Bouligand generalized differentials for a nonsmooth semilinear elliptic optimal control problem with distributed and boundary control pointwise constraints}
	
	\title[Optimality conditions in terms of B- generalized differentials...]{Optimality conditions in terms of Bouligand generalized differentials for a nonsmooth semilinear elliptic optimal control problem with distributed and boundary control pointwise constraints}
	
	%\subtitle{Optimality conditions in terms of Bouligand subdifferentials...}

	%\titlerunning{Optimality conditions in terms of Bouligand subdifferentials...}
	%\dedication{Thanks to my friends}
	%\author{

		%	\name{V.\, H.~Nhu \orcid{0000-0003-4279-3937}\textsuperscript{a}\thanks{CONTACT V.\, H.~ Nhu. Email: nhuvh@ptit.edu.vn}}
		%	%\name{V.\, H.~Nhu\textsuperscript{a}\thanks{CONTACT V.\, H.~ Nhu. Email: nhuvh@ptit.edu.vn}}  %%% to submit to ArXiv
		%	\affil{\textsuperscript{a} Department of Scientific Fundamentals, Posts and Telecommunications Institute of Technology, Hanoi, Vietnam }
		%}

	%    author one information
	\author[V. H. Nhu]{Vu Huu Nhu}
	\address{Faculty of Fundamental Sciences\\
		PHENIKAA University\\
		Yen Nghia, Ha Dong, Hanoi 12116, Vietnam}
	%\curraddr{}
	\email{nhu.vuhuu@phenikaa-uni.edu.vn}
	\thanks{}
	
	\dedicatory{Dedicated to Professor Arnd R\"{o}sch on the occasion of his sixtieth birthday}
	
	%    author two information
	\author[N. H. Son]{Nguyen Hai Son }
	\address{School of Applied Mathematics and 		Informatics\\
		Hanoi University of Science and Technology\\
		No.1 Dai Co Viet, Hanoi, Vietnam
		}
	%\curraddr{}
	\email{son.nguyenhai1@hust.edu.vn}
	\thanks{This work is supported by the Vietnam Ministry of Education and Training and Vietnam Institute for Advanced Study in Mathematics under Grant B2022-CTT-05.}

	\date{}
	
	\subjclass[2020]{Primary: 49K20; Secondary: 49J20, 49J52, 35J25}
	
	\keywords{Bouligand generalized differential, Optimality condition, Nonsmooth semilinear elliptic equation, Distributed control, Boundary control, Control pointwise constraint}
	
		\begin{abstract}
		%We prove a novel optimality condition in terms of Bouligand generalized differentials for a local minimizer of optimal control problems governed by a nonsmooth semilinear elliptic partial differential equation with both distributed and boundary unilateral pointwise control constraints, in which the nonlinear coefficient in the state equation is not differentiable at one point. 
		This paper is concerned with an  optimal control problem governed by nonsmooth semilinear elliptic partial differential equations with both distributed and boundary unilateral pointwise control constraints, in which the nonlinear coefficient in the state equation is not differentiable at one point.
		Therefore, the Bouligand subdifferential of this nonsmooth  coefficient in every point consists of one or two elements that will be used to construct the two associated Bouligand generalized derivatives of the control-to-state operator in any admissible control.
		These Bouligand generalized derivatives appear in a novel optimality condition, in which 
		in addition to the existence of the adjoint states and of the nonnegative multipliers associated with the unilateral pointwise constraints as usual, other nonnegative multipliers exist and correspond to the nondifferentiability of the control-to-state mapping.
		This type of optimality conditions shall be applied to 
		%\textcolor{blue}{%
			an optimal control satisfying the so-called \emph{constraint qualification} 
		%}%
		to derive a \emph{strong} stationarity, where the sign of the associated adjoint state does not vary on the level set of the corresponding optimal state at the value of nondifferentiability. 
		%\textcolor{blue}{%
		Finally, this strong stationarity is also shown to be equivalent to the  purely primal optimality condition saying that the directional derivative of the reduced objective functional in admissible directions 
		%\textcolor{blue}{%
			is nonnegative. 
		%}%
	\end{abstract}

	\maketitle

\section{Introduction}

%%% Introduce optimality conditions for OCs with nonsmooth PDEs: semilinear; quasilinear elliptic; semilinear parabolic; obstacle problems; ... 

%%%% OCs with nonsmooth objective functional: Casas. Mateos,....
The study of optimality conditions for nonsmooth optimal control problems governed by partial differential equations (PDEs for short) and by variational inequalities (abbreviated as VIs) is an active topic of research. 
Better understanding of some kinds of stationarity conditions for these nonsmooth problems is of great value in both theory and application.
For problems with nondifferentiable objective functionals and smooth PDEs, we refer to \cite{Stadler2009,WachsmuthWachsmuth2022,WachsmuthWachsmuth2011,CasasTroltzsch2014,CasasHerzogWachsmuth2012,CasasTroltzsch2020,Casas2012,Kunisch2014}, and the references therein. 
In these papers, the objective functional often contains a term, for example, the $L^1$-norm of controls, which is nonsmooth in the control variable,  while 
the control-to-state operator is continuously differentiable as a mapping of control variable. 
In case of having  the $L^1$-norm of controls, there exists in the optimality system a multiplier that belongs to the subdifferential of the $L^1$-term in the sense of convex analysis; see, e.g. \cite{Ioffe,Bauschke2017,BonnansShapiro2000} for the definition of subdifferential of a convex function.

%\medskip 

Regarding optimal control problems subject to nonsmooth PDEs and/or VIs, the corresponding control-to-state mapping is, in general, not differentiable, and is often shown to be directionally differentiable only; see, e.g., \cite{Constantin2018,MeyerSusu2017,ClasonNhuRosch2021,BetzMeyer2015,HerzogMeyerWachsmuth2013,RaulsWachsmuth2020,ReyesMeyer2016} as well as the pioneering work \cite[Chap.~2]{Tiba1990}, and the sources cited inside. 
The lack of the smoothness of the control-to-state mapping is the main difficulty when deriving suitable optimality conditions, and makes the analytical and numerical treatment challenging.
With nonsmooth PDE constrained optimal controls, it was shown in  \cite{MeyerSusu2017,Constantin2018,Tiba1990,Barbu1984,NeittaanmakiTiba1994,ClasonNhuRosch2021} that local optima fulfill the first-order optimality conditions of C-stationarity type involving Clarke's generalized gradient of the nonsmooth ingredient. 
This was achieved via using a regularization and relaxation approach to approximate the original problem by its corresponding regularized smooth problems. 
A limit analysis for vanishing regularization thus yields the C-stationarity conditions for the original nonsmooth problem. 
For the optimal control problems of variational inequality type, we refer to \cite{Barbu1984,Hintermuller2001,HerzogMeyerWachsmuth2013}.
Alternative stationarity concepts such as  
the purely primal condition (which is referred to B-stationarity) 
and strong stationarity have also been introduced as the first-order necessary optimality conditions. 
Concerning the B-stationarity condition, it means that the directional derivative of the reduced objective in any direction belonging to Bouligand's tangent cone to the feasible set at the considered minimizer is nonnegative. 
On the other hand, the strong stationarity condition additionally provides the sign of some multiplier on the nonsmooth set, i.e. the set corresponds to the nondifferentiability of the control-to-state mapping.   
These two types of stationarities are equivalent to each other when some additional certain assumption is fulfilled.
With the  strong stationarity concept 
%\textcolor{blue}{%
for problems without pointwise control constraints, 
%}%
we refer to \cite{Constantin2018,MeyerSusu2017,ClasonNhuRosch2021,Betz2019} 
for PDE constrained optimal controls and to \cite{HerzogMeyerWachsmuth2013,Mignot1976,MignotFulbert1984,ReyesMeyer2016} for optimal control problems governed by VIs.
%\textcolor{blue}{%
	For problems with pointwise control constraints, there are even less contributions addressing the strong stationarity. 
	To the best of our knowledge, the only works in	this ﬁeld are papers \cite{Wachsmuth2014} and \cite{Betz2023}, dealing with optimal control of the obstacle problem and of nonsmooth semilinear elliptic PDEs, respectively.
%}%

%\medskip 
%%%% 
In this paper, we shall investigate the optimal control problem governed by nonsmooth semilinear elliptic PDEs with  unilateral pointwise control constraints on both distributed and boundary controls. We then provide a novel optimality condition in terms of the so-called \emph{Bouligand generalized differential} of the control-to-state mapping.
Our work is inspired by the one of Christof et al. \cite{Constantin2018}. There, the multiple notions of Bouligand generalized differentials in the infinite-dimensional setting were introduced and these notions generalize the concepts of Bouligand subdifferential of a function between finite-dimensional spaces in the spirit of, e.g., \cite[Def.~2.12]{Outrata1998} or \cite[Sec.~1.3]{KlatteKummer2002}.
For our purpose, we use the notion of \emph{strong-strong} Bouligand generalized differential only. 
Following \cite[Def.~3.1]{Constantin2018}, a {strong-strong} Bouligand generalized derivative of a mapping $S$ at a point $w$ is an accumulation, in the strong operator topology, of a sequence consisting of G\^{a}teaux derivatives $S'(w_k)$ for some $\{w_k\}$ of G\^{a}teaux points -- at which $S$ is G\^{a}teaux differentiable -- converging strongly to $w$.

\medskip
%%%% Our approach different from others
Let us emphasize that our approach is very different from those used in all references listed above, including \cite{Constantin2018} and \cite{Betz2023} even. 
Here, in contrast with the  regularization and relaxation approach,  our analysis comes directly from the definition of the Bouligand generalized differential. 
In this spirit, we construct, for any local minimizer, two minimizing sequences which  comprise  G\^{a}teaux points of the control-to-state mapping, belong to the feasible set of the optimal control problem,  and converge strongly to the mentioned minimizer. 
Furthermore, one of these two sequences lies pointwise to the "left" of the minimizer and the other is in the "right"; see \cref{lem:countable-sets} below.
We then define two corresponding \emph{left} and \emph{right} Bouligand generalized derivatives, as it will be seen in \cref{prop:G-pm-belongto-Bouligand-diff}.
Both of these generalized derivatives will play an important role in establishing the novel optimality condition, as stated in \cref{thm:OCs-multiplier}.
% and \cref{prop:primal-OCs}.  
%\textcolor{blue}{%
%	This novel optimality condition extends the B-stationarity and thus
%}%
%reduces, of course, to the classical one when the control-to-state mapping is assumed to be G\^{a}teaux differentiable in the considered control.
This system of optimality conditions states the existence of  multipliers, where nonnegative functions exist and have their supports lying in the level set of the associated optimal state at nonsmooth value. 
%\textcolor{blue}{%
	In addition, two adjoint states are defined by two associated linear PDEs, in which the coefficients are pointwise identified via the Bouligand subdifferential of the nonsmooth ingredient of the state equation.
%}%
The novel optimality system will be applied to 
%\textcolor{blue}{%
	a local minimizer fulfilling the constraint qualification; see \cite{Betz2023,Wachsmuth2014}, in order to 
%}%
derive the corresponding strong stationarity condition, which might be obtained by combining the C-stationarity system together with B-stationarity conditions as in  \cite{Betz2023}.

\medskip 

%%% the organization of the paper

%To the best of our knowledge, this is the first work to treat nonsmooth quasi-linear problems.
The paper is organized as follows. 
%\textcolor{blue}{%
 In \cref{sec:OCP}, we first state the optimal control problem, then give the fundamental assumptions as well as the notation, and finally present the main results of the paper.
%}%
In \cref{sec:control2state-oper}, we first provide some needed properties of the control-to-state mapping, including the Lipschitz continuity, the directional differentiability, and the weak maximum principle. We then prove a strong maximum principle that helps us construct the two suitable G\^{a}teaux sequences and consequently determine the left and right Bouligand generalized derivatives. 
%Finally, the main results of the paper -- the novel optimality conditions in terms of left and right Bouligand generalized derivatives --  are presented in \cref{thm:primal-OCs,thm:OCs-multiplier} in \cref{sec:OC-Bouligand}. 
\cref{sec:OC-Bouligand} is devoted to stating some technical lemmas and proofs of main results.
%\textcolor{blue}{%
	The paper ends with a conclusion and 
	%\textcolor{blue}{%text}
	an appendix that shows the pointwise behavior on a level set of the Laplacian of weak solutions to Poisson's equation.
%}%

\section{The problem setting, standing assumptions,  notation, and main results} \label{sec:OCP}

Let $\Omega$ be a bounded domain in two-dimensional space $\mathbb{R}^2$ with a  Lipschitz boundary
$\Gamma$.  
We investigate the following distributed and boundary semilinear elliptic optimal control problem with unilateral pointwise
constraints: find a pair of controls $(u,v) \in L^2(\Omega) \times L^{2}(\Gamma)$ and a corresponding state function $y_{u,v} \in H^1(\Omega) \cap C(\overline\Omega)$, which minimize the objective functional 
\begin{subequations}
	\makeatletter
	\def\@currentlabel{P}
	\makeatother
	\label{eq:P}
	\renewcommand{\theequation}{P.\arabic{equation}}   %{P.\alph{equation}}   %{P.\arabic{equation}}
	\begin{align}
		J(u,v)&= \frac{1}{2} \norm{y_{u,v} - y_{\Omega}}^2_{L^2(\Omega)} + \frac{\alpha}{2} \norm{y_{u,v} - y_{\Gamma}}^2_{L^2(\Gamma)} + \frac{\kappa_\Omega}{2} \norm{u}^2_{L^2(\Omega)}  + \frac{\kappa_\Gamma}{2} \norm{v}^2_{L^2(\Gamma)}  				\label{eq:objective-func} \\
		\intertext{subject to the state equation}
		&
		\left\{
		\begin{aligned}
			-\Delta y_{u,v} + d(y_{u,v})  = u \quad &{\rm in}\ \Omega\\
			\frac{\partial y_{u,v}}{\partial \nu}  + b(x) y_{u,v} = v \quad &{\rm on}\ \Gamma
		\end{aligned}
		\right.											\label{eq:state}\\
		\intertext{and the unilateral pointwise  constraints}
		& u(x) \leq u_b(x) \quad \text{for a.a. } x \in \Omega, \label{eq:constraint-distributed} \\
		& v(x) \leq v_b(x) \quad \text{for a.a. } x \in\Gamma,					\label{eq:constraint-boundary}
	\end{align}
\end{subequations}
where all data of \eqref{eq:P} satisfy the hypotheses listed on \crefrange{ass:data}{ass:d-func-nonsmooth}  below, which are  assumed to be true throughout the whole paper.

\medskip%%% Assumptions

\begin{assumption} \label{ass:assumption}
	%\item \label{ass:domain} $\Omega \subset \R^2$ is a bounded domain with a Lipschitz boundary $\Gamma$.

	\item \label{ass:data} $\alpha \geq 0$, $\kappa_\Omega, \kappa_\Gamma >0$,   $y_\Omega, u_b \in L^2(\Omega)$, and $y_\Gamma, v_b \in L^2(\Gamma)$.

	\item \label{ass:ub-vb} Either $u_b \in L^2(\Omega)$ or $u_b = \infty$; similarly either $v_b \in L^2(\Gamma)$ or $v_b = \infty$.

	\item \label{ass:b-func}  Function $b$ belongs to $L^\infty(\Gamma)$ satisfying $b(x) \geq b_0 > 0$ for a.a. $x \in \Gamma$ and for some constant $b_0$.

	\item \label{ass:d-func-nonsmooth} Function $d: \R \to \R$ is continuous and monotonically increasing, but it might not be differentiable at some point $\bar t\in \R$. Moreover, $d$ is defined as follows
	\begin{equation*}
		%\label{eq:d-func}
		d(t) = \left\{
		\begin{aligned}
			d_1(t) && \text{if} \quad t \leq \bar t,\\
			d_2(t) && \text{if} \quad t > \bar t,
		\end{aligned}
		\right.
	\end{equation*}
	where $d_1$ and $d_2$ are monotonically increasing and continuously differentiable, and satisfy 
	\begin{equation}
		\label{eq:d12-continuity}
		d_1(\bar t)  = d_2(\bar t).
	\end{equation} 
\end{assumption}

\medskip 
%\textcolor{blue}{%}
	The function $d$ defined in \cref{ass:d-func-nonsmooth} belongs to a class of \emph{piecewise differentiable functions} or \emph{$PC^1$-functions}. For the definition of $PC^1$-function, we refer, e.g., to \cite[Chap.~4]{Scholtes}.
%}%
From \cref{ass:d-func-nonsmooth}, $d$ is differentiable at every point $t \neq \bar t$, but it might be directionally differentiable at $\bar t$ only and its directional derivative is given by
\begin{equation*}
	d'(\bar t; s) = 
	\left\{
	\begin{aligned}
		d_1'(\bar t)s && \text{if } s \leq 0,\\
		d_2'(\bar t)s && \text{if } s > 0.
	\end{aligned}
	\right.
\end{equation*}
Thus, we can express the directional derivative of $d$ as
\begin{multline}
	\label{eq:directional-der-d-func}
	d'(t;s) = \1_{\{ t< \bar t \}}(t) d_1'(t)s + \1_{\{ t > \bar t \}}(t) d_2'(t)s \\
	+ \1_{\{ t = \bar t \}}(t) [ \1_{(-\infty, 0)}(s) d_1'(t)s + \1_{(0, \infty)}(s) d_2'(t)s] \quad \text{for $t, s \in \R.$}
\end{multline}
Hereafter, $\1_M$ denotes the characteristic function of a set $M$, i.e., $\1_M(x) = 1$ if $x \in M$ and $\1_M(x) =0$ if $x \notin M$. 
By $\partial_B d$, we denote the Bouligand subdifferential of $d$, i.e.,
\[
	\partial_B d(t) := \{ \lim\limits_{k \to \infty} d'(\tau_k) \mid \tau_k \to t \quad \text{and} \quad d \, \text{ is differentiable in } \tau_k  \};
\]
see, e.g. \cite[Def.~2.12]{Outrata1998}. Obviously, $\partial_B d$ at every point $t$ consists of one or two elements. Namely, we have
\begin{equation}
	\label{eq:d-Bouligand-subd}
	\partial_B d(t)  = \left \{
	\begin{aligned}
		& \{d'(t) \} && \text{if} \quad t \neq \bar t,\\
		& \{d_1'(\bar t), d_2'(\bar t)\}  && \text{if} \quad t = \bar t.
	\end{aligned}
	\right.
\end{equation}
We will use this formula of $\partial_B d$ to construct two associated Bouligand generalized derivatives of the control-to-state mapping defined in \cref{sec:control2state-oper}; see, the inclusions in \eqref{eq:a-pm-Bouligand-d} and the definitions of operators in \eqref{eq:G-pm-Bouligand-der}--\eqref{eq:G-pm-determining} below.
%\textcolor{blue}{%text
	Moreover, 
	since $d$ is a $PC^1$-function, its Clarke generalized gradient $\partial_C d$ (see \cite{Clarke1990}) is characterized via 
	\begin{equation} \label{eq:Clark-subdiff-d}
		\partial_C d(t) = 
		\left \{
		\begin{aligned}
			& \{d'(t) \} && \text{if} \quad t \neq \bar t,\\
			& [d_1'(\bar t), d_2'(\bar t)]  && \text{if} \quad t = \bar t;
		\end{aligned}
		\right.
	\end{equation}
	see, e.g. \cite[Prop.~4.3.1]{Scholtes}, \cite[Thm~5.16, Chap.~5, p.~366]{Penot2013}, and \cite[p.~5]{KlatteKummer2002}.
	%Here $\partial_C d$ denotes the Clarke generalized gradient of $d$; see, e.g. \cite{Clarke1990}.
%}%

%Hereafter, the measure $\sigma$ on the boundary $\Gamma$ of the domain $\Omega$ is the usual $\left(N-1\right)$-dimensional measure induced by the parametrization (see, e.g., \cite{Kufner1977,Troltzsch2010,Evans1992}). 

\medskip

%%% Notation

%\paragraph*{Notation: } 

\noindent\emph{Notation: }
For a closed convex set $U$ in a Hilbert space $X$, we denote by $N(U; u)$ the normal cone to $U$ at a point $u \in U$, that is,
\[
	N(U; u) := \{ x^* \in X \mid \scalarprod{x^*}{ \tilde{u} - u}_X \leq 0 \quad \forall \tilde{u} \in U  \},
\]
where $\scalarprod{\cdot}{\cdot}_X$ stands for the scalar product of $X$. When $X = L^2(M)$ with a measurable set $M$, we simply write $\scalarprod{\cdot}{\cdot}_M$ instead of $\scalarprod{\cdot}{\cdot}_{L^2(M)}$. For a proper convex functional $F: X \to (-\infty, +\infty]$ defined in a Hilbert space $X$, the symbol $\partial F$ stands for the subdifferential of $F$ in the sense of convex analysis, i.e.,
\[
	\partial F(u) := \{ x^* \in X \mid \scalarprod{x^*}{ \tilde{u} - u}_X \leq F(\tilde{u}) - F(u) \quad \forall \tilde{u} \in X \}.
\]
%For a locally Lipschitz continuous function $H$, by $\partial_C H$ we denote the Clarke generalized gradient of $H$; see, e.g. \cite{Clarke1990}.  
Given Banach spaces $Z_1$ and $Z_2$, the notation $Z_1 \hookrightarrow Z_2$  means that $Z_1$ is continuously embedded in $Z_2$, and the symbol $\Linop(Z_1,Z_2)$ denotes the space of all bounded continuous linear operators between $Z_1$ and $Z_2$. 
With a given function $y: \hat\Omega\to \R$ defined on a measurable subset $\hat\Omega \subset \R^N$ and a given value $t \in \R$, by $\{y =t \}$ we denote the level set of $y$ associated with $t$, that is, $\{y = t \} := \{x \in \hat \Omega \mid y(x)=t \}$. Analogously, for given functions $y_1,y_2$ and numbers $ t_1, t_2 \in \R$, we set $\{ y_1 = t_1, y_2 = t_2 \} := \{ y_1 = t_1\} \cap \{ y_2 = t_2 \}$. Moreover, by $\{y > t\}$ we denote the set of all points $x \in \hat \Omega$ for which $y(x) > t$ and set 
$\{y_1 > t_2, y_2 > t_2 \} := \{ y_1 > t_1\} \cap \{y_2 > t_2 \}$ and so on. 
When $y$ is continuous, by $\supp(y)$, we denote the support of $y$, i.e., $\supp(y) := \mathrm{cl}(\{x \in \hat \Omega \mid y(x) \neq 0\})$.
We write $\meas_{\R^N}(A)$ to indicate the $N$-dimensional Lebesgue measure of a measurable set $A \subset \R^N$ for an integer $N \geq 1$. 
%\textcolor{blue}{%
	The symbol $\Ha^1$ denotes the one-dimensional Hausdorff measure on $\R^2$ that is scaled as in  \cite[Def.~2.1]{Evans1992}.
	For measurable functions $u_1, u_2$ and $v_1, v_2$ defined, respectively, in a bounded domain $\hat \Omega \subset \R^2$ and its Lipschitz boundary $\hat\Gamma$, the symbol $(u_1, v_1) \neq (u_2, v_2)$  means that either
	\[
		\meas_{\R^2}(\{ u_1 = u_2  \}) < \meas_{\R^2}(\hat\Omega) \quad \text{or} \quad \Ha^1(\{ v_1 = v_2 \}) < \Ha^1(\hat\Gamma)
	\]	
	holds.
%}
Finally, $C$ stands for a generic positive constant, which might be different at different places of occurrence. We also write, e.g., $C(\xi)$ or $C_\xi$ for a constant that is dependent only on the parameter $\xi$.

%%%%%%%%% Main results

\medskip

%\textcolor{blue}{%
	Under \crefrange{ass:data}{ass:d-func-nonsmooth}, the control-to-state mapping $S: L^2(\Omega) \times L^2(\Gamma) \ni (u,v) \mapsto y_{u,v} \in H^1(\Omega) \cap C(\overline\Omega)$, with $y_{u,v}$ uniquely solving the state equation \eqref{eq:state} associated with $u$ and $v$, is well-defined; see, e.g. \cite[Thm.~4.7]{Troltzsch2010}, and is directionally differentiable; see \cref{prop:control-to-state-app} below. Thus, the objective functional $J$ is also directionally differentiable and its directional derivative is given by
		\begin{multline}
			\label{eq:J-direc-der}
			J'(u,v;f,h) = \scalarprod{S(u,v)-y_\Omega}{S'(u,v;f,h)}_\Omega \\
			+ \alpha \scalarprod{S(u,v)-y_\Gamma}{S'(u,v;f,h)}_\Gamma 
			+ \kappa_\Omega \scalarprod{u}{f}_\Omega + \kappa_\Gamma \scalarprod{v}{h}_\Gamma
		\end{multline}
		for all $(u,v), (f,h) \in L^2(\Omega) \times L^2(\Gamma)$.
	%}%

\medskip 

From now on, let $U_{ad}$ stand for the set of all admissible pairs of controls in $L^2(\Omega) \times L^2(\Gamma)$ of \eqref{eq:P}, that is,
\begin{equation}
	\label{eq:admissible-set}
	U_{ad} := \{  (u,v)  \in L^2(\Omega) \times L^2(\Gamma) \mid u \leq u_b \, \text{a.a. in } \Omega,  v \leq v_b \, \text{a.a. on } \Gamma \}.
\end{equation}
The existence of local minimizers to \eqref{eq:P} is stated in the following proposition and its proof is mainly based on the weak lower semicontinuity and the coercivity of the objective functional as well as the fact that the admissible set $U_{ad}$ is closed and convex in the strong topology in $L^2(\Omega) \times L^2(\Gamma)$. We thus skip the proof here.
\begin{proposition}
	\label{prop:existence-minimizers}
	Problem \eqref{eq:P} admits at least one local minimizer. % $(\bar u, \bar v) \in U_{ad}$.
\end{proposition}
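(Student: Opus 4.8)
The plan is to establish existence by the direct method of the calculus of variations. First I would take a minimizing sequence $\{(u_k, v_k)\} \subset U_{ad}$, i.e.\ a sequence with $J(u_k, v_k) \to \inf_{U_{ad}} J =: j$. Since $\kappa_\Omega, \kappa_\Gamma > 0$ by \cref{ass:data}, the functional $J$ is coercive in the $L^2(\Omega) \times L^2(\Gamma)$-norm: indeed $J(u,v) \geq \frac{\kappa_\Omega}{2}\norm{u}_{L^2(\Omega)}^2 + \frac{\kappa_\Gamma}{2}\norm{v}_{L^2(\Gamma)}^2$, so the sequence $\{(u_k, v_k)\}$ is bounded in the Hilbert space $L^2(\Omega) \times L^2(\Gamma)$. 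Passing to a subsequence (not relabeled), there exists $(\bar u, \bar v) \in L^2(\Omega) \times L^2(\Gamma)$ with $u_k \weakto \bar u$ in $L^2(\Omega)$ and $v_k \weakto \bar v$ in $L^2(\Gamma)$. Since $U_{ad}$ as defined in \eqref{eq:admissible-set} is convex and closed in the strong topology of $L^2(\Omega) \times L^2(\Gamma)$, it is weakly closed, hence $(\bar u, \bar v) \in U_{ad}$.

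Next I would pass to the limit in the state equation. Using the Lipschitz continuity of the control-to-state operator $S$ (see \cref{prop:control-to-state-app}, or the $H^1(\Omega) \cap C(\overline\Omega)$ well-posedness cited from \cite[Thm.~4.7]{Troltzsch2010}), the bounded sequence of controls yields a bounded sequence of states $y_k := S(u_k, v_k)$ in $H^1(\Omega)$; extracting a further subsequence gives $y_k \weakto \bar y$ in $H^1(\Omega)$, and by compactness of the embeddings $H^1(\Omega) \hookrightarrow L^2(\Omega)$ and of the trace operator $H^1(\Omega) \to L^2(\Gamma)$, we get $y_k \to \bar y$ strongly in $L^2(\Omega)$ and in $L^2(\Gamma)$. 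Weak continuity of the solution map then identifies $\bar y = S(\bar u, \bar v) = y_{\bar u, \bar v}$: one tests the weak formulation of \eqref{eq:state} with a fixed test function, using that $-\Delta$ together with the Robin term is linear (so weakly continuous) and that $d$ is continuous and monotone, so $d(y_k) \to d(\bar y)$ in $L^2(\Omega)$ (by the strong $L^2$ convergence of $y_k$ plus a dominated-convergence / Nemytskii-continuity argument, the latter since $d$ maps bounded sets to bounded sets) to pass to the limit in the semilinear term.

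Finally I would invoke weak lower semicontinuity of $J$. The first two tracking terms $\frac12\norm{y_k - y_\Omega}_{L^2(\Omega)}^2$ and $\frac{\alpha}{2}\norm{y_k - y_\Gamma}_{L^2(\Gamma)}^2$ in fact converge to their values at $\bar y$ thanks to the strong $L^2(\Omega)$ and $L^2(\Gamma)$ convergence of $y_k$; the Tikhonov terms $\frac{\kappa_\Omega}{2}\norm{u_k}_{L^2(\Omega)}^2$ and $\frac{\kappa_\Gamma}{2}\norm{v_k}_{L^2(\Gamma)}^2$ are convex continuous, hence weakly lower semicontinuous, so $\liminf_k \norm{u_k}_{L^2(\Omega)}^2 \geq \norm{\bar u}_{L^2(\Omega)}^2$ and similarly for $v_k$. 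Combining, $J(\bar u, \bar v) \leq \liminf_{k\to\infty} J(u_k, v_k) = j$, and since $(\bar u, \bar v) \in U_{ad}$ we also have $J(\bar u, \bar v) \geq j$, whence $(\bar u, \bar v)$ is a global — in particular local — minimizer. The proposition asserts only a local minimizer, so this global argument suffices a fortiori; alternatively, to obtain a genuinely local statement one would restrict to a closed ball around a point and run the same argument there.

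The main obstacle, modest in this setting, is the passage to the limit in the nonlinear term $d(y_k)$ in the state equation; this is handled by upgrading weak $H^1$ convergence to strong $L^2$ convergence via compact embedding and then using continuity of the Nemytskii operator induced by the continuous monotone function $d$. Everything else — coercivity, weak closedness of the convex constraint set $U_{ad}$, and weak lower semicontinuity of the remaining terms — is routine, which is why the authors state they skip the proof.
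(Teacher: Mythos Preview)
Your proposal is correct and follows essentially the same approach the paper outlines: the paper explicitly skips the proof, noting only that it rests on the coercivity and weak lower semicontinuity of $J$ together with the convexity and (strong) closedness of $U_{ad}$, which is precisely the direct-method argument you carry out. Your treatment of the limit in the state equation is in fact already covered in the paper by the weak-to-strong continuity \eqref{eq:compact-control2state}, so no additional work is needed there.
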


%\textcolor{blue}{%
	Hereafter, let $(\bar u, \bar v)$ be an arbitrary admissible, but fixed control of \eqref{eq:P}. By $\bar y$, we denote the unique state in $H^1(\Omega) \cap C(\overline\Omega)$ satisfying \eqref{eq:state} corresponding to $(u,v) := (\bar u, \bar v)$, that is, $\bar y := y_{\bar u, \bar v}$.
	We now define the following functions on $\Omega$ as follows:
	\begin{equation}
		\label{eq:a-pm-func}
		\bar a_{-} := \1_{\{ \bar y \neq \bar t \}} d'(\bar y) + \1_{\{ \bar y = \bar t \}} d_1'(\bar y) \quad \text{and} \quad \bar a_{+} := \1_{\{ \bar y \neq \bar t \}} d'(\bar y) + \1_{\{ \bar y = \bar t \}} d_2'(\bar y).
	\end{equation}
	These two functions shall appear in the novel optimality conditions; see \cref{thm:OCs-multiplier} below. 
	We thus determine the mappings
	\begin{equation}
		\label{eq:G-pm-operator}
		\begin{aligned}[t]
			\bar G_{\pm}:  L^2(\Omega) \times L^2(\Gamma) & \to H^1(\Omega) \cap C(\overline\Omega)\\
			 (f,h) & \mapsto z_{\pm},
		\end{aligned}
	\end{equation}
	where $z_{\pm}$ uniquely solve the  linear PDEs:
	\begin{equation*} 
		\left \{
		\begin{aligned}
			-\Delta z_{\pm}+ \bar a_{\pm} z_{\pm} & =f && \text{in} \, \Omega, \\
			\frac{\partial z_{\pm}}{\partial \nuv}  + b(x)z_{\pm}&= h&& \text{on}\, \Gamma.
		\end{aligned}
		\right.
	\end{equation*}	
	In \cref{prop:G-pm-belongto-Bouligand-diff} below (see, also \eqref{eq:G-pm-bar-Guv}), we shall show that these mappings belong to the Bouligand generalized differentials of the control-to-state operator in $(\bar u, \bar v)$.

\medskip

%\textcolor{blue}{%
	We now state the main results of the paper. The first one is relevant to $\bar{G}_{\pm}$ (see \eqref{eq:adjoint-multiplier-minus} and \eqref{eq:adjoint-multiplier-plus} below) 
	and presents
%}%
	the optimality conditions in terms of multipliers, in which nonnegative multipliers, associated with the nondifferentiability of the control-to-state operator, exist and have their supports lying in $\{\bar y = \bar t \}$  as seen in \eqref{eq:supp-condition-minus}  below.  
	Its proof  is deferred until \cref{sec:OC-Bouligand}.
\begin{theorem}
	\label{thm:OCs-multiplier}
	Let $(\bar u, \bar v)$ be a local minimizer of \eqref{eq:P}
	% satisfying $(\bar u, \bar v) \neq (u_b, v_b)$, 
	and $\bar a_{\pm}$ defined  in \eqref{eq:a-pm-func}. 
	Then, there exist an adjoint state $\tilde{p}_{-} \in H^1(\Omega) \cap C(\overline\Omega)$ and multipliers $\zeta_{-, \Omega} \in L^2(\Omega)$, $\zeta_{-, \Gamma} \in L^2(\Gamma)$, 
	%\textcolor{blue}{%text}
	$\mu_{-} \in L^2(\Omega)$ 
%}%
such that following conditions are fulfilled:
\begin{subequations}
	\label{eq:OCs-multipliers-minus}
	\begin{align}
		& \tilde{p}_{-} + \kappa_\Omega \bar u + \zeta_{-, \Omega} =0, &&  \tilde{p}_{-} + \kappa_\Gamma \bar v + \zeta_{-, \Gamma} =0, \label{eq:stationary-multiplier-uv-minus} \\
		& \zeta_{-, \Omega}  
		\begin{cases}
			=0 & \text{a.a. in } \{ \bar u < u_b \},\\
			\geq 0 & \text{a.a. in } \{ \bar u = u_b \},
		\end{cases} &&
		\zeta_{-, \Gamma}  
		\begin{cases}
			=0 & \text{a.a. on } \{ \bar v < v_b \},\\
			\geq 0 & \text{a.a. on } \{ \bar v = v_b \},
		\end{cases}   \label{eq:complementarity-minus} \\
		\intertext{and}
		& \mu_{-} \geq 0 \quad \text{with }    \supp(\mu_{-}) \subset \{ \bar y = \bar t \}, && \label{eq:supp-condition-minus}
	\end{align}
\end{subequations}
where $\tilde{p}_{-}$ satisfies the  equation
\begin{equation} \label{eq:adjoint-multiplier-minus}
	\left \{
	\begin{aligned}
		-\Delta \tilde{p}_{-}+ \bar a_{-} \tilde{p}_{-} & =\bar y - y_\Omega +  |d_1'(\bar t) - d_2'(\bar t)| \mu_{-} && \text{in} \, \Omega, \\
		\frac{\partial \tilde{p}_{-}}{\partial \nuv}  + b(x)\tilde{p}_{-}&= \alpha(\bar y - y_\Gamma)&& \text{on}\, \Gamma.
	\end{aligned}
	\right.
\end{equation}
%\textcolor{blue}{%
	If, in addition, $(\bar u, \bar v) \neq (u_b, v_b)$, 
	then, an adjoint state $\tilde{p}_{+} \in H^1(\Omega) \cap C(\overline\Omega)$ and multipliers $\zeta_{+, \Omega} \in L^2(\Omega)$, $\zeta_{+, \Gamma} \in L^2(\Gamma)$, and $\mu_{+} \in L^2(\Omega)$ exist and satisfy the condition \eqref{eq:OCs-multipliers-minus} with $\tilde{p}_{-}$, $\zeta_{-, \Omega}$, $\mu_{-}$, and $\zeta_{-, \Gamma}$ being replaced by $\tilde{p}_{+}$, $\zeta_{+, \Omega}$, $\mu_{+}$, and $\zeta_{+, \Gamma}$, respectively. Moreover, 
	$\tilde{p}_{+}$ fulfills
	\begin{equation} \label{eq:adjoint-multiplier-plus}
		\left \{
		\begin{aligned}
			-\Delta \tilde{p}_{+}+ \bar a_{+} \tilde{p}_{+} & =\bar y - y_\Omega -  |d_1'(\bar t) - d_2'(\bar t)| \mu_{+} && \text{in} \, \Omega, \\
			\frac{\partial \tilde{p}_{+}}{\partial \nuv}  + b(x)\tilde{p}_{+}&= \alpha(\bar y - y_\Gamma)&& \text{on}\, \Gamma.
		\end{aligned}
		\right.
	\end{equation}
%}%
\end{theorem}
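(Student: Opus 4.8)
The plan is to sidestep the non-differentiability of $S$ at $(\bar u,\bar v)$ by working only with the directional derivative $S'(\bar u,\bar v;\cdot)$, and to produce the multipliers $\mu_\mp$ by a constrained Farkas (Lagrange-duality) argument on a convex subcone of admissible directions on which $S'(\bar u,\bar v;\cdot)$ is \emph{linear} and equals $\bar G_\mp$; this is an alternative to the minimizing-sequence route announced in the introduction. Throughout, let $\mathcal T:=\{(f,h)\in L^2(\Omega)\times L^2(\Gamma)\mid f\le 0 \text{ a.e.\ on }\{\bar u=u_b\},\ h\le 0 \text{ a.e.\ on }\{\bar v=v_b\}\}$ denote the tangent cone of $U_{ad}$ at $(\bar u,\bar v)$. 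If $d_1'(\bar t)=d_2'(\bar t)$, then $\bar a_-=\bar a_+$, $S$ is G\^ateaux differentiable at $(\bar u,\bar v)$, and the statement reduces to the classical KKT system with $\mu_\pm=0$; so I assume $d_1'(\bar t)\neq d_2'(\bar t)$ and write $c_{\bar t}:=|d_1'(\bar t)-d_2'(\bar t)|>0$.

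\textbf{Step 1: B-stationarity and exact linearization on a subcone.} From local optimality and the directional differentiability of $J$ with the (Lipschitz, positively homogeneous) derivative \eqref{eq:J-direc-der}, the usual difference-quotient argument together with the continuity of $J'(\bar u,\bar v;\cdot)$ yields $J'(\bar u,\bar v;f,h)\ge 0$ for all $(f,h)\in\mathcal T$. I would then prove the key reduction: \emph{if $w:=\bar G_-(f,h)$ satisfies $w\le 0$ a.e.\ on $\{\bar y=\bar t\}$, then $w=S'(\bar u,\bar v;f,h)$.} Indeed, on $\{\bar y=\bar t\}$ the formula \eqref{eq:directional-der-d-func} and $w\le 0$ give $d'(\bar t;w)=d_1'(\bar t)w=\bar a_- w$, while on $\{\bar y\neq\bar t\}$ one has $d'(\bar y)=\bar a_-$; hence $w$ solves the equation characterising $S'(\bar u,\bar v;f,h)$ in \cref{prop:control-to-state-app}, and uniqueness of that equation gives $w=S'(\bar u,\bar v;f,h)$. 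Consequently, on the closed convex cone $\mathcal C_-:=\{(f,h)\in\mathcal T\mid \bar G_-(f,h)\le 0 \text{ a.e.\ on }\{\bar y=\bar t\}\}$ one has $J'(\bar u,\bar v;f,h)=\mathcal J_-'(f,h)$, where $\mathcal J_-'(f,h):=\scalarprod{\bar y-y_\Omega}{\bar G_-(f,h)}_\Omega+\alpha\scalarprod{\bar y-y_\Gamma}{\bar G_-(f,h)}_\Gamma+\kappa_\Omega\scalarprod{\bar u}{f}_\Omega+\kappa_\Gamma\scalarprod{\bar v}{h}_\Gamma$ is a \emph{bounded linear} functional; thus $\mathcal J_-'\ge 0$ on $\mathcal C_-$. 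Testing the equations for $\bar G_-(f,h)$ and for $p_-^{0}\in H^1(\Omega)\cap C(\overline\Omega)$, the solution of $-\Delta p+\bar a_- p=\bar y-y_\Omega$ in $\Omega$, $\partial_\nu p+bp=\alpha(\bar y-y_\Gamma)$ on $\Gamma$, against each other yields $\mathcal J_-'(f,h)=\scalarprod{p_-^{0}+\kappa_\Omega\bar u}{f}_\Omega+\scalarprod{p_-^{0}+\kappa_\Gamma\bar v}{h}_\Gamma$.

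\textbf{Step 2: extracting $\mu_-$ and $\tilde p_-$ by Farkas duality.} Put $Y:=L^2(\{\bar y=\bar t\})$, $K_+:=\{\mu\in Y\mid \mu\ge 0\}$ and $A(f,h):=\bar G_-(f,h)\big|_{\{\bar y=\bar t\}}$, which is bounded linear into $Y$ since $\bar G_-$ maps into $C(\overline\Omega)$; then $\mathcal C_-=\{x\in\mathcal T\mid Ax\in -K_+\}$ and $\mathcal J_-'\ge 0$ on $\mathcal C_-$. A uniform Slater condition holds: $(-1,-1)\in\mathcal T$, and by the strong maximum principle of \cref{sec:control2state-oper} one has $\bar G_-(-1,-1)<0$ on $\overline\Omega$, hence $A(-1,-1)\le-\gamma$ a.e.\ for some $\gamma>0$. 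Invoking the generalised Farkas lemma / strong conic duality, I obtain $\mu\in K_+$ with $\mathcal J_-'(f,h)+\scalarprod{\mu}{\bar G_-(f,h)}_{\{\bar y=\bar t\}}\ge 0$ for all $(f,h)\in\mathcal T$; letting $q\in H^1(\Omega)\cap C(\overline\Omega)$ solve $-\Delta q+\bar a_- q=\mu\1_{\{\bar y=\bar t\}}$ in $\Omega$, $\partial_\nu q+bq=0$ on $\Gamma$, duality gives $\scalarprod{\mu}{\bar G_-(f,h)}_{\{\bar y=\bar t\}}=\scalarprod{q}{f}_\Omega+\scalarprod{q}{h}_\Gamma$. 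Hence, with $\tilde p_-:=p_-^{0}+q$ and $\mu_-:=c_{\bar t}^{-1}\mu\1_{\{\bar y=\bar t\}}$: (i) $\tilde p_-$ solves \eqref{eq:adjoint-multiplier-minus}; (ii) $\mu_-\ge 0$ and $\supp\mu_-\subset\{\bar y=\bar t\}$, giving \eqref{eq:supp-condition-minus}; and (iii) $\scalarprod{\tilde p_-+\kappa_\Omega\bar u}{f}_\Omega+\scalarprod{\tilde p_-+\kappa_\Gamma\bar v}{h}_\Gamma\ge 0$ for all $(f,h)\in\mathcal T$, which — testing with $(f,0)$ of arbitrary sign supported in $\{\bar u<u_b\}$, then with $(f,0)$, $f\le 0$, supported in $\{\bar u=u_b\}$, and symmetrically on $\Gamma$ — forces $\zeta_{-,\Omega}:=-(\tilde p_-+\kappa_\Omega\bar u)$ and $\zeta_{-,\Gamma}:=-(\tilde p_-+\kappa_\Gamma\bar v)$ to satisfy \eqref{eq:stationary-multiplier-uv-minus}--\eqref{eq:complementarity-minus}. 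This proves the first assertion.

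\textbf{Step 3: the ``$+$'' part, and the main obstacle.} For the second assertion I repeat Steps 1--2 with $\bar G_+$, the subcone $\mathcal C_+:=\{(f,h)\in\mathcal T\mid \bar G_+(f,h)\ge 0 \text{ a.e.\ on }\{\bar y=\bar t\}\}$ (on which $d'(\bar t;w)=d_2'(\bar t)w=\bar a_+w$ for $w=\bar G_+(f,h)\ge 0$, so $S'(\bar u,\bar v;f,h)=\bar G_+(f,h)$), and the constraint $-\bar G_+(f,h)\big|_{\{\bar y=\bar t\}}\in -K_+$. The one new point is the Slater direction: one needs $(f_0,h_0)\in\mathcal T$ with $\bar G_+(f_0,h_0)\ge\gamma>0$ on $\{\bar y=\bar t\}$, and the choice $f_0=\1_{\{\bar u<u_b\}}$, $h_0=\1_{\{\bar v<v_b\}}$ lies in $\mathcal T$ and is not identically zero \emph{exactly} when $(\bar u,\bar v)\neq(u_b,v_b)$, whereupon the strong maximum principle gives $\bar G_+(f_0,h_0)>0$ on $\overline\Omega$. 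The duality step then yields $\mu\ge 0$, and with $\tilde p_+:=p_+^{0}-q_+$ (where $p_+^{0},q_+\in H^1(\Omega)\cap C(\overline\Omega)$ solve the analogues of the above with $\bar a_-$ replaced by $\bar a_+$ and $q_+$ having source $\mu\1_{\{\bar y=\bar t\}}$) and $\mu_+:=c_{\bar t}^{-1}\mu\1_{\{\bar y=\bar t\}}$ one obtains \eqref{eq:adjoint-multiplier-plus} and the remaining relations. I expect the main obstacle to be the duality step in Step 2: since $K_+\subset L^2$ has empty interior, classical Slater is unavailable, and one must justify strong duality — and, crucially, that $\mu$ is a genuine $L^2$-function (not merely an element of a bidual or a finitely additive measure) — from the \emph{uniform} Slater gap $Ax_0\le-\gamma$ a.e.; this, together with the strong maximum principle of \cref{sec:control2state-oper} (and, in the ``$+$'' case, the hypothesis $(\bar u,\bar v)\neq(u_b,v_b)$), is exactly where the argument is delicate.
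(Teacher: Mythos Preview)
Your route is genuinely different from the paper's and Step~1 is clean: B-stationarity is immediate for local minimizers, and your ``key reduction'' (that $\bar G_-(f,h)=S'(\bar u,\bar v;f,h)$ whenever $\bar G_-(f,h)\le 0$ on $\{\bar y=\bar t\}$) follows directly from the characterising PDE and is a nice shortcut. The real problem is Step~2.

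The duality step is not merely ``delicate'' --- as written it does not go through. Classical Slater requires $A(x_0)$ in the interior of $-K_+$, which is empty in $L^2(M)$. The natural replacement is the Zowe--Kurcyusz/Robinson condition $A(\mathcal T)+K_+=L^2(M)$, but this \emph{fails}: each $A(x)=\bar G_-(x)\big|_M$ lies in $C(M)\subset L^\infty(M)$, so $A(x)\le c$ is impossible whenever $c\in L^2(M)$ is essentially unbounded below, and hence $A(\mathcal T)+K_+$ misses every such $c$. Your uniform gap $A(x_0)\le-\gamma$ only yields a Slater point in $C(M)$; the corresponding dual then lives in the space of (positive) Radon measures on $M$, not in $L^2(M)$. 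With a measure-valued $\mu$, the adjoint corrector $q$ solving $-\Delta q+\bar a_- q=\mu$ is only a very weak solution and in general lies in $W^{1,p}(\Omega)$ for $p<2$, not in $H^1(\Omega)\cap C(\overline\Omega)$; so you cannot conclude $\tilde p_-\in H^1(\Omega)\cap C(\overline\Omega)$, which is part of the statement. No a posteriori regularity upgrade for $\mu$ is apparent from the remaining KKT relations.

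By contrast, the paper does not rely on a constraint qualification at all. Via approximation by G\^ateaux points and a careful remainder analysis (their Lemma~4.6) it proves the \emph{quantitative} inequality
\[
\langle \bar p_-+\kappa_\Omega\bar u,\,u-\bar u\rangle_\Omega+\langle \bar p_-+\kappa_\Gamma\bar v,\,v-\bar v\rangle_\Gamma
\ \ge\ -C\,c_{\bar t}\,\big\|\mathbf 1_{\{\bar y=\bar t\}}\,(\bar G_-(u-\bar u,v-\bar v))^+\big\|_{L^2(\Omega)}
\]
for \emph{all} $(u,v)\in U_{ad}$. The right-hand side is $C\,c_{\bar t}\cdot\mathrm{dist}_{\mathcal C_M^-}(\bar G_-(\cdot))$, a continuous convex function on $L^2(\Omega)\times L^2(\Gamma)$; the multiplier then drops out of the subdifferential of this distance function at $0$, namely $\mu_-\in N(\mathcal C_M^-;0)\cap C\bar B_{L^2(\Omega)}$, which is automatically in $L^2$ with an explicit bound. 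This is exactly what your Farkas argument lacks. If you want to keep your architecture, you would need to prove a comparable penalty bound $\|S'(\bar u,\bar v;f,h)-\bar G_-(f,h)\|_{H^1}\le C\,c_{\bar t}\|\mathbf 1_{\{\bar y=\bar t\}}(\bar G_-(f,h))^+\|_{L^2}$ directly; a PDE subtraction gives the right-hand side with $\mathbf 1_{\{S'>0\}}$ in place of $\mathbf 1_{\{\bar G_- >0\}}$, and closing that gap without circularity is precisely where the paper's G\^ateaux-point machinery earns its keep.
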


\medskip 

%%% A corollary
In the special case, where $(\bar u, \bar v) = (u_b, v_b)$,  
%\textcolor{blue}{
	we derive the following pointwise inequalities for an adjoint state  and the vanishing of the multiplier corresponding to the nondifferentiability of the control-to-state operator. The associated proof is left to \cref{sec:OC-Bouligand}.
%}%
%the right-hand side in \eqref{eq:primal-OCs-minus}  vanishes, thanks to assertion \ref{item:W-set-minus-zero} in \cref{cor:W-pm-zero}. We therefore have from assertion \ref{item:primal-OCs-minus} in \cref{thm:primal-OCs} the following optimality conditions. 
\begin{theorem}
	\label{thm:control-ub-vb}
	Let $(\bar u, \bar v)$ be a local minimizer of \eqref{eq:P} satisfying $(\bar u, \bar v) = (u_b, v_b)$. 
	%Assume further that $d_1'(\bar t) \leq d_2'(\bar t)$, 
	Then, 
	there exists an adjoint state $\bar{p} \in H^1(\Omega) \cap C(\overline\Omega)$ that fulfills the following conditions
	%\textcolor{blue}{%
		\begin{equation}
			\label{eq:control-ub-vb-OC}
			\frac{-\bar p}{\kappa_\Omega} \geq u_b \, \text{a.a. in } \Omega, \quad \frac{-\bar p}{\kappa_\Gamma} \geq v_b \, \text{a.a. on } \Gamma,
		\end{equation}	
		and
		\begin{equation} \label{eq:adjoint-state-ubvb}
			\left \{
			\begin{aligned}
				-\Delta \bar p + \bar a_{-} \bar p & =\bar y - y_\Omega  && \text{in} \, \Omega, \\
				\frac{\partial \bar p}{\partial \nuv}  + b(x)\bar p&= \alpha(\bar y - y_\Gamma)&& \text{on}\, \Gamma.
			\end{aligned}
			\right.
		\end{equation}
	%}%
\end{theorem}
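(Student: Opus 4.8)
The plan is to obtain Theorem~\ref{thm:control-ub-vb} as the degenerate case of the analysis behind Theorem~\ref{thm:OCs-multiplier}, specialized to the situation in which the admissible set is ``saturated'' at $(\bar u,\bar v)=(u_b,v_b)$. First I would recall that, by the construction announced before \cref{lem:countable-sets} and in \cref{prop:G-pm-belongto-Bouligand-diff}, one associates to the local minimizer a minimizing sequence of G\^ateaux points lying pointwise to the \emph{left} of $(\bar u,\bar v)$ — crucially, a sequence that stays admissible precisely because moving left keeps the controls below $u_b$ and $v_b$. This is why only the ``$-$'' objects ($\bar a_{-}$, $\bar G_{-}$, $\tilde p_{-}$) survive here: the ``$+$'' (rightward) sequence would generically violate the pointwise constraints when $(\bar u,\bar v)=(u_b,v_b)$, which is exactly the extra hypothesis $(\bar u,\bar v)\neq(u_b,v_b)$ needed for the second half of Theorem~\ref{thm:OCs-multiplier}. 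So the starting point is the ``minus'' optimality system \eqref{eq:OCs-multipliers-minus}--\eqref{eq:adjoint-multiplier-minus}.

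Next I would argue that in the present case the nonsmooth multiplier $\mu_{-}$ must vanish. The heuristic is that when the admissible set only allows variations in one direction (downward in both $u$ and $v$), the directional-derivative variational inequality $J'(\bar u,\bar v;f,h)\ge 0$ for admissible directions $(f,h)$ — i.e.\ $f\le 0$ a.e.\ in $\Omega$, $h\le 0$ a.e.\ on $\Gamma$ — tests the generalized derivative $\bar G_{-}$ only, and the left/right discrepancy term $|d_1'(\bar t)-d_2'(\bar t)|\mu_{-}$ that was produced by comparing the two one-sided sequences no longer has a counterpart to be balanced against. Concretely I would re-run the limit passage from \cref{sec:OC-Bouligand} but drop the rightward comparison: the sign condition $\mu_{-}\ge 0$ together with the fact that the left sequence already realizes the full first-order decrease forces $\mu_{-}=0$, whence \eqref{eq:adjoint-multiplier-minus} collapses to \eqref{eq:adjoint-state-ubvb} with $\bar p:=\tilde p_{-}$. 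The main obstacle is exactly this step — making the vanishing of $\mu_{-}$ rigorous. One has to be careful that the construction of the left G\^ateaux sequence, when $(\bar u,\bar v)=(u_b,v_b)$, produces controls that are genuinely admissible and strongly convergent without any room to move right, and that the weak limits of the associated adjoint states and multipliers (cf.\ the technical lemmas of \cref{sec:OC-Bouligand}) identify $\mu_{-}$ with $0$ rather than merely with something nonnegative; this likely reuses the strong maximum principle from \cref{sec:control2state-oper} to control the support $\{\bar y=\bar t\}$ and a duality/density argument to conclude $\mu_{-}\equiv 0$.

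Finally I would derive the pointwise bounds \eqref{eq:control-ub-vb-OC}. With $\mu_{-}=0$ the adjoint $\bar p$ solves \eqref{eq:adjoint-state-ubvb} and lies in $H^1(\Omega)\cap C(\overline\Omega)$ by the standard linear elliptic theory already invoked (Tröltzsch, Thm.~4.7). The stationarity relations \eqref{eq:stationary-multiplier-uv-minus} give $\zeta_{-,\Omega}=-\bar p-\kappa_\Omega\bar u=-\bar p-\kappa_\Omega u_b$ and $\zeta_{-,\Gamma}=-\bar p-\kappa_\Gamma v_b$, while the complementarity \eqref{eq:complementarity-minus}, evaluated on the set where the constraint is active — which is now all of $\Omega$ resp.\ $\Gamma$ since $\bar u=u_b$, $\bar v=v_b$ — yields $\zeta_{-,\Omega}\ge 0$ a.e.\ in $\Omega$ and $\zeta_{-,\Gamma}\ge 0$ a.e.\ on $\Gamma$. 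Rearranging, $-\bar p/\kappa_\Omega\ge u_b$ a.e.\ in $\Omega$ and $-\bar p/\kappa_\Gamma\ge v_b$ a.e.\ on $\Gamma$, which is precisely \eqref{eq:control-ub-vb-OC}. This last part is routine bookkeeping; the only genuine work is the disappearance of $\mu_{-}$, and I expect the proof in \cref{sec:OC-Bouligand} to handle it by the same limiting scheme used for Theorem~\ref{thm:OCs-multiplier} with the rightward sequence simply omitted.
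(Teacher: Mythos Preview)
Your overall shape is right --- only the ``minus'' machinery is available and the final bookkeeping (your Step~3) matches the paper exactly --- but your Step~2 misidentifies the mechanism. The paper does \emph{not} start from the optimality system \eqref{eq:OCs-multipliers-minus}--\eqref{eq:adjoint-multiplier-minus} of \cref{thm:OCs-multiplier} and then argue $\mu_{-}=0$ a~posteriori; that existence statement gives only \emph{some} $\mu_{-}\ge 0$, with no uniqueness, so your proposed implication ``$\mu_{-}\ge 0$ plus one-sided variations forces $\mu_{-}=0$'' has no footing. Re-running the limit passage and ``dropping the rightward comparison'' is also not what makes the term disappear.

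What the paper actually uses is one level lower, at \cref{prop:primal-OCs}/\cref{cor:primal-OCs}, together with the key observation \cref{cor:W-pm-zero}\ref{item:W-set-minus-zero}: when $(\bar u,\bar v)=(u_b,v_b)$, every admissible direction $(u-\bar u,v-\bar v)$ is pointwise nonpositive, and for such directions the identity \eqref{eq:S-der-G-identity} (a maximum-principle consequence) gives $S'(\bar u,\bar v;f,h)=\bar G_{-}(f,h)$, hence the error element $e_{-}\in W^\sigma_{-}(\bar u,\bar v;f,h)$ is \emph{exactly zero}. With $e_{-}=0$ and $\sigma=0$, the right-hand side of \eqref{eq:primal-OCs-minus} vanishes, so \eqref{eq:primal-OCs-minus-adjoint} holds with right-hand side $0$; this is the clean variational inequality for $\bar p:=\bar p_{-}$ solving \eqref{eq:adjoint-minus-primal}$=$\eqref{eq:adjoint-state-ubvb}, and your Step~3 then gives \eqref{eq:control-ub-vb-OC}. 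In other words, the $\mu_{-}$-generating term never enters: the nonsmooth correction $\|\1_{\{\bar y=\bar t\}}\1_{\{z_{-}\ge 0\}}z_{-}\|_{L^2}$ is controlled by $e_{-}$ via \cref{lem:eta-epsilon-estimate}, and $e_{-}=0$ here --- not because of any sign argument on $\mu_{-}$, but because the directional and left-Bouligand derivatives coincide on the relevant cone of directions.
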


%%% A corrolary

%\section{Applications to an optimal control problem without control constraints}  \label{sec:application-OCP-no-constraint}
Next, we apply the obtained results presented in \cref{thm:OCs-multiplier} 
in order to derive the \emph{strong stationarity conditions},
%\textcolor{blue}{%
	which are equivalent to the purely primal optimality condition, as stated in \cref{thm:strong-B-equivalence} below.
%}%
%in which the sign of the corresponding adjoint state on the set $\{\bar y = \bar t\}$ does not change. 

\medskip 
%%% Constraint qualification
%\textcolor{blue}{%
	We need the following so-called {constraint qualification:}
	\begin{equation}
		\label{eq:CQ}
		%[d_1'(\bar t) - d_2'(\bar t)]\meas_{\R^2} \left(\{ \bar y = \bar t \} \cap \overline{ \{  \bar u = u_b \} }  \right) = 0;
		\meas_{\R^2} \left(\{ \bar y = \bar t \} \cap \overline{ \{  \bar u = u_b \} }  \right) = 0;
	\end{equation}
	see, e.g. \cite{Betz2023,Wachsmuth2014}.
	%This condition is automatically fulfilled when $u_b := + \infty$, i.e., there is no distributed control constraint
%}%
%without control constraints, i.e., $u_b := + \infty$ and $v_b = +\infty$, 

\begin{remark}
	\label{rem:CQ}
	%\textcolor{blue}{%
		We point out that the constraint qualification \eqref{eq:CQ} is due only to the presence of control constraint \eqref{eq:constraint-distributed}, since the nonsmoothness of \eqref{eq:P} occurs in the domain $\Omega$ only. This constraint qualification is automatically fulfilled when $u_b := + \infty$, i.e., there is no unilateral pointwise constraint \eqref{eq:constraint-distributed}.
%	}%
\end{remark}

\begin{theorem}[{cf. \cite[Thm.~3.7]{Betz2023}; Strong stationarity}]
	\label{thm:OC-strong}
	%Assume that $u_b := + \infty$ and $v_b := +\infty$. 
	Let $(\bar u, \bar v)$ be a local minimizer of \eqref{eq:P}
	%\textcolor{blue}{%
		such that the constraint qualification \eqref{eq:CQ} is satisfied.
	%}%
	%	Assume further that one of the following conditions are valid:
	%	\begin{enumerate}[label=(\roman*)]
		%		\item \label{item:levelset-zero-boundary} The boundary of $\{\bar y = \bar t \}$ has a two-dimensional Lebesgue measure zero.
		%		\item \label{item:domain-regularity} $\Omega$ has a $C^{1,1}$ boundary and $b$ is Lipschitz continuous.
		%	\end{enumerate}
	Then, there exist an adjoint state $\tilde{p} \in H^1(\Omega) \cap C(\overline\Omega)$ and a function   $\tilde{a} \in L^\infty(\Omega)$ such that following conditions are verified:
	\begin{subequations}
		\label{eq:OCs-multipliers-strong}
		\begin{align}
			& 
			\tilde{p} + \kappa_\Omega \bar u 
		%	\textcolor{blue}{%text}
			+ \zeta_{ \Omega} 
		%}%
		=0, &&  \tilde{p} + \kappa_\Gamma \bar v 
		%\textcolor{blue}{%text}
		+ \zeta_{ \Gamma} 
	%}%
	=0, \label{eq:stationary-multiplier-uv-strong} \\
	& 
	%\textcolor{blue}{%text}
	\zeta_{\Omega}  
	\begin{cases}
		=0 & \text{a.a. in } \{ \bar u < u_b \},\\
		\geq 0 & \text{a.a. in } \{ \bar u = u_b \},
	\end{cases} 
%}%
&&
%\textcolor{blue}{%text}
\zeta_{ \Gamma}  
\begin{cases}
	=0 & \text{a.a. on } \{ \bar v < v_b \},\\
	\geq 0 & \text{a.a. on } \{ \bar v = v_b \},
\end{cases}  
%}%
\label{eq:complementarity-strong} \\
%& \tilde{p} + \kappa_\Omega \bar u =0, \quad  \tilde{p} + \kappa_\Gamma \bar v  =0, \label{eq:stationary-multiplier-uv-no-constraint} \\
& \tilde{p}[d_1'(\bar t) - d_2'(\bar t)] \geq 0 \quad \text{a.a. in } \{\bar y = \bar t\}, && \label{eq:OCs-multiplier-sign-condition} 
\intertext{and}
& \tilde{a}(x) \in \partial_C d(\bar y(x)) \quad \text{for a.a. } x \in \Omega, && \label{eq:Clarke-condition}
\end{align}
\end{subequations}
where $\tilde{p}$ satisfies 
\begin{equation} \label{eq:adjoint-multiplier-strong}
\left \{
\begin{aligned}
-\Delta \tilde{p}+ \tilde{a} \tilde{p} & =\bar y - y_\Omega && \text{in} \, \Omega, \\
\frac{\partial \tilde{p}}{\partial \nuv}  + b(x)\tilde{p}&= \alpha(\bar y - y_\Gamma)&& \text{on}\, \Gamma.
\end{aligned}
\right.
\end{equation} 
\end{theorem}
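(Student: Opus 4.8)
The plan is to derive \cref{thm:OC-strong} from \cref{thm:OCs-multiplier} by combining the two families of multiplier conditions associated with $\bar G_-$ and $\bar G_+$, following the strategy of \cite[Thm.~3.7]{Betz2023}: the condition attached to $\bar G_-$ already yields C-stationarity, while comparing it with the condition attached to $\bar G_+$ forces the sign of the adjoint state on the level set $\{\bar y = \bar t\}$. First I would dispose of the trivial case $(\bar u,\bar v) = (u_b,v_b)$: here \cref{thm:control-ub-vb} provides an adjoint state $\bar p$ solving \eqref{eq:adjoint-state-ubvb} with $\tilde a := \bar a_-$, and I would set $\tilde p := \bar p$, $\zeta_\Omega := -\tilde p - \kappa_\Omega u_b$, $\zeta_\Gamma := -\tilde p - \kappa_\Gamma v_b$; the sign conditions \eqref{eq:complementarity-strong} hold because \eqref{eq:control-ub-vb-OC} says exactly $-\bar p/\kappa_\Omega \ge u_b$ and $-\bar p/\kappa_\Gamma \ge v_b$, and \eqref{eq:OCs-multiplier-sign-condition} will follow from the constraint qualification \eqref{eq:CQ}, which in this case forces $\meas_{\R^2}(\{\bar y = \bar t\}) = 0$ (since $\overline{\{\bar u = u_b\}} = \overline{\Omega}$), making the inequality vacuous; finally \eqref{eq:Clarke-condition} holds since $\bar a_-(x) = d'(\bar y(x)) \in \partial_C d(\bar y(x))$ off $\{\bar y = \bar t\}$ and that set is null.

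The main case is $(\bar u,\bar v) \neq (u_b,v_b)$, where \cref{thm:OCs-multiplier} furnishes both systems \eqref{eq:OCs-multipliers-minus}: data $(\tilde p_-, \zeta_{-,\Omega}, \zeta_{-,\Gamma}, \mu_-)$ solving \eqref{eq:adjoint-multiplier-minus} and data $(\tilde p_+, \zeta_{+,\Omega}, \zeta_{+,\Gamma}, \mu_+)$ solving \eqref{eq:adjoint-multiplier-plus}, with $\mu_\pm \ge 0$ and $\supp(\mu_\pm) \subset \{\bar y = \bar t\}$. I would take the candidate strong-stationary adjoint state to be $\tilde p := \tilde p_-$, so that automatically $\tilde a := \bar a_-$ works for \eqref{eq:adjoint-multiplier-strong} once we show the extra source term in \eqref{eq:adjoint-multiplier-minus} vanishes, i.e. once we show $\mu_- = 0$; then $\zeta_\Omega := \zeta_{-,\Omega}$, $\zeta_\Gamma := \zeta_{-,\Gamma}$ inherit \eqref{eq:stationary-multiplier-uv-strong}--\eqref{eq:complementarity-strong} directly. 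To get $\mu_- = 0$ I would subtract \eqref{eq:adjoint-multiplier-plus} from \eqref{eq:adjoint-multiplier-minus}: the difference $q := \tilde p_- - \tilde p_+$ solves a linear elliptic problem whose right-hand side is $(\bar a_+ - \bar a_-)$-related terms plus $|d_1'(\bar t) - d_2'(\bar t)|(\mu_- + \mu_+)$; crucially $\bar a_+ - \bar a_- = \1_{\{\bar y = \bar t\}}(d_2'(\bar t) - d_1'(\bar t))$ is supported on $\{\bar y = \bar t\}$, and testing the equation for $q$ against suitable functions — together with the sign information $\mu_\pm \ge 0$ — should pin down the behaviour of $\tilde p_-$ on $\{\bar y = \bar t\}$. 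The cleaner route, and the one I would actually pursue, is to use the B-stationarity / primal optimality condition (the directional derivative of the reduced functional is nonnegative on admissible directions, cf. \cref{thm:strong-B-equivalence}) evaluated on directions supported in $\{\bar y = \bar t\}$, combined with the constraint qualification \eqref{eq:CQ} which guarantees that on (a.e. point of) $\{\bar y = \bar t\}$ one has $\bar u < u_b$, hence such directions are two-sided admissible; plugging $\bar G_\pm$ into \eqref{eq:J-direc-der} and using the adjoint identities \eqref{eq:adjoint-multiplier-minus}--\eqref{eq:adjoint-multiplier-plus} converts the variational inequality into the pointwise sign condition \eqref{eq:OCs-multiplier-sign-condition} and simultaneously forces $\mu_- = \mu_+ = 0$.

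Concretely, the key steps in order: (i) handle the boundary-active case via \cref{thm:control-ub-vb} as above; (ii) in the main case, record both multiplier systems from \cref{thm:OCs-multiplier} and observe that $\bar a_+$ and $\bar a_-$ differ only on $\{\bar y = \bar t\}$; (iii) use \eqref{eq:CQ} to conclude that $\meas_{\R^2}(\{\bar y = \bar t\} \cap \{\bar u = u_b\}) = 0$, so a.e. on $\{\bar y = \bar t\}$ the distributed constraint is inactive and variations of $\bar u$ in both signs are feasible; (iv) take an arbitrary $g \in L^2(\Omega)$ supported in $\{\bar y = \bar t\}$, use $(\bar u \pm \tau g, \bar v) \in U_{ad}$ for small $\tau > 0$, pass to the directional derivative, and express $J'(\bar u,\bar v; \pm g, 0)$ through $\tilde p_-$ resp. $\tilde p_+$ using the adjoint equations — by construction $\bar G_-(g,0)$ pairs with the first, $\bar G_+(g,0)$ with the second; (v) the resulting inequalities $\int_{\{\bar y = \bar t\}} (\text{something involving } \tilde p_- \text{ and } \mu_-) g \ge 0$ for all such $g$ of both signs yield an equality pinning $\tilde p_-$ and simultaneously $\mu_- = \mu_+ = 0$, whence $\tilde p := \tilde p_-$ solves \eqref{eq:adjoint-multiplier-strong} with $\tilde a := \bar a_-$; (vi) derive the sign condition \eqref{eq:OCs-multiplier-sign-condition} from the same computation after distinguishing the cases $d_1'(\bar t) \gtrless d_2'(\bar t)$ and comparing the two adjoint states on $\{\bar y = \bar t\}$; (vii) verify \eqref{eq:Clarke-condition}: off $\{\bar y = \bar t\}$ we have $\bar a_-(x) = d'(\bar y(x)) \in \partial_C d(\bar y(x))$, while on $\{\bar y = \bar t\}$ we have $\bar a_-(x) = d_1'(\bar t) \in [d_1'(\bar t), d_2'(\bar t)] = \partial_C d(\bar t)$ by \eqref{eq:Clark-subdiff-d} (using monotonicity of $d_1, d_2$ to order the endpoints), so $\tilde a := \bar a_- \in L^\infty(\Omega)$ does the job. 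The main obstacle I anticipate is step (iv)--(v): justifying that the directional derivative $S'(\bar u, \bar v; \pm g, 0)$ is exactly represented by $\bar G_\mp(\pm g, 0)$ for directions supported on the level set — this requires the precise structure of the directional derivative of $S$ on $\{\bar y = \bar t\}$ (the branch of $\partial_B d$ selected depends on the sign of the perturbation), and matching it against the operators $\bar G_\pm$ defined via the fixed coefficients $\bar a_\pm$; getting the bookkeeping of signs consistent so that the two one-sided variational inequalities combine into both the equality $\mu_\pm = 0$ and the correct sign in \eqref{eq:OCs-multiplier-sign-condition} is the delicate point, and is precisely where the constraint qualification is indispensable.
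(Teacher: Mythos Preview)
Your proposal contains a genuine gap: the expectation that $\mu_- = \mu_+ = 0$ is incorrect, and consequently the choice $\tilde a := \bar a_-$ does not yield \eqref{eq:adjoint-multiplier-strong}. In the paper's proof the multipliers $\mu_\pm$ are generically nonzero; indeed, once one knows (as the paper shows) that $\tilde p_- = -\kappa_\Omega d(\bar t)$ and $\Delta\tilde p_- = 0$ a.e.\ on $\{\bar y = \bar t\}$, the first line of \eqref{eq:adjoint-multiplier-minus} gives $\mu_- = \big(d_1'(\bar t)\tilde p_- - (\bar t - y_\Omega)\big)/|d_1'(\bar t)-d_2'(\bar t)|$ pointwise on that set, which depends on the free data $y_\Omega$ and has no reason to vanish. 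So if you keep $\tilde p := \tilde p_-$ but set $\tilde a := \bar a_-$, equation \eqref{eq:adjoint-multiplier-strong} fails by exactly the term $|d_1'(\bar t)-d_2'(\bar t)|\mu_-$.

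The paper's route differs from yours in two essential respects. First, it does not use B-stationarity at all; instead it exploits the stationary identities \eqref{eq:stationary-multiplier-uv-minus}, \eqref{eq:stationary-multiplier-uv-plus} together with the constraint qualification to conclude $\tilde p_- = \tilde p_+ = -\kappa_\Omega \bar u$ a.e.\ on $\{\bar y = \bar t\}$, and then applies \cref{lem:Delta-vanishing-levelset} (vanishing of $\Delta$ on level sets) to both the state and the adjoint equations to obtain the pointwise identity $\1_{\{\bar y=\bar t\}}\tilde p_-[d_1'(\bar t)-d_2'(\bar t)] = |d_1'(\bar t)-d_2'(\bar t)|(\mu_-+\mu_+)$. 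The sign condition \eqref{eq:OCs-multiplier-sign-condition} follows immediately from $\mu_\pm\ge 0$, without eliminating $\mu_\pm$. Second --- and this is the idea you are missing --- the paper absorbs the nonzero $\mu_-$ into the coefficient: it defines $\tilde a$ on $\{\bar y=\bar t\}\cap\{\tilde p\neq 0\}$ by $\tilde a = d_1'(\bar t) + [d_1'(\bar t)-d_2'(\bar t)]\mu_-/\tilde p$, so that $\tilde a\tilde p = \bar a_-\tilde p - |d_1'(\bar t)-d_2'(\bar t)|\mu_-$ exactly cancels the extra source in \eqref{eq:adjoint-multiplier-minus}. The inclusion $\tilde a(x)\in[d_1'(\bar t),d_2'(\bar t)]$ then follows from the pointwise identity above, since $\mu_-/(\mu_-+\mu_+)\in[0,1]$. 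Your plan never arrives at this construction because it aims for the wrong target $\mu_-=0$. (A minor additional point: in step (vii) monotonicity of $d_1,d_2$ does not order $d_1'(\bar t)$ and $d_2'(\bar t)$; the paper treats both orderings by a WLOG.)
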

\begin{proof}
%\textcolor{blue}{%
We first consider the situation that $(\bar u, \bar v) =(u_b, v_b)$. By  \eqref{eq:CQ}, we have 
\begin{equation}
\label{eq:CQ-bary}
\meas_{\R^2} \left( \{ \bar y = \bar t \} \right) = 0.
\end{equation}
Thanks to \cref{thm:OCs-multiplier}, there exist $\tilde{p}_{-} \in H^1(\Omega) \cap C(\overline\Omega)$,  $\zeta_{-, \Omega} \in L^2(\Omega)$, $\zeta_{-, \Gamma} \in L^2(\Gamma)$, and $\mu_{-} \in L^2(\Omega)$ that satisfy \eqref{eq:OCs-multipliers-minus} and \eqref{eq:adjoint-multiplier-minus}.
From \eqref{eq:CQ-bary} and \eqref{eq:supp-condition-minus}, one has $\mu_{-} =0$.
%		\[
%			[d_1'(\bar t) - d_2'(\bar t)]\mu_{-} =0.
%		\]
By setting $\tilde{p} := \tilde{p}_{-}$, $\zeta_\Omega := \zeta_{-, \Omega}$,  $\zeta_{\Gamma} : = \zeta_{-, \Gamma}$, and $\tilde{a} := \bar a_{-}$, we thus derive \eqref{eq:OCs-multipliers-strong} and \eqref{eq:adjoint-multiplier-strong} from \eqref{eq:OCs-multipliers-minus} and \eqref{eq:adjoint-multiplier-minus}, respectively.
%}%

\medskip

%\textcolor{blue}{%
For the situation $(\bar u, \bar v) \neq (u_b, v_b)$, by applying \cref{thm:OCs-multiplier},	
%}%
there are $\tilde{p}_{\pm} \in H^1(\Omega) \cap C(\overline\Omega)$,  $\zeta_{\pm, \Omega} \in L^2(\Omega)$, $\zeta_{\pm, \Gamma} \in L^2(\Gamma)$, and
%\textcolor{blue}{%text}
$\mu_{\pm} \in L^2(\Omega)$ 
that satisfy \eqref{eq:OCs-multipliers-minus}, \eqref{eq:adjoint-multiplier-minus}, \eqref{eq:adjoint-multiplier-plus}, and
%the following conditions
\begin{subequations}
	\label{eq:OCs-multipliers-plus}
	\begin{align}
	& \tilde{p}_{+} + \kappa_\Omega \bar u + \zeta_{+, \Omega} =0, &&  \tilde{p}_{+} + \kappa_\Gamma \bar v + \zeta_{+, \Gamma} =0, \label{eq:stationary-multiplier-uv-plus} \\
	& \zeta_{+, \Omega}  
	\begin{cases}
	=0 & \text{a.a. in } \{ \bar u < u_b \},\\
	\geq 0 & \text{a.a. in } \{ \bar u = u_b \},
	\end{cases} &&
	\zeta_{+, \Gamma}  
	\begin{cases}
	=0 & \text{a.a. on } \{ \bar v < v_b \},\\
	\geq 0 & \text{a.a. on } \{ \bar v = v_b \},
	\end{cases}   \label{eq:complementarity-plus} \\
	%\intertext{and}
	& \mu_{+} \geq 0 \quad \text{with }    \supp(\mu_{+}) \subset \{ \bar y = \bar t \}. && \label{eq:supp-condition-plus}
\end{align}
\end{subequations}
%and
%\begin{equation} \label{eq:adjoint-multiplier-pm}
%\left \{
%\begin{aligned}
%-\Delta \tilde{p}_{\pm}+ \bar a_{\pm} \tilde{p}_{\pm} & =\bar y - y_\Omega \mp  |d_1'(\bar t) - d_2'(\bar t)| \mu_{\pm} && \text{in} \, \Omega, \\
%\frac{\partial \tilde{p}_{\pm}}{\partial \nuv}  + b(x)\tilde{p}_{\pm}&= \alpha(\bar y - y_\Gamma)&& \text{on}\, \Gamma.
%\end{aligned}
%\right.
%\end{equation} 
%Since $u_b = + \infty$ and $v_b = +\infty$, we have $\zeta_{\pm, \Omega} = 0$ a.a. in $\Omega$ and $\zeta_{\pm, \Gamma} = 0$ a.a. on $\Gamma$, on account of \eqref{eq:complementarity}. 
Combining \eqref{eq:CQ} with \eqref{eq:complementarity-minus} and \eqref{eq:complementarity-plus} yields 
\[
\zeta_{\pm,\Omega} = 0 \quad \text{a.a. in } \{ \bar y = \bar t \}.
\]
This, as well as the first identities in \eqref{eq:stationary-multiplier-uv-minus} and in \eqref{eq:stationary-multiplier-uv-plus}, gives
\[
\tilde{p}_{\pm} + \kappa_\Omega \bar u  =0 \quad \text{a.a. in } \{ \bar y = \bar t \},
\]
which yields 
\begin{equation}
\label{eq:p-pm-onlevelset}
\tilde{p}_{-} = \tilde{p}_{+} 
%\textcolor{blue}{%
 = - \kappa_\Omega \bar u
%}
 \quad \text{a.a. in } \{ \bar y = \bar t \}.
\end{equation}	
%\textcolor{blue}{%
	On the other hand, by applying \cref{lem:Delta-vanishing-levelset} to the first equation in \eqref{eq:state} and using the fact that $[\bar u - d(\bar y)] \in L^2(\Omega)$, we deduce that $\Delta \bar y = 0$ a.a. in $\{ \bar y = \bar t \}$. 
	Since the first equation in \eqref{eq:state} is fulfilled a.a. in $\Omega$ (see, e.g., \cite[Thm.~8.8]{GilbargTrudinger2021}), there holds
	\[
		\bar u = d(\bar t) \quad \text{a.a. in } \{ \bar y = \bar t \}.
	\]
Combining this with \eqref{eq:p-pm-onlevelset} yields
\[
	\tilde{p}_{-} = \tilde{p}_{+} 
		= - \kappa_\Omega d(\bar t)
	\quad \text{a.a. in } \{ \bar y = \bar t \}.
\]
We now apply \cref{lem:Delta-vanishing-levelset} to \eqref{eq:adjoint-multiplier-minus} to derive $\Delta \tilde{p}_{-} = 0$ a.a. in $\{ \tilde{p}_{-} =  - \kappa_\Omega d(\bar t) \}$ and thus in $\{ \bar y = \bar t \}$.
Since the first equation in \eqref{eq:adjoint-multiplier-minus} is satisfied a.a. in $\Omega$, there holds that
\[
	\bar a_{-} \tilde{p}_{-}  = \bar t - y_\Omega + |d_1'(\bar t) - d_2'(\bar t)| \mu_{-} \quad \text{a.a. in } \{ \bar y = \bar t \}.
\]
Similarly, by applying \cref{lem:Delta-vanishing-levelset} to \eqref{eq:adjoint-multiplier-plus}, one has
\[
	\bar a_{+} \tilde{p}_{+}  = \bar t - y_\Omega - |d_1'(\bar t) - d_2'(\bar t)| \mu_{+} \quad \text{a.a. in } \{ \bar y = \bar t \}.
\]
	Subtracting these identities and using the definition of $\bar a_{\pm}$ in \eqref{eq:a-pm-func}, we therefore derive from \eqref{eq:p-pm-onlevelset} that
%	}%
%Setting $\tilde{p} := \tilde{p}_{-} = \tilde{p}_{+}$, we obtain \eqref{eq:stationary-multiplier-uv-no-constraint}.
%Moreover, by subtracting the equations for $\tilde{p}_{-}$ in \eqref{eq:adjoint-multiplier-minus} and $\tilde{p}_{+}$ in \eqref{eq:adjoint-multiplier-plus}, respectively, and thus using the definition of $\bar a_{\pm}$ in \eqref{eq:a-pm-func}, we therefore obtain
%\begin{multline*}
%\label{eq:p-pm-pdes-subtraction}
%-\Delta (\tilde{p}_{-} - \tilde{p}_{+}) +\1_{\{ \bar y \neq \bar t \}} d'(\bar y)(\tilde{p}_{-} - \tilde{p}_{+}) + \1_{\{ \bar y = \bar t \}}\tilde{p}_{-} [d_1'(\bar t) - d_2'(\bar t)] \\
%= |d_1'(\bar t) - d_2'(\bar t)|(\mu_{-} + \mu_{+}) \quad \text{a.a. in } \Omega.
%\end{multline*}
%Combining this with \eqref{eq:p-pm-onlevelset} yields
\begin{equation}
\label{eq:p-identity-mu}
\1_{\{ \bar y = \bar t \}} \tilde{p}_{-}[d_1'(\bar t) - d_2'(\bar t)] = |d_1'(\bar t) - d_2'(\bar t)|(\mu_{-} + \mu_{+}).
\end{equation}
We now put $\tilde{p} := \tilde{p}_{-}$, $\zeta_\Omega := \zeta_{-, \Omega}$, and $\zeta_{\Gamma} : = \zeta_{-, \Gamma}$. We thus derive the conditions \eqref{eq:stationary-multiplier-uv-strong}--\eqref{eq:OCs-multiplier-sign-condition} from  \eqref{eq:stationary-multiplier-uv-minus}--\eqref{eq:supp-condition-minus} and \eqref{eq:p-identity-mu}.
%}%

\medskip

It remains to prove that there exists a function $\tilde{a} \in L^\infty(\Omega)$ satisfying \eqref{eq:Clarke-condition} and \eqref{eq:adjoint-multiplier-strong}. For this purpose, 
we consider the following two cases:

%\textcolor{blue}{%
\noindent $\bullet$ \emph{Case 1: $d_1'(\bar t) = d_2'(\bar t)$.}
In this situation, we set $\tilde{a} := d'(\bar y)$ and then deduce the desired conclusion.
%}%

%\textcolor{blue}{%
\noindent $\bullet$ \emph{Case 2: $d_1'(\bar t) \neq d_2'(\bar t)$.} Without loss of generality, we can assume that
\begin{equation}
\label{eq:deri-d1-less-d2}
d_1'(\bar t) < d_2'(\bar t),
\end{equation}
since the other case is considered analogously. 
We now set
\begin{equation*}
%\label{eq:a0-def}
a_0 := 
\left\{
\begin{aligned}
& 0 && \text{a.a. in } \Omega\backslash \{ \bar y = \bar t\}, \\
& d_1'(\bar t) && \text{a.a. in } \{ \bar y = \bar t\} \cap \{ \tilde{p} = 0 \},\\
& d_1'(\bar t) + \frac{[d_1'(\bar t) - d_2'(\bar t)] \mu_{-}}{\tilde{p}} && \text{a.a. in } \{ \bar y = \bar t\} \cap \{ \tilde{p} \neq 0 \},
\end{aligned}
\right.
\end{equation*}
and 
\begin{equation}
\label{eq:a-tilde-def}
\tilde{a} :=  \1_{\{ \bar y \neq \bar t \}}d'(\bar y) + a_0.
\end{equation}
Thanks to \eqref{eq:p-identity-mu}, there holds
\[
\1_{\{ \bar y = \bar t \}} \tilde{p} =-(\mu_{-} + \mu_{+}),
\]
which, in combination with the nonnegativity of $\mu_{\pm}$, implies that
\begin{equation}
	\label{eq:mu-pm-vanish}
	\mu_{-} = \mu_{+} = 0 \quad \text{a.a. in } \{ \bar y = \bar t \} \cap \{ \tilde{p} =0 \}.
\end{equation}
From this and the definition of $a_0$, we obtain
\[
\1_{\{ \bar y = \bar t \}} a_0 = 
\left\{
\begin{aligned}
%& 0 && \text{a.a. in } \Omega\backslash \{ \bar y = \bar t\}, \\
& d_1'(\bar t) && \text{a.a. in } \{ \bar y = \bar t\} \cap \{ \tilde{p} = 0 \},\\
& d_1'(\bar t) -  \frac{[d_1'(\bar t) - d_2'(\bar t)] \mu_{-}}{\mu_{-} + \mu_{+}} && \text{a.a. in } \{ \bar y = \bar t\} \cap \{ \tilde{p} \neq 0 \}.
\end{aligned}
\right.
\] 
This, together with the nonnegativity of $\mu_{\pm}$, gives 
\[
a_0(x) \in [d_1'(\bar t), d_2'(\bar t)] \quad \text{for a.a. } x \in \{ \bar y = \bar t \}.
\]
By combining this with the definition of $\tilde{a}$ in \eqref{eq:a-tilde-def}
%\textcolor{blue}{%
	 and the identity in \eqref{eq:Clark-subdiff-d},
%}
we deduce \eqref{eq:Clarke-condition}.
%}%

%\textcolor{blue}{%
Finally, for \eqref{eq:adjoint-multiplier-strong}, we use the definition of  $a_0$ and exploit the identities in \eqref{eq:mu-pm-vanish} to have
\begin{align*}
a_0 \tilde{p} & = \1_{\{ \bar y = \bar t \}} d_1'(\bar t) \tilde{p} + [d_1'(\bar t) - d_2'(\bar t)] \mu_{-} = \1_{\{ \bar y = \bar t \}} \bar{a}_{-} \tilde{p} - |d_1'(\bar t) - d_2'(\bar t)| \mu_{-},
\end{align*}
where we have employed the definition of $\bar{a}_{-}$ and the condition \eqref{eq:deri-d1-less-d2} to obtain the last identity. 
From this, the definition of $\bar{a}_{-}$, and \eqref{eq:a-tilde-def}, we have
\begin{equation}
\label{eq:a-tilde-p}
\tilde{a} \tilde{p} = \1_{\{ \bar y \neq \bar t \}}d'(\bar y) \tilde{p} + a_0  \tilde{p} = \bar{a}_{-} \tilde{p} - |d_1'(\bar t) - d_2'(\bar t)| \mu_{-}.
\end{equation}
Moreover, since $\tilde{p} = \tilde{p}_{-}$ satisfies \eqref{eq:adjoint-multiplier-minus}, there holds
\begin{equation*} %\label{eq:adjoint-multiplier-minus-tilde}
\left \{
\begin{aligned}
-\Delta \tilde{p}+ \bar a_{-} \tilde{p} & =\bar y - y_\Omega +  |d_1'(\bar t) - d_2'(\bar t)| \mu_{-} && \text{in} \, \Omega, \\
\frac{\partial \tilde{p}}{\partial \nuv}  + b(x)\tilde{p}&= \alpha(\bar y - y_\Gamma)&& \text{on}\, \Gamma.
\end{aligned}
\right.
\end{equation*}
Combining this with \eqref{eq:a-tilde-p} gives \eqref{eq:adjoint-multiplier-strong}.
%}%
\end{proof}

%\textcolor{blue}{%
	In the remainder of this section, we state the equivalence between the strong stationarity and B- stationarity. Its proof shall be presented in \cref{sec:OC-Bouligand}.
	\begin{theorem}[{cf. \cite[Thm.~3.8]{Betz2023}; Equivalence between B- and strong stationarity}]
		\label{thm:strong-B-equivalence}
		The following assertions hold:
		\begin{enumerate}[label=(\alph*)]
			\item \label{item:strong2B} If  there exist an adjoint state $\tilde{p} \in H^1(\Omega) \cap C(\overline\Omega)$ and a function   $\tilde{a} \in L^\infty(\Omega)$ satisfying \eqref{eq:OCs-multipliers-strong} and \eqref{eq:adjoint-multiplier-strong}, then  the following condition is fulfilled	
			%\textcolor{blue}{%text}		
			\begin{equation}
				\label{eq:B-stationarity}
				J'(\bar u, \bar v; u - \bar u, v - \bar v) \geq 0 \quad \text{for all} \quad (u, v) \in U_{ad};
			\end{equation}
		%}%
			\item \label{item:B2strong} Conversely, if $(\bar u, \bar v)$ satisfies \eqref{eq:CQ}, then  \eqref{eq:B-stationarity} implies the existence of an adjoint state $\tilde{p} \in H^1(\Omega) \cap C(\overline\Omega)$ and a function   $\tilde{a} \in L^\infty(\Omega)$ satisfying \eqref{eq:OCs-multipliers-strong} and \eqref{eq:adjoint-multiplier-strong}.
		\end{enumerate}
	\end{theorem}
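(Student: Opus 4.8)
The plan is to prove the two implications separately, leaning on the identities already set up for \cref{thm:OCs-multiplier,thm:OC-strong}. For assertion~\ref{item:strong2B}, I would fix an arbitrary $(u,v)\in U_{ad}$, put $f:=u-\bar u$, $h:=v-\bar v$, and let $z:=S'(\bar u,\bar v;f,h)$; by \cref{prop:control-to-state-app}, $z\in H^1(\Omega)\cap C(\overline\Omega)$ solves the linearized state equation $-\Delta z+d'(\bar y;z)=f$ in $\Omega$ with $\partial z/\partial\nuv+bz=h$ on $\Gamma$, where $d'(\bar y;z)$ is understood pointwise via \eqref{eq:directional-der-d-func}. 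Testing the adjoint equation \eqref{eq:adjoint-multiplier-strong} with $z$ and the linearized equation with $\tilde p$ and subtracting gives
\[
	\scalarprod{\bar y-y_\Omega}{z}_\Omega+\alpha\scalarprod{\bar y-y_\Gamma}{z}_\Gamma=\scalarprod{\tilde p}{\tilde a z-d'(\bar y;z)}_\Omega+\scalarprod{\tilde p}{f}_\Omega+\scalarprod{\tilde p}{h}_\Gamma,
\]
whence, by \eqref{eq:J-direc-der} and the two identities in \eqref{eq:stationary-multiplier-uv-strong}, $J'(\bar u,\bar v;f,h)=\scalarprod{\tilde p}{\tilde a z-d'(\bar y;z)}_\Omega-\scalarprod{\zeta_\Omega}{f}_\Omega-\scalarprod{\zeta_\Gamma}{h}_\Gamma$. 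The two control terms are nonnegative by the complementarity \eqref{eq:complementarity-strong} together with $f=u-u_b\le0$ a.e.\ on $\{\bar u=u_b\}$ and $h=v-v_b\le0$ a.e.\ on $\{\bar v=v_b\}$. For the first term I argue pointwise: by \eqref{eq:Clarke-condition} and \eqref{eq:Clark-subdiff-d} one has $\tilde a=d'(\bar y)$ a.e.\ on $\{\bar y\ne\bar t\}$, where $s\mapsto d'(\bar y;s)$ is linear, so the integrand vanishes there; on $\{\bar y=\bar t\}$ a short case distinction on the sign of $z(x)$ using \eqref{eq:directional-der-d-func} and $\tilde a(x)\in[\min\{d_1'(\bar t),d_2'(\bar t)\},\max\{d_1'(\bar t),d_2'(\bar t)\}]$ shows that $\tilde a(x)z(x)-d'(\bar t;z(x))$ carries the same sign as $d_1'(\bar t)-d_2'(\bar t)$, which by \eqref{eq:OCs-multiplier-sign-condition} is also the sign of $\tilde p(x)$; hence $\tilde p(\tilde a z-d'(\bar y;z))\ge0$ a.e. Adding the three nonnegative contributions yields \eqref{eq:B-stationarity}.

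For assertion~\ref{item:B2strong}, the plan is to reduce everything to the proof of \cref{thm:OC-strong}. I would first argue that \eqref{eq:B-stationarity} by itself --- without local optimality of $(\bar u,\bar v)$ --- already delivers the multiplier system of \cref{thm:OCs-multiplier}: in the proof of that theorem the hypothesis that $(\bar u,\bar v)$ be a local minimizer should be used only to secure the variational inequality $J'(\bar u,\bar v;u-\bar u,v-\bar v)\ge0$ for all $(u,v)\in U_{ad}$, which is then tested along the feasible left and right G\^{a}teaux sequences of \cref{lem:countable-sets} and against the operators $\bar G_\pm$ of \cref{prop:G-pm-belongto-Bouligand-diff}; since that inequality is precisely \eqref{eq:B-stationarity}, the same computation produces $\tilde p_-,\zeta_{-,\Omega},\zeta_{-,\Gamma},\mu_-$ satisfying \eqref{eq:OCs-multipliers-minus}--\eqref{eq:adjoint-multiplier-minus}, and likewise the ``$+$''-system when $(\bar u,\bar v)\ne(u_b,v_b)$. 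Granting this, the remaining steps copy the proof of \cref{thm:OC-strong} line by line: when $(\bar u,\bar v)=(u_b,v_b)$ the constraint qualification \eqref{eq:CQ} forces $\meas_{\R^2}(\{\bar y=\bar t\})=0$, so $\mu_-=0$ and $\tilde p:=\tilde p_-$, $\zeta_\Omega:=\zeta_{-,\Omega}$, $\zeta_\Gamma:=\zeta_{-,\Gamma}$, $\tilde a:=\bar a_-$ do the job; when $(\bar u,\bar v)\ne(u_b,v_b)$ one combines the ``$-$''- and ``$+$''-systems, invokes \cref{lem:Delta-vanishing-levelset} on the state and both adjoint equations to identify $\tilde p_\pm$ and $\bar a_\pm\tilde p_\pm$ on $\{\bar y=\bar t\}$, derives $\1_{\{\bar y=\bar t\}}\tilde p_-[d_1'(\bar t)-d_2'(\bar t)]=|d_1'(\bar t)-d_2'(\bar t)|(\mu_-+\mu_+)$ (hence \eqref{eq:OCs-multiplier-sign-condition}), uses \eqref{eq:CQ} with \eqref{eq:complementarity-minus} and \eqref{eq:complementarity-plus} to kill $\zeta_{\pm,\Omega}$ on $\{\bar y=\bar t\}$, and constructs $\tilde a$ from a measurable selection $a_0(x)\in[d_1'(\bar t),d_2'(\bar t)]$ so that $\tilde a\tilde p=\bar a_-\tilde p-|d_1'(\bar t)-d_2'(\bar t)|\mu_-$, which converts \eqref{eq:adjoint-multiplier-minus} into \eqref{eq:adjoint-multiplier-strong}.

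The step I expect to be the main obstacle is precisely this reduction --- verifying that the proof of \cref{thm:OCs-multiplier} nowhere exploits more than the first-order condition \eqref{eq:B-stationarity}; in particular that the limit passage producing $\bar G_\pm$ as strong--strong Bouligand generalized derivatives, and the extraction of the nonnegative multipliers $\mu_\pm$ with $\supp\mu_\pm\subset\{\bar y=\bar t\}$, use only the feasibility of the constructed G\^{a}teaux sequences and the sign of the directional derivative along them, never a comparison of objective values of the form $J(\cdot)\ge J(\bar u,\bar v)$. Should this reduction require extra work, the fallback is the argument of \cite[Thm.~3.8]{Betz2023}: first extract from \eqref{eq:B-stationarity} the weaker Clarke-type conditions --- the adjoint equation with a coefficient $\tilde a\in\partial_C d(\bar y)$ together with the complementarity relations \eqref{eq:complementarity-strong}, but without the sign condition \eqref{eq:OCs-multiplier-sign-condition} --- and then upgrade them by testing \eqref{eq:B-stationarity} against directions $(f,0)$ that are allowed to change sign on a full-measure subset of $\{\bar y=\bar t\}$, their admissibility for small step sizes being exactly what \eqref{eq:CQ} secures, since \eqref{eq:CQ} forces $\bar u<u_b$ a.e.\ on $\{\bar y=\bar t\}$; this two-sided testing is what produces \eqref{eq:OCs-multiplier-sign-condition}.
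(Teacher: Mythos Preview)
Your argument for part~\ref{item:strong2B} is correct and essentially identical to the paper's: test the adjoint and linearized equations against each other, split the remaining integral over $\{\bar y\ne\bar t\}$ and $\{\bar y=\bar t\}$, and use the sign condition \eqref{eq:OCs-multiplier-sign-condition} together with \eqref{eq:Clarke-condition} pointwise. The paper assumes $d_1'(\bar t)<d_2'(\bar t)$ without loss of generality, but your formulation handling both orderings at once is fine.

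For part~\ref{item:B2strong}, your worry is well founded: the proof of \cref{thm:OCs-multiplier} does \emph{not} rely only on \eqref{eq:B-stationarity}. The chain is \cref{prop:primal-OCs} $\Rightarrow$ \cref{cor:primal-OCs} $\Rightarrow$ \cref{thm:OCs-multiplier}, and \cref{prop:primal-OCs} explicitly invokes the local optimality of $(\bar u,\bar v)$ through the inequality $J(\bar w_{-,k}+\rho_k\psi)\ge J(\bar u,\bar v)$, a genuine comparison of objective values along a perturbed sequence, not a directional derivative at $(\bar u,\bar v)$. So the reduction you propose does not go through as stated.

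However, the paper's actual route is shorter than either your main plan or your fallback. It bypasses \cref{prop:primal-OCs} entirely and derives the conclusion of \cref{cor:primal-OCs} \emph{directly} from \eqref{eq:B-stationarity}: by \cref{lem:W-pm-sets-characterization} with $\sigma=0$ one has $S'(\bar u,\bar v;u-\bar u,v-\bar v)=\bar G_-(u-\bar u,v-\bar v)+e_-$ with $e_-\in W_-^0(\bar u,\bar v;u-\bar u,v-\bar v)$; inserting this into \eqref{eq:J-direc-der} and applying \eqref{eq:B-stationarity} gives precisely the inequality preceding \eqref{eq:primal-OCs-minus-adjoint}, and bounding $e_-$ via \cref{lem:eta-epsilon-estimate} yields \eqref{eq:primal-OCs-minus-adjoint} itself (and analogously \eqref{eq:primal-OCs-plus-adjoint} when $(\bar u,\bar v)\ne(u_b,v_b)$). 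From that point onward your outline is exactly what the paper does: rerun the convex-subdifferential argument in the proof of \cref{thm:OCs-multiplier} to obtain the $\pm$-multiplier systems, then repeat the proof of \cref{thm:OC-strong} verbatim. No separate ``upgrade'' step in the style of \cite{Betz2023} is needed.
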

%}%

\section{Control-to-state operator and its Bouligand generalized differentials} \label{sec:control2state-oper}

%Based on \crefrange{ass:data}{ass:d-func-nonsmooth}, we deduce from \cite[Thm.~4.7]{Troltzsch2010}  that there exists, for each $(u,v) \in L^2(\Omega) \times L^2(\Gamma)$, a unique solution $y_{u,v} \in H^1(\Omega) \cap C(\overline\Omega)$  to \eqref{eq:state}. This consequently defines a mapping, denoted by $S$, from $L^2(\Omega) \times L^2(\Gamma)$ to $H^1(\Omega) \cap C(\overline\Omega)$ which maps any $(u,v) \in L^2(\Omega) \times L^2(\Gamma)$ to a corresponding state $y_{u,v} \in H^1(\Omega) \cap C(\overline\Omega)$. Moreover,  there holds 
We first deduce from \cite[Thm.~4.7]{Troltzsch2010}  that
\begin{equation}
	\label{eq:apriori-esti-state}
	\norm{S(u,v)}_{H^1(\Omega)} + \norm{S(u,v)}_{C(\overline\Omega)} \leq c_\infty (\norm{u}_{L^2(\Omega)} + \norm{v}_{L^2(\Gamma)})
\end{equation}
for some constant $c_\infty >0$ independent of $u, v, d$, and $b$. Moreover, by \cite[Thm.~3.1]{Casas1993}, $S$ is weakly-to-strongly continuous, i.e., the following implication is verified:
\begin{equation}
	\label{eq:compact-control2state}
	(u_k, v_k) \rightharpoonup (u,v)  \, \text{in } L^2(\Omega) \times L^2(\Gamma)  \implies  S(u_k,v_k) \to S(u,v) \, \text{in }  H^1(\Omega) \cap C(\overline\Omega).
\end{equation}

%%%%%%%%%% Lipschitz continuity and directional differentiability of the control-to-state

\subsection{Lipschitz continuity and directional differentiability of the control-to-state mapping, and the maximum principle} \label{sec:control2state-directional-diff}

We start this subsection with the following properties on the continuity and the directional differentiability of the control-to-state operator $S$, as well as the maximum principle. The proofs of these properties are based on a standard argument as in \cite{Nhu2021} (see, also \cite{Constantin2018}), and  thus omitted.

\begin{proposition}[{cf. \cite[Prop.~3.1]{Nhu2021}}] \label{prop:control-to-state-app}
	Under \crefrange{ass:data}{ass:d-func-nonsmooth}, the control-to-state mapping $S: L^2(\Omega) \times L^2(\Gamma) \ni (u,v) \mapsto y_{u,v} \in H^1(\Omega) \cap C(\overline\Omega)$, with $y_{u,v}$ solving the state equation \eqref{eq:state} associated with $u$ and $v$, satisfies the following assertions:
	\begin{enumerate}[label=(\alph*)]
		\item \label{item:S-Lipschitz} $S$ is globally Lipschitz continuous.

		\item \label{item:S-dir-der} For any $(u,v), (f, h) \in L^2(\Omega) \times L^2(\Gamma)$, a $\delta := S'(u,v;f,h)$ exists in $H^1(\Omega) \cap C(\overline\Omega)$  and satisfies
		\begin{equation} \label{eq:weak-strong-Hadamard-app}
			\frac{S((u,v) + t_k (f_k,h_k)) - S(u,v)}{t_k} \to S'(u,v;f,h)
		\end{equation}
		strongly in $H^1(\Omega) \cap C(\overline\Omega)$
		for any  $\{(f_k,h_k)\} \subset L^2(\Omega) \times L^2(\Gamma)$ such that $(f_k,h_k) \rightharpoonup (f,h)$ in $L^2(\Omega) \times L^2(\Gamma)$ and $t_k \to 0^+$ as $k \to \infty$. Moreover, $\delta$ uniquely solves 
		\begin{equation}
			\label{eq:S-dir-der}
			\left \{
			\begin{aligned}
				-\Delta \delta + d'(y_{u,v};\delta) & = f && \text{in} \, \Omega, \\
				\frac{\partial \delta}{\partial \nuv}  + b(x) \delta &= h && \text{on}\, \Gamma
			\end{aligned}
			\right.
		\end{equation}
		with $y_{u,v} := S(u,v)$.		
		Consequently, $S$ is Hadamard directionally differentiable at any $(u,v) \in L^2(\Omega) \times L^2(\Gamma)$ in any direction $(f,h) \in L^2(\Omega) \times L^2(\Gamma)$. 

		Here and in the following, with a little abuse of notation, $d'(y; \delta)$ stands for the superposition operator.

		\item \label{item:S-max-prin} $S'(u,v;\cdot, \cdot)$ fulfills the maximum principle, i.e., 
		\[
			\left\{
			\begin{aligned}
				& f \geq 0 \quad \text{a.a. in } \, \Omega,\\
				& h \geq 0 \quad \text{a.a. on } \, \Gamma
			\end{aligned}
			\right.
			\quad \implies \quad S'(u,v;f,h) \geq 0 \quad \text{a.a. on } \, \Omega.
		\]

		\item \label{item:S-wlsc} $S'(u,v;\cdot, \cdot): L^2(\Omega) \times L^2(\Gamma) \ni (f,h) \mapsto \delta \in H^1(\Omega) \cap C(\overline\Omega)$ with $\delta$ fulfilling \eqref{eq:S-dir-der} is weakly-to-strongly continuous, i.e.,
		\begin{equation*}
			(f_k, h_k) \rightharpoonup (f,h) \quad \text{in } \, L^2(\Omega) \times L^2(\Gamma) \, \implies \, S'(u,v; f_k,h_k) \to S'(u,v;f,h)   
		\end{equation*}
		strongly in $H^1(\Omega) \cap C(\overline\Omega).$
	\end{enumerate} 
\end{proposition}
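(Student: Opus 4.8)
The plan is to establish all four assertions by the classical technique of comparing the state equation to its formal linearization, exploiting that $d$ is monotone, Lipschitz on bounded sets, and $PC^1$. First I would prove \ref{item:S-Lipschitz}. Given $(u_1,v_1),(u_2,v_2)$ with states $y_1,y_2$, subtract the two copies of \eqref{eq:state}; the difference $w:=y_1-y_2$ solves $-\Delta w + [d(y_1)-d(y_2)] = u_1-u_2$ in $\Omega$ with $\partial_\nu w + b w = v_1-v_2$ on $\Gamma$. Since $d$ is monotonically increasing, $[d(y_1)-d(y_2)](y_1-y_2)\ge 0$ pointwise, so testing with $w$ and using $b\ge b_0>0$ from \cref{ass:b-func} yields an $H^1$-coercivity estimate $\norm{w}_{H^1(\Omega)}\le C(\norm{u_1-u_2}_{L^2(\Omega)}+\norm{v_1-v_2}_{L^2(\Gamma)})$; the $C(\overline\Omega)$-bound follows from the a priori estimate \eqref{eq:apriori-esti-state} applied to the equation for $w$ (treating $d(y_1)-d(y_2)$ as a monotone nonlinearity, or equivalently absorbing it, exactly as in \cite[Thm.~4.7]{Troltzsch2010}).

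For \ref{item:S-dir-der} I would proceed in two stages. First, existence and uniqueness of $\delta$ solving \eqref{eq:S-dir-der}: the map $s\mapsto d'(y_{u,v}(x);s)$ is, by \eqref{eq:directional-der-d-func}, monotone and Lipschitz in $s$ uniformly in $x$ (its slopes lie between $\min\{d_1'(y),d_2'(y)\}$ and $\max$, which are bounded on the compact range of $y_{u,v}$), and it is positively homogeneous; hence \eqref{eq:S-dir-der} is a monotone semilinear problem admitting a unique solution in $H^1(\Omega)\cap C(\overline\Omega)$ by the same existence theory as for \eqref{eq:state}. Second, the difference-quotient convergence \eqref{eq:weak-strong-Hadamard-app}: set $w_k:=\big(S((u,v)+t_k(f_k,h_k))-S(u,v)\big)/t_k$. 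Then $w_k$ solves a linear equation $-\Delta w_k + a_k w_k = f_k$, $\partial_\nu w_k + b w_k = h_k$, where $a_k(x):=\int_0^1 d'\!\big(y_{u,v}(x)+\theta t_k(y^k-y_{u,v})(x)\big)\,d\theta$ is bounded in $L^\infty$ uniformly in $k$ (it sits in $\partial_C d$ of an intermediate point, hence is bounded by the Lipschitz constant of $d$ on the relevant compact interval, using the strong convergence $y^k\to y_{u,v}$ guaranteed by \ref{item:S-Lipschitz}). A uniform bound on $\norm{w_k}_{H^1\cap C(\overline\Omega)}$ follows, so along a subsequence $w_k\weakto w$ in $H^1(\Omega)$ and $w_k\to w$ in $C(\overline\Omega)$. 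The delicate point is passing to the limit in $a_k w_k$: one shows $a_k w_k \to d'(y_{u,v};w)$ — this is where $PC^1$-structure enters, since on $\{y_{u,v}=\bar t\}$ the value of $a_k(x)$ depends on the sign of $(y^k-y_{u,v})(x)$, i.e.\ on the sign of $w_k(x)$, which converges to $\sign(w(x))$ off the null set $\{w=0\}$ and on $\{w=0\}$ the product $a_k w_k\to 0=d'(\bar t;0)$ regardless. Combined with $f_k\weakto f$, $h_k\weakto h$, this identifies $w$ as the solution of \eqref{eq:S-dir-der}, i.e.\ $w=\delta$; uniqueness of $\delta$ then upgrades the subsequential convergence to full convergence and, since the limit equation forces $\norm{w_k}_{H^1}\to\norm{\delta}_{H^1}$, the $H^1$-convergence is strong. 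Hadamard differentiability is then immediate from the definition, taking $(f_k,h_k)\equiv(f,h)$ giving the ordinary directional derivative and the general weakly convergent sequences giving the Hadamard property.

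Assertion \ref{item:S-max-prin} follows by testing \eqref{eq:S-dir-der} with $\delta^-:=\max\{-\delta,0\}$: the monotonicity of $s\mapsto d'(y_{u,v};s)$ (so $d'(y_{u,v};\delta)\delta^-\le 0$ with the correct sign) together with $f\ge 0$, $h\ge 0$ and $b\ge b_0>0$ forces $\norm{\delta^-}_{H^1(\Omega)}=0$, hence $\delta\ge 0$ a.e. Finally \ref{item:S-wlsc}: if $(f_k,h_k)\weakto(f,h)$, the corresponding solutions $\delta_k$ of \eqref{eq:S-dir-der} are uniformly bounded in $H^1(\Omega)\cap C(\overline\Omega)$; extracting $\delta_k\weakto\delta_*$ weakly in $H^1$ and strongly in $C(\overline\Omega)$ and passing to the limit in the (now $k$-independent) coefficient structure of \eqref{eq:S-dir-der} — again using that the superposition $s\mapsto d'(y_{u,v};s)$ is continuous and that the only place it is nonlinear, the level set $\{y_{u,v}=\bar t\}$, is handled by the sign-of-$\delta$ argument as above — identifies $\delta_*$ as the unique solution for data $(f,h)$, so $\delta_*=S'(u,v;f,h)$; uniqueness gives convergence of the whole sequence, and norm convergence gives strong $H^1$-convergence.

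I expect the \emph{main obstacle} to be the limit passage in the semilinear term on the level set $\{y_{u,v}=\bar t\}$ in part \ref{item:S-dir-der} (and its reuse in \ref{item:S-wlsc}): one must show that the pointwise characteristic functions $\1_{\{y^k>y_{u,v}\}}$, which select $d_2'$ versus $d_1'$ in the linearized coefficient on that set, converge in a way compatible with the weak limit — the resolution is that $w_k\to w$ uniformly, so $\sign(w_k)\to\sign(w)$ off $\{w=0\}$, while on $\{w=0\}$ the factor $w_k$ itself kills the ambiguity, and everywhere one invokes dominated convergence against the uniform $L^\infty$-bound on the intermediate coefficients. This is exactly the $PC^1$ input that makes $S$ directionally (but not Fréchet) differentiable, and it is why the statement is credited to \cite[Prop.~3.1]{Nhu2021} rather than being a routine Nemytskii-operator argument.
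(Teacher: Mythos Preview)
Your proposal is correct and follows the standard approach that the paper itself defers to: the paper omits the proof entirely, stating only that it ``is based on a standard argument as in \cite{Nhu2021} (see, also \cite{Constantin2018}), and thus omitted.'' Your outline reproduces precisely that argument---monotonicity testing for Lipschitz continuity, difference-quotient analysis with an integral-form coefficient for directional differentiability, the $\delta^-$ test for the maximum principle, and the same limit passage for weak-to-strong continuity---and you correctly isolate the $PC^1$ sign-selection on $\{y_{u,v}=\bar t\}$ as the crux.

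One small technical caveat: the step ``$w_k\to w$ in $C(\overline\Omega)$'' does not follow from mere boundedness in $H^1(\Omega)\cap C(\overline\Omega)$ in dimension two. In the references this is handled either by first passing to the limit in $L^2(\Omega)$ (via Rellich), identifying $w$ through the PDE, and \emph{then} upgrading to strong $H^1\cap C(\overline\Omega)$ convergence via the a~priori estimate for the difference $w_k-w$, or by invoking H\"older regularity of solutions to obtain genuine $C(\overline\Omega)$-compactness. Either route closes the gap; your pointwise sign argument on the level set needs only a.e.\ convergence of $w_k$, which already follows from the $L^2$ subsequence.
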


%%% G-diff. characterization of S

Below, we have a precise characterization of the G\^{a}teaux differentiability of $S$ at some point $(u,v)$. Its proof is similar to that in  \cite[Cor.~2.3]{Constantin2018} and   thus skipped here.
\begin{proposition}[{cf. \cite[Cor.~2.3]{Constantin2018} and \cite[Cor.~3.1]{Nhu2021}}] 
	\label{pro:S-Gateaux-diff-char}
	The operator
	$S: L^2(\Omega) \times L^2(\Gamma) \to H^1(\Omega) \cap C(\overline\Omega)$ is G\^{a}teaux differentiable in some point $(u,v) \in L^2(\Omega) \times L^2(\Gamma)$ if and only if
	\begin{equation} \label{eq:G-diff-char}
		[d_1'(\bar t) - d_2'(\bar t)]\meas_{\R^2}\left( \{S(u,v)  = \bar t \} \right) = 0.
	\end{equation}
	Moreover, if \eqref{eq:G-diff-char} holds, then 
	\begin{equation}
		\label{eq:d-deri-expression}
		d'(S(u,v)(x)) = \1_{\{S(u,v) < \bar t \}}(x) d_1'(S(u,v)(x)) + \1_{\{S(u,v) > \bar t \}}(x) d_2'(S(u,v)(x))
	\end{equation}
	for a.a. $x \in \Omega$, and $z := S'(u,v)(f,h)$ with $(f,h) \in L^2(\Omega) \times L^2(\Gamma)$ uniquely solves the following equation
	\begin{equation}
		\label{eq:S-G-der}
		\left \{
		\begin{aligned}
			-\Delta z + d'(S(u,v)) z & = f && \text{in} \, \Omega, \\
			\frac{\partial z}{\partial \nuv}  + b(x) z &= h && \text{on}\, \Gamma.
		\end{aligned}
		\right.
	\end{equation}
\end{proposition}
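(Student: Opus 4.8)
The plan is to reduce Gâteaux differentiability of $S$ at $(u,v)$ to linearity of the direction map $(f,h)\mapsto S'(u,v;f,h)$, and then to show that this linearity is governed precisely by the scalar condition \eqref{eq:G-diff-char}. Write $y:=S(u,v)$ and recall from \cref{prop:control-to-state-app} that $S$ is Hadamard directionally differentiable, with $\delta:=S'(u,v;f,h)$ solving \eqref{eq:S-dir-der}. The only obstruction to linearity sits in the superposition $d'(y;\delta)$: by \eqref{eq:directional-der-d-func} this term is linear in $\delta$ on $\{y\neq\bar t\}$, whereas on $\{y=\bar t\}$ it equals $\1_{\{\delta<0\}}d_1'(\bar t)\delta+\1_{\{\delta>0\}}d_2'(\bar t)\delta$, which is linear in $\delta$ exactly when $d_1'(\bar t)=d_2'(\bar t)$. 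Since a Gâteaux-differentiable map forces the two-sided difference quotients to converge, a \emph{necessary} condition is that $S'(u,v;\cdot,\cdot)$ be odd; the argument therefore splits into producing linearity when \eqref{eq:G-diff-char} holds, and destroying oddness when it fails.

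For the sufficiency direction I would assume \eqref{eq:G-diff-char}. Then either $d_1'(\bar t)=d_2'(\bar t)$, in which case the two branches above coincide, or $\meas_{\R^2}(\{y=\bar t\})=0$, in which case the sign-dependent term is supported on a null set. In both cases the superposition reduces, almost everywhere, to $d'(y;\delta)=a\,\delta$ with the fixed coefficient $a:=\1_{\{y<\bar t\}}d_1'(y)+\1_{\{y>\bar t\}}d_2'(y)+\1_{\{y=\bar t\}}d_1'(\bar t)\in L^\infty(\Omega)$, nonnegative by monotonicity of $d$. Hence \eqref{eq:S-dir-der} collapses to the linear Robin problem \eqref{eq:S-G-der}, whose solution operator is bounded and linear since the associated bilinear form is coercive (using $b\geq b_0>0$). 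Thus $S'(u,v;\cdot,\cdot)$ is linear and bounded; combined with Hadamard directional differentiability this yields Gâteaux differentiability, the identity \eqref{eq:d-deri-expression} (valid a.e., the kink set being irrelevant under \eqref{eq:G-diff-char}), and the representation \eqref{eq:S-G-der} of $S'(u,v)$.

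For the necessity direction I argue by contraposition: suppose \eqref{eq:G-diff-char} fails, i.e.\ $d_1'(\bar t)\neq d_2'(\bar t)$ and $\meas_{\R^2}(\{y=\bar t\})>0$, and assume without loss $d_1'(\bar t)<d_2'(\bar t)$. I would test with the opposite directions $(1,0)$ and $(-1,0)$ and set $\delta^+:=S'(u,v;1,0)$, $\delta^-:=S'(u,v;-1,0)$. The maximum principle in \cref{prop:control-to-state-app} gives $\delta^+\geq 0$ and $\delta^-\leq 0$, and I would upgrade these to $\delta^+>0$, $\delta^-<0$ a.e.\ on $\Omega$ via the strong maximum principle established in this section; equivalently, applying \cref{lem:Delta-vanishing-levelset} on $\{\delta^+=0\}$ yields $\Delta\delta^+=0$ there, whence $d'(y;0)=1$, a contradiction, so $\meas(\{\delta^+=0\})=0$. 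Adding the two copies of \eqref{eq:S-dir-der} and using that on $\{y=\bar t\}$ the signs $\delta^+>0$, $\delta^-<0$ select $d_2'(\bar t)$ and $d_1'(\bar t)$ respectively, the function $w:=\delta^++\delta^-$ solves $-\Delta w+a\,w=-\1_{\{y=\bar t\}}[d_2'(\bar t)-d_1'(\bar t)]\,\delta^+$ with homogeneous Robin data. The right-hand side is $\leq 0$ and strictly negative on the positive-measure set $\{y=\bar t\}$, hence nonzero in $L^2(\Omega)$; coercivity of $-\Delta+a$ then forces $w\neq 0$, i.e.\ $S'(u,v;-1,0)\neq -S'(u,v;1,0)$. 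Consequently the two-sided difference quotient in the direction $(1,0)$ does not converge, so $S$ is not Gâteaux differentiable at $(u,v)$.

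The main obstacle is the necessity step, and specifically the claim that $\delta^\pm$ do not vanish on $\{y=\bar t\}$: without strict sign information the perturbation source $\1_{\{y=\bar t\}}[d_2'(\bar t)-d_1'(\bar t)]\delta^+$ could a priori be zero and the contradiction would collapse. This is exactly where the strong maximum principle (or, alternatively, the level-set behaviour of the Laplacian in \cref{lem:Delta-vanishing-levelset}) is indispensable, guaranteeing $\delta^+>0$ throughout $\Omega$ and hence on the whole level set. The remaining ingredients — coercivity of the Robin form, $L^\infty$-boundedness and nonnegativity of $a$, and the a.e.\ reduction of the superposition $d'(y;\cdot)$ — are routine consequences of the standing assumptions and of \eqref{eq:directional-der-d-func}.
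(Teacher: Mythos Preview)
The paper does not prove this proposition, deferring instead to \cite[Cor.~2.3]{Constantin2018}; your argument is correct and is the natural approach.

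One minor remark on the necessity step: when you appeal to ``the strong maximum principle established in this section'' to obtain $\delta^+>0$ a.e., note that \cref{lem:strong-maximum-prin} as written concerns the nonlinear operator $S$, not the linearized equation solved by $\delta^\pm$ (and it also appears after the proposition in the paper). Your alternative route via \cref{lem:Delta-vanishing-levelset} is, however, clean and self-contained: since $-\Delta\delta^+ = 1 - d'(y;\delta^+)\in L^2(\Omega)$, that lemma gives $\Delta\delta^+ = 0$ a.e.\ on $\{\delta^+=0\}$, whence the equation forces $0=1$ there, so $\meas_{\R^2}(\{\delta^+=0\})=0$. This is enough to make the source term $\1_{\{y=\bar t\}}[d_2'(\bar t)-d_1'(\bar t)]\delta^+$ nonzero in $L^2(\Omega)$ and the contradiction follows as you describe.
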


%%% Strong maximum principle

We finish this subsection with the strong maximum principle in the interior of the domain $\Omega$, which plays an important role in showing the existence of the G\^{a}teaux differentiability points of $S$ investigated in \cref{sec:Bouligand-diff} below.
\begin{lemma}
	\label{lem:strong-maximum-prin}
	%Assume that $\Omega$ has a $C^{1,1}$-boundary and that $b$ is Lipschitz continuous. 
	Let $(u_1,v_1)$ and $(u_2,v_2)$ belong to $L^2(\Omega) \times L^2(\Gamma)$ such that 
	\begin{equation}
		\label{eq:strong-maximum-condition}
		\left\{
		\begin{aligned}
			& u_1 \geq u_2 \quad \text{a.a. in } \Omega, \\
			& v_1 \geq v_2 \quad \text{a.a. on } \Gamma,\\
			& (u_1, v_1) \neq (u_2, v_2). %\meas_{\R^2}(\{u_1 \neq u_2 \}) +  \meas_{\Ha^1}(\{ v_1 \neq v_2 \}) > 0.
		\end{aligned}
		\right.
	\end{equation}
	Then there holds
	\begin{equation*}
		%\label{eq:strong-maximum-prin}
		S(u_1,v_1)(x) > S(u_2,v_2)(x) \quad \text{for all} \quad x \in \Omega.
	\end{equation*}
\end{lemma}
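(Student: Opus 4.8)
The plan is to exploit the monotonicity of $d$ to recast the difference $w := S(u_1,v_1) - S(u_2,v_2)$ as the solution of a linear elliptic equation with a nonnegative zero-order coefficient and nonnegative data, and then invoke the classical strong maximum principle of Hopf type. First I would set $y_i := S(u_i, v_i)$ for $i = 1,2$ and subtract the two state equations \eqref{eq:state}. Using the elementary identity $d(y_1) - d(y_2) = c(x)\, w(x)$ with
\[
	c(x) := \begin{cases} \dfrac{d(y_1(x)) - d(y_2(x))}{y_1(x) - y_2(x)} & \text{if } y_1(x) \neq y_2(x),\\[2ex] 0 & \text{if } y_1(x) = y_2(x),\end{cases}
\]
and observing that $c \in L^\infty(\Omega)$ with $c(x) \geq 0$ a.e.\ (because $d$ is monotonically increasing by \cref{ass:d-func-nonsmooth}), I obtain that $w \in H^1(\Omega) \cap C(\overline\Omega)$ solves
\[
	\left\{
	\begin{aligned}
		-\Delta w + c(x) w &= u_1 - u_2 && \text{in } \Omega,\\
		\frac{\partial w}{\partial \nuv} + b(x) w &= v_1 - v_2 && \text{on } \Gamma.
	\end{aligned}
	\right.
\]
By the first two lines of \eqref{eq:strong-maximum-condition} the right-hand sides are nonnegative, so the weak maximum principle (which follows from the same kind of argument underlying \cref{prop:control-to-state-app}\ref{item:S-max-prin}, applied here to a genuine linear equation) gives $w \geq 0$ a.e.\ in $\Omega$ and on $\Gamma$.

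Next I would upgrade this to strict positivity in the interior. Since $\meas_{\R^2}(\{u_1 = u_2\}) < \meas_{\R^2}(\Omega)$ or $\Ha^1(\{v_1 = v_2\}) < \Ha^1(\Gamma)$ by the third line of \eqref{eq:strong-maximum-condition} (recall the definition of $(u_1,v_1)\neq(u_2,v_2)$ from the notation section), the pair of data $(u_1-u_2, v_1-v_2)$ is nonnegative and not identically zero. Hence $w$ cannot be identically zero: if it were, the equation would force $u_1 = u_2$ a.e.\ in $\Omega$ and, via the boundary condition, $v_1 = v_2$ a.e.\ on $\Gamma$, contradicting \eqref{eq:strong-maximum-condition}. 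Now $w \geq 0$ solves $-\Delta w = (u_1 - u_2) - c(x) w$ in $\Omega$ with $c \geq 0$, i.e.\ $-\Delta w + c(x) w \geq 0$, and $w \not\equiv 0$; the strong maximum principle for supersolutions of $-\Delta + c(x)$ with $c \in L^\infty$, $c \geq 0$ (see, e.g., \cite[Thm.~8.19]{GilbargTrudinger2021}) then yields $w(x) > 0$ for all $x \in \Omega$, which is exactly the assertion $S(u_1,v_1)(x) > S(u_2,v_2)(x)$ for all $x \in \Omega$.

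The main technical obstacle is the regularity needed to apply the strong maximum principle: the classical Hopf-type statement requires $w$ to be a $W^{2,p}_{\mathrm{loc}}$ (or at least $H^1_{\mathrm{loc}}$ with interior continuity) supersolution, and one must be careful that $u_1 - u_2 \in L^2(\Omega)$ only, which in two dimensions is enough for interior $W^{2,p}_{\mathrm{loc}}$ regularity for any $p < \infty$ by elliptic $L^p$-estimates, but this step should be stated explicitly. A secondary subtlety is handling the Robin boundary condition when arguing that $w \not\equiv 0$: one must use that $b(x) \geq b_0 > 0$ (\cref{ass:b-func}) so that $w \equiv 0$ genuinely forces $v_1 = v_2$ on $\Gamma$, not merely up to an additive constant. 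Everything else — the construction of $c$, its boundedness and sign, and the weak maximum principle — is routine.
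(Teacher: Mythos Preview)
Your proposal is correct and follows essentially the same route as the paper: linearize via the difference quotient $c(x)$, obtain $w\geq 0$ by the weak maximum principle (the paper does this explicitly by testing with $w^{-}$), and then invoke \cite[Thm.~8.19]{GilbargTrudinger2021} together with the observation that $w\equiv 0$ would force $(u_1,v_1)=(u_2,v_2)$. Your regularity worry is unnecessary, incidentally: Theorem~8.19 in Gilbarg--Trudinger is stated for $H^1$ weak supersolutions, so no $W^{2,p}_{\mathrm{loc}}$ upgrade is needed.
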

\begin{proof}
	Setting $y_i := S(u_i,v_i)$, $i=1,2$ and $z := y_1 -y_2$, and then subtracting the equations for $y_i$, one has
	\begin{equation}
		\label{eq:z}
		\left \{
		\begin{aligned}
			-\Delta z + a(x)z & = u_1 -u_2 && \text{in} \, \Omega, \\
			\frac{\partial z}{\partial \nuv}  + b(x) z &= v_1 -v_2 && \text{on}\, \Gamma,
		\end{aligned}
		\right.
	\end{equation}
	where, for a.a. $x \in \Omega$,
	\[
		a(x) := \left\{
		\begin{aligned}
			&  \frac{d(y_1(x)) - d(y_2(x))}{y_1(x) - y_2(x)} && \text{if } y_1(x) \neq y_2(x),\\
			& 0 && \text{if } y_1(x) = y_2(x).
		\end{aligned}
		\right.
	\]
	From the increasing monotonicity of $d$, the continuous differentiability of $d_i$, $i=1,2$, as well as the fact that $y_1, y_2 \in C(\overline\Omega)$, there holds 
	\begin{equation}
		\label{eq:a-positive}
		0 \leq a(x) \leq C \quad \text{for all } x \in \overline \Omega
	\end{equation}
	and for some constant $C>0$ depending on $y_1, y_2$, and $d$. Testing now the equation \eqref{eq:z} for $z$ by $z^{-} := \max\{-z,0\} \in H^1(\Omega) \cap  C(\overline\Omega)$ yields
	\[
		- \int_\Omega  |\nabla z^{-}|^2 + a |z^{-}|^2 dx - \int_\Gamma  b |z^{-}|^2 d\sigma(x) = \int_\Omega (u_1 - u_2) z^{-} dx +  \int_\Gamma (v_1 -v_2) z^{-} d\sigma(x).
	\]
	Combining this with \ref{ass:b-func} and with \eqref{eq:a-positive} gives
	\[
		0 \geq - \int_\Omega  |\nabla z^{-}|^2  dx - b_0 \int_\Gamma  |z^{-}|^2 d\sigma(x) \geq \int_\Omega (u_1 - u_2) z^{-} dx +  \int_\Gamma (v_1 -v_2) z^{-} d\sigma(x) \geq 0,
	\]
	where the last inequality has been derived by using \eqref{eq:strong-maximum-condition}. This implies that $z^{-} =0$ and thus
	$z \geq 0$ a.a. in $\Omega$. Since $z \in C(\overline\Omega)$, one has 
	\begin{equation*}
		%\label{eq:z-nonnegative}
		z(x) \geq 0 \quad \text{for all} \quad x \in \overline\Omega.
	\end{equation*}	

	It remains to prove that there is no point $x \in \Omega$ such that $z(x) =0$. To this end, arguing now by contradiction, assume that $z(x_0) = 0$ for some point $x_0 \in \Omega$. Then there exists an open ball $B$ containing $x_0$ such that $ \bar B \subset \Omega$ and 
	\[
		0 = \min\limits_{x \in \bar B}z(x) = \inf\limits_{x \in \Omega}z(x).
	\] 
	The strong maximum principle; see, e.g. \cite[Thm.~8.19]{GilbargTrudinger2021}, applied to the equation \eqref{eq:z}, implies that $z$ must be constant in $\Omega$. The continuity of $z$ up to the boundary $\Gamma$ yields that $z(x) = z(x_0) = 0$ for all $x \in \overline{\Omega}$. Then \eqref{eq:z}  gives 
	\[
		u_1 = u_2 \quad \text{in } \Omega \quad \text{and} \quad v_1 - v_2 = 0 \quad \text{on }  \Gamma,
	\]
	which contradicts the last condition in \eqref{eq:strong-maximum-condition}.
	The proof is complete.
	%\qed
\end{proof}

%%% Bouligand Subdifferentials of the control-to-state operator

\subsection{Bouligand generalized differentials of the control-to-state operator} \label{sec:Bouligand-diff}

In this subsection, we will formally provide the rigorous definition of the Bouligand generalized differential of the control-to-state operator $S$.  For a mapping acting on the finite dimensional Banach spaces, its Bouligand generalized differential is defined as the set of limits of Jacobians in differentiable points; see, e.g. \cite[Def.~2.12]{Outrata1998}.
However, in infinite dimensions, since the weak and strong topologies underlying these limits are no longer equivalent, there are several ways to extend the definition of the Bouligand generalized differential. For our purpose, we define the \emph{strong-strong} one taken from \cite[Def.~3.1]{Constantin2018}, in which all the limits are understood in the strong topologies. 
For other types of the Bouligand generalized differential, we refer to  \cite[Def.~3.1]{Constantin2018}; see, also, \cite[Def.~2.10]{RaulsWachsmuth2020}.

%[Strong-strong generalized derivatives of $S$]

\begin{definition} \label{def:Bouligand-subdiff}
	We denote by $D_S$ the set of all G\^{a}teaux differentiable points of $S$, i.e.,
	\begin{multline*}
		D_S := \Big\{ (u,v) \in L^2(\Omega) \times L^2(\Gamma) \mid S: L^2(\Omega) \times L^2(\Gamma)\to H^1(\Omega) \cap C(\overline\Omega)  \\ 
	\text{is G\^{a}teaux differentiable in } (u,v) \Big\}.
	\end{multline*}
	The (strong-strong) Bouligand generalized differential of $S$ in $(u,v)$ is defined as
	\begin{multline*}
		\partial_B S(u,v) := \Big\{ G \in \Linop(L^2(\Omega) \times L^2(\Gamma), H^1(\Omega) \cap C(\overline\Omega)) \mid   \\
		\begin{aligned}
			&  \exists \{(u_k, v_k) \} \subset D_S, (u_k, v_k) \to (u,v) \, \text{in }  L^2(\Omega) \times L^2(\Gamma)\\
			&  \text{and } S'(u_k,v_k) \to G \, \text{in the strong operator topology}  \Big\}.
		\end{aligned}
	\end{multline*}
	Here, $S'$ denotes the G\^{a}teaux derivative of $S$.
\end{definition}

%%% Remark on Bouligand generalized differential

\begin{remark} \label{rem:Bouligand-diff}
	We recall that $S'(u_k,v_k) \to G$ in the strong topology if 
	\[
		S'(u_k,v_k)(f,h) \to G(f,h)
	\]
	strongly in $H^1(\Omega) \cap C(\overline\Omega)$ for all $(f,h) \in L^2(\Omega) \times L^2(\Gamma)$; see, e.g. \cite[Chap.~VI]{Dunford1988}.
	The nonemptiness of the Bouligand generalized differential of $S$ in every admissible control $(u,v) \in U_{ad}$ will be shown later in \cref{prop:G-pm-belongto-Bouligand-diff} below.
\end{remark}

We now collect some simple properties of the Bouligand generalized differential of $S$ in the following proposition, whose proof is straightforward and thus skipped. 
\begin{proposition}[{cf. \cite[Lem.~ 3.4 \& Prop.~3.5]{Constantin2018} and \cite[Prop.~2.11 (ii)]{RaulsWachsmuth2020}}]
	\label{prop:Bouligand-simple-properties}
	\begin{enumerate}[label=(\alph*)]
		\item \label{item:Boudligand-Gateaux-diff} 
		If $S$ is G\^{a}teaux differentiable in $(u,v) \in L^2(\Omega) \times L^2(\Gamma)$, then there holds $S'(u,v) \in \partial_B S(u,v)$.
		\item \label{item:Bouligand-bounded}
		There exists a constant $L>0$ such that
		\[
			\norm{G}_{\Linop(L^2(\Omega) \times L^2(\Gamma), H^1(\Omega) \cap C(\overline\Omega))} \leq L
		\]
		for all $G \in \partial_B S(u,v)$ and for all $(u,v) \in L^2(\Omega) \times L^2(\Gamma)$.
		\item Let $(u,v) \in L^2(\Omega) \times L^2(\Gamma)$ and $\{(u_k, v_k)\} \subset L^2(\Omega) \times L^2(\Gamma)$ such that $(u_k, v_k) \to (u,v)$ strongly in $L^2(\Omega) \times L^2(\Gamma)$. Suppose that for any $k \geq 1$ there exists $G_k \in  \partial_B S(u_k,v_k)$ satisfying
		\[
			G_k \to G \quad \text{in the strong operator topology}
		\]
		for some $G \in \Linop(L^2(\Omega) \times L^2(\Gamma), H^1(\Omega) \cap C(\overline\Omega))$. Then $G$ belongs to $\partial_B S(u,v)$.
	\end{enumerate}
\end{proposition}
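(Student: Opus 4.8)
The plan is to dispatch the three assertions in order, the first two being bookkeeping from the definition together with the previously established properties of $S$, and the third being a diagonal-sequence argument adapted to a non-metrizable topology.

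For part \ref{item:Boudligand-Gateaux-diff}, I would take the constant sequence $(u_k,v_k):=(u,v)$. If $S$ is G\^ateaux differentiable at $(u,v)$, then $(u,v)\in D_S$, this sequence converges (trivially) to $(u,v)$ in $L^2(\Omega)\times L^2(\Gamma)$, and $S'(u_k,v_k)=S'(u,v)$ converges to $S'(u,v)$ in the strong operator topology; \cref{def:Bouligand-subdiff} then yields $S'(u,v)\in\partial_B S(u,v)$.

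For part \ref{item:Bouligand-bounded}, the point is a uniform bound on the G\^ateaux derivative at every G\^ateaux point. By \cref{prop:control-to-state-app}\,\ref{item:S-Lipschitz}, $S$ is globally Lipschitz with some constant $L>0$; taking the constant direction sequence in \eqref{eq:weak-strong-Hadamard-app} shows that, for every $(u,v)\in D_S$ and every $(f,h)$, $S'(u,v)(f,h)$ is the $H^1(\Omega)\cap C(\overline\Omega)$-limit of the difference quotients $t^{-1}\big(S((u,v)+t(f,h))-S(u,v)\big)$, each of norm at most $L\norm{(f,h)}_{L^2(\Omega)\times L^2(\Gamma)}$; hence $\norm{S'(u,v)}_{\Linop}\le L$ for all $(u,v)\in D_S$. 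If now $G\in\partial_B S(u,v)$, choose $\{(u_k,v_k)\}\subset D_S$ with $S'(u_k,v_k)\to G$ in the strong operator topology (see \cref{rem:Bouligand-diff}); then for each $(f,h)$ one has $\norm{G(f,h)}=\lim_k\norm{S'(u_k,v_k)(f,h)}\le L\norm{(f,h)}$, so $\norm{G}_{\Linop(L^2(\Omega)\times L^2(\Gamma),H^1(\Omega)\cap C(\overline\Omega))}\le L$, with $L$ independent of $(u,v)$ and of $G$.

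The substantive assertion is the third one, where the obstruction is that the strong operator topology on $\Linop(L^2(\Omega)\times L^2(\Gamma),H^1(\Omega)\cap C(\overline\Omega))$ is not metrizable, so one cannot extract a diagonal sequence naively. I would exploit the separability of $L^2(\Omega)\times L^2(\Gamma)$: fix a countable dense set $\{(f_i,h_i)\}_{i\in\N}$. For each $k$, since $G_k\in\partial_B S(u_k,v_k)$, pick $\{(u_k^j,v_k^j)\}_j\subset D_S$ with $(u_k^j,v_k^j)\to(u_k,v_k)$ and $S'(u_k^j,v_k^j)\to G_k$ in the strong operator topology, and then select an index $j(k)$ so that $\norm{(u_k^{j(k)},v_k^{j(k)})-(u_k,v_k)}<1/k$ and $\norm{S'(u_k^{j(k)},v_k^{j(k)})(f_i,h_i)-G_k(f_i,h_i)}_{H^1(\Omega)\cap C(\overline\Omega)}<1/k$ for $i=1,\dots,k$. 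Put $(\tilde u_k,\tilde v_k):=(u_k^{j(k)},v_k^{j(k)})\in D_S$; a triangle inequality with $(u_k,v_k)\to(u,v)$ gives $(\tilde u_k,\tilde v_k)\to(u,v)$. For a dense direction $(f_i,h_i)$ and $k\ge i$ one has $\norm{S'(\tilde u_k,\tilde v_k)(f_i,h_i)-G(f_i,h_i)}\le 1/k+\norm{G_k(f_i,h_i)-G(f_i,h_i)}\to0$. For an arbitrary $(f,h)$ I would use the uniform bound $L$ from part \ref{item:Bouligand-bounded}, which applies both to $S'(\tilde u_k,\tilde v_k)$ and, by the same limiting argument, to $G$: approximating $(f,h)$ by some $(f_i,h_i)$ to within $\varepsilon/(3L)$ and splitting $S'(\tilde u_k,\tilde v_k)(f,h)-G(f,h)$ into the three standard terms bounds the two outer ones by $\varepsilon/3$ each and sends the middle one to $0$. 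Thus $S'(\tilde u_k,\tilde v_k)\to G$ in the strong operator topology, so $G\in\partial_B S(u,v)$. I expect this $\varepsilon/3$ density argument, resting on the uniform boundedness of part \ref{item:Bouligand-bounded}, to be the only delicate point; everything else is routine.
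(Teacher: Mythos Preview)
Your proof is correct; the paper itself omits the proof entirely, stating only that it is ``straightforward and thus skipped,'' so there is no argument to compare against. Your diagonal-sequence treatment of part (c), using separability of $L^2(\Omega)\times L^2(\Gamma)$ together with the uniform operator bound from part \ref{item:Bouligand-bounded}, is exactly the kind of standard argument the authors presumably had in mind.
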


%%% NonEmptiness of Bouligand generalized differential

In the remainder of this subsection, we shall prove the nonemptiness of the Bouligand generalized differential of $S$ in every point in $L^2(\Omega) \times L^2(\Gamma)$. In order to do that, we first validate the following technical lemma that particularly shows the density of the set of all  G\^{a}teaux differentiability points lying in  the admissible set $U_{ad}$, defined in \eqref{eq:admissible-set}. 
%\textcolor{blue}{%
	For any $(u, v) \in U_{ad}$ and any $\epsilon >0$, we define the following feasible functions:
	\begin{equation}
		\label{eq:uv-varepsilon}
		\left\{
		\begin{aligned}
			& u^\epsilon_{-} := u -\epsilon \1_\Omega , \quad v^\epsilon_{-} := v -\epsilon \1_\Gamma,  \\
			& u^\epsilon_{+} := \left\{
			\begin{aligned}
				& u + \epsilon(u_b - u)  && \text{if }  u_b \in L^2(\Omega), \\
				& u + \epsilon \1_\Omega  && \text{if } u_b = \infty,
			\end{aligned}
			\right.	 \\
			& 
			v^\epsilon_{+} := \left\{
			\begin{aligned}
				& v + \epsilon(v_b - v)  && \text{if } v_b \in L^2(\Gamma), \\
				& v + \epsilon \1_\Gamma && \text{if } v_b = \infty.
			\end{aligned}
			\right.
		\end{aligned}
		\right.
	\end{equation}
%}%
\begin{lemma}
	\label{lem:countable-sets}
	Let $(u,v) \in U_{ad}$ be arbitrary, but fixed
%	\textcolor{blue}{%text}
	such that 
	\begin{equation}
		\label{eq:uv-bound-assumption}
		\norm{u}_{L^2(\Omega)} + \norm{v}_{L^2(\Gamma)} \leq L
	\end{equation}
	for some constant $L>0$.
	Then, the following assertions hold:
%}%
	\begin{enumerate}[label=(\alph*)]
		\item \label{item:minus-set}
%	\textcolor{blue}{%text}	
	There is an at most countable set $I_{-}\subset (0,1)$ such that, for any $\epsilon \in (0,1) \backslash I_{-}$, the admissible control $(u^\epsilon_{-}, v^\epsilon_{-}) \in U_{ad}$ satisfies the following conditions: 
	\begin{subequations} \label{eq:vepsilon-minus}
		\begin{align}
			& \meas_{\R^2}(\{S(u^\epsilon_{-},v^\epsilon_{-})= \bar t \}) = 0, \label{eq:vepsilon-G-diff-minus} \\
			& u^\epsilon_{-} \leq u   \leq u_b \quad \text{a.a. in } \Omega,  \label{eq:uepsilon-admissible-minus} \\
			& v^\epsilon_{-} \leq v  \leq v_b \quad \text{a.a. on } \Gamma,  \label{eq:vepsilon-admissible-minus} \\
			& (u^\epsilon_{-}, v^\epsilon_{-}) \neq (u,v), \label{eq:uv-epsilon-neq-uv-minus} \\
			\intertext{and}
			& \norm{u^\epsilon_{-} - u}_{L^2(\Omega)} + \norm{v^\epsilon_{-} - v}_{L^2(\Gamma)} \leq C_{-} \epsilon \label{eq:vepsilon-density-minus}
		\end{align}
	\end{subequations}
%}%
	for some positive constant $C_{-} = C_{-}(L)>0$.
	Consequently, the set $D_S \cap U_{ad}$ is dense in $U_{ad}$ in the norm of $L^2(\Omega) \times L^2(\Gamma)$.
	
	\item  \label{item:plus-set}
%	\textcolor{blue}{%text}
	If, in addition, 	$(u,v) \neq (u_b,v_b)$, then an at most countable set $I_{+} \subset (0,1)$ exists and satisfies, for any $\epsilon \in (0,1) \backslash I_{+}$, that the admissible control $(u^\epsilon_{+}, v^\epsilon_{+}) \in U_{ad}$ fulfills the following conditions: 
	\begin{subequations} \label{eq:vepsilon-plus}
		\begin{align}
			& \meas_{\R^2}(\{S(u^\epsilon_{+},v^\epsilon_{+})= \bar t \}) = 0, \label{eq:vepsilon-G-diff-plus} \\
			&  u \leq u^\epsilon_{+} \leq u + \epsilon(u_b - u) \leq u_b \quad \text{a.a. in } \Omega,  \label{eq:uepsilon-admissible-plus} \\
			&  v \leq v^\epsilon_{+} \leq v + \epsilon(v_b - v) \leq v_b \quad \text{a.a. on } \Gamma,  \label{eq:vepsilon-admissible-plus} \\
			& (u^\epsilon_{+}, v^\epsilon_{+}) \neq (u,v), \label{eq:uv-epsilon-neq-uv-plus} \\
			\intertext{and}
			& \norm{u^\epsilon_{+} - u}_{L^2(\Omega)} + \norm{v^\epsilon_{+} - v}_{L^2(\Gamma)} \leq C_{+} \epsilon \label{eq:vepsilon-density-plus}
		\end{align}
	\end{subequations}
	for some constant $C_{+} = C_{+}(L)>0$. 
%}%
	\end{enumerate}
\end{lemma}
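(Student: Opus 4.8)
The plan is to construct, for each of the two signs, a one‑parameter family $\{(u^\epsilon_{\pm},v^\epsilon_{\pm})\}_{\epsilon\in(0,1)}$ of admissible controls that depends \emph{monotonically} on $\epsilon$ in such a way that the associated states are \emph{pointwise strictly ordered} on $\Omega$; the level sets $\{S(u^\epsilon_{\pm},v^\epsilon_{\pm})=\bar t\}$ are then pairwise disjoint subsets of $\Omega$, so at most countably many of them can have positive Lebesgue measure, and that exceptional set is the desired $I_{\pm}$. G\^ateaux differentiability at the remaining parameters is then read off from \cref{pro:S-Gateaux-diff-char}, and the five remaining displayed conditions are elementary consequences of the explicit formulas in \eqref{eq:uv-varepsilon}.

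For assertion \ref{item:minus-set} I would first fix $0<\epsilon_2<\epsilon_1<1$ and note that $u^{\epsilon_1}_{-}<u^{\epsilon_2}_{-}$ a.a.\ in $\Omega$, $v^{\epsilon_1}_{-}<v^{\epsilon_2}_{-}$ a.a.\ on $\Gamma$, and that $u^{\epsilon_2}_{-}-u^{\epsilon_1}_{-}=(\epsilon_1-\epsilon_2)\1_\Omega$ is a nonzero constant, so $(u^{\epsilon_2}_{-},v^{\epsilon_2}_{-})\neq(u^{\epsilon_1}_{-},v^{\epsilon_1}_{-})$ in the sense of the notation. \cref{lem:strong-maximum-prin} then gives $S(u^{\epsilon_2}_{-},v^{\epsilon_2}_{-})(x)>S(u^{\epsilon_1}_{-},v^{\epsilon_1}_{-})(x)$ for all $x\in\Omega$, whence the sets $A_\epsilon:=\{x\in\Omega\mid S(u^\epsilon_{-},v^\epsilon_{-})(x)=\bar t\}$, $\epsilon\in(0,1)$, are pairwise disjoint; since they lie in $\Omega$ and $\meas_{\R^2}(\Omega)<\infty$, the set $I_{-}:=\{\epsilon\in(0,1)\mid \meas_{\R^2}(A_\epsilon)>0\}$ is at most countable. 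For $\epsilon\in(0,1)\setminus I_{-}$ this yields \eqref{eq:vepsilon-G-diff-minus} (recall $\meas_{\R^2}(\Gamma)=0$, so the level set in $\Omega$ or in $\overline\Omega$ has the same measure), hence $(u^\epsilon_{-},v^\epsilon_{-})\in D_S$ by \cref{pro:S-Gateaux-diff-char}; the inclusions \eqref{eq:uepsilon-admissible-minus}--\eqref{eq:vepsilon-admissible-minus} hold because $\epsilon>0$ and $(u,v)\in U_{ad}$; \eqref{eq:uv-epsilon-neq-uv-minus} is trivial since $u^\epsilon_{-}=u-\epsilon\1_\Omega\neq u$ everywhere; and \eqref{eq:vepsilon-density-minus} holds with $C_{-}:=\meas_{\R^2}(\Omega)^{1/2}+\Ha^1(\Gamma)^{1/2}$. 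The density claim follows: given an arbitrary element of $U_{ad}$, apply the foregoing to it (with $L$ equal to its norm) and let $\epsilon\to0^+$ along the dense set $(0,1)\setminus I_{-}$.

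For assertion \ref{item:plus-set} the same scheme works once monotonicity and the ``$\neq$'' property of $\{(u^\epsilon_{+},v^\epsilon_{+})\}$ are secured. For $\epsilon_1>\epsilon_2$, either $u^{\epsilon_1}_{+}-u^{\epsilon_2}_{+}=(\epsilon_1-\epsilon_2)(u_b-u)\ge0$ with $\{u^{\epsilon_1}_{+}=u^{\epsilon_2}_{+}\}=\{u=u_b\}$ (when $u_b\in L^2(\Omega)$, using $u\le u_b$), or $u^{\epsilon_1}_{+}-u^{\epsilon_2}_{+}=(\epsilon_1-\epsilon_2)\1_\Omega>0$ (when $u_b=\infty$); the analogue holds on $\Gamma$. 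Since $(u,v)\neq(u_b,v_b)$ means $\meas_{\R^2}(\{u=u_b\})<\meas_{\R^2}(\Omega)$ or $\Ha^1(\{v=v_b\})<\Ha^1(\Gamma)$, one gets $(u^{\epsilon_1}_{+},v^{\epsilon_1}_{+})\neq(u^{\epsilon_2}_{+},v^{\epsilon_2}_{+})$ in every case, so \cref{lem:strong-maximum-prin} again gives pointwise strict ordering of the states and the countability argument produces $I_{+}$ together with \eqref{eq:vepsilon-G-diff-plus}. The chain \eqref{eq:uepsilon-admissible-plus}--\eqref{eq:vepsilon-admissible-plus} follows from $0<\epsilon<1$ and $u\le u_b$ (with the natural reading $u+\epsilon(u_b-u)=+\infty$ when $u_b=\infty$); \eqref{eq:uv-epsilon-neq-uv-plus} follows from $\{u^\epsilon_{+}=u\}=\{u=u_b\}$, $\{v^\epsilon_{+}=v\}=\{v=v_b\}$ and $(u,v)\neq(u_b,v_b)$; and \eqref{eq:vepsilon-density-plus} follows from $\norm{u^\epsilon_{+}-u}_{L^2(\Omega)}\le\epsilon(\norm{u_b}_{L^2(\Omega)}+L)$ (or $\epsilon\,\meas_{\R^2}(\Omega)^{1/2}$ when $u_b=\infty$) and the analogous bound on $\Gamma$, giving $C_{+}=C_{+}(L)$.

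I expect the only genuinely delicate step to be the applicability of \cref{lem:strong-maximum-prin} inside the $\epsilon$‑family in part \ref{item:plus-set}: in contrast with the ``minus'' case, the increments $u^{\epsilon_1}_{+}-u^{\epsilon_2}_{+}$ may vanish on $\{u=u_b\}$, so strict pointwise monotonicity of the controls can fail and one must instead verify the weaker hypothesis $(u^{\epsilon_1}_{+},v^{\epsilon_1}_{+})\neq(u^{\epsilon_2}_{+},v^{\epsilon_2}_{+})$ by the case distinction above — which is exactly where the standing assumption $(u,v)\neq(u_b,v_b)$ is used. Everything else (the disjointness/measure counting, the invocation of \cref{pro:S-Gateaux-diff-char}, and the bookkeeping of the conditions in \eqref{eq:vepsilon-minus} and \eqref{eq:vepsilon-plus}) is routine.
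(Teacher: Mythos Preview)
Your proposal is correct and follows essentially the same approach as the paper: the strict monotonicity of the states in $\epsilon$ via \cref{lem:strong-maximum-prin}, pairwise disjointness of the level sets, and the countability argument for sets of positive measure in a finite-measure space are exactly the paper's argument (the paper cites \cite[Prop.~B.12]{Leoni2017} for the last step, but your direct observation amounts to the same). Your explicit constants and the careful case distinction for $u_b\in L^2(\Omega)$ versus $u_b=\infty$ in part \ref{item:plus-set} are in fact slightly more detailed than the paper's treatment.
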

\begin{proof}
	We now divide the proof into two parts as follows.
	
	\noindent \emph{Ad \ref{item:minus-set}.}  
%	\textcolor{blue}{%text}
	By virtue of \eqref{eq:uv-varepsilon}, the relations \eqref{eq:uepsilon-admissible-minus}--\eqref{eq:vepsilon-density-minus} are fulfilled for all $\epsilon \in (0,1)$.
%}%
	We now show the existence of the set $I_{-} \subset (0,1)$, which is at most countable and fulfills \eqref{eq:vepsilon-G-diff-minus}. To this end, we first consider a family of measurable sets
	\[
		\{\Omega_\epsilon^{-} := \{ y^\epsilon_{-} = \bar t\} \backslash \Gamma\}_{0 < \epsilon < 1} \subset \Omega,
	\]
	where $y^\epsilon_{-} := S(u^\epsilon_{-},v^\epsilon_{-})$. 
	Taking now $0 < \epsilon_1 < \epsilon_2 < 1$ arbitrarily, we deduce from the definition of $u_{-}^{\epsilon}$ and $v_{-}^{\epsilon}$ that
	\begin{equation}
		\label{eq:vepsilon-neq2}
		\left\{
		\begin{aligned}
			& u_{-}^{\epsilon_1} \geq u_{-}^{\epsilon_2} \quad \text{a.a. in } \Omega,\\
			&v_{-}^{\epsilon_1} \geq v_{-}^{\epsilon_2} \quad \text{a.a. on } \Gamma, \\
			& (u_{-}^{\epsilon_1}, v_{-}^{\epsilon_1}) \neq (u_{-}^{\epsilon_2}, v_{-}^{\epsilon_2}), 
		\end{aligned}
		\right.
	\end{equation}
	which, together with \cref{lem:strong-maximum-prin}, yields 
	\[
		y^{\epsilon_1}_{-}(x) > y^{\epsilon_2}_{-}(x) \quad \text{for all} \quad  x \in \Omega.
	\]
	There then holds 
	\begin{equation*}
		\Omega_{\epsilon_1}^{-}  \cap \Omega_{\epsilon_2}^{-}  = \emptyset.
	\end{equation*}	
	Hence, $\{\Omega_\epsilon^{-} \}_{0 < \epsilon < 1}$ forms a family of disjoint measurable sets. From this and \cite[Prop.~B.12]{Leoni2017}, there exists a subset $I_{-} \subset (0,1)$, which is at most countable and satisfies
	\[
		\meas_{\R^2}(\Omega_\epsilon^{-}) = 0 \quad \text{for all} \quad \epsilon \in (0,1) \backslash I_{-}.
	\]
	Therefore, \eqref{eq:vepsilon-G-diff-minus} is fulfilled.

	Moreover, the density of the set $D_S \cap U_{ad}$ in $U_{ad}$ follows from \eqref{eq:vepsilon-G-diff-minus}, \eqref{eq:vepsilon-density-minus}, and   \cref{pro:S-Gateaux-diff-char}.
	\medskip 
	
	\noindent \emph{Ad \ref{item:plus-set}.}   
%	\textcolor{blue}{%text}
	Thanks to \eqref{eq:uv-varepsilon}, $(u^\epsilon_{+},v^\epsilon_{+})$ fulfills \eqref{eq:uepsilon-admissible-plus}, \eqref{eq:vepsilon-admissible-plus}, and \eqref{eq:vepsilon-density-plus}.
	Moreover, since $(u,v) \neq (u_b, v_b)$, the relation \eqref{eq:uv-epsilon-neq-uv-plus} is satisfied. 
%}%
	From the definition of $u^\epsilon_{+}$ and $v^\epsilon_{+}$ and due to the fact that $(u,v) \neq (u_b, v_b)$, we have
	\begin{equation*}
		\left\{
		\begin{aligned}
			& u_{+}^{\epsilon_1} \leq u_{+}^{\epsilon_2} \quad \text{a.a. in } \Omega,\\
			&v_{+}^{\epsilon_1} \leq v_{+}^{\epsilon_2} \quad \text{a.a. on } \Gamma, \\
			& (u_{+}^{\epsilon_1}, v_{+}^{\epsilon_1}) \neq (u_{+}^{\epsilon_2}, v_{+}^{\epsilon_2}) 
		\end{aligned}
		\right.
	\end{equation*}
	for $0 < \epsilon_1 < \epsilon_2 < 1$, which is analogous to \eqref{eq:vepsilon-neq2}. Therefore, the same argument showing the existence of $I_{-}$ implies that an at most countable set $I_{+}$ exists and satisfies \eqref{eq:vepsilon-G-diff-plus}.
\end{proof}

%%% Remark on I_{-}
%
%\begin{remark}
%	\label{rem:I-minus-set}
%	From the proof of \cref{lem:countable-sets}, in order to guarantee the existences of $I_{-} \subset (0,1)$ and of $(u^\epsilon_{-},v^\epsilon_{-}) \in U_{ad}$ satisfying \eqref{eq:vepsilon} we do not need the condition that $(u,v) \neq (u_b,v_b)$. 
%\end{remark}

%%% Define left and right Bouligand generalized derivates G_{\pm}

Next, we define the functions that pointwise belong to the Bouligand subdifferential of $d$ introduced in \cref{ass:d-func-nonsmooth}, and then construct the corresponding bounded linear operators, which will be shown to belong to the Bouligand generalized differential of the control-to-state mapping $S$. For any $(u,v) \in L^2(\Omega) \times L^2(\Gamma)$, define two functions $a^{u,v}_{\pm}$ over $\overline\Omega$ as follows:
\begin{align}
	& a^{u,v}_{-}(x) := \1_{\{ y_{u,v} \leq \bar t \}}(x) d_1'(y_{u,v}(x)) + \1_{\{ y_{u,v} > \bar t\}}(x) d_2'(y_{u,v}(x)) \label{eq:a-minus-func} \\
	\intertext{and}
	& a^{u,v}_{+}(x) := \1_{\{ y_{u,v} < \bar t \}}(x) d_1'(y_{u,v}(x)) + \1_{\{ y_{u,v} \geq \bar t\}}(x) d_2'(y_{u,v}(x)) \label{eq:a-plus-func}
\end{align} 
for all $x \in \overline \Omega$ with $y_{u,v} := S(u,v)$. 
%\textcolor{blue}{%
	From the definition of $\bar a_{\pm}$ in \eqref{eq:a-pm-func}, there holds
	\begin{equation}
		\label{eq:a-pm-auv-bar}
		\bar a_{\pm} = a^{\bar u,\bar v}_{\pm}.
	\end{equation}
%}%
%\textcolor{blue}{%text}
	Besides, we have for all $x \in \overline{\Omega}$ that
	\begin{equation}
		\label{eq:a-pm-identity}
		a^{u,v}_{-}(x) = a^{u,v}_{+}(x) \quad \text{whenever} \quad S(u,v)(x) \neq \bar t.
	\end{equation}
	Moreover, the combination of \eqref{eq:a-minus-func} and \eqref{eq:a-plus-func} with  \eqref{eq:d-Bouligand-subd} yields for all $x \in \overline{\Omega}$ that
	\begin{align*}
		\partial_B d(S(u,v)(x)) & = \left\{
		\begin{aligned}
			&\{ d'(S(u,v)(x)) \} && \text{if } S(u,v)(x) \neq \bar t,\\
			& \{d_1'(S(u,v)(x)), d_2'(S(u,v)(x)) \} && \text{if } S(u,v)(x) = \bar t
		\end{aligned}
		\right. \\
		& = \left\{
		\begin{aligned}
			&\{ d'_1(S(u,v)(x)) \} && \text{if } S(u,v)(x) < \bar t,\\
			&\{ d'_2(S(u,v)(x)) \} && \text{if } S(u,v)(x) > \bar t,\\
			& \{d_1'(S(u,v)(x)), d_2'(S(u,v)(x)) \} && \text{if } S(u,v)(x) = \bar t
		\end{aligned}
		\right. \\
		& = \left\{
		\begin{aligned}
			&\{ a^{u,v}_{-}(x) \} && \text{if } S(u,v)(x) < \bar t,\\
			&\{ a^{u,v}_{+}(x) \} && \text{if } S(u,v)(x) > \bar t,\\
			& \{a^{u,v}_{-}(x),a^{u,v}_{+}(x) \} && \text{if } S(u,v)(x) = \bar t.
		\end{aligned}
		\right. 
	\end{align*}
	Combining this with \eqref{eq:a-pm-identity} gives
%}%
\begin{equation}
	\label{eq:a-pm-Bouligand-d}
	 \partial_B d(S(u,v)(x)) = \{a^{u,v}_{-} (x),a^{u,v}_{+} (x) \} \quad \text{for all } x \in \overline \Omega.
\end{equation}
We then define the following operators
\begin{equation}
	\label{eq:G-pm-Bouligand-der}
	\begin{aligned}[t]
			G^{u,v}_{\pm}: L^2(\Omega) \times L^2(\Gamma) &\to H^1(\Omega) \cap C(\overline\Omega) \\
			(f,h) & \mapsto z_{\pm}, 
	\end{aligned}
\end{equation}
where $z_{\pm}$ are the unique solutions in $H^1(\Omega) \cap C(\overline\Omega)$ to linear equations
\begin{equation}
	\label{eq:G-pm-determining}
	\left \{
	\begin{aligned}
		-\Delta z_{\pm} + a^{u,v}_{\pm} z_{\pm} & = f && \text{in} \, \Omega, \\
		\frac{\partial z_{\pm}}{\partial \nuv}  + b(x) z_{\pm} &=h && \text{on}\, \Gamma.
	\end{aligned}
	\right.
\end{equation}
%\textcolor{blue}{%
	By \eqref{eq:G-pm-operator}, \eqref{eq:a-pm-auv-bar}, and  \eqref{eq:G-pm-Bouligand-der}--\eqref{eq:G-pm-determining}, there hold
	\begin{equation}
		\label{eq:G-pm-bar-Guv}
		G^{\bar u,\bar v}_{\pm} = \bar G_{\pm}.
	\end{equation}
%}%

%%% relation between directional derivative and Bouligand derivative

%\textcolor{blue}{%
	Furthermore, we can conclude from the expression of $d'(t;s)$ in \eqref{eq:directional-der-d-func}, the definition of $G^{u,v}_{\pm}$ in \eqref{eq:G-pm-Bouligand-der}--\eqref{eq:G-pm-determining} and  \cref{prop:control-to-state-app}~\ref{item:S-dir-der} \& \ref{item:S-max-prin} that
	\begin{equation}
		\label{eq:S-der-G-identity}
		S'(u, v; -f, - h) =  - G^{u,v}_{-}(f,h) \quad \text{and} \quad S'(u, v; f, h) = G^{u,v}_{+}(f,h)
	\end{equation}
	for all $(u, v), (f, h) \in L^2(\Omega) \times L^2(\Gamma)$ with $f \geq 0$ a.a. in $\Omega$ and $h \geq 0$ a.a. on $\Gamma$.
%}%

\medskip 

In \cref{prop:G-pm-belongto-Bouligand-diff} below, we can see that $G^{u,v}_{\pm}$ are elements of the Bouligand generalized differential of $S$ in $(u,v)$.
From this, \eqref{eq:a-pm-Bouligand-d}, and the definition of $G^{u,v}_{\pm}$ in \eqref{eq:G-pm-Bouligand-der}--\eqref{eq:G-pm-determining}, we call $G^{u,v}_{-}$ and $G^{u,v}_{+}$ the \emph{left} and \emph{right} Bouligand generalized derivatives, respectively.

\medskip 

We have the following result on the functions $a^{u,v}_{\pm}$.
\begin{proposition}
	\label{prop:av-pm-limit}
	For any $(u,v) \in U_{ad}$,  there holds
	\begin{equation} \label{eq:av-minus-limit}
		d'(S(u^\epsilon_{-},v^\epsilon_{-})) \to a^{u,v}_{-} \quad \text{a.a. in} \ \Omega \quad \text{as} \quad \epsilon \to 0^+.
	\end{equation}
	Furthermore, if in addition $(u,v) \neq (u_b, v_b)$, then
	\begin{equation} \label{eq:av-plus-limit}
		d'(S(u^\epsilon_{+},v^\epsilon_{+})) \to a^{u,v}_{+} \quad \text{a.a. in} \ \Omega \quad \text{as} \quad \epsilon \to 0^+.
	\end{equation}
	Here $u^\epsilon_{\pm}, v^\epsilon_{\pm}$, $\epsilon \in (0,1) \backslash I_{\pm}$, are determined via \eqref{eq:uv-varepsilon}, and $a^{u,v}_{\pm}$ are given in \eqref{eq:a-minus-func} and \eqref{eq:a-plus-func}. 
\end{proposition}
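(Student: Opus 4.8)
The plan is to reduce the claimed a.e.\ convergence to an elementary pointwise argument, combining the uniform convergence $S(u^\epsilon_{-},v^\epsilon_{-}) \to S(u,v)$ with the \emph{strict} separation produced by the strong maximum principle \cref{lem:strong-maximum-prin}. Write $y_{u,v} := S(u,v)$ and $y^\epsilon_{-} := S(u^\epsilon_{-},v^\epsilon_{-})$. First I would use the global Lipschitz continuity of $S$ from \cref{prop:control-to-state-app}~\ref{item:S-Lipschitz} together with \eqref{eq:vepsilon-density-minus} to get $\norm{y^\epsilon_{-} - y_{u,v}}_{C(\overline\Omega)} \le C\epsilon \to 0$ as $\epsilon \to 0^+$, hence $y^\epsilon_{-}(x) \to y_{u,v}(x)$ for every $x \in \overline\Omega$. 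Next I would invoke \cref{lem:strong-maximum-prin} with $(u_1,v_1) := (u,v)$ and $(u_2,v_2) := (u^\epsilon_{-},v^\epsilon_{-})$: its hypotheses \eqref{eq:strong-maximum-condition} are exactly \eqref{eq:uepsilon-admissible-minus}, \eqref{eq:vepsilon-admissible-minus} and \eqref{eq:uv-epsilon-neq-uv-minus}, so that $y_{u,v}(x) > y^\epsilon_{-}(x)$ for every $x \in \Omega$ and every $\epsilon \in (0,1)\setminus I_{-}$.

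With these two facts I would then argue pointwise. Fix $x \in \Omega$ and split into three cases according to the sign of $y_{u,v}(x) - \bar t$. If $y_{u,v}(x) < \bar t$ (resp.\ $y_{u,v}(x) > \bar t$), the convergence $y^\epsilon_{-}(x) \to y_{u,v}(x)$ forces $y^\epsilon_{-}(x) < \bar t$ (resp.\ $> \bar t$) for all small $\epsilon$, whence by the definition of $d$ in \cref{ass:d-func-nonsmooth} and the continuity of $d_1',d_2'$ we get $d'(y^\epsilon_{-}(x)) = d_1'(y^\epsilon_{-}(x)) \to d_1'(y_{u,v}(x))$ (resp.\ $d'(y^\epsilon_{-}(x)) = d_2'(y^\epsilon_{-}(x)) \to d_2'(y_{u,v}(x))$), which is precisely $a^{u,v}_{-}(x)$ by \eqref{eq:a-minus-func}. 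The decisive case is $y_{u,v}(x) = \bar t$: here the strict inequality from \cref{lem:strong-maximum-prin} gives $y^\epsilon_{-}(x) < \bar t$ for \emph{every} admissible $\epsilon$, so $d'(y^\epsilon_{-}(x)) = d_1'(y^\epsilon_{-}(x)) \to d_1'(\bar t) = d_1'(y_{u,v}(x)) = a^{u,v}_{-}(x)$, the last equality again by \eqref{eq:a-minus-func} since $y_{u,v}(x) = \bar t \le \bar t$. Thus $d'(y^\epsilon_{-}(x)) \to a^{u,v}_{-}(x)$ for \emph{every} $x \in \Omega$, which in particular yields \eqref{eq:av-minus-limit}; no null-set bookkeeping is needed.

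Finally, for \eqref{eq:av-plus-limit} I would run the mirror-image argument under the extra hypothesis $(u,v)\neq(u_b,v_b)$. Setting $y^\epsilon_{+} := S(u^\epsilon_{+},v^\epsilon_{+})$, the relations \eqref{eq:uepsilon-admissible-plus}, \eqref{eq:vepsilon-admissible-plus}, \eqref{eq:uv-epsilon-neq-uv-plus} let me apply \cref{lem:strong-maximum-prin} with $(u_1,v_1):=(u^\epsilon_{+},v^\epsilon_{+})$, $(u_2,v_2):=(u,v)$, giving $y^\epsilon_{+}(x) > y_{u,v}(x)$ for all $x \in \Omega$, while \eqref{eq:vepsilon-density-plus} and Lipschitz continuity give $y^\epsilon_{+} \to y_{u,v}$ in $C(\overline\Omega)$. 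On $\{y_{u,v}=\bar t\}$ one now has $y^\epsilon_{+}(x) > \bar t$, hence $d'(y^\epsilon_{+}(x)) = d_2'(y^\epsilon_{+}(x)) \to d_2'(\bar t) = a^{u,v}_{+}(x)$ by \eqref{eq:a-plus-func}; off that set the two remaining cases are handled exactly as above. The only genuinely delicate point in the whole proof is the case $y_{u,v}(x)=\bar t$, where it is essential that the one-sided perturbations keep $y^\epsilon_{\mp}$ \emph{strictly} on the intended side of $\bar t$ --- which is exactly what \cref{lem:strong-maximum-prin} (applied via the strict control inequalities built into \eqref{eq:uv-varepsilon}) provides; everything else is routine continuity.
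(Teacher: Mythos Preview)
Your proof is correct and uses the same key ingredients as the paper's own argument: the Lipschitz continuity of $S$ (giving $y^\epsilon_{\pm} \to y_{u,v}$ in $C(\overline\Omega)$) together with the strict monotonicity from \cref{lem:strong-maximum-prin} (forcing $y^\epsilon_{-}(x) < y_{u,v}(x)$, resp.\ $y^\epsilon_{+}(x) > y_{u,v}(x)$, on all of $\Omega$). The paper packages the same idea as an estimate of the differences $\1_{\{y < \bar t\}}d_1'(y) - \1_{\{y_\epsilon < \bar t\}}d_1'(y_\epsilon)$ and $\1_{\{y \ge \bar t\}}d_2'(y) - \1_{\{y_\epsilon > \bar t\}}d_2'(y_\epsilon)$, bounding the indicator-function mismatch via $\1_{\{\bar t - C\epsilon \le y < \bar t\}} \to 0$; your three-case pointwise split is a cleaner and more direct rendering of the same mechanism, and in fact yields convergence at \emph{every} $x \in \Omega$ without needing to invoke the measure-zero property \eqref{eq:vepsilon-G-diff-minus}/\eqref{eq:vepsilon-G-diff-plus} or the formula \eqref{eq:d-deri-expression}.
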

\begin{proof}
	We only show \eqref{eq:av-plus-limit} since the argument for \eqref{eq:av-minus-limit} is analyzed in a similar way. 
	%with noting that we do not need  the assumption that $(u,v) \neq (u_b,v_b)$, due to  \cref{rem:I-minus-set}.
	From \cref{lem:countable-sets}, $(u^\epsilon_{+}, v^\epsilon_{+})$ satisfies \eqref{eq:vepsilon-plus} for all $\epsilon \in (0,1) \backslash I_{+}$. This, in combination with \cref{pro:S-Gateaux-diff-char}, yields  that $y_\epsilon \neq \bar t$ a.a. in $\Omega$ with $y_\epsilon := S(u^\epsilon_{+},v^\epsilon_{+})$  and that
	\[
		d'(S(u^\epsilon_{+}, v^\epsilon_{+})) = \1_{\{y_\epsilon < \bar t \}} d_1'(y_\epsilon) + \1_{\{y_\epsilon > \bar t \}} d_2'(y_\epsilon).
	\]
	Moreover, by setting $y := S(u,v)$ and using \eqref{eq:a-plus-func}, there holds
	\[
		a^{u,v}_{+} =  \1_{\{y < \bar t \}} d_1'(y) + \1_{\{y \geq \bar t \}} d_2'(y) 
	\]
	a.a. in $\Omega$.
	On the other hand, we deduce from \cref{prop:control-to-state-app} \ref{item:S-Lipschitz} and \eqref{eq:vepsilon-density-plus} that
	\begin{equation}
		\label{eq:y-epsilon-y-distance}
		\norm{y_\epsilon - y}_{H^1(\Omega)} + \norm{y_\epsilon - y}_{C(\overline\Omega)} \leq C \epsilon
	\end{equation}
	for some positive constant $C$ independent of $\epsilon$.  Furthermore, by using \eqref{eq:uepsilon-admissible-plus} and \eqref{eq:vepsilon-admissible-plus}, and exploiting \cref{lem:strong-maximum-prin}, one has
	\begin{equation}
		\label{eq:y-epsilon-greater-y}
		y_\epsilon(x) > y(x) \quad \text{for all} \quad x \in \Omega.
	\end{equation}
	We therefore have from \eqref{eq:y-epsilon-y-distance} and \eqref{eq:y-epsilon-greater-y} that
	\begin{multline*}
		\left|\1_{\{y < \bar t \}} d_1'(y) - \1_{\{y_\epsilon < \bar t \}} d_1'(y_\epsilon) \right| \\
		\begin{aligned}
			& = \left| \1_{\{y < \bar t \}}[d_1'(y) - d_1'(y_\epsilon)] + d_1'(y_\epsilon)[\1_{\{y < \bar t \}} -\1_{\{y_\epsilon < \bar t \}}] \right | \\
			& \leq \1_{\{y < \bar t \}}|d_1'(y) - d_1'(y_\epsilon)| + d_1'(y_\epsilon) \1_{\{\bar t - (y_\epsilon - y) \leq y < \bar t\}} \\
			& \leq \1_{\{y < \bar t \}}|d_1'(y) - d_1'(y_\epsilon)| + d_1'(y_\epsilon) \1_{\{\bar t - C \epsilon \leq y < \bar t\}} \\
			& \to 0, 
		\end{aligned}
	\end{multline*}
	a.a. in $\Omega$. 
	Similarly, one has 
	\begin{multline*}
		\left| \1_{\{y \geq \bar t \}} d_2'(y) - \1_{\{y_\epsilon > \bar t \}} d_2'(y_\epsilon) \right| \\
		\begin{aligned}
			& \leq \1_{\{y \geq \bar t \}}|d_2'(y) - d_2'(y_\epsilon)| + d_2'(y_\epsilon)| \1_{\{y \geq \bar t \}} - \1_{\{y_\epsilon > \bar t \}}| \\
			& \leq \1_{\{y \geq \bar t \}}|d_2'(y) - d_2'(y_\epsilon)|  + d_2'(y_\epsilon)|- \1_{\{ \bar t - (y_\epsilon - y) \leq y < \bar t \}} |\\
			& \leq \1_{\{y \geq \bar t \}}|d_2'(y) - d_2'(y_\epsilon)|  + d_2'(y_\epsilon)|- 
	%		\textcolor{blue}{%text}
			\1_{\{ \bar t - C \epsilon \leq y < \bar  t \}} 
	%	}%
			| \\
			& \to 0
		\end{aligned}
	\end{multline*}
	a.a. in $\Omega$.
	We therefore obtain \eqref{eq:av-plus-limit}.
	%\qed
\end{proof}

The following statement, whose proof is based on \cref{lem:countable-sets} and \cref{prop:av-pm-limit}, shows that both $G^{u,v}_{\pm}$ belong to $\partial_B S(u,v)$ for any admissible control $(u,v)$ and the Bouligand generalized differential of the control-to-state operator $S$ is thus always nonempty.
%This result will play an important role in showing the optimality conditions in terms of Bouligand generalized differential of $S$.
\begin{proposition}
	\label{prop:G-pm-belongto-Bouligand-diff}
	For any $(u,v) \in U_{ad}$, there hold:
	\begin{enumerate}[label=(\alph*)]
		\item \label{item:G-minus} The following limit is valid 
		\begin{equation*}
			%\label{eq:limit-S-der-vepsilon-minus}
			S'(u_{-}^\epsilon, v_{-}^\epsilon) \to G^{u,v}_{-}
		\end{equation*}
		in the strong operator topology of $\Linop(L^2(\Omega) \times L^2(\Gamma), H^1(\Omega) \cap C(\overline\Omega))$,
		and so $G^{u,v}_{-} \in \partial_B S(u,v)$.
		\item \label{item:G-plus} If, in addition, $(u,v) \neq (u_b,v_b)$, then 
		\begin{equation}
			\label{eq:limit-S-der-vepsilon-plus}
			S'(u_{+}^\epsilon, v_{+}^\epsilon) \to G^{u,v}_{+} 
		\end{equation}
		{in the strong operator topology of} $\Linop(L^2(\Omega)\times L^2(\Gamma), H^1(\Omega) \cap C(\overline\Omega))$,
		and thus $G^{u,v}_{+} \in \partial_B S(u,v)$.
	\end{enumerate}
	Consequently, $\partial_B S(u,v) \neq \emptyset$ for all $(u,v) \in U_{ad}$.
	Here $u^\epsilon_{\pm}$ and $v^\epsilon_{\pm}$, $\epsilon \in (0,1) \backslash I_{\pm}$, are determined as in \eqref{eq:uv-varepsilon}.  
\end{proposition}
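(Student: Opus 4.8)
The plan is to exhibit $G^{u,v}_{\pm}$ as a strong-operator limit of the G\^ateaux derivatives $S'(u^\epsilon_{\pm},v^\epsilon_{\pm})$ along the sequences built in \cref{lem:countable-sets}, and then read off membership in $\partial_B S(u,v)$ from \cref{def:Bouligand-subdiff}. Fix $(u,v)\in U_{ad}$, and for part \ref{item:G-plus} assume in addition $(u,v)\neq(u_b,v_b)$. By \cref{lem:countable-sets}, for every $\epsilon\in(0,1)\setminus I_{\pm}$ the control $(u^\epsilon_{\pm},v^\epsilon_{\pm})$ lies in $U_{ad}$, satisfies $\meas_{\R^2}(\{S(u^\epsilon_{\pm},v^\epsilon_{\pm})=\bar t\})=0$, and hence, by \cref{pro:S-Gateaux-diff-char}, belongs to $D_S$, with $S'(u^\epsilon_{\pm},v^\epsilon_{\pm})(f,h)$ the unique $H^1(\Omega)\cap C(\overline\Omega)$-solution of \eqref{eq:S-G-der} for the coefficient $d'(S(u^\epsilon_{\pm},v^\epsilon_{\pm}))$; moreover $(u^\epsilon_{\pm},v^\epsilon_{\pm})\to(u,v)$ in $L^2(\Omega)\times L^2(\Gamma)$ as $\epsilon\to0^+$ by \eqref{eq:vepsilon-density-minus}/\eqref{eq:vepsilon-density-plus}. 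It therefore suffices to prove, for each fixed $(f,h)\in L^2(\Omega)\times L^2(\Gamma)$, that $S'(u^\epsilon_{\pm},v^\epsilon_{\pm})(f,h)\to G^{u,v}_{\pm}(f,h)$ strongly in $H^1(\Omega)\cap C(\overline\Omega)$; the assertions \ref{item:G-minus}, \ref{item:G-plus} then follow by picking any sequence $\epsilon_n\downarrow0$ with $\epsilon_n\notin I_{\pm}$ (possible as $I_{\pm}$ is at most countable) and applying \cref{def:Bouligand-subdiff}, and the final nonemptiness statement is immediate.

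For the limit, set $z^\epsilon:=S'(u^\epsilon_{\pm},v^\epsilon_{\pm})(f,h)$ and $z:=G^{u,v}_{\pm}(f,h)$, solving the linear problems in \eqref{eq:S-G-der} and \eqref{eq:G-pm-determining} respectively. Subtracting, $z^\epsilon-z$ solves
\begin{equation*}
	\left\{
	\begin{aligned}
		-\Delta(z^\epsilon-z) + d'(S(u^\epsilon_{\pm},v^\epsilon_{\pm}))(z^\epsilon-z) &= \big(a^{u,v}_{\pm} - d'(S(u^\epsilon_{\pm},v^\epsilon_{\pm}))\big)z && \text{in }\Omega,\\
		\frac{\partial(z^\epsilon-z)}{\partial\nuv} + b(x)(z^\epsilon-z) &= 0 && \text{on }\Gamma.
	\end{aligned}
	\right.
\end{equation*}
By \cref{prop:control-to-state-app}~\ref{item:S-Lipschitz} together with \eqref{eq:vepsilon-density-minus}/\eqref{eq:vepsilon-density-plus}, the states $S(u^\epsilon_{\pm},v^\epsilon_{\pm})$ are bounded in $C(\overline\Omega)$ uniformly in $\epsilon$, so they take values in a fixed compact interval on which $d_1'$ and $d_2'$ are bounded; hence the coefficients $d'(S(u^\epsilon_{\pm},v^\epsilon_{\pm}))$ and $a^{u,v}_{\pm}$ are nonnegative and bounded in $L^\infty(\Omega)$ by a constant independent of $\epsilon$. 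Since $z\in C(\overline\Omega)\subset L^2(\Omega)$ is fixed and $d'(S(u^\epsilon_{\pm},v^\epsilon_{\pm}))\to a^{u,v}_{\pm}$ a.e.\ in $\Omega$ by \cref{prop:av-pm-limit}, dominated convergence gives $\big(a^{u,v}_{\pm}-d'(S(u^\epsilon_{\pm},v^\epsilon_{\pm}))\big)z\to0$ in $L^2(\Omega)$. Applying the a priori estimate for this linear equation (cf.\ \eqref{eq:apriori-esti-state} and \cite[Thm.~4.7]{Troltzsch2010}, which holds uniformly over nonnegative $L^\infty$-coefficients) yields
\begin{equation*}
	\norm{z^\epsilon-z}_{H^1(\Omega)} + \norm{z^\epsilon-z}_{C(\overline\Omega)} \le c_\infty\,\norm{\big(a^{u,v}_{\pm}-d'(S(u^\epsilon_{\pm},v^\epsilon_{\pm}))\big)z}_{L^2(\Omega)} \to 0 .
\end{equation*}
As $(f,h)$ was arbitrary, $S'(u^\epsilon_{\pm},v^\epsilon_{\pm})\to G^{u,v}_{\pm}$ in the strong operator topology (in the sense of \cref{rem:Bouligand-diff}), which completes the argument.

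The only genuine point of care is uniformity in $\epsilon$: this rests on the uniform $C(\overline\Omega)$-bound for the states $S(u^\epsilon_{\pm},v^\epsilon_{\pm})$ — coming from the global Lipschitz continuity of $S$ and the $O(\epsilon)$ size of the control perturbations — which simultaneously confines the coefficients to a compact range (so that \cref{prop:av-pm-limit} can be upgraded via dominated convergence to $L^2$-convergence of the right-hand side) and makes the constant in the linear a priori estimate independent of $\epsilon$. Notably, no weak-compactness/limit-passage in a nonlinear term is required here, since the equations for $z^\epsilon$ and $z$ are already linear with common principal part; if one prefers, an equivalent route is to extract a weak $H^1$-limit, identify it through the weak formulation using $H^1(\Omega)\hookrightarrow\hookrightarrow L^2(\Omega)$, and then test the difference equation with $z^\epsilon-z$ to obtain the strong convergence, but the direct estimate above is shorter.
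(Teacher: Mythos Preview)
Your proof is correct and follows essentially the same approach as the paper. The only cosmetic difference lies in how you subtract the two linear equations: you keep the $\epsilon$-dependent coefficient $d'(S(u^\epsilon_{\pm},v^\epsilon_{\pm}))$ on the left and put the fixed function $z$ on the right, whereas the paper keeps the fixed coefficient $a^{u,v}_{\pm}$ on the left and puts $z^\epsilon_{\pm}$ on the right (so that $g_\epsilon = z^\epsilon_{\pm}\,[a^{u,v}_{\pm}-d'(y_\epsilon)]$). Both rearrangements rely on the same ingredients---the a.e.\ convergence from \cref{prop:av-pm-limit}, a uniform $L^\infty$-bound on the coefficients, and the fact that the constant $c_\infty$ in \eqref{eq:apriori-esti-state} does not depend on the nonnegative zeroth-order coefficient---so neither route is materially simpler than the other.
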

\begin{proof}
	We prove \ref{item:G-plus} only. The argument showing \ref{item:G-minus} is analyzed similarly.
	It suffices to show \eqref{eq:limit-S-der-vepsilon-plus}.
	For that purpose, we now fix $(f,h) \in L^2(\Omega) \times L^2(\Gamma)$ and put
	$z^\epsilon_{+} := S'(u^\epsilon_{+}, v_{+}^\epsilon)(f,h)$ and $z_{+}:= G^{u,v}_{+}(f,h)$. 
	Subtracting the equations  for $z^\epsilon_{+}$ and $z_{+}$ (see \eqref{eq:S-G-der} and \eqref{eq:G-pm-determining}, respectively) yields
	\begin{equation} \label{eq:omega-epsilon}
		\left \{
		\begin{aligned}
			-\Delta \omega_\epsilon + a^{u,v}_{+} \omega_\epsilon & = g_\epsilon && \text{in} \, \Omega, \\
			\frac{\partial \omega_\epsilon}{\partial \nuv}  + b(x) \omega_\epsilon &= 0&& \text{on}\, \Gamma,
		\end{aligned}
		\right.
	\end{equation}
	where $\omega_\epsilon := z^\epsilon_{+} - z_{+}$ and 
	\[
		g_\epsilon := z^\epsilon_{+}\left[ \left(\1_{\{y < \bar t \}} d_1'(y) - \1_{\{y_\epsilon < \bar t \}} d_1'(y_\epsilon) \right) +  \left(\1_{\{y \geq \bar t \}} d_2'(y) - \1_{\{y_\epsilon > \bar t \}} d_2'(y_\epsilon) \right) \right]
	\]
	with $y := S(u,v)$ and $y_\epsilon := S(u^\epsilon_{+},v^\epsilon_{+})$. Here, in order to express $g_\epsilon$ as above, we have used \eqref{eq:a-plus-func} for $y_{u,v} := y$ and \eqref{eq:d-deri-expression}  for $S(u,v) := y_\epsilon$.
	Combining \cref{prop:av-pm-limit} with the definition of $g_\epsilon$  and with the bound of $\{z^\epsilon_{+}\}$ in $C(\overline\Omega)$, we conclude from Lebesgue's dominated convergence theorem that
	\[
		g_\epsilon \to 0 \quad \text{strongly in} \quad L^p(\Omega)
	\] 
	for any $p>1$. By applying now the Stampacchia Theorem \cite[Thm.~12.4]{Chipot2009} and using the a prior estimate for \eqref{eq:omega-epsilon}, there holds
	\[
		\norm{\omega_\epsilon}_{H^1(\Omega)} + \norm{\omega_\epsilon}_{C(\overline\Omega)} \to 0.
	\]
	This implies \eqref{eq:limit-S-der-vepsilon-plus} since $(f,h)$ is arbitrarily chosen in $L^2(\Omega) \times L^2(\Gamma)$.
	%\qed
\end{proof}

%%%% The necessarity of Bouligand generalized derivatives

We now address a necessity for an operator in $\Linop(L^2(\Omega) \times L^2(\Gamma), H^1(\Omega) \cap C(\overline\Omega))$ to be an element of the Bouligand generalized differential of $S$.
\begin{proposition}
	\label{prop:Bouligand-necessity}
	Let $(u,v) \in L^2(\Omega) \times L^2(\Gamma)$ be arbitrary, but fixed. Then, for any $G \in \partial_B S(u,v)$, there exists a unique $a_G \in L^\infty(\Omega)$ such that $G$ is identical to solution operator to the following equation
	\begin{equation} \label{eq:Bouligand-element-identity}
		\left \{
		\begin{aligned}
			-\Delta z + a_G z & = f && \text{in} \, \Omega, \\
			\frac{\partial z}{\partial \nuv}  + b(x) z &= h&& \text{on}\, \Gamma,
		\end{aligned}
		\right.
	\end{equation}
	i.e., for any $(f,h) \in L^2(\Omega) \times L^2(\Gamma)$, $z:= G(f,h)$ uniquely solves \eqref{eq:Bouligand-element-identity}. Moreover, there holds
	\begin{equation}
		\label{eq:aG-coefficient}
		a_G(x) \in [a^{u,v}_{-}(x), a^{u,v}_{+}(x)] \quad \text{for a.a. } x \in \Omega.
	\end{equation}
	%Here, with a little abuse of notation, for $s, t \in \R$, the symbol $[s,t]$ stands for the bounded and closed interval of $\R$ with end points $s$ and $t$. 
\end{proposition}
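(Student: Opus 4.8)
The plan is to show first that any $G \in \partial_B S(u,v)$ has the claimed PDE form, and then to localize the coefficient into the interval $[a^{u,v}_-, a^{u,v}_+]$ pointwise a.e. By the definition of the strong-strong Bouligand generalized differential, there is a sequence $\{(u_k,v_k)\} \subset D_S$ with $(u_k,v_k) \to (u,v)$ in $L^2(\Omega)\times L^2(\Gamma)$ and $S'(u_k,v_k) \to G$ in the strong operator topology. By \cref{pro:S-Gateaux-diff-char}, each $S'(u_k,v_k)$ is the solution operator to \eqref{eq:S-G-der} with coefficient $a_k := d'(S(u_k,v_k))$. By the $a\,priori$ estimate \eqref{eq:apriori-esti-state} and \eqref{eq:uv-bound-assumption}-type bounds (here just boundedness of $\{(u_k,v_k)\}$ in $L^2\times L^2$, which holds since the sequence converges), the states $S(u_k,v_k)$ are uniformly bounded in $C(\overline\Omega)$; since $d_1',d_2'$ are continuous, $\{a_k\}$ is uniformly bounded in $L^\infty(\Omega)$, say by a constant $M$, and $a_k \geq 0$ by monotonicity of $d$. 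Hence, passing to a subsequence, $a_k \weakto a_G$ weakly-$*$ in $L^\infty(\Omega)$ for some $a_G$ with $0 \le a_G \le M$ a.e.

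The first substantive step is to identify $G$ as the solution operator to \eqref{eq:Bouligand-element-identity} with this $a_G$. Fix $(f,h)$, set $z_k := S'(u_k,v_k)(f,h)$ and $z := G(f,h)$, so $z_k \to z$ in $H^1(\Omega)\cap C(\overline\Omega)$ by the strong-operator convergence. Each $z_k$ solves $-\Delta z_k + a_k z_k = f$, $\partial_\nu z_k + b z_k = h$. In the weak formulation, the term $\int_\Omega a_k z_k \varphi\,dx$ splits as $\int_\Omega a_k(z_k-z)\varphi\,dx + \int_\Omega a_k z\varphi\,dx$; the first integral vanishes because $\|a_k\|_{L^\infty}$ is bounded and $z_k \to z$ in $C(\overline\Omega)$, while the second converges to $\int_\Omega a_G z\varphi\,dx$ by weak-$*$ convergence of $a_k$ (note $z\varphi \in L^1(\Omega)$). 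Passing to the limit in the weak formulation shows $z$ solves \eqref{eq:Bouligand-element-identity} with coefficient $a_G$, and uniqueness of $a_G$ follows from the standard observation that $a_G z = f + \Delta z$ is determined by $G$ on the dense set of $z$'s of the form $G(f,h)$, together with unique continuation / the fact that such $z$ cannot vanish on a set of positive measure while being nonconstant (one can pick $(f,h)$ with $f>0$, $h\geq 0$ so that, by the maximum principle, $z>0$ on $\Omega$, and then $a_G = (f+\Delta z)/z$ a.e.).

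The main obstacle — and the heart of the proposition — is the inclusion \eqref{eq:aG-coefficient}. The idea is that each $a_k(x) = d'(S(u_k,v_k)(x))$ lies in $\partial_B d(S(u_k,v_k)(x)) \subseteq \partial_C d(S(u_k,v_k)(x))$, and since $S(u_k,v_k) \to S(u,v)$ uniformly and $\partial_C d$ is upper semicontinuous with values the intervals described by \eqref{eq:Clark-subdiff-d}, one expects $a_k(x)$ to be "asymptotically trapped" in $[\,\min\partial_B d(S(u,v)(x)),\ \max\partial_B d(S(u,v)(x))\,] = [a^{u,v}_-(x), a^{u,v}_+(x)]$ (using \eqref{eq:a-pm-Bouligand-d}). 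Concretely: on the open set $\{S(u,v) < \bar t\}$ we have $S(u_k,v_k) < \bar t$ for $k$ large on any compact subset (by uniform convergence), so $a_k = d_1'(S(u_k,v_k)) \to d_1'(S(u,v)) = a^{u,v}_-(x) = a^{u,v}_+(x)$; similarly on $\{S(u,v) > \bar t\}$ the limit is $d_2'(S(u,v))$. On the level set $\{S(u,v) = \bar t\}$ one does not get pointwise convergence, but $a_k(x)$ is eventually within $\epsilon$ of the interval $[\min(d_1'(\bar t),d_2'(\bar t)), \max(d_1'(\bar t),d_2'(\bar t))] = [a^{u,v}_-(x),a^{u,v}_+(x)]$ because $d_1',d_2'$ are continuous and $S(u_k,v_k)(x)$ is near $\bar t$. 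To convert this "eventual almost-membership" of $a_k$ into membership of the weak-$*$ limit $a_G$, I would test against nonnegative $\varphi$ supported in $\{S(u,v)=\bar t\}$ (resp. where $a^{u,v}_-$ or $a^{u,v}_+$ is the active endpoint) and pass to the limit in the inequalities $a^{u,v}_-(x) - o(1) \le a_k(x) \le a^{u,v}_+(x) + o(1)$, using that a weak-$*$ limit preserves a.e. inequalities of the form $\ell(x) \le a_k(x) \le m(x)$ when $\ell,m$ are fixed $L^\infty$ functions and the $o(1)$ is uniform on the relevant set. This yields $a^{u,v}_-(x) \le a_G(x) \le a^{u,v}_+(x)$ a.e., completing the proof. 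The delicate point to handle carefully is the uniformity of the $o(1)$ error on the level set $\{S(u,v)=\bar t\}$, which follows from the uniform ($C(\overline\Omega)$) convergence $S(u_k,v_k)\to S(u,v)$ together with the uniform continuity of $d_1',d_2'$ on a neighborhood of $\bar t$.
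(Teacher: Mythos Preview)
Your proposal is correct and follows essentially the same route as the paper: extract a weak-$*$ limit $a_G$ of $a_k := d'(S(u_k,v_k))$ in $L^\infty(\Omega)$, pass to the limit in the weak formulation of \eqref{eq:S-G-der} using the splitting $a_k z_k = a_k(z_k-z) + a_k z$, and then pin down the pointwise location of $a_G$.

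The only notable difference is in the last step. The paper simply invokes \cite[Chap.~I, Thm.~3.14]{Tiba1990} (a general result stating that if $y_k \to y$ uniformly and $a_k(x) \in \partial_C d(y_k(x))$ with $a_k \overset{*}{\rightharpoonup} a_G$, then $a_G(x) \in \partial_C d(y(x))$ a.e.), and then reads off \eqref{eq:aG-coefficient} from the explicit form \eqref{eq:Clark-subdiff-d} of $\partial_C d$. You instead give a direct, hands-on argument: pointwise convergence $a_k \to d'(y)$ on $\{y \neq \bar t\}$ by uniform convergence of $y_k$ and continuity of $d_1',d_2'$, and on $\{y = \bar t\}$ the uniform sandwich $a_k(x) \in [\min(d_1'(\bar t),d_2'(\bar t)) - o(1),\ \max(d_1'(\bar t),d_2'(\bar t)) + o(1)]$, which survives weak-$*$ limits after testing against nonnegative $\varphi$. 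Your argument is more elementary and self-contained; the paper's is shorter but relies on an external black box. Both yield the same conclusion. As a bonus, your treatment of uniqueness of $a_G$ (choosing $f>0$, $h\ge 0$ so that $z=G(f,h)>0$ in $\Omega$ by the maximum principle, whence $a_G = (f+\Delta z)/z$ is determined) is explicit, whereas the paper's proof does not spell this out.
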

\begin{proof}
	Assume now that $G \in \partial_B S(u,v)$. By definition, there exists a sequence $\{(u_k,v_k)\} \subset L^2(\Omega) \times L^2(\Gamma)$ such that  $(u_k,v_k) \in D_S$, $(u_k, v_k) \to (u,v)$ in $L^2(\Omega) \times L^2(\Gamma)$, and
	\begin{equation}
		\label{eq:S-der-vk-to-G}
		S'(u_k,v_k) \to G 
	\end{equation}
	in the strong operator topology in $\Linop(L^2(\Omega) \times L^2(\Gamma), H^1(\Omega) \cap C(\overline\Omega))$.
	
	Fix $(f,h) \in L^2(\Omega) \times L^2(\Gamma)$ and set $z:= G(f,h)$, and $z_k := S'(u_k,v_k)(f,h)$ for all $k \geq 1$.  Owing to \cref{pro:S-Gateaux-diff-char}, $z_k$ satisfies the equation
	\begin{equation}
		\label{eq:zk-pde}
		\left \{
		\begin{aligned}
			-\Delta z_k + d'(y_k)  z_k & = f && \text{in} \, \Omega, \\
			\frac{\partial z_k}{\partial \nuv}  + b(x) z_k &= h && \text{on}\, \Gamma,
		\end{aligned}
		\right.
	\end{equation}
	with $y_k := S(u_k,v_k)$, $k \geq 1$. Since $(u_k, v_k) \to (u,v)$ in $L^2(\Omega) \times L^2(\Gamma)$ and $S$ is weakly-to-strongly continuous as a mapping from $L^2(\Omega) \times L^2(\Gamma)$ to $H^1(\Omega) \cap C(\overline\Omega)$ (see \eqref{eq:compact-control2state}), there holds
	\begin{equation}
		\label{eq:yk-to-y}
		y_k \to y := S(u,v) \quad \text{strongly in} \quad H^1(\Omega) \cap C(\overline\Omega).
	\end{equation}
	From this and the continuity of $d_1'$ and $d_2'$, $\{d'(y_k)\}$ is bounded in $L^\infty(\Omega)$. There then exists a function $a_G \in L^\infty(\Omega)$ such that
	\begin{equation}
		\label{eq:aG-weakstar-limit}
		d'(y_k) \overset{*}{\rightharpoonup} a_G \quad \text{in} \quad L^\infty(\Omega).
	\end{equation}
	Combining this with the limit \eqref{eq:yk-to-y}, we derive from  \cite[Chap.~I, Thm.~3.14]{Tiba1990} that $a_G(x) \in \partial_C d( y(x))$ for a.a. $x \in \Omega$. From this, the definitions of $a^{u,v}_{\pm}$ in \eqref{eq:a-minus-func} and \eqref{eq:a-plus-func}, as well as the identity \eqref{eq:Clark-subdiff-d}, we have \eqref{eq:aG-coefficient}. Moreover, by letting $k \to \infty$ in \eqref{eq:zk-pde} and using the limits \eqref{eq:S-der-vk-to-G} and \eqref{eq:aG-weakstar-limit}, we finally derive \eqref{eq:Bouligand-element-identity}.
	%\qed
\end{proof}

%%% A Corollary on Bouligand generalized differential

The following result is a direct consequence of \cref{pro:S-Gateaux-diff-char,prop:G-pm-belongto-Bouligand-diff,prop:Bouligand-necessity} in the case where $S$ is G\^{a}teaux differentiable at an element $(u,v)$ in $L^2(\Omega) \times L^2(\Gamma)$.
\begin{corollary}
	\label{cor:Bouligand-Gateaux}
	If $S$ is G\^{a}teaux differentiable at $(u,v) \in L^2(\Omega) \times L^2(\Gamma)$, then 
	\[
		\partial_B S(u,v) = \{S'(u,v) \} = \{G^{u,v}_{-}\} = \{G^{u,v}_{+}\}.
	\]
\end{corollary}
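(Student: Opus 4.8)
The plan is to reduce the statement to the single observation that Gâteaux differentiability at $(u,v)$ forces the two coefficients $a^{u,v}_{-}$ and $a^{u,v}_{+}$ to agree almost everywhere with $d'(S(u,v))$. First I would invoke \cref{pro:S-Gateaux-diff-char}: Gâteaux differentiability of $S$ at $(u,v)$ is equivalent to $[d_1'(\bar t)-d_2'(\bar t)]\meas_{\R^2}(\{S(u,v)=\bar t\})=0$, so either $d_1'(\bar t)=d_2'(\bar t)$, or the level set $\{S(u,v)=\bar t\}$ is Lebesgue-null. By the definitions \eqref{eq:a-minus-func} and \eqref{eq:a-plus-func}, the functions $a^{u,v}_{-}$ and $a^{u,v}_{+}$ can differ only on $\{S(u,v)=\bar t\}$, where they equal $d_1'(\bar t)$ and $d_2'(\bar t)$ respectively; in either of the two cases above this discrepancy is irrelevant, so $a^{u,v}_{-}=a^{u,v}_{+}$ a.a. in $\Omega$. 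Moreover \eqref{eq:d-deri-expression} gives $d'(S(u,v))=\1_{\{S(u,v)<\bar t\}}d_1'(S(u,v))+\1_{\{S(u,v)>\bar t\}}d_2'(S(u,v))$ a.e., which, again up to the negligible level set, coincides with $a^{u,v}_{-}$. Hence $a^{u,v}_{-}=a^{u,v}_{+}=d'(S(u,v))$ a.a. in $\Omega$.

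With this identification in hand, for any $(f,h)\in L^2(\Omega)\times L^2(\Gamma)$ the linear boundary value problems \eqref{eq:S-G-der} (which defines $S'(u,v)(f,h)$) and \eqref{eq:G-pm-determining} (which defines $G^{u,v}_{\pm}(f,h)$) share the same coefficient, the same right-hand side and the same boundary data; by uniqueness of solutions in $H^1(\Omega)\cap C(\overline\Omega)$ I obtain $S'(u,v)=G^{u,v}_{-}=G^{u,v}_{+}$ as elements of $\Linop(L^2(\Omega)\times L^2(\Gamma),H^1(\Omega)\cap C(\overline\Omega))$. Since $S$ is Gâteaux differentiable at $(u,v)$, \cref{prop:Bouligand-simple-properties}~\ref{item:Boudligand-Gateaux-diff} gives $S'(u,v)\in\partial_BS(u,v)$, and therefore $\{S'(u,v)\}=\{G^{u,v}_{-}\}=\{G^{u,v}_{+}\}\subset\partial_BS(u,v)$.

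For the reverse inclusion, let $G\in\partial_BS(u,v)$ be arbitrary. By \cref{prop:Bouligand-necessity} there is a unique $a_G\in L^\infty(\Omega)$ with $a_G(x)\in[a^{u,v}_{-}(x),a^{u,v}_{+}(x)]$ for a.a. $x\in\Omega$ such that $G$ is the solution operator of \eqref{eq:Bouligand-element-identity} with coefficient $a_G$. Since $a^{u,v}_{-}=a^{u,v}_{+}$ a.e., this interval degenerates to a single point a.e., forcing $a_G=a^{u,v}_{-}=d'(S(u,v))$ a.a. in $\Omega$; hence $G$ solves exactly the same equation as $S'(u,v)$, so $G=S'(u,v)$. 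This yields $\partial_BS(u,v)=\{S'(u,v)\}=\{G^{u,v}_{-}\}=\{G^{u,v}_{+}\}$. I do not expect a genuine obstacle here: the result is a bookkeeping consequence of \cref{pro:S-Gateaux-diff-char}, the explicit descriptions \eqref{eq:a-minus-func}--\eqref{eq:G-pm-determining}, and \cref{prop:Bouligand-necessity}; the only point requiring care is the use of the equivalence in \cref{pro:S-Gateaux-diff-char} to ensure that the level set $\{S(u,v)=\bar t\}$ is null even when $d_1'(\bar t)\neq d_2'(\bar t)$, which is what legitimizes all the "a.e." identifications above.
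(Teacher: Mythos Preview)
Your proof is correct and follows essentially the same route as the paper's: use \cref{pro:S-Gateaux-diff-char} to force $a^{u,v}_{-}=a^{u,v}_{+}$ a.e., identify $S'(u,v)$ with $G^{u,v}_{\pm}$ via the coinciding PDEs, and then invoke \cref{prop:Bouligand-necessity} for the reverse inclusion. Your treatment is in fact slightly more careful than the paper's, which asserts $S(u,v)\neq\bar t$ a.e.\ directly from \cref{pro:S-Gateaux-diff-char} without separating the case $d_1'(\bar t)=d_2'(\bar t)$; your explicit case split handles this cleanly.
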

\begin{proof}
	Assume that $S$ is G\^{a}teaux differentiable at $(u,v) \in L^2(\Omega) \times L^2(\Gamma)$, then there obviously holds $\{S'(u,v) \} \subset \partial_B S (u,v) $. Moreover, we have $S(u,v)(x) \neq \bar t$ for a.a. $x \in \Omega$, on account of \cref{pro:S-Gateaux-diff-char}. From this and \eqref{eq:a-pm-identity}, there holds 
	$a^{u,v}_{-}(x) = a^{u,v}_{+}(x)$ for a.a. $x \in \Omega$,
	which, in combination with \cref{prop:G-pm-belongto-Bouligand-diff,prop:Bouligand-necessity}, yields the desired conclusion.	
	%\qed
\end{proof}

%%% Adjoint operator of Bouligand generalized derivatives.

We finish this section with explicitly determining the adjoint operator of any Bouligand generalized 
%\textcolor{blue}{%text}
derivative of 
%}%
the control-to-state mapping $S$. Let $G \in \partial_B S(u,v)$ be defined as in \eqref{eq:Bouligand-element-identity}. Since $H^1(\Omega) \cap C(\overline\Omega) \hookrightarrow L^2(\Omega)$, $G$ can be considered as a bounded and linear mapping from $L^2(\Omega) \times L^2(\Gamma)$ to $L^2(\Omega)$. Its adjoint operator $G^*$ defined on $L^2(\Omega)$ to $L^2(\Omega) \times L^2(\Gamma)$ is thus determined as follows:
\begin{equation}
	\label{eq:G-adjoint}
	\begin{aligned}[t]
		G^* : L^2(\Omega) & \to L^2(\Omega) \times L^2(\Gamma) \\
		\phi & \mapsto (\zeta, \gamma(\zeta)),
	\end{aligned}
\end{equation}
where $\zeta$ is the unique solution in $H^1(\Omega) \cap C(\overline\Omega)$ to the equation
\begin{equation*} %\label{eq:adjoint-pde}
	\left \{
	\begin{aligned}
		-\Delta \zeta + a_G \zeta & = \phi && \text{in} \, \Omega, \\
		\frac{\partial \zeta}{\partial \nuv}  + b(x) \zeta &= 0&& \text{on}\, \Gamma
	\end{aligned}
	\right.
\end{equation*}
and $\gamma$ stands for the trace operator.

%%%% Optimality conditions in terms of Bouligand generalized differential

%\section{Optimality conditions in terms of Bouligand generalized differential} \label{sec:OC-Bouligand}
\section{Proofs of main results} \label{sec:OC-Bouligand}

\subsection{Auxiliary results} \label{sec:auxiliaryresults}

Let us begin this subsection with stating some simple properties of the cost functional introduced in \eqref{eq:objective-func}.
\begin{lemma}
	\label{lem:objective-func-difference-G-der}
	There hold:
	\begin{enumerate}[label=(\alph*)]
		\item \label{item:difference}
		For any $(u_1,v_1), (u_2,v_2) \in L^2(\Omega) \times L^2(\Gamma)$, 
		\begin{multline}
			\label{eq:objective-difference}
			J(u_1,v_1) - J(u_2,v_2) = \frac{1}{2} \norm{y_1 - y_2}_{L^2(\Omega)}^2 + \frac{\alpha}{2}\norm{y_1 - y_2}^2_{L^2(\Gamma)}  \\
			\begin{aligned}[b]
				& + \frac{\kappa_\Omega}{2} \norm{u_1 -u_2}_{L^2(\Omega)}^2 + \frac{\kappa_\Gamma}{2}\norm{v_1 -v_2}_{L^2(\Gamma)}^2 + \scalarprod{y_2 - y_\Omega}{y_1 -y_2}_{\Omega}  \\
				& + \alpha \scalarprod{y_2 - y_\Gamma}{y_1 -y_2}_{\Gamma} + \kappa_\Omega \scalarprod{u_2}{u_1 -u_2}_{\Omega} + \kappa_\Gamma \scalarprod{v_2}{v_1 -v_2}_{\Gamma}
			\end{aligned}
		\end{multline}
		with $y_i :=S(u_i,v_i)$, $i=1,2$.
		
		\item \label{item:obj-G-der}
		If, in addition, $S$ is G\^{a}teaux differentiable at $(u,v)$, then the cost functional $J$ is also G\^{a}teaux differentiable at $(u,v)$ and its derivative is given as
		\begin{equation*}
			%\label{eq:objective-func-deri}
			J'(u,v)(f,h) = \scalarprod{\varphi+ \kappa_\Omega u}{f}_{\Omega} + \scalarprod{\varphi + \kappa_\Gamma v}{h}_{\Gamma}, \quad (f,h) \in L^2(\Omega) \times L^2(\Gamma),
		\end{equation*}
		where $\varphi \in H^1(\Omega) \cap C(\overline\Omega)$ uniquely solves the following equation
		\begin{equation*} %\label{eq:adjoint-pde-objective}
			\left \{
			\begin{aligned}
				-\Delta \varphi + d'(S(u,v)) \varphi & = S(u,v) - y_\Omega && \text{in} \, \Omega, \\
				\frac{\partial \varphi}{\partial \nuv}  + b(x) \varphi &=  \alpha (S(u,v) - y_\Gamma)&& \text{on}\, \Gamma.
			\end{aligned}
			\right.
		\end{equation*}
		%Here $\scalarprod{\cdot}{\cdot}_{\Omega}$ and  $\scalarprod{\cdot}{\cdot}_{\Gamma}$, respectively, denote the scalar products on $L^2(\Omega)$ and $L^2(\Gamma)$.
	\end{enumerate}
\end{lemma}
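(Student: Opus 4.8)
\emph{Part \ref{item:difference}: a purely algebraic identity.} The plan is to expand each of the four quadratic terms in \eqref{eq:objective-func} by the elementary inner-product identity $\tfrac12\norm{a-c}_X^2-\tfrac12\norm{b-c}_X^2=\tfrac12\norm{a-b}_X^2+\scalarprod{b-c}{a-b}_X$, valid for any $a,b,c$ in an inner-product space $X$. I would use it with $X=L^2(\Omega)$ and $(a,b,c)=(y_1,y_2,y_\Omega)$; with $X=L^2(\Gamma)$, $(a,b,c)=(y_1,y_2,y_\Gamma)$ and weight $\alpha$; with $X=L^2(\Omega)$, $(a,b,c)=(u_1,u_2,0)$ and weight $\kappa_\Omega$; and with $X=L^2(\Gamma)$, $(a,b,c)=(v_1,v_2,0)$ and weight $\kappa_\Gamma$. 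Summing the four resulting equalities and inserting the definition of $J$ yields \eqref{eq:objective-difference}; no analysis enters here, and nothing about $S$ is needed beyond $y_i=S(u_i,v_i)$.

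\emph{Part \ref{item:obj-G-der}: the derivative via a difference quotient.} Fix $(f,h)\in L^2(\Omega)\times L^2(\Gamma)$, write $y:=S(u,v)$ and $y_t:=S((u,v)+t(f,h))$, and apply \eqref{eq:objective-difference} with $(u_1,v_1)=(u,v)+t(f,h)$ and $(u_2,v_2)=(u,v)$. Dividing by $t$ gives
\begin{multline*}
	\frac{J((u,v)+t(f,h))-J(u,v)}{t}=\frac{1}{2t}\norm{y_t-y}_{L^2(\Omega)}^2+\frac{\alpha}{2t}\norm{y_t-y}_{L^2(\Gamma)}^2+\frac{t\kappa_\Omega}{2}\norm{f}_{L^2(\Omega)}^2+\frac{t\kappa_\Gamma}{2}\norm{h}_{L^2(\Gamma)}^2\\
	+\scalarprod{y-y_\Omega}{\tfrac{y_t-y}{t}}_\Omega+\alpha\scalarprod{y-y_\Gamma}{\tfrac{y_t-y}{t}}_\Gamma+\kappa_\Omega\scalarprod{u}{f}_\Omega+\kappa_\Gamma\scalarprod{v}{h}_\Gamma.
\end{multline*}
By \cref{prop:control-to-state-app}\ref{item:S-dir-der} and the standing assumption that $S$ is G\^{a}teaux differentiable at $(u,v)$ (so its directional derivative there is linear in the direction), $\tfrac{y_t-y}{t}\to z:=S'(u,v)(f,h)$ strongly in $H^1(\Omega)\cap C(\overline\Omega)$ as $t\to0$, the one-sided limit from the proposition together with linearity giving the two-sided one. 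The Lipschitz continuity of $S$ (\cref{prop:control-to-state-app}\ref{item:S-Lipschitz}) yields $\norm{y_t-y}_{L^2(\Omega)}+\norm{y_t-y}_{L^2(\Gamma)}\le C\norm{y_t-y}_{C(\overline\Omega)}\le Ct(\norm{f}_{L^2(\Omega)}+\norm{h}_{L^2(\Gamma)})$, so the first two terms on the right are $O(t)$ and, with the two $t$-linear control terms, vanish as $t\to0$. Passing to the limit shows $J$ is G\^{a}teaux differentiable at $(u,v)$ with $J'(u,v)(f,h)=\scalarprod{y-y_\Omega}{z}_\Omega+\alpha\scalarprod{y-y_\Gamma}{z}_\Gamma+\kappa_\Omega\scalarprod{u}{f}_\Omega+\kappa_\Gamma\scalarprod{v}{h}_\Gamma$, the map $(f,h)\mapsto z$ being bounded and linear by \cref{pro:S-Gateaux-diff-char}.

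\emph{Rewriting with the adjoint state.} It remains to pass from $z$ to $\varphi$. First, $\meas_{\R^2}(\{S(u,v)=\bar t\})=0$ by \cref{pro:S-Gateaux-diff-char}, so $d'(S(u,v))$ is a.e.\ defined, and $0\le d'(S(u,v))\in L^\infty(\Omega)$ by the monotonicity of $d$; since moreover $b\ge b_0>0$, the bilinear form of the $\varphi$-equation is coercive on $H^1(\Omega)$, so $\varphi$ exists and is unique, and $\varphi\in C(\overline\Omega)$ by the Stampacchia regularity theorem \cite[Thm.~12.4]{Chipot2009}, exactly as in the proof of \cref{prop:G-pm-belongto-Bouligand-diff}. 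Testing the weak form of \eqref{eq:S-G-der} (for $z$) with $\varphi$ and the weak form of the $\varphi$-equation with $z$ produces the same bilinear quantity $\int_\Omega\nabla z\cdot\nabla\varphi\dx+\int_\Omega d'(S(u,v))\,z\varphi\dx+\int_\Gamma b\,z\varphi\,\mathrm{d}\sigma(x)$ on both left-hand sides, so the right-hand sides coincide: $\scalarprod{y-y_\Omega}{z}_\Omega+\alpha\scalarprod{y-y_\Gamma}{z}_\Gamma=\scalarprod{\varphi}{f}_\Omega+\scalarprod{\varphi}{h}_\Gamma$. Substituting this into the formula for $J'(u,v)(f,h)$ and regrouping the $f$- and $h$-terms gives $J'(u,v)(f,h)=\scalarprod{\varphi+\kappa_\Omega u}{f}_\Omega+\scalarprod{\varphi+\kappa_\Gamma v}{h}_\Gamma$, as asserted.

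\emph{Where the (mild) difficulty lies.} Nothing here is deep. The only steps needing care are (i) identifying $\lim_{t\to0}\tfrac{y_t-y}{t}$ with the bounded linear operator $(f,h)\mapsto z=S'(u,v)(f,h)$, which is precisely \cref{prop:control-to-state-app}\ref{item:S-dir-der} under the G\^{a}teaux hypothesis, and (ii) checking that $d'(S(u,v))$ is a legitimate bounded nonnegative coefficient, so that the adjoint equation is well posed, which \cref{pro:S-Gateaux-diff-char} and the monotonicity of $d$ guarantee. The remaining ingredients are the polarization identity and one ``integrate by parts twice'' computation.
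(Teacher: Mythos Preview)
Your proof is correct and is exactly the standard argument the paper alludes to when it writes ``The proof is straightforward and standard'' without giving details: the polarization identity for part~\ref{item:difference}, passage to the limit in the difference quotient via \cref{prop:control-to-state-app} for the derivative, and the symmetric testing of the linearized and adjoint equations to introduce~$\varphi$. One tiny imprecision: \cref{pro:S-Gateaux-diff-char} only gives $[d_1'(\bar t)-d_2'(\bar t)]\meas_{\R^2}(\{S(u,v)=\bar t\})=0$, not necessarily $\meas_{\R^2}(\{S(u,v)=\bar t\})=0$; but in the remaining case $d_1'(\bar t)=d_2'(\bar t)$ the function $d$ is differentiable at $\bar t$ as well, so $d'(S(u,v))$ is still defined everywhere and your argument goes through unchanged.
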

\begin{proof}
	The proof is straightforward and standard.
	%\qed
\end{proof}

\medskip

By applying now \cref{lem:countable-sets} for the situation that $(u,v):= (\bar u, \bar v)$, 
%\textcolor{blue}{%
	there then exists a sequence $\{\epsilon_k\} \subset (0,1)$ such that  
	\begin{equation}
		\label{eq:epsilonk-ukvk-minus}
		\left\{
		\begin{aligned}
			& \epsilon_k \to 0^{+} \quad \text{as} \quad k \to \infty, \\
			& (\bar u^{\epsilon_k}_{-},\bar v^{\epsilon_k}_{-}) \in U_{ad} \cap D_S \, \text{satisfies \eqref{eq:vepsilon-minus}}, \\
			& (\bar u^{\epsilon_k}_{+},\bar v^{\epsilon_k}_{+}) \in U_{ad} \cap D_S \, \text{satisfies \eqref{eq:vepsilon-plus}, provided that } (\bar u, \bar v) \neq (u_b, v_b),
		\end{aligned}
		\right.
	\end{equation}
	where $u^{\epsilon_k}_{\pm}$ and $v^{\epsilon_k}_{\pm}$ are defined as in  \eqref{eq:uv-varepsilon}.
	For any nonnegative number $\sigma \geq 0$, we now choose a sequence $\{ \rho_{k} \} \subset (0, \infty)$ satisfying
	\begin{equation}
		\label{eq:rho-k-def}
		\rho_{k} \to 0^+ \quad \text{and} \quad \frac{\epsilon_k}{\rho_{k}} \to \sigma.
	\end{equation}
	Such a sequence can be chosen, for example, as follows:
	\[
	\left\{ 
	\begin{aligned}
		& \rho_k = \sqrt{\epsilon_k}, \quad k \geq 1 && \text{if } \sigma =0, \\
		& \rho_{k} = \frac{1}{\sigma} \epsilon_k, \quad k \geq 1 && \text{if } \sigma >0.
	\end{aligned}
	\right.
	\]
%}%
%\textcolor{blue}{%
	To simplify the notation, from now on, for any $(f,h) \in L^2(\Omega) \times L^2(\Gamma)$ and any $k \geq 1$, we use the following abbreviations
	%For simplicity of the exposition, 
	%we leave  the letters $f$ and $h$ out of $\bar \eta^{\epsilon_k, \rho_k; f, h}_{\pm}$, i.e., 
	%we now use the symbols $\bar \eta_{\pm, k}$ instead of $\bar \eta^{\epsilon_k, \rho_k; f, h}_{\pm}$. Moreover, we simply  set 
	\begin{equation}
		\label{eq:abbreviations-k}
		\left\{
		\begin{aligned}
			& \bar u_{\pm, k} := \bar u^{\epsilon_k}_{\pm}, \quad \bar v_{\pm, k} := \bar v^{\epsilon_k}_{\pm}, \quad \bar w_{\pm, k} := (\bar u^{\epsilon_k}_{\pm}, \bar v^{\epsilon_k}_{\pm}), \quad \bar y_{\pm, k} := S(\bar u^{\epsilon_k}_{\pm}, \bar v^{\epsilon_k}_{\pm}  ), \\
			& \bar y_{\pm, k}^{f,h} := S(\bar u^{\epsilon_k}_{\pm} + \rho_k f, \bar v^{\epsilon_k}_{\pm}  + \rho_k h), \quad \bar z_{\pm, k} := S'(\bar u_{\pm, k}, \bar v_{\pm, k})(f,h), \\
			& \bar\eta_{\pm, k}^{f,h} := S(\bar u_{\pm, k} + \rho_k f, \bar v_{\pm,k}  + \rho_k h)  - S(\bar u_{\pm,k}, \bar v_{\pm,k}  )  - \rho_k \bar z_{\pm, k}.
			% S'(\bar u_{\pm,k}, \bar v_{\pm,k})(f,h).
			%\bar \eta^{\epsilon_k, \rho_k; f, h}_{\pm}. 
		\end{aligned}
		\right.
	\end{equation}
%}%

We now introduce the following sets in $H^1(\Omega)$ 
%\textcolor{blue}{%
	(depending on $(f,h)$ and $\sigma$):
%}%
\begin{multline}
	\label{eq:W-pm-sets}
	%\textcolor{blue}{%
	W_{\pm}^\sigma(\bar u, \bar v;f, h)  
%}%
	:=  \{ e  \mid e \, \text{is a weak accumulation in $H^1(\Omega)$ of $\frac{1}{\rho_k}\bar\eta_{\pm, k}^{f,h}$}
		 \}.
\end{multline}

\medskip 

Thanks to \cref{prop:control-to-state-app}~\ref{item:S-Lipschitz}, \eqref{eq:vepsilon-density-minus}, and \eqref{eq:vepsilon-density-plus}, we arrive at the following estimates, 
%\textcolor{blue}{%
	which help us to prove the nonemptiness of $W_{\pm}^\sigma(\bar u, \bar v;f, h)$.
%}%

\begin{lemma} \label{lem:distance-y-epsilon-rho}
	%\textcolor{blue}{
		Assume that \eqref{eq:uv-bound-assumption} is fulfilled for $(u, v) :=(\bar u, \bar v)$.
	%}%
	Then, there exists a constant $C := C(L) >0$ independent of $(\bar u, \bar v)$, $k$, $\sigma$, $f$, and $h$ such that
%	\[
%	\norm{ S(\bar u^\epsilon_{-}, \bar v^\epsilon_{-}) - S(\bar u, \bar v)}_{H^1(\Omega)}  + \norm{S(\bar u^\epsilon_{-}, \bar v^\epsilon_{-}) - S(\bar u, \bar v)}_{C(\overline\Omega)} \leq C \epsilon
%	\]
	\[
		\norm{ \bar y_{-, k} - \bar y}_{H^1(\Omega)}  + \norm{\bar y_{-, k} - \bar y}_{C(\overline\Omega)} \leq C \epsilon_k
	\]
	and
	\[
		\norm{ \bar y_{-, k}^{f,h} - \bar y_{-, k} }_{H^1(\Omega)}  + \norm{\bar y_{-, k}^{f,h} - \bar y_{-, k}}_{C(\overline\Omega)} \leq C \rho_k\left(\norm{f}_{L^2(\Omega)} +  \norm{h}_{L^2(\Gamma)}\right).
	\]
%	\begin{multline*}
%		\norm{S(\bar u^{\epsilon}_{-} + \rho f, \bar v^{\epsilon}_{-}  + \rho h) - S(\bar u^\epsilon_{-}, \bar v^\epsilon_{-})}_{H^1(\Omega)}  + \norm{S(\bar u^{\epsilon}_{-} + \rho f, \bar v^{\epsilon}_{-}  + \rho h) - S(\bar u^\epsilon_{-}, \bar v^\epsilon_{-})}_{C(\overline\Omega)} \\
%		 \leq C \rho \left(\norm{f}_{L^2(\Omega)} +  \norm{h}_{L^2(\Gamma)}\right).
%	\end{multline*}
	Additionally, if $(\bar u, \bar v) \neq (u_b, v_b)$, then the last two estimates also hold for the subscript "$+$" in place of the subscript "$-$".
\end{lemma}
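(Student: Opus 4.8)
\textbf{Proof plan for \cref{lem:distance-y-epsilon-rho}.}
The plan is to obtain all four estimates as direct consequences of the global Lipschitz continuity of the control-to-state operator $S$ (\cref{prop:control-to-state-app}~\ref{item:S-Lipschitz}), combined with the explicit bounds on the perturbed controls recorded in \cref{lem:countable-sets}. First I would invoke \ref{item:S-Lipschitz}: there is a Lipschitz constant $L_S>0$, depending only on the fixed data (in particular on $d$ and $b$, hence \emph{not} on $(\bar u,\bar v)$, $k$, $\sigma$, $f$, or $h$), such that
\[
	\norm{S(u_1,v_1)-S(u_2,v_2)}_{H^1(\Omega)} + \norm{S(u_1,v_1)-S(u_2,v_2)}_{C(\overline\Omega)} \leq L_S\bigl(\norm{u_1-u_2}_{L^2(\Omega)} + \norm{v_1-v_2}_{L^2(\Gamma)}\bigr)
\]
for all admissible arguments. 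Applying this with $(u_1,v_1):=(\bar u_{-,k},\bar v_{-,k}) = (\bar u^{\epsilon_k}_{-},\bar v^{\epsilon_k}_{-})$ and $(u_2,v_2):=(\bar u,\bar v)$, and using the density estimate \eqref{eq:vepsilon-density-minus} from \cref{lem:countable-sets}~\ref{item:minus-set}, namely $\norm{\bar u^{\epsilon_k}_{-}-\bar u}_{L^2(\Omega)} + \norm{\bar v^{\epsilon_k}_{-}-\bar v}_{L^2(\Gamma)} \leq C_{-}(L)\,\epsilon_k$, yields the first inequality with $C:=L_S\,C_{-}(L)$.

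For the second inequality I would again use the Lipschitz estimate, now with the two arguments $(\bar u_{-,k}+\rho_k f,\bar v_{-,k}+\rho_k h)$ and $(\bar u_{-,k},\bar v_{-,k})$; the control difference is exactly $(\rho_k f,\rho_k h)$, so the right-hand side is $L_S\rho_k\bigl(\norm{f}_{L^2(\Omega)}+\norm{h}_{L^2(\Gamma)}\bigr)$, which is the claimed bound (enlarging $C$ if necessary so that a single constant serves both estimates). The only point requiring a word of care is that $\bar u_{-,k}+\rho_k f$ need not be feasible, i.e.\ need not satisfy the pointwise constraint $\leq u_b$; but \ref{item:S-Lipschitz} asserts global Lipschitz continuity of $S$ on all of $L^2(\Omega)\times L^2(\Gamma)$, not merely on $U_{ad}$, so this causes no difficulty.

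Finally, for the ``$+$'' versions I would repeat the identical argument, this time drawing on \cref{lem:countable-sets}~\ref{item:plus-set}: under the extra hypothesis $(\bar u,\bar v)\neq(u_b,v_b)$ the perturbed controls $(\bar u^{\epsilon_k}_{+},\bar v^{\epsilon_k}_{+})$ are well-defined, lie in $U_{ad}$, and satisfy the analogous density bound \eqref{eq:vepsilon-density-plus} with constant $C_{+}(L)$; combining this with the Lipschitz estimate gives the two ``$+$'' inequalities with $C:=\max\{L_S C_{-}(L),\,L_S C_{+}(L)\}$. There is essentially no obstacle here — the lemma is a bookkeeping step isolating the quantitative continuity bounds that the nonemptiness proof of $W^\sigma_{\pm}(\bar u,\bar v;f,h)$ will need; the only thing to be vigilant about is tracking that the constant genuinely depends on $L$ alone and on none of the other listed parameters, which is immediate since $L_S$ is data-dependent only and $C_{\pm}$ depend only on $L$.
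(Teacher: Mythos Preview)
Your proposal is correct and matches the paper's own approach exactly: the paper simply attributes the lemma to \cref{prop:control-to-state-app}~\ref{item:S-Lipschitz} together with \eqref{eq:vepsilon-density-minus} and \eqref{eq:vepsilon-density-plus}, without giving any further detail. Your write-up is in fact more careful than the paper's one-line justification, and your remark about feasibility of $\bar u_{-,k}+\rho_k f$ being irrelevant (since $S$ is globally Lipschitz on all of $L^2(\Omega)\times L^2(\Gamma)$) is a nice clarification.
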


\medskip

%Here
%\begin{equation}
%	\label{eq:eta-k-abbreviation}
%	\bar \eta_{\pm,k}^{f,h} := \bar \eta^{\epsilon_k, \rho_k; f, h}_{\pm}, \quad k \geq 1.
%\end{equation}
%with $\bar \eta^{\epsilon, \rho; f, h}_{\pm}$ determined in \eqref{eq:vepsion-rho-pm}.

\medskip

The nonemptiness of the sets $W_{\pm}^\sigma(\bar u,\bar v;f, h)$ is presented in the following.
\begin{proposition}
	\label{prop:W-pm-emptiness}
	For any $(f,h) \in  L^2(\Omega) \times L^2(\Gamma)$, there holds $W_{-}^\sigma(\bar u,\bar v; f, h) \neq \emptyset$. Moreover, if, in addition, $(\bar u,\bar v) \neq (u_b,v_b)$, then $W_{+}^\sigma(\bar u,\bar v; f, h) \neq \emptyset$.
%	Furthermore, there exists a constant $C$ independent of $(f,h)$ such that
%	\begin{equation}
%		\label{eq:W-sets-pm-bound}
%		\norm{e}_{H^1(\Omega)} + \norm{e}_{C(\overline\Omega)} \leq C \left(\norm{f}_{L^2(\Omega)} +  \norm{h}_{L^2(\Gamma)}\right)
%	\end{equation}
%	for any $e \in W_{-}(\bar u,\bar v; f, h) \cup  W_{+}(\bar u, \bar v; f, h)$. 
\end{proposition}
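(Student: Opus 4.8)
The plan is to show that the normalized remainders $\frac{1}{\rho_k}\bar\eta^{f,h}_{\pm,k}$ form a bounded sequence in $H^1(\Omega)$, so that a weakly convergent subsequence exists by reflexivity; its weak limit then belongs to $W^\sigma_{\pm}(\bar u,\bar v;f,h)$ by definition. I will carry out the argument for the subscript ``$-$''; the case ``$+$'' is identical once $(\bar u,\bar v)\neq(u_b,v_b)$ guarantees that the sequence $(\bar u^{\epsilon_k}_{+},\bar v^{\epsilon_k}_{+})$ is well-defined and satisfies \eqref{eq:vepsilon-plus} via \cref{lem:countable-sets}.

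First I would write out the PDE satisfied by $\bar\eta^{f,h}_{-,k}$. By definition \eqref{eq:abbreviations-k}, $\bar\eta^{f,h}_{-,k} = \bar y^{f,h}_{-,k} - \bar y_{-,k} - \rho_k \bar z_{-,k}$, where $\bar y^{f,h}_{-,k}$ solves the state equation with data $(\bar u_{-,k}+\rho_k f,\bar v_{-,k}+\rho_k h)$, $\bar y_{-,k}$ solves it with data $(\bar u_{-,k},\bar v_{-,k})$, and $\bar z_{-,k}=S'(\bar u_{-,k},\bar v_{-,k})(f,h)$ solves the linearized equation \eqref{eq:S-G-der} at the G\^ateaux point $\bar w_{-,k}\in D_S$ (which is a G\^ateaux point by \eqref{eq:epsilonk-ukvk-minus}). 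Subtracting these three equations and using a first-order Taylor expansion of $d$ with integral remainder, $\bar\eta^{f,h}_{-,k}$ solves a linear elliptic equation of the form
\[
	\left\{
	\begin{aligned}
		-\Delta \bar\eta^{f,h}_{-,k} + a^{\bar u_{-,k},\bar v_{-,k}}_{-}\,\bar\eta^{f,h}_{-,k} &= r_k && \text{in }\Omega,\\
		\frac{\partial \bar\eta^{f,h}_{-,k}}{\partial\nuv} + b(x)\bar\eta^{f,h}_{-,k} &= 0 && \text{on }\Gamma,
	\end{aligned}
	\right.
\]
with a nonnegative bounded zeroth-order coefficient, where $r_k$ collects the terms measuring the nonlinearity of $d$ between $\bar y_{-,k}$ and $\bar y^{f,h}_{-,k}$. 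The key estimate is then $\norm{r_k}_{L^2(\Omega)} = o(\rho_k)$, which follows from: the uniform $C(\overline\Omega)$-bound on the states (from \eqref{eq:apriori-esti-state} together with \eqref{eq:uv-bound-assumption}), the $C^1$-regularity of $d_1,d_2$ away from $\bar t$, the estimate $\norm{\bar y^{f,h}_{-,k}-\bar y_{-,k}}_{C(\overline\Omega)}\le C\rho_k(\norm{f}_{L^2(\Omega)}+\norm{h}_{L^2(\Gamma)})$ from \cref{lem:distance-y-epsilon-rho}, and the fact that $\bar y_{-,k}\neq\bar t$ a.e.\ (from \eqref{eq:vepsilon-G-diff-minus} and \cref{pro:S-Gateaux-diff-char}), so that the only contribution near the nonsmooth value $\bar t$ comes from a shrinking sublevel set $\{\,\bar t - C\rho_k \le \bar y_{-,k} < \bar t\,\}$ whose measure tends to zero, hence vanishes under Lebesgue's dominated convergence theorem. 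Applying the a priori estimate for the linear equation (in the spirit of \cref{prop:control-to-state-app} and the Stampacchia theorem \cite[Thm.~12.4]{Chipot2009}) gives $\norm{\bar\eta^{f,h}_{-,k}}_{H^1(\Omega)} \le C\norm{r_k}_{L^2(\Omega)} = o(\rho_k)$, so $\frac{1}{\rho_k}\bar\eta^{f,h}_{-,k}$ is bounded (indeed convergent to $0$) in $H^1(\Omega)$.

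Having the bound, reflexivity of $H^1(\Omega)$ yields a subsequence along which $\frac{1}{\rho_k}\bar\eta^{f,h}_{-,k}$ converges weakly to some $e\in H^1(\Omega)$, and by the very definition \eqref{eq:W-pm-sets} this $e$ lies in $W^\sigma_{-}(\bar u,\bar v;f,h)$, proving nonemptiness. For the ``$+$''-case I would simply repeat the argument with $\bar u^{\epsilon_k}_{+},\bar v^{\epsilon_k}_{+}$ in place of $\bar u^{\epsilon_k}_{-},\bar v^{\epsilon_k}_{-}$, invoking \cref{lem:countable-sets}~\ref{item:plus-set}, \cref{prop:av-pm-limit}, and the ``$+$'' part of \cref{lem:distance-y-epsilon-rho}, all of which are available precisely because $(\bar u,\bar v)\neq(u_b,v_b)$.

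The main obstacle is the remainder estimate $\norm{r_k}_{L^2(\Omega)}=o(\rho_k)$: one must handle simultaneously the two scales $\epsilon_k$ (distance from $\bar w_{-,k}$ to $(\bar u,\bar v)$) and $\rho_k$ (size of the perturbation $\rho_k(f,h)$), and argue that the nonsmoothness of $d$ at $\bar t$ does not spoil the quadratic decay because the relevant states stay away from $\bar t$ except on a null-measure-limit set. This is exactly where \cref{lem:distance-y-epsilon-rho} and the G\^ateaux-point property \eqref{eq:vepsilon-G-diff-minus} enter decisively; the rest is a routine linear elliptic a priori bound.
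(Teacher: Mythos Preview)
Your argument establishes the proposition, but it is considerably more involved than necessary and contains an incorrect side claim.

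The paper's proof is a two-line triangle inequality: since $\bar\eta^{f,h}_{-,k} = [S(\bar w_{-,k}+\rho_k(f,h))-S(\bar w_{-,k})] - \rho_k S'(\bar w_{-,k})(f,h)$, the first bracket is bounded in $H^1(\Omega)$ by $C\rho_k(\norm{f}_{L^2(\Omega)}+\norm{h}_{L^2(\Gamma)})$ from \cref{lem:distance-y-epsilon-rho} (Lipschitz continuity of $S$), and the second term is bounded by the same quantity because $\{S'(\bar w_{-,k})\}$ is uniformly bounded in the operator norm by \cref{prop:G-pm-belongto-Bouligand-diff}. Hence $\tfrac{1}{\rho_k}\norm{\bar\eta^{f,h}_{-,k}}_{H^1(\Omega)}\le C$, and reflexivity of $H^1(\Omega)$ finishes the argument. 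No PDE for the remainder, no analysis of the nonsmooth set, no dominated convergence.

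Your claimed estimate $\norm{r_k}_{L^2(\Omega)}=o(\rho_k)$ is \emph{false} in general, and the step ``whose measure tends to zero'' is where it breaks. Using $\bar y - C\epsilon_k \le \bar y_{-,k} \le \bar y$, the set $\{\bar t - C\rho_k \le \bar y_{-,k} < \bar t\}$ is contained in $\{\bar t - C\rho_k \le \bar y < \bar t + C\epsilon_k\}$, whose measure tends to $\meas_{\R^2}(\{\bar y=\bar t\})$, which need not vanish. On this set the nonsmooth contribution to $r_k$ is $O(\rho_k)$ pointwise (of size $|d_1'(\bar t)-d_2'(\bar t)|\rho_k|\bar z_{-,k}|$, cf.\ the proof of \cref{lem:eta-epsilon-estimate}), so after dividing by $\rho_k$ you get a term that does \emph{not} vanish under dominated convergence. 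Indeed, your conclusion ``$\tfrac{1}{\rho_k}\bar\eta^{f,h}_{-,k}\to 0$'' would force $W^\sigma_{-}(\bar u,\bar v;f,h)=\{0\}$ for all $(f,h)$ and $\sigma$, contradicting \cref{lem:W-pm-sets-characterization}, which identifies this set with $\{S'(\bar u,\bar v;f-\sigma\tilde u_-,h-\sigma\tilde v_-)-\bar G_-(f-\sigma\tilde u_-,h-\sigma\tilde v_-)\}$, a nonzero singleton whenever $S$ fails to be G\^ateaux differentiable at $(\bar u,\bar v)$ and the direction is generic.

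That said, only the bound $\norm{r_k}_{L^2(\Omega)}=O(\rho_k)$ is needed for nonemptiness, and your PDE route does deliver that much once the erroneous ``measure tends to zero'' step is dropped. So your overall approach can be salvaged, but the paper's direct estimate is both shorter and avoids the pitfall entirely.
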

\begin{proof}
	First, thanks to \cref{prop:G-pm-belongto-Bouligand-diff}, $\{S'(\bar u_{-,k}, \bar v_{-,k})\}$ is bounded in $\Linop(L^2(\Omega) \times L^2(\Gamma), H^1(\Omega) \cap C(\overline\Omega))$. From this,  \cref{lem:distance-y-epsilon-rho}, and the definition of $\bar\eta_{\pm, k}^{f,h}$ in \eqref{eq:abbreviations-k}, we have
	\begin{equation*}
		%\label{eq:eta-epsilon-rho-esti}
		%\norm{\bar \eta^{\epsilon, \rho; f, h}_{-}}_{H^1(\Omega)} + 	\norm{\bar \eta^{\epsilon, \rho;  f,h}_{-}}_{C(\overline\Omega)} \leq C \rho \left(\norm{f}_{L^2(\Omega)} +  \norm{h}_{L^2(\Gamma)}\right)
		\frac{1}{\rho_k} \norm{ \bar\eta_{\pm, k}^{f,h} }_{H^1(\Omega)} 
		 \leq C  \left(\norm{f}_{L^2(\Omega)} +  \norm{h}_{L^2(\Gamma)}\right)
	\end{equation*}
	for some constant $C$ independent of $(f,h)$, $k$, and $\sigma$, which, together with the reflexivity of $H^1(\Omega)$, shows the nonemptiness of $W_{-}^\sigma(\bar u,\bar v; f, h)$. Similarly, $W_{+}^\sigma(\bar u,\bar v; f, h) \neq \emptyset$ provided that $(\bar u,\bar v) \neq (u_b,v_b)$. 
\end{proof}

%%% a remark on the choice \epsilon = o(\rho)
%\begin{remark}
%	\label{rem:epsilon-rho-choice}
%	\textcolor{blue}{%
%		The condition $\frac{\epsilon_k}{\rho_k} \to 0$ in the definition \eqref{eq:W-pm-sets} of the sets $W_{\pm}(\bar u, \bar v;f, h)$ is only utilized until we establish the optimality conditions outlined in \cref{thm:primal-OCs} below.
%	}%
%\end{remark}

\medskip

%\textcolor{blue}{%
	Next, we define the following nonnegative functions depending on $(\bar u, \bar v)$:
	\begin{subequations}
		\label{eq:variational-funcs}
		\begin{align}
			& \tilde{u}_{-} := \1_{\Omega}, \quad && \tilde{v}_{-} := \1_{\Gamma}, \label{eq:variational-func-minus} \\
			\intertext{and}
			& \tilde{u}_{+} := 
			\left\{
			\begin{aligned}
				& u_b - \bar u && \text{if } u_b \in L^2(\Omega),\\
				& \1_{\Omega} && \text{if } u_b = \infty,
			\end{aligned}
			\right.
			&& \tilde{v}_{+} := 
			\left\{
			\begin{aligned}
				& v_b - \bar v && \text{if } v_b \in L^2(\Gamma),\\
				& \1_{\Gamma} && \text{if } v_b = \infty.
			\end{aligned}
			\right. \label{eq:variational-func-plus} 
		\end{align}
	\end{subequations}
%}%

In the following we will derive 
%\textcolor{blue}{%text}
	a characterization of 
%}%
the sets $W_{\pm}^\sigma(\bar u,\bar v;f,h)$, which 
%\textcolor{blue}{%
	reveals that these sets are singletons identifying the difference between directional and Bouligand generalized derivatives of the control-to-state mapping. 
%}%
%is of great importance for establishing the optimality conditions in terms of Bouligand generalized derivatives of $\partial_B S(\bar u,\bar v)$. 
\begin{lemma}
	\label{lem:W-pm-sets-characterization}
	Let $\sigma \geq 0$ be arbitrary, but fixed. Then, the following assertions are fulfilled:
	\begin{enumerate}[label=(\alph*)]
		\item \label{item:W-set-minus-char} 
		For any $(f,h) \in L^2(\Omega) \times L^2(\Gamma)$, there holds,
		\begin{equation}
			\label{eq:W-minus-characterization}
			%\textcolor{blue}{%
			W_{-}^\sigma(\bar u,\bar v; f, h) = \{ S'(\bar u, \bar v; f - \sigma \tilde{u}_{-},h - \sigma \tilde{v}_{-}) - \bar G_{-}(f - \sigma \tilde{u}_{-},h - \sigma \tilde{v}_{-}) \}.
		%}%
		\end{equation}
%		Moreover, $e_{-} := S'(\bar u, \bar v; f,h) - \bar G_{-}(f,h)$
%		uniquely solves
%		\begin{equation} \label{eq:W-set-minus-pde}
%			\left \{
%			\begin{aligned}
%				-\Delta e_{-} + \bar a_{-} e_{-} & =[d_1'(\bar t) - d_2'(\bar t)] \chi_{-} && \text{in} \, \Omega, \\
%				\frac{\partial e_{-}}{\partial \nuv}  + b(x) e_{-} &=  0&& \text{on}\, \Gamma,
%			\end{aligned}
%			\right.
%		\end{equation}
%		with
%		\begin{equation}
%			\label{eq:W-minus-elemement}
%			0 \leq \chi_{-} 
%			\textcolor{blue}{%
%				=
%			}%
%			 (e_{-}+\bar z_{-}) \1_{\{e_{-}+ \bar z_{-} \geq 0\} } \1_{\{ \bar y = \bar t\}} \quad \text{a.a. in} \quad \Omega.
%		\end{equation}	
		\item \label{item:W-set-plus-char}  If, in addition $(\bar u,\bar v )\neq (u_b,v_b)$, then, for any $(f,h) \in L^2(\Omega) \times L^2(\Gamma)$,
		\begin{equation}
			\label{eq:W-plus-characterization}
			%\textcolor{blue}{%
				W_{+}^\sigma(\bar u,\bar v; f, h) = \{ S'(\bar u, \bar v; f + \sigma \tilde{u}_{+},h + \sigma \tilde{v}_{+}) - \bar G_{+}(f + \sigma \tilde{u}_{+},h + \sigma \tilde{v}_{+}) \}.
			%}%
		\end{equation}
%		Furthermore, $e_{+} := S'(\bar u, \bar v; f,h) - \bar G_{+}(f,h)$ fulfills
%		\begin{equation} \label{eq:W-set-plus-pde}
%			\left \{
%			\begin{aligned}
%				-\Delta e_{+} + \bar a_{+} e_{+} & =-[d_1'(\bar t) - d_2'(\bar t)] \chi_{+} && \text{in} \, \Omega, \\
%				\frac{\partial e_{+}}{\partial \nuv}  + b(x) e_{+} &=  0&& \text{on}\, \Gamma,
%			\end{aligned}
%			\right.
%		\end{equation}
%		with
%		\begin{equation}
%			\label{eq:W-plus-elemement}
%			0 \geq \chi_{+} 
%			\textcolor{blue}{%
%			=
%		}%
%			 (e_{+}+\bar z_{+}) \1_{\{e_{+}+ \bar z_{+} \leq 0\} } \1_{\{ \bar y = \bar t\}} \quad \text{a.a. in} \quad \Omega.
%		\end{equation}	
	\end{enumerate}	
%	Here $\bar z_{\pm} := \bar G_{\pm}(f,h)$, and $\bar a_{\pm}$ and 
	Here  $\bar G_{\pm}$ and $\tilde{u}_{\pm}$, $\tilde{v}_{\pm}$ are given in \eqref{eq:G-pm-operator} and \eqref{eq:variational-funcs}, respectively. 
\end{lemma}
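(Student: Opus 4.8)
The plan is to prove part \ref{item:W-set-minus-char} in detail; part \ref{item:W-set-plus-char} follows by the symmetric argument, replacing the subscript "$-$" by "$+$", the sign of $\sigma$, and $\tilde u_-,\tilde v_-$ by $\tilde u_+,\tilde v_+$. Fix $(f,h)\in L^2(\Omega)\times L^2(\Gamma)$ and write $g := f-\sigma\tilde u_-$, $\ell := h-\sigma\tilde v_-$. First I would observe, using the definition of $\bar u_{-,k}=\bar u^{\epsilon_k}_- = \bar u-\epsilon_k\1_\Omega$ and $\bar v_{-,k}=\bar v-\epsilon_k\1_\Gamma$ from \eqref{eq:uv-varepsilon}, that
\[
	\bar u_{-,k}+\rho_k f - \bar u = \rho_k\Big(f - \tfrac{\epsilon_k}{\rho_k}\1_\Omega\Big), \qquad \bar v_{-,k}+\rho_k h - \bar v = \rho_k\Big(h - \tfrac{\epsilon_k}{\rho_k}\1_\Gamma\Big),
\]
and since $\epsilon_k/\rho_k\to\sigma$ by \eqref{eq:rho-k-def}, the perturbation directions $f-\tfrac{\epsilon_k}{\rho_k}\1_\Omega \rightharpoonup g$ and $h-\tfrac{\epsilon_k}{\rho_k}\1_\Gamma\rightharpoonup \ell$ weakly (indeed strongly) in $L^2$. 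By the Hadamard directional differentiability of $S$ along weakly convergent directions, namely \cref{prop:control-to-state-app}\,\ref{item:S-dir-der} (the convergence \eqref{eq:weak-strong-Hadamard-app}), it follows that
\[
	\frac{\bar y_{-,k}^{f,h}-\bar y}{\rho_k} = \frac{S(\bar u_{-,k}+\rho_k f,\bar v_{-,k}+\rho_k h)-S(\bar u,\bar v)}{\rho_k} \longrightarrow S'(\bar u,\bar v; g, \ell)
\]
strongly in $H^1(\Omega)\cap C(\overline\Omega)$.

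Next I would handle the term $\rho_k\bar z_{-,k}$. Here $\bar z_{-,k}=S'(\bar u_{-,k},\bar v_{-,k})(f,h)$, and since $(\bar u_{-,k},\bar v_{-,k})\in D_S$ with $(\bar u_{-,k},\bar v_{-,k})\to(\bar u,\bar v)$ strongly in $L^2$, \cref{prop:G-pm-belongto-Bouligand-diff}\,\ref{item:G-minus} gives $S'(\bar u_{-,k},\bar v_{-,k})\to G^{\bar u,\bar v}_- = \bar G_-$ in the strong operator topology, hence $\bar z_{-,k}\to\bar G_-(f,h)$ strongly in $H^1(\Omega)\cap C(\overline\Omega)$. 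The more delicate point is that I actually need $\bar z_{-,k}$ evaluated along the drifting directions: more precisely, I should recompute $\bar\eta_{-,k}^{f,h}/\rho_k$ directly. Write
\[
	\frac{\bar\eta_{-,k}^{f,h}}{\rho_k} = \frac{\bar y_{-,k}^{f,h}-\bar y_{-,k}}{\rho_k} - \bar z_{-,k}.
\]
For the first difference quotient, $\bar y_{-,k}^{f,h}-\bar y_{-,k} = S(\bar u_{-,k}+\rho_k f,\bar v_{-,k}+\rho_k h)-S(\bar u_{-,k},\bar v_{-,k})$, which is a genuine directional difference quotient \emph{at the G\^ateaux point} $(\bar u_{-,k},\bar v_{-,k})$ in the fixed direction $(f,h)$; by \cref{prop:control-to-state-app}\,\ref{item:S-dir-der} and the fact that $S$ is G\^ateaux differentiable there, this quotient converges as $\rho_k\to0^+$ to $S'(\bar u_{-,k},\bar v_{-,k})(f,h)=\bar z_{-,k}$ — but uniformly controlled only with $k$ fixed. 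The clean way around this is the decomposition
\[
	\frac{\bar\eta_{-,k}^{f,h}}{\rho_k} = \underbrace{\frac{\bar y_{-,k}^{f,h}-\bar y}{\rho_k}}_{\to\, S'(\bar u,\bar v;g,\ell)} \;-\; \underbrace{\frac{\bar y_{-,k}-\bar y}{\rho_k}}_{\to\, \sigma\,\bar G_-(\tilde u_-,\tilde v_-)} \;-\; \underbrace{\bar z_{-,k}}_{\to\,\bar G_-(f,h)},
\]
where the first limit was established above; the second follows since $\bar y_{-,k}-\bar y = S(\bar u-\epsilon_k\1_\Omega,\bar v-\epsilon_k\1_\Gamma)-S(\bar u,\bar v)$ and $\epsilon_k/\rho_k\to\sigma$, so by \eqref{eq:weak-strong-Hadamard-app} (applied with direction $-(\1_\Omega,\1_\Gamma)=-(\tilde u_-,\tilde v_-)$ and step $\epsilon_k$, then rescaled by $\epsilon_k/\rho_k$) it converges to $-\sigma\, S'(\bar u,\bar v;-\tilde u_-,-\tilde v_-) = \sigma\,\bar G_-(\tilde u_-,\tilde v_-)$, using the identity $S'(\bar u,\bar v;-\phi,-\psi) = -\bar G_-(\phi,\psi)$ for $\phi,\psi\ge0$ from \eqref{eq:S-der-G-identity}; and the third limit is the strong-operator convergence $\bar z_{-,k}\to\bar G_-(f,h)$ just noted. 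Combining the three, and using linearity of $\bar G_-$,
\[
	\frac{\bar\eta_{-,k}^{f,h}}{\rho_k} \longrightarrow S'(\bar u,\bar v;g,\ell) - \sigma\,\bar G_-(\tilde u_-,\tilde v_-) - \bar G_-(f,h) = S'(\bar u,\bar v;g,\ell) - \bar G_-(g,\ell)
\]
strongly in $H^1(\Omega)$.

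Since the whole sequence $\tfrac{1}{\rho_k}\bar\eta_{-,k}^{f,h}$ converges strongly in $H^1(\Omega)$ to this single limit, it has exactly one weak accumulation point, namely that limit; by the definition \eqref{eq:W-pm-sets} of $W_-^\sigma(\bar u,\bar v;f,h)$ this proves \eqref{eq:W-minus-characterization}. For \ref{item:W-set-plus-char}, under the extra hypothesis $(\bar u,\bar v)\neq(u_b,v_b)$, one repeats the argument with $\bar u_{+,k}=\bar u+\epsilon_k\tilde u_+$, $\bar v_{+,k}=\bar v+\epsilon_k\tilde v_+$ (from \eqref{eq:uv-varepsilon}, in both the $u_b\in L^2$ and $u_b=\infty$ cases covered by \eqref{eq:variational-func-plus}), invoking \cref{prop:G-pm-belongto-Bouligand-diff}\,\ref{item:G-plus} for $\bar z_{+,k}\to\bar G_+(f,h)$ and the second identity in \eqref{eq:S-der-G-identity}, $S'(\bar u,\bar v;\phi,\psi)=\bar G_+(\phi,\psi)$ for $\phi,\psi\ge0$, which now contributes $+\sigma\,\bar G_+(\tilde u_+,\tilde v_+)$; the signs work out to give \eqref{eq:W-plus-characterization}. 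The main obstacle is the bookkeeping in the three-term decomposition above — in particular making sure that the drift $\epsilon_k/\rho_k\to\sigma$ is transferred correctly through the Hadamard differentiability \eqref{eq:weak-strong-Hadamard-app} (which requires the sequence of directions to converge, the reason \eqref{eq:rho-k-def} is set up as it is) and that one consistently uses the \emph{one-sided} derivatives $\bar G_\pm$ via \eqref{eq:S-der-G-identity}, since $S'(\bar u,\bar v;\cdot,\cdot)$ is only positively homogeneous and not linear at the nonsmooth state value $\bar t$.
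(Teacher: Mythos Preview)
Your proof is correct and follows essentially the same three-term decomposition as the paper's own argument (first term via Hadamard differentiability \eqref{eq:weak-strong-Hadamard-app}, middle term via \eqref{eq:S-der-G-identity}, third term via \cref{prop:G-pm-belongto-Bouligand-diff}). There is a harmless sign slip in your middle term: the correct limit is $\tfrac{\bar y_{-,k}-\bar y}{\rho_k}\to \sigma\,S'(\bar u,\bar v;-\tilde u_-,-\tilde v_-)=-\sigma\,\bar G_-(\tilde u_-,\tilde v_-)$ (not $+\sigma\,\bar G_-(\tilde u_-,\tilde v_-)$), so the combined limit should read $S'(\bar u,\bar v;g,\ell)+\sigma\,\bar G_-(\tilde u_-,\tilde v_-)-\bar G_-(f,h)$, which then indeed simplifies to $S'(\bar u,\bar v;g,\ell)-\bar G_-(g,\ell)$.
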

\begin{proof}
%	\textcolor{blue}{%text}
	\noindent\emph{Ad \ref{item:W-set-minus-char}}.
	First, take $e_{-} \in W_{-}^\sigma(\bar u,\bar v; f, h)$ arbitrarily. By definition, there exists a subsequence, denoted in the same way, of $\{k\}$ such that \eqref{eq:epsilonk-ukvk-minus} and \eqref{eq:rho-k-def} are satisfied. Moreover, there holds
	\begin{equation}
		\label{eq:e-minus-determine}
		\frac{1}{\rho_k}\bar \eta^{f, h}_{-,k} \rightharpoonup e_{-} \, \text{ in $H^1(\Omega)$}.
	\end{equation}
	From the definition of $\bar \eta^{f, h}_{-,k}$ in  \eqref{eq:abbreviations-k}, we can rewrite $\bar \eta^{f, h}_{-,k}$ as follows
	\begin{align}
		\bar \eta^{f, h}_{-,k} & = S(\bar u_{-,k} + \rho_{k} f, \bar v_{-,k} + \rho_k h) - S(\bar u_{-,k} , \bar v_{-,k} ) - 
		%\textcolor{blue}{%
			\rho_k
		%}
		S'(\bar u_{-,k} , \bar v_{-,k})(f,h) \label{eq:eta-k-minus-rewriten} \\
		& = [S((\bar u, \bar v) + \rho_{k}(f_{-,k}, h_{-,k}) ) - S(\bar u, \bar v) ]+ [S(\bar u, \bar v) - S(\bar u_{-,k} , \bar v_{-,k} )] - 
		%\textcolor{blue}{%
			\rho_k
		%}
		\bar z_{-,k} \nonumber
	\end{align}
	with
	\[
		f_{-,k} := f + \frac{1}{\rho_k}(\bar u_{-,k} - \bar u) \quad \text{and} \quad h_{-,k} := h + \frac{1}{\rho_k}(\bar v_{-,k} - \bar v).
	\]
	By applying now \cref{prop:G-pm-belongto-Bouligand-diff} for $(u,v) := (\bar u, \bar v)$ and using the identity \eqref{eq:G-pm-bar-Guv}, there holds
	\begin{equation}
		\label{eq:zk-minus-limit}
		\bar z_{-,k} \to \bar G_{-}(f,h) \quad \text{in } H^1(\Omega).
	\end{equation}
	Besides, we deduce from \eqref{eq:variational-func-minus}, \eqref{eq:uv-varepsilon}, and \eqref{eq:abbreviations-k} that
	\begin{equation}
		\label{eq:uvk-bar-tilde-minus-expression}
		\bar u_{-, k} = \bar u - \epsilon_k \tilde{u}_{-} \quad \text{and} \quad \bar v_{-, k} = \bar v - \epsilon_k \tilde{v}_{-}.
	\end{equation}
%	Moreover, \eqref{eq:vepsilon-density-minus} implies that
%	\begin{equation}
%		\label{eq:uvk-uvbar-distance-minus}
%		\norm{\bar u_{-,k} - \bar u}_{L^2(\Omega)} + \norm{\bar v_{-,k} - \bar v}_{L^2(\Gamma)} \leq C \epsilon_k.
%	\end{equation}
	This, together with the second limit in \eqref{eq:rho-k-def}, then gives 
	\begin{equation}
		\label{eq:fh-k-limit-minus}
		(f_{-,k},h_{-,k}) \to (f - \sigma \tilde{u}_{-},h -\sigma \tilde{v}_{-}) \quad \text{in} \quad L^2(\Omega) \times L^2(\Gamma).
	\end{equation}
	From this and \cref{prop:control-to-state-app}~\ref{item:S-dir-der}, we have
	\begin{equation}
		\label{eq:directional-der-k}
		\frac{1}{\rho_k}[S((\bar u, \bar v) + \rho_{k}(f_{-,k}, h_{-,k}) ) - S(\bar u, \bar v) ] \to S'(\bar u, \bar v; f - \sigma \tilde{u}_{-},h -\sigma \tilde{v}_{-})
	\end{equation} 
	in $H^1(\Omega)$.
%	On the other hand, we deduce from the Lipschitz continuity of $S$; see \cref{prop:control-to-state-app}~\ref{item:S-Lipschitz}, that
%	\[
%		\norm{S(\bar u, \bar v) - S(\bar u_{-,k} , \bar v_{-,k} )}_{H^1(\Omega)} \leq C \norm{\bar u_{-,k} - \bar u}_{L^2(\Omega)} + \norm{\bar v_{-,k} - \bar v}_{L^2(\Gamma)} \leq C \epsilon_k,
%	\]
%	where we have exploited \eqref{eq:uvk-uvbar-distance-minus} to derive the last estimate. We thus have
	Similarly, it follows from \eqref{eq:uvk-bar-tilde-minus-expression}, the second limit in  \eqref{eq:rho-k-def}, and \cref{prop:control-to-state-app}~\ref{item:S-dir-der} that
	\[
		\frac{1}{\rho_{k}}[S(\bar u, \bar v) - S(\bar u_{-,k} , \bar v_{-,k} )] \to  - S'(\bar u, \bar v, - \sigma \tilde{u}_{-}, -\sigma \tilde{v}_{-})
	\]
	in $H^1(\Omega)$.
	Combining this with \eqref{eq:G-pm-bar-Guv} and \eqref{eq:S-der-G-identity} yields
	\[
		\frac{1}{\rho_{k}}[S(\bar u, \bar v) - S(\bar u_{-,k} , \bar v_{-,k} )] \to  \sigma \bar G_{-}(\tilde{u}_{-}, \tilde{v}_{-})
	\]
	in $H^1(\Omega)$,
	where we have just used the pointwise nonnegativity of $\tilde{u}_{-}$ and $\tilde{v}_{-}$ defined in \eqref{eq:variational-func-minus}.
	We thus employ the expression \eqref{eq:eta-k-minus-rewriten} and the limits \eqref{eq:zk-minus-limit}, \eqref{eq:directional-der-k} to have
	\begin{equation}
		\label{eq:eta-k-minus-limit}
		\frac{1}{\rho_k}\bar \eta^{f, h}_{-,k} \to S'(\bar u, \bar v; f - \sigma \tilde{u}_{-},h -\sigma \tilde{v}_{-}) + \sigma \bar G_{-}(\tilde{u}_{-}, \tilde{v}_{-}) - \bar G_{-}(f,h) \, \text{in } H^1(\Omega),
	\end{equation}
	which, along with \eqref{eq:e-minus-determine}, implies \eqref{eq:W-minus-characterization}.
%}%

\medskip

%\textcolor{blue}{%
	\noindent\emph{Ad \ref{item:W-set-plus-char}}.	In this case, the argument is analogous to that for \ref{item:W-set-minus-char} with a slight modification. Indeed, similar to \eqref{eq:eta-k-minus-rewriten}, we can rewrite $\bar \eta^{f, h}_{+,k}$ as follows
	\begin{equation}
		\bar \eta^{f, h}_{+,k}  = [S((\bar u, \bar v) + \rho_{k}(f_{+,k}, h_{+,k}) ) - S(\bar u, \bar v) ]+ [S(\bar u, \bar v) - S(\bar u_{+,k} , \bar v_{+,k} )] - 
		%\textcolor{blue}{%
			\rho_k
		%}
		\bar z_{+,k} \label{eq:eta-k-plus-rewriten} 
	\end{equation}
	with
	\[
		f_{+,k} := f + \frac{1}{\rho_k}(\bar u_{+,k} - \bar u) \quad \text{and} \quad h_{+,k} := h + \frac{1}{\rho_k}(\bar v_{+,k} - \bar v).
	\]
	Also, from \eqref{eq:variational-func-plus}, \eqref{eq:uv-varepsilon}, and \eqref{eq:abbreviations-k} that
	\begin{equation}
		\label{eq:uvk-bar-tilde-plus-expression}
		\bar u_{+, k} = \bar u + \epsilon_k \tilde{u}_{+} \quad \text{and} \quad \bar v_{+, k} = \bar v + \epsilon_k \tilde{v}_{+},
	\end{equation}
	which is analogous to \eqref{eq:uvk-bar-tilde-minus-expression}.
	A combination of \eqref{eq:uvk-bar-tilde-plus-expression} with the second limit in \eqref{eq:rho-k-def}  then gives 
	\begin{equation}
		\label{eq:fh-k-limit-plus}
		(f_{+,k},h_{+,k}) \to (f + \sigma \tilde{u}_{+},h + \sigma \tilde{v}_{+}) \quad \text{in} \quad L^2(\Omega) \times L^2(\Gamma).
	\end{equation}
	This is comparable to \eqref{eq:fh-k-limit-minus}. From these and the same argument showing \eqref{eq:directional-der-k}--\eqref{eq:eta-k-minus-limit}, we obtain \eqref{eq:W-plus-characterization}.
%}%
\end{proof}

%%% The zero-element of W-sets

The following is a direct consequence of \cref{lem:W-pm-sets-characterization}, in which the sets $W_{\pm}^\sigma(\bar u,\bar v; f, h)$ reduce to ones consisting of  one element $e_{\pm} = 0$ only, provided that some additional assumptions, in particular one ensuring the G\^{a}teaux differentiability of $S$, are imposed.
\begin{corollary}
	\label{cor:W-pm-zero}
	The following assertions are valid:
	\begin{enumerate}[label =(\alph*)]
		\item \label{item:W-set-Gateaux-diff} If $S$ is G\^{a}teaux differentiable in  $(\bar u,\bar v)$, then $W_{\pm}^\sigma(\bar u,\bar v; f, h) = \{0\}$ for all $(f,h) \in L^2(\Omega) \times L^2(\Gamma)$ and $\sigma \geq 0$.
		
		\item \label{item:W-set-minus-zero}  For any $(f,h) \in L^2(\Omega) \times L^2(\Gamma)$ with $f \leq 0$ a.a. in $\Omega$ and $h \leq 0$ a.a. on $\Gamma$, there holds 
		\begin{equation} \label{eq:W-minus-zero}
			W_{-}^\sigma(\bar u,\bar v; f,h) = \{0\} \quad \text{for all} \quad \sigma \geq 0.
		\end{equation}
	%	\textcolor{blue}{%
			Consequently, one has
			\begin{equation} \label{eq:W-minus-zero-ubvb}
				W_{-}^\sigma(\bar u,\bar v; u- u_b, v -v_b) = \{0\} \quad \text{for all} \quad (u,v) \in U_{ad}, \,  \sigma \geq 0.
			\end{equation}
	%	}%
		
		\item \label{item:W-set-plus-zero} For any $(f,h) \in L^2(\Omega) \times L^2(\Gamma)$ with $f \geq 0$ a.a. in $\Omega$ and $h \geq  0$ a.a. on $\Gamma$, there holds 
		\begin{equation*} %\label{eq:W-plus-zero}
			W_{+}^\sigma(\bar u,\bar v;f, h) = \{0\}  \quad \text{for all} \quad \sigma \geq 0.
		\end{equation*}
	\end{enumerate}	 
\end{corollary}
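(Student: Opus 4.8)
The plan is to derive all three assertions directly from the characterization in \cref{lem:W-pm-sets-characterization} together with the identity \eqref{eq:S-der-G-identity}, which already expresses the directional derivative $S'(\bar u,\bar v;\cdot,\cdot)$ in terms of the left and right Bouligand generalized derivatives $\bar G_{\mp}$ whenever the direction has a definite sign. No new analysis is needed: the corollary is pure sign bookkeeping, using in addition the linearity of the bounded operators $\bar G_{\pm}$ and the fact, recorded in \eqref{eq:G-pm-bar-Guv}, that $\bar G_{\pm} = G^{\bar u,\bar v}_{\pm}$. For the assertions concerning $W_{+}^\sigma$ we tacitly use $(\bar u,\bar v)\neq(u_b,v_b)$, as required by \cref{lem:W-pm-sets-characterization}\ref{item:W-set-plus-char}.

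For \ref{item:W-set-minus-zero}, I would take $(f,h)$ with $f\le 0$ a.a.\ in $\Omega$ and $h\le 0$ a.a.\ on $\Gamma$. Since $\tilde u_{-}=\1_\Omega\ge 0$, $\tilde v_{-}=\1_\Gamma\ge 0$ and $\sigma\ge 0$, the shifted direction $(f-\sigma\tilde u_{-},h-\sigma\tilde v_{-})$ is still $\le 0$ a.a. Setting $f_0:=-(f-\sigma\tilde u_{-})\ge 0$ and $h_0:=-(h-\sigma\tilde v_{-})\ge 0$ and invoking the first identity in \eqref{eq:S-der-G-identity} (with $(u,v):=(\bar u,\bar v)$) together with \eqref{eq:G-pm-bar-Guv} gives
\[
S'(\bar u,\bar v;f-\sigma\tilde u_{-},h-\sigma\tilde v_{-})=S'(\bar u,\bar v;-f_0,-h_0)=-\bar G_{-}(f_0,h_0)=\bar G_{-}(f-\sigma\tilde u_{-},h-\sigma\tilde v_{-}),
\]
the last step by linearity of $\bar G_{-}$. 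Hence the single element of $W_{-}^\sigma(\bar u,\bar v;f,h)$ provided by \eqref{eq:W-minus-characterization} is $0$, i.e.\ \eqref{eq:W-minus-zero} holds. For the consequence \eqref{eq:W-minus-zero-ubvb} I would then take $(f,h):=(u-u_b,v-v_b)$ with $(u,v)\in U_{ad}$: feasibility gives $u-u_b\le 0$ a.a.\ in $\Omega$ and $v-v_b\le 0$ a.a.\ on $\Gamma$, so \eqref{eq:W-minus-zero} applies verbatim.

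For \ref{item:W-set-plus-zero} I would argue symmetrically. If $f\ge 0$ a.a.\ in $\Omega$ and $h\ge 0$ a.a.\ on $\Gamma$, then, since $\tilde u_{+}\ge 0$ (either $u_b-\bar u\ge 0$ by feasibility, or $\1_\Omega$) and likewise $\tilde v_{+}\ge 0$, the shifted direction $(f+\sigma\tilde u_{+},h+\sigma\tilde v_{+})$ is $\ge 0$ a.a.; the second identity in \eqref{eq:S-der-G-identity} with \eqref{eq:G-pm-bar-Guv} then yields $S'(\bar u,\bar v;f+\sigma\tilde u_{+},h+\sigma\tilde v_{+})=\bar G_{+}(f+\sigma\tilde u_{+},h+\sigma\tilde v_{+})$, so the element described by \eqref{eq:W-plus-characterization} vanishes. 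Finally, for \ref{item:W-set-Gateaux-diff}, G\^{a}teaux differentiability of $S$ at $(\bar u,\bar v)$ makes $S'(\bar u,\bar v;\cdot,\cdot)$ coincide with the linear G\^{a}teaux derivative $S'(\bar u,\bar v)$, while \cref{cor:Bouligand-Gateaux} gives $\bar G_{\pm}=G^{\bar u,\bar v}_{\pm}=S'(\bar u,\bar v)$; substituting into \eqref{eq:W-minus-characterization} and \eqref{eq:W-plus-characterization}, the displayed element becomes $S'(\bar u,\bar v)(\cdot)-S'(\bar u,\bar v)(\cdot)=0$ for every $(f,h)$ and every $\sigma\ge 0$. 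The only point requiring a little care is checking, in each case, that the sign hypothesis of \eqref{eq:S-der-G-identity} is genuinely met by the shifted direction --- which is exactly where the explicit choices of $\tilde u_{\pm},\tilde v_{\pm}$ in \eqref{eq:variational-funcs} enter --- but I anticipate no real obstacle, the statement being essentially a repackaging of \cref{lem:W-pm-sets-characterization}, \eqref{eq:S-der-G-identity} and \cref{cor:Bouligand-Gateaux}.
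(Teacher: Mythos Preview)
Your proposal is correct and follows essentially the same route as the paper: both arguments derive each assertion directly from the characterization in \cref{lem:W-pm-sets-characterization}, combined with \eqref{eq:S-der-G-identity} and \eqref{eq:G-pm-bar-Guv} for the sign-definite cases \ref{item:W-set-minus-zero} and \ref{item:W-set-plus-zero}, and with \cref{cor:Bouligand-Gateaux} for the G\^{a}teaux-differentiable case \ref{item:W-set-Gateaux-diff}. Your explicit remark that the $W_{+}^\sigma$ assertions tacitly require $(\bar u,\bar v)\neq(u_b,v_b)$ matches the paper's own implicit use of \cref{lem:W-pm-sets-characterization}\ref{item:W-set-plus-char}.
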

\begin{proof}
	\noindent \emph{Ad \ref{item:W-set-Gateaux-diff}.} Assume that the control-to-state operator $S$ is G\^{a}teaux differentiable in  $(\bar u, \bar v)$. 
%	\textcolor{blue}{%
	Then, there holds $S'(\bar u, \bar v) = \bar G_{\pm}$, on account of \cref{cor:Bouligand-Gateaux}.
%	}%
	From this and \cref{lem:W-pm-sets-characterization}, the sets $W_{\pm}(\bar u,\bar v;f,h)$ thus consist of one element $e_{\pm} = 0$ only. 	
	
	\medskip 
	
	\noindent \emph{Ad \ref{item:W-set-minus-zero}.} Assume now that $f \leq 0$ a.a. in $\Omega$, and $h \leq 0$ a.a. on $\Gamma$. 
%	\textcolor{blue}{%
		We therefore have $f - \sigma \tilde{u}_{-} \leq 0$ a.a. in $\Omega$ and $h - \sigma \tilde{v}_{-} \leq 0$ a.a. on $\Gamma$, due to the nonnegativity of $\tilde{u}_{-}$ and $\tilde{v}_{-}$ defined in \eqref{eq:variational-func-minus}.
		From this and \eqref{eq:G-pm-bar-Guv} as well as the first identity in \eqref{eq:S-der-G-identity}, there holds
		\[
		S'(\bar u, \bar v; f - \sigma \tilde{u}_{-},h - \sigma \tilde{v}_{-}) = - \bar G_{-}(-f + \sigma \tilde{u}_{-}, -h + \sigma \tilde{v}_{-}),
		\]
		which, along with the linearity of $\bar G_{-}$ and \eqref{eq:W-minus-characterization}, gives \eqref{eq:W-minus-zero}.
%	}%
	
	\medskip 
	
	\noindent \emph{Ad \ref{item:W-set-plus-zero}.} The proof for this assertion is similar to \ref{item:W-set-minus-zero}.
	%\qed
\end{proof}

%%% Remark on (u,v) = (ub,vb)
\begin{remark}
	\label{rem:ubvb}
%	\textcolor{blue}{%
		As it will be seen in the proof of \cref{thm:control-ub-vb} below, the identity \eqref{eq:W-minus-zero-ubvb} plays an important role in establishing the optimality condition \eqref{eq:control-ub-vb-OC}--\eqref{eq:adjoint-state-ubvb} for the special situation, where a local minimizer $(\bar u, \bar v)$ is assumed to be identical to the upper bound $(u_b, v_b)$.
%	}%
\end{remark}

%%% other estimates on elements of W-sets

Next, we derive some estimates on elements of $W_{\pm}^\sigma(\bar u,\bar v; f, h)$,
%\textcolor{blue}{%
	in which the upper bounds for these elements are independent of $\sigma$.
	%\textcolor{blue}{%text
		The associated proof mainly relies on  
	%}%
	the definition of $W_{\pm}^\sigma(\bar u,\bar v; f, h)$ in \eqref{eq:W-pm-sets}, except the second limit in \eqref{eq:rho-k-def}.
%}%
\begin{lemma}
	\label{lem:eta-epsilon-estimate}
%	\textcolor{blue}{%
		Let $e_{\pm}$ be the unique elements of $W_{\pm}^\sigma(\bar u,\bar v; f, h)$ shown as in \cref{lem:W-pm-sets-characterization}. Then, there hold:
%	}%
	%There exists a constant $C>0$ such that, for any $(f,h) \in L^2(\Omega) \times L^2(\Gamma)$, the following assertions hold:
		\begin{equation}
			\label{eq:e-minus-esti}
			\norm{e_{-}}_{H^1(\Omega)} \leq C |d_1'(\bar t) - d_2'(\bar t)|\norm{ \bar z_{-} \1_{\{ \bar z_{-} \geq 0\} } \1_{\{ \bar y = \bar t\}}}_{L^2(\Omega)}	
		\end{equation}
		and
		\begin{equation}
			\label{eq:e-plus-esti}
			\norm{e_{+}}_{H^1(\Omega)} \leq C |d_1'(\bar t) - d_2'(\bar t)| \norm{\bar z_{+} \1_{\{ \bar z_{+} \leq 0\}}  \1_{\{ \bar y = \bar t\}}}_{L^2(\Omega)} 
		\end{equation} 
		for some positive constant $C$ independent of $(f,h)$ and $\sigma$. 		
		Here $\bar z_{\pm} := \bar G_{\pm}(f,h)$. 
\end{lemma}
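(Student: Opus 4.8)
The plan is to make $e_{\pm}$ explicit via \cref{lem:W-pm-sets-characterization}, write down the linear elliptic equations they solve, and then control the truncated right‑hand sides of those equations by $\bar z_{\pm}=\bar G_{\pm}(f,h)$ through a mix of comparison and absorption. By \eqref{eq:W-minus-characterization} we have $e_{-}=\delta_{-}-\tilde z_{-}$ with $\delta_{-}:=S'(\bar u,\bar v;f-\sigma\tilde{u}_{-},h-\sigma\tilde{v}_{-})$ and $\tilde z_{-}:=\bar G_{-}(f-\sigma\tilde{u}_{-},h-\sigma\tilde{v}_{-})$, and by \eqref{eq:W-plus-characterization}, $e_{+}=\delta_{+}-\tilde z_{+}$ with $\delta_{+}:=S'(\bar u,\bar v;f+\sigma\tilde{u}_{+},h+\sigma\tilde{v}_{+})$ and $\tilde z_{+}:=\bar G_{+}(f+\sigma\tilde{u}_{+},h+\sigma\tilde{v}_{+})$. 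Subtracting the equation \eqref{eq:S-dir-der} for $\delta_{\mp}$ from the linear equation defining $\bar G_{\mp}$ and using \eqref{eq:directional-der-d-func} together with \eqref{eq:a-pm-func} (a short computation showing that $d'(\bar y;s)-\bar a_{-}s$, respectively $d'(\bar y;s)-\bar a_{+}s$, vanishes off $\{\bar y=\bar t\}$ and equals $(d_2'(\bar t)-d_1'(\bar t))\max\{s,0\}$, respectively $(d_2'(\bar t)-d_1'(\bar t))\max\{-s,0\}$, on $\{\bar y=\bar t\}$), one finds
\[
-\Delta e_{-}+\bar a_{-}e_{-}=[d_1'(\bar t)-d_2'(\bar t)]\1_{\{\bar y=\bar t\}}\delta_{-}^{+},
\qquad
-\Delta e_{+}+\bar a_{+}e_{+}=[d_1'(\bar t)-d_2'(\bar t)]\1_{\{\bar y=\bar t\}}\delta_{+}^{-},
\]
both with homogeneous Robin data, where $\delta_{-}^{+}:=\max\{\delta_{-},0\}$ and $\delta_{+}^{-}:=\max\{-\delta_{+},0\}$. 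Since $\bar a_{\pm}\ge 0$ and $b\ge b_0>0$, the bilinear forms are coercive on $H^1(\Omega)$ (Poincar\'e--trace inequality), so the standard a priori estimate gives $\norm{e_{\mp}}_{H^1(\Omega)}\le C|d_1'(\bar t)-d_2'(\bar t)|\,\norm{\1_{\{\bar y=\bar t\}}\delta_{\mp}^{\pm}}_{L^2(\Omega)}$ with $C$ depending only on $\Omega$, $b_0$, and $\norm{\bar a_{\mp}}_{L^\infty(\Omega)}$ — in particular independent of $(f,h)$ and $\sigma$.

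It then remains to estimate $\1_{\{\bar y=\bar t\}}\delta_{-}^{+}$ by $\1_{\{\bar y=\bar t\}}\bar z_{-}^{+}$ and $\1_{\{\bar y=\bar t\}}\delta_{+}^{-}$ by $\1_{\{\bar y=\bar t\}}\bar z_{+}^{-}$ in $L^2$, where $\bar z_{\pm}^{\pm}:=\max\{\pm\bar z_{\pm},0\}$ agree a.e.\ with $\bar z_{\pm}\1_{\{\pm\bar z_{\pm}\ge 0\}}$. Using linearity of $\bar G_{\pm}$ and the maximum principle for it (valid because $\bar a_{\pm}\ge 0$, $b\ge b_0>0$, and $\tilde{u}_{\pm},\tilde{v}_{\pm}\ge 0$ by \eqref{eq:variational-funcs}), write $\tilde z_{-}=\bar z_{-}-\sigma\bar\zeta_{-}$, $\tilde z_{+}=\bar z_{+}+\sigma\bar\zeta_{+}$ with $\bar\zeta_{\pm}:=\bar G_{\pm}(\tilde{u}_{\pm},\tilde{v}_{\pm})\ge 0$, so that $\delta_{-}=\bar z_{-}-\sigma\bar\zeta_{-}+e_{-}$ and $\delta_{+}=\bar z_{+}+\sigma\bar\zeta_{+}+e_{+}$. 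Assume first $d_1'(\bar t)\le d_2'(\bar t)$ (equality being trivial, since then $e_{\pm}=0$; the case $d_1'(\bar t)\ge d_2'(\bar t)$ is symmetric, with the roles of $e_{-}$ and $e_{+}$ exchanged). Then the right-hand side of the $e_{-}$-equation is $\le 0$, so $e_{-}\le 0$ by the maximum principle, whence $\delta_{-}=\bar z_{-}-\sigma\bar\zeta_{-}+e_{-}\le\bar z_{-}$ pointwise, hence $\delta_{-}^{+}\le\bar z_{-}^{+}$, and \eqref{eq:e-minus-esti} follows from the a priori estimate.

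For $e_{+}$ the three summands in $\delta_{+}=\bar z_{+}+\sigma\bar\zeta_{+}+e_{+}$ do not all have a favourable sign ($e_{+}\le 0$ but $\sigma\bar\zeta_{+}\ge 0$), so instead I would argue by absorption. Put $w:=-e_{+}\ge 0$; on $\{\bar y=\bar t\}$, wherever $\delta_{+}<0$, one has $\delta_{+}^{-}=-\delta_{+}=-\bar z_{+}-\sigma\bar\zeta_{+}+w\le\bar z_{+}^{-}+w$, hence $\1_{\{\bar y=\bar t\}}\delta_{+}^{-}\le\1_{\{\bar y=\bar t\}}(\bar z_{+}^{-}+w)$. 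Substituting into the $e_{+}$-equation gives $-\Delta w+\bar a_{+}w\le(d_2'(\bar t)-d_1'(\bar t))\1_{\{\bar y=\bar t\}}(\bar z_{+}^{-}+w)$ with homogeneous Robin data; testing with $w$, using $\bar a_{+}=d_2'(\bar t)$ on $\{\bar y=\bar t\}$ and $0\le d_2'(\bar t)-d_1'(\bar t)\le d_2'(\bar t)$ (the last step because $d_1'(\bar t)\ge 0$, $d_1$ being monotone) to absorb $\int_{\{\bar y=\bar t\}}w^2$ into $\int_{\Omega}\bar a_{+}w^2$, and invoking coercivity from $b\ge b_0>0$, one obtains $\norm{w}_{H^1(\Omega)}\le C|d_1'(\bar t)-d_2'(\bar t)|\,\norm{\1_{\{\bar y=\bar t\}}\bar z_{+}^{-}}_{L^2(\Omega)}$, i.e.\ \eqref{eq:e-plus-esti}. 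I expect this absorption step to be the main point: $\delta_{\pm}$ depends on $\sigma$ through the perturbation $\mp\sigma\tilde{u}_{\pm}$, so in the unfavourable sign configuration one cannot simply compare $\delta_{\pm}$ with $\bar z_{\pm}$, and it is precisely the bound $|d_1'(\bar t)-d_2'(\bar t)|\le\bar a_{\pm}$ on the level set $\{\bar y=\bar t\}$, a consequence of the monotonicity of $d_1$ and $d_2$, that keeps the constant independent of $\sigma$.
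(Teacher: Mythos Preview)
Your proof is correct and takes a genuinely different route from the paper's. The paper never writes down a PDE for $e_{\pm}$ itself; instead it works at the level of the approximating sequence $\bar\eta^{f,h}_{\pm,k}$, derives the equation \eqref{eq:eta-k-pde} with right-hand side $\omega_{\pm,k}$, and then estimates $\omega_{\pm,k}$ through a four-region decomposition $\Omega^i_{\pm,k}$ (according to the signs of $\bar y_{\pm,k}-\bar t$ and $\bar y_{\pm,k}+\rho_k\bar z_{\pm,k}-\bar t$), using Taylor expansions and dominated convergence to show that only one region survives in the limit, and finally passes to the limit by weak lower semicontinuity of the $H^1$-norm. This occupies roughly three pages.

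Your argument is considerably shorter and more transparent: by exploiting the explicit characterization in \cref{lem:W-pm-sets-characterization} you obtain a clean linear PDE for $e_{\pm}$ with right-hand side supported in $\{\bar y=\bar t\}$, and then handle the two sign configurations by the maximum principle (comparison $\delta_-^{+}\le\bar z_-^{+}$) and by an absorption estimate (using $|d_1'(\bar t)-d_2'(\bar t)|\le\bar a_{\pm}$ on $\{\bar y=\bar t\}$ thanks to $d_1',d_2'\ge0$). The paper's approach has the mild advantage of not relying on the closed-form identification of $e_{\pm}$---it uses only the definition of $W^\sigma_{\pm}$ as a set of weak accumulation points---so it could in principle transfer to settings where such a formula is unavailable; your approach, on the other hand, makes the mechanism behind the estimate fully explicit and avoids all limit analysis.
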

\begin{proof} 
	%First, take $e_{\pm} \in W_{\pm}(\bar u,\bar v; f, h)$ arbitrarily. 
	By definition, there exists a subsequence, denoted in the same way, of $\{k\}$ such that \eqref{eq:epsilonk-ukvk-minus} and \eqref{eq:rho-k-def} are satisfied.
	Moreover, one has
	\begin{equation}
		\label{eq:e-determine}
		\frac{1}{\rho_k}\bar \eta^{f, h}_{\pm,k} \rightharpoonup e_{\pm} \quad \text{ in $H^1(\Omega)$}.
		% \, \text{ in $H^1(\Omega)$ and thus strongly in $L^2(\Omega)$}.
	\end{equation}
	From \cref{prop:G-pm-belongto-Bouligand-diff}, we have
	\begin{equation}
		\label{eq:zk-z-limit}
		\bar z_{\pm, k} \to \bar z_{\pm} \quad \text{strongly in } H^1(\Omega) \cap C(\overline\Omega). 
	\end{equation}
	Moreover, the limits in \eqref{eq:zk-z-limit} imply that
	\begin{equation}
		\label{eq:zk-z-pointwise-esti}
		\bar z_{\pm}(x) - \tau_{k} \leq \bar z_{\pm, k}(x) \leq 
	%	\textcolor{blue}{%text}
		\bar z_{\pm}(x) + \tau_{k} 
	%}%
		\quad \text{for all } x \in \overline{\Omega}
	\end{equation}
	with 
	\[
		\tau_{k} := \max \{\norm{\bar z_{-, k} - \bar z_{-}}_{C(\overline{\Omega})}, \norm{\bar z_{+, k} - \bar z_{+}}_{C(\overline{\Omega})} \}  \to 0.
	\]
	Thanks to \eqref{eq:vepsilon-G-diff-minus} and \eqref{eq:vepsilon-G-diff-plus} for $u^\epsilon_{\pm} := \bar u_{\pm, k}$  and $v^\epsilon_{\pm} := \bar v_{\pm, k}$ (see, also \eqref{eq:epsilonk-ukvk-minus} and \eqref{eq:abbreviations-k}), there hold
	\begin{equation}
		\label{eq:yk-measurezero}
		\meas_{\R^2} (\{\bar y_{\pm, k} = \bar t \}) = 0 \quad \text{for all } k \geq 1.
	\end{equation}	
%	Applying \cref{prop:av-pm-limit} for $(u,v) := (\bar u, \bar v)$ and $(u^\epsilon_{\pm},v^\epsilon_{\pm}) := (\bar u_{\pm, k},\bar v_{\pm, k})$ gives
%	\begin{equation}
%		\label{eq:dk-coefficient-limit}
%		d'(\bar y_{\pm, k}) \to \bar a_{\pm} \quad \text{a.a. in} \quad \Omega.
%	\end{equation}
	Furthermore, from the definition of $\bar y_{\pm, k}^{f,h}$ in \eqref{eq:abbreviations-k}, we can write
	\begin{equation}
		\label{eq:ykh-yk-expression}
		\bar y_{\pm, k}^{f,h} - \bar y_{\pm, k} = \bar \eta^{f, h}_{\pm, k} +  \rho_k \bar z_{\pm, k}.
	\end{equation}
	By using \eqref{eq:yk-measurezero} and \eqref{eq:ykh-yk-expression}, a simple computation shows that $\bar \eta^{f, h}_{\pm, k}$ satisfies
	\begin{equation} \label{eq:eta-k-pde}
		\left \{
		\begin{aligned}
			&-\Delta \bar \eta^{f, h}_{\pm, k} = 
		%	\textcolor{blue}{%
			- [d(\bar y_{\pm,k}^{f,h})-d(\bar y_{\pm,k} + \rho_k \bar z_{\pm,k} )] - {\omega}_{\pm,k}   
		%}%
			&& \text{in} \, \Omega, \\
			& \frac{\partial \bar \eta^{f, h}_{\pm, k}}{\partial \nuv}  + b(x) \bar \eta^{f, h}_{\pm, k} =  0&& \text{on}\, \Gamma
		\end{aligned}
		\right.
	\end{equation}
	with 
	\begin{equation}
		\label{eq:omegak-def}
		%\textcolor{blue}{%
		\omega_{\pm, k} := d(\bar y_{\pm, k} + \rho_k \bar z_{\pm,k})  - d(\bar y_{\pm, k}) - \rho_k d'(\bar y_{\pm, k}) \bar z_{\pm,k}.
%	}%
	\end{equation}	
%	\textcolor{blue}{%
	From the expression of $\bar y_{\pm,k}^{f,h}$ in \eqref{eq:ykh-yk-expression} and the increasing monotonicity of $d$; see \cref{ass:d-func-nonsmooth}, we have
	\[
		[d(\bar y_{\pm,k}^{f,h})-d(\bar y_{\pm,k} + \rho_k \bar z_{\pm,k} )]\bar \eta^{f, h}_{\pm,k} \geq 0 \quad \text{a.a. in } \Omega.
	\]	
	Testing \eqref{eq:eta-k-pde} by $\bar \eta^{f, h}_{\pm,k}$ and using the Cauchy--Schwarz inequality, we  then obtain
	\begin{equation}
		\label{eq:eta-k-esti-H1}
		\norm{\bar \eta^{f, h}_{\pm,k}}_{H^1(\Omega)} \leq C \norm{{\omega}_{\pm,k}}_{L^2(\Omega)}
	\end{equation}
%}%
	for some constant $C>0$ independent of $k$, $(f,h)$, and $\sigma$.	
	For all $k \geq 1$,  define the sets
	\begin{equation}
		\label{eq:Omega-pm-sets}
		\left\{
			\begin{aligned}
				& \Omega_{\pm,k}^1 := \{ 
		%		\textcolor{blue}{%
				\bar y_{\pm, k} + \rho_k \bar z_{\pm,k}
		%	}%
				\leq \bar t \} \cap \{ \bar y_{\pm,k} < \bar t \}, \\
				&  \Omega_{\pm,k}^2 := \{ 
		%		\textcolor{blue}{%
					\bar y_{\pm, k} + \rho_k \bar z_{\pm,k}
		%		}% 
				\leq \bar t \} \cap \{ \bar y_{\pm,k} > \bar t \},\\
				& \Omega_{\pm,k}^3 := \{ 
		%		\textcolor{blue}{%
					\bar y_{\pm, k} + \rho_k \bar z_{\pm,k}
		%		}%
				> \bar t \} \cap \{ \bar y_{\pm,k} < \bar t \}, \\
				& \Omega_{\pm,k}^4 := \{
		%		\textcolor{blue}{%
					\bar y_{\pm, k} + \rho_k \bar z_{\pm,k}
		%		}%
				> \bar t \} \cap \{ \bar y_{\pm,k} > \bar t \}.
			\end{aligned}
		\right.
	\end{equation}	
	It is easy to see that the sets $\Omega_{\pm,k}^i$, $1 \leq i \leq 4$, are pairwise disjoint and there hold
	\begin{equation}
		\label{eq:sum-Omegak-i}
		\sum_{i=1}^4 \1_{\Omega_{\pm,k}^i} = 1 \quad \text{for all} \quad k \geq 1
	\end{equation}
	a.a. in $\Omega$,
	as a result of \eqref{eq:yk-measurezero}.

	\medskip

	We now prove \eqref{eq:e-minus-esti} and \eqref{eq:e-plus-esti}.

	\noindent $\star$ \emph{Step 1: Showing \eqref{eq:e-minus-esti}}.
	To this end, we deduce from \cref{lem:strong-maximum-prin} and \eqref{eq:uepsilon-admissible-minus}--\eqref{eq:vepsilon-admissible-minus} that
	\[
		\bar y_{-,k}(x) < \bar y(x) \quad \text{for all } x \in \Omega,
	\]
	which, together with \cref{lem:distance-y-epsilon-rho} and the continuity of $\bar y$ on $\overline\Omega$, yields
	\begin{equation}
		\label{eq:yk-less-y}
		\bar y(x) - C\epsilon_k \leq \bar y_{-,k}(x) \leq \bar y(x) \quad \text{for all } x \in \overline \Omega.
	\end{equation} 
	We now estimate $\omega_{-,k}$, defined in \eqref{eq:omegak-def}, on each $\Omega_{-,k}^i$ (given in \eqref{eq:Omega-pm-sets}) with $1 \leq i \leq 4$.
	
	\noindent $\bullet$ In $\Omega_{-,k}^1$, we have
%	\textcolor{blue}{%text}
	\begin{align*}
		\omega_{-,k} &= d_1(\bar y_{-,k} + \rho_k \bar z_{-,k})  - d_1(\bar y_{-,k}) - \rho_{k} d_1'(\bar y_{-,k}) \bar z_{-,k}\\
		& = \rho_k \int_0^1[ d_1'(\bar y_{-,k} +  \theta \rho_k \bar z_{-,k})  - d_1'(\bar y_{-,k}) ] \bar z_{-,k}d\theta.
	\end{align*}	
	By setting 
	\[
		\gamma_k(x,\theta) := d_1'(\bar y_{-,k}(x) + \theta \rho_k \bar z_{-,k}(x))  - d_1'(\bar y_{-,k}(x))
	\]	
	for a.a. $x \in \Omega$ and for all $0 \leq \theta \leq 1$, there thus holds
	\begin{align}
		\norm{\1_{\Omega_{-,k}^1} \omega_{-,k} }_{L^2(\Omega)} & \leq \rho_k \norm{\bar z_{-,k}}_{C(\overline\Omega)} \norm{\int_0^1 \gamma_k(\cdot, \theta) d\theta }_{L^2(\Omega)} \nonumber \\
		& \leq \rho_k \norm{\bar z_{-,k}}_{C(\overline\Omega)}  \norm{\gamma_k}_{L^2(\Omega \times (0,1))}. \label{eq:omegak-1}
	\end{align}
	Thanks to \cref{lem:distance-y-epsilon-rho} and \eqref{eq:zk-z-limit}, there exists a constant $C_0$ independent of $k$ such that
	\[
		\norm{\bar z_{-,k}}_{C(\overline\Omega)}, \norm{\bar y_{-,k}}_{C(\overline\Omega)} \leq C_0 \quad \text{for all } k \geq 1.
	\]
	Setting $C_1 := \max\{ |d_1'(\tau)| : -2C_0 \leq \tau \leq 2C_0 \}$ yields
	\[
		|\gamma_k(x, \theta)| \leq 2C_1 \quad \text{for a.a. } (x, \theta) \in \Omega \times (0,1) \quad \text{and for all } k \geq 1.
	\]
	On the other hand, in view of \cref{lem:distance-y-epsilon-rho}, one has
	$\bar y_{-,k} \to \bar y$ in $C(\overline{\Omega})$ as $k \to \infty$. From this, \eqref{eq:zk-z-limit}, and the uniform continuity of $d_1'$ on $[-2C_0,2C_0]$, there holds
	\[
		\gamma_k(x,\theta) \to 0 \quad \text{as} \quad k \to \infty 
	\]
	for a.a. $(x,\theta) \in \Omega \times (0,1)$. We then deduce from Lebesgue's dominated convergence theorem that
	\begin{equation*}
		%\label{eq:gammak-lim}
		\norm{\gamma_k}_{L^2(\Omega \times (0,1))} \to 0 \quad \text{as} \quad k \to \infty, 
	\end{equation*}
	which, along with \eqref{eq:zk-z-limit} and \eqref{eq:omegak-1},
	yields
%}%
	\begin{equation}
		\label{eq:omega-1-limit}
		\frac{1}{\rho_k} \norm{\1_{\Omega_{-,k}^1} \omega_{-,k} }_{L^2(\Omega)} \to 0 \quad \text{as} \quad k \to \infty.
	\end{equation}
	
	\noindent $\bullet$ On $\Omega_{-,k}^4$, one similarly has
	\begin{equation}
		\label{eq:omega-4-limit}
		\frac{1}{\rho_k} \norm{\1_{\Omega_{-,k}^4} \omega_{-,k} }_{L^2(\Omega)} \to 0 \quad \text{as} \quad k \to \infty.
	\end{equation}
	
	\noindent $\bullet$ In $\Omega_{-,k}^2$, there holds 
	\begin{multline*}
		\omega_{-,k} = d_1(\bar y_{-,k} + \rho_k \bar z_{-,k} )  -  d_2(\bar y_{-,k}) - \rho_k d_2'(\bar y_{-,k})  \bar z_{-,k} \\
		\begin{aligned}
			& =\underbrace{[d_1(\bar y_{-,k} + \rho_k \bar z_{-,k} ) -d_2(\bar y_{-,k}  + \rho_k \bar z_{-,k} )]}_{ =:B_{-,k}} \\
			& \qquad + \underbrace{ [d_2(\bar y_{-,k} + \rho_k \bar z_{-,k} )  -  d_2(\bar y_{-,k}) - \rho_k d_2'(\bar y_{-,k}) \bar z_{-,k}]}_{=: A_{-,k} }.
		\end{aligned} 
	\end{multline*}
	Analogous to \eqref{eq:omega-1-limit}, we derive
	\[
		\frac{1}{\rho_k} \norm{ A_{-,k} }_{L^2(\Omega)} \to 0 \quad \text{as} \quad k \to \infty.
	\]
	This implies that
	\begin{equation}
		\label{eq:omega-2-limit-esti1}
		\limsup\limits_{k \to \infty}\frac{1}{\rho_k} \norm{\1_{\Omega_{-,k}^2} \omega_{-,k} }_{L^2(\Omega)} \leq \limsup\limits_{k \to \infty}\frac{1}{\rho_k} \norm{\1_{\Omega_{-,k}^2} B_{-,k} }_{L^2(\Omega)}.
	\end{equation}	
	On the other hand, by using \eqref{eq:d12-continuity}, the triangle inequality, and the bounds of $\{\bar y_{-,k} \}$ and $\{\bar z_{-,k}\}$ in $C(\overline\Omega)$ resulting from \cref{lem:distance-y-epsilon-rho} and \eqref{eq:zk-z-limit}, as well as the continuous differentiability of $d_i$, $i=1,2$, we have
	\begin{multline}
		\label{eq:Bk-omega2}
		|B_{-,k}|  = |d_1(\bar y_{-,k} + \rho_k \bar z_{-,k} ) - d_1(\bar t)  + d_2(\bar t) - d_2(\bar y_{-,k} + \rho_k \bar z_{-,k}  )| \\
		\begin{aligned}[b]
			& \leq |d_1(\bar y_{-,k} + \rho_k \bar z_{-,k}) - d_1(\bar t)| + | d_2(\bar t) - d_2(\bar y_{-,k} + \rho_k \bar z_{-,k}  )| \\
			& \leq C_{f,h}|\bar y_{-,k} + \rho_k \bar z_{-,k} - \bar t|  = C_{f,h}(\bar t- \bar y_{-,k} - \rho_k \bar z_{-,k}) \\
			& \leq -\rho_k C_{f,h}\bar z_{-,k}
			%& = C_{f,h} (\bar t- \bar y_{-,k} - \bar \eta^{f, h}_{-,k} -  \rho_k \bar z_{-,k})   \leq - C_{f,h} (\rho_k \bar z_{-,k} + \bar \eta^{f, h}_{-,k})
		\end{aligned}
	\end{multline}	
	a.a. in $\Omega^2_{-,k}$ and for some positive constant $C_{f,h}$ independent of $k$,
	where we have used the definition of $\Omega^2_{-,k}$ to obtain the last  identity and the last inequality.
	Again, exploiting the definition of $\Omega^2_{-,k}$,  we then conclude from 
	\eqref{eq:zk-z-pointwise-esti} and \eqref{eq:yk-less-y} that
	\begin{align*}
		\Omega_{-,k}^2 & = \{\bar y_{-,k} > \bar t \} \cap \{ \bar y_{-,k}  + \rho_k \bar z_{-,k} \leq \bar t \}\\
		& \subset \{ \bar y > \bar t \} \cap \{ \bar y - C \epsilon_k \leq \bar t - \rho_k (\bar z_{-} - \tau_k) \} \\
		& = \{ \bar t < \bar y \leq \bar t + C \epsilon_k - \rho_k (\bar z_{-} - \tau_k) \}
	\end{align*}
	for all $k \geq 1$. 
	From this and the limits $\tau_k, \epsilon_k, \rho_k \to 0^+$, one has
	\[
		\1_{\Omega_{-,k}^2} \to 0 \quad \text{a.a. in $\Omega$ as $k \to \infty$. }
	\]
	This, \eqref{eq:Bk-omega2}, \eqref{eq:zk-z-limit}, and Lebesgue's dominated convergence theorem thus imply
	\[
		\frac{1}{\rho_k}\1_{\Omega_{-,k}^2}|B_{-,k}| \to 0 \quad \text{in} \quad L^2(\Omega).
	\]
	Combining this with \eqref{eq:omega-2-limit-esti1} yields
	\begin{equation}
		\label{eq:omega-2-limit}
		\frac{1}{\rho_k} \norm{\1_{\Omega_{-,k}^2} \omega_{-,k} }_{L^2(\Omega)} \to 0 \quad \text{as} \quad k \to \infty.
	\end{equation}

	\noindent $\bullet$ In $\Omega_{-,k}^3$, we now have
	\begin{multline} \label{eq:omega3-Bk-Dk}
		\omega_{-,k} = d_2(\bar y_{-,k} + \rho_k \bar z_{-,k})  - d_1(\bar y_{-,k}) - \rho_k d_1'(\bar y_{-,k}) \bar z_{-,k} \\
		\begin{aligned}
			& = \underbrace{[d_2(\bar y_{-,k} + \rho_k \bar z_{-,k}) - d_1(\bar y_{-,k} + \rho_k \bar z_{-,k})]}_{= -B_{-,k}} \\
			& \qquad + \underbrace{ [d_1(\bar y_{-,k} + \rho_k \bar z_{-,k}) - d_1(\bar y_{-,k}) - \rho_k d_1'(\bar y_{-,k}) \bar z_{-,k}]}_{=: D_{-,k} }.
		\end{aligned} 
	\end{multline}
	Similar to $A_{-,k}$, one has 
	\begin{equation}
		\label{eq:Dk-limit}
		\frac{1}{\rho_k} \norm{ D_{-,k} }_{L^2(\Omega)} \to 0 \quad \text{as} \quad k \to \infty.
	\end{equation}
	We now analyze $B_{-,k}$ differently from \eqref{eq:Bk-omega2}. In fact, by employing \eqref{eq:d12-continuity}, we have
	\begin{multline} \label{eq:Bk-omega3-esti}
		B_{-,k} = [d_1'(\bar t) - d_2'(\bar t)]\underbrace{(\bar y_{-,k} + \rho_k \bar z_{-,k}- \bar t)}_{=:E_{-,k}}  \\
		\begin{aligned}[b]
			& - [d_2(\bar y_{-,k} + \rho_k \bar z_{-,k} ) - d_2(\bar t) - d_2'(\bar t)(\bar y_{-,k}+ \rho_k \bar z_{-,k}- \bar t)] \\
			& +[d_1(\bar y_{-,k} + \rho_k \bar z_{-,k} ) - d_1(\bar t) - d_1'(\bar t)(\bar y_{-,k} + \rho_k \bar z_{-,k}- \bar t)].
		\end{aligned}
	\end{multline}
	Besides, a Taylor's expansion gives
	\begin{multline*}
		|d_i(\bar y_{-,k} + \rho_k \bar z_{-,k} ) - d_i(\bar t) - d_i'(\bar t)(\bar y_{-,k} + \rho_k \bar z_{-,k}- \bar t)| \\
		\begin{aligned}
			& = \left| \int_0^1 (d_i'(\bar t + \theta (\bar y_{-,k} + \rho_k \bar z_{-,k}- \bar t) ) - d_i'(\bar t))( \bar y_{-,k} + \rho_k \bar z_{-,k}- \bar t) d\theta \right| \\
			& \leq |\bar y_{-,k} + \rho_k \bar z_{-,k}- \bar t|  \int_0^1 |d_i'(\bar t + \theta (\bar y_{-,k} + \rho_k \bar z_{-,k}- \bar t) ) - d_i'(\bar t)| d\theta
		\end{aligned}
	\end{multline*}
	a.a. in $\Omega$
	with $i =1,2$. From the definition of $\Omega_{-,k}^3$, there holds
	\begin{equation}
		\label{eq:omega3-esti}
		0 < \bar y_{-,k} + \rho_k \bar z_{-,k}- \bar t < \rho_k \bar z_{-,k} \quad \text{a.a. in} \quad \Omega_{-,k}^3.
	\end{equation}
	This, along with the limit \eqref{eq:zk-z-limit},  shows
	\[
		\1_{\Omega_{-,k}^3}|\bar y_{-,k} + \rho_{k} \bar z_{-,k}- \bar t| \to 0 \quad \text{a.a. in} \quad \Omega.
	\]
	There then holds
	\begin{multline*}
		\frac{1}{\rho_k}\1_{\Omega_{-,k}^3}|d_i(\bar y_{-,k} + \rho_k \bar z_{-,k} ) - d_i(\bar t) - d_i'(\bar t)(\bar y_{-,k} + \rho_k \bar z_{-,k}- \bar t)| \\
		\leq \1_{\Omega_{-,k}^3} \bar z_{-,k} \int_0^1 |d_i'(\bar t + \theta (\bar y_{-,k} + \rho_k \bar z_{-,k}- \bar t) ) - d_i'(\bar t)| d\theta \to 0
	\end{multline*}
	a.a. in $\Omega$ with $i=1,2$. From this and Lebesgue's dominated convergence theorem, we deduce from \eqref{eq:Bk-omega3-esti} that
	\begin{equation}
		\label{eq:Bk-Ek-limit}
	%	\textcolor{blue}{
			\lim\limits_{k \to \infty}
	%	}%
		\frac{1}{\rho_k} \norm{\1_{\Omega_{-,k}^3} \{B_{-,k} -[d_1'(\bar t) - d_2'(\bar t)] E_{-,k}\} }_{L^2(\Omega)} = 0.
	\end{equation}
	Combining this with 
	\eqref{eq:omega3-Bk-Dk} and  \eqref{eq:Dk-limit} yields
	\begin{equation*} 
	%	\textcolor{blue}{
			\lim\limits_{k \to \infty}
	%	}%
		\frac{1}{\rho_k} \norm{\1_{\Omega_{-,k}^3} \{\omega_{-,k} +[d_1'(\bar t) - d_2'(\bar t)] E_{-,k}\} }_{L^2(\Omega)} = 0,
	\end{equation*}	
	which, along with limits \eqref{eq:omega-1-limit},  \eqref{eq:omega-4-limit}, and \eqref{eq:omega-2-limit}, as well as the identity in \eqref{eq:sum-Omegak-i}, gives
	\begin{equation}
		\label{eq:omegak-limit-all}
	%	\textcolor{blue}{
			\lim\limits_{k \to \infty}
	%	}%
		\frac{1}{\rho_k} \norm{\omega_{-,k} +\1_{\Omega_{-,k}^3}[d_1'(\bar t) - d_2'(\bar t)] E_{-,k} }_{L^2(\Omega)} = 0.
	\end{equation}	
%	\textcolor{blue}{%text}
	From this, \eqref{eq:e-determine}, and \eqref{eq:eta-k-esti-H1}, we deduce from the weak lower semicontinuity of the norm in $H^1(\Omega)$ that
	\begin{equation}
		\label{eq:e-minus-H1-esti}
		\norm{e_{-}}_{H^1(\Omega)} \leq C |d_1'(\bar t) - d_2'(\bar t)|\limsup\limits_{k \to \infty}\frac{1}{\rho_k}\norm{ \1_{ \Omega_{-,k}^3}  E_{-,k}}_{L^2(\Omega)}.
	\end{equation}	
%}%
	Besides, we obtain from \eqref{eq:omega3-esti} and the definition of $E_{-,k}$ in \eqref{eq:Bk-omega3-esti} that
	\begin{equation} \label{eq:Ek-esti-pointwise}
		0 \leq \frac{1}{\rho_k} \1_{ \Omega_{-,k}^3} E_{-,k} \leq  \1_{ \Omega_{-,k}^3}\bar z_{-,k} \quad \text{a.a. in } \Omega.
	\end{equation}
%	
%	a.a. in $\Omega_{-,k}^3$,
%	which, as well as  \eqref{eq:zk-z-limit}, shows the boundedness in $L^\infty(\Omega)$ of $\{\frac{1}{\rho_k} \1_{\Omega_{-,k}^3} E_{-,k} \}$. 
%	By using a subsequence, denoted in the same way, we can assume that
%	\begin{equation}
%		\label{eq:chi-def}
%		\frac{1}{\rho_k} \1_{\Omega_{-,k}^3} E_{-,k} \overset{*}{\rightharpoonup} \hat \chi_{-}   \quad \text{in } L^\infty(\Omega)
%	\end{equation}
%	for some $\hat \chi_{-} \in L^\infty(\Omega)$. 
%	We now combine this with the limit in \eqref{eq:omegak-limit-all} to have
%	\[
%		\frac{1}{\rho_k}  \omega_{-,k} \rightharpoonup -[d_1'(\bar t) - d_2'(\bar t)] \hat \chi_{-} \quad \text{weakly in } L^2(\Omega). 
%	\]
%	From this, we now divide \eqref{eq:eta-k-pde} by $\rho_k$ and then let $k \to \infty$ in the obtained equation  as well as exploit the limits in \eqref{eq:e-determine} and in \eqref{eq:dk-coefficient-limit} to have \eqref{eq:W-set-minus-pde}. 
	On the other hand, we can see from \eqref{eq:zk-z-pointwise-esti}  and \eqref{eq:yk-less-y} as well as from the definition of $\Omega_{-,k}^3$ that
	\begin{align*}
			\Omega_{-,k}^3 & = \{ \bar t - \rho_k \bar z_{-,k} < \bar y_{-,k} < \bar t \} \cap \{  \bar z_{-,k} > 0 \} \\
				& \subset \{ \bar t - \rho_k (\bar z_{-} + \tau_k) < \bar y < \bar t + C \epsilon_k \} \cap \{   \bar z_{-} > - \tau_k \}.
	\end{align*}		
	This, along with \eqref{eq:Ek-esti-pointwise}, implies that
	\begin{align*} 	
		0 \leq & \frac{1}{\rho_k} \1_{\Omega_{-,k}^3} E_{-,k} 
		%\begin{aligned}[b]
			\leq  \1_{ \{ \bar t - \rho_k (\bar z_{-} + \tau_k) < \bar y < \bar t + C \epsilon_k \}} \1_{  \{  \bar z_{-} > - \tau_k \}} \bar z_{-,k} =: m_k %\label{eq:E-rho-k-esti}
		%\end{aligned}
	\end{align*}
	a.a. in $\Omega$. 	
%	\textcolor{blue}{%
		Consequently, there holds
		\[
			\limsup\limits_{k \to \infty}\frac{1}{\rho_k}  \norm{\1_{\Omega_{-,k}^3} E_{-,k} }_{L^2(\Omega)} \leq \lim\limits_{k \to \infty} \norm{m_k}_{L^2(\Omega)} = \norm{\1_{\{ \bar y = \bar t \}} \1_{ \{\bar z_{-} \geq 0\}} \bar z_{-}}_{L^2(\Omega)},
		\]
		where we have used the limits $\rho_k, \epsilon_k, \tau_k \to 0$ and \eqref{eq:zk-z-limit} as well as Lebesgue's dominated convergence theorem to derive the last limit. 
		From this and \eqref{eq:e-minus-H1-esti}, we have \eqref{eq:e-minus-esti}.
%	}%
%	
%	
%	This, as well as the , yields
%	\[
%	0 \leq \limsup\limits_{k \to \infty}\frac{1}{\rho_k}\1_{ \Omega_{-,k}^3}  E_{-,k} \leq \1_{\{ \bar y = \bar t \}} \1_{ \{\bar z_{-} \geq 0\}} \bar z_{-}
%	\]
%	a.a. in $\Omega$. 
%	
%	
%	From this and \eqref{eq:omegak-limit-all-hat}, there holds	
%	\[
%	\limsup\limits_{k \to \infty}\frac{1}{\rho_k}  \norm{\hat{\omega}_{-,k}}_{L^2(\Omega)} \leq |d_1'(\bar t) - d_2'(\bar t)|\norm{\1_{\{ \bar y = \bar t \}} \1_{ \{\bar z_{-} \geq 0\}} \bar z_{-}  }_{L^2(\Omega)}.
%	\]	
%	
%	Letting $k \to \infty$ and exploiting the limits $\tau_k \to 0$, $\epsilon_k \to 0$,  $\rho_k \to 0$,   \eqref{eq:zk-z-limit}, and \eqref{eq:chi-def}, we have
%	\begin{equation*}
%		0 \leq \chi_{-} \leq \1_{\{ \bar y = \bar t\}} \1_{\{ e_{-} + \bar z_{-} \geq 0 \}}(e_{-} +\bar z_{-})
%	\end{equation*}
%	a.a. in $\Omega$, which is identical to \eqref{eq:W-minus-elemement}.

	\medskip

	\noindent $\star$ \emph{Step 2: Showing \eqref{eq:e-plus-esti}}. 	
	The proof for \eqref{eq:e-plus-esti} is similar to that for \eqref{eq:e-minus-esti} with some slight modifications as shown below.  We first have from \eqref{eq:uepsilon-admissible-plus}--\eqref{eq:vepsilon-admissible-plus} and from \cref{lem:strong-maximum-prin} that
	\begin{equation}
		\label{eq:yk-greater-y-plus}
		\bar y(x) + C\epsilon_k \geq \bar y_{+,k}(x) \geq \bar y(x) \quad \text{for all } x \in \overline \Omega,
	\end{equation} 
	similar to \eqref{eq:yk-less-y}. We now estimate the term $\omega_{+,k}$ defined in \eqref{eq:omegak-def} analogously to $\omega_{-,k}$.

	\noindent $\bullet$ On $\Omega_{+,k}^1$, analogous to \eqref{eq:omega-1-limit}, there holds
	\begin{equation}
		\label{eq:omega-1-limit-plus}
		\frac{1}{\rho_k} \norm{\1_{\Omega_{+,k}^1} \omega_{+,k} }_{L^2(\Omega)} \to 0 \quad \text{as} \quad k \to \infty.
	\end{equation}

	\noindent $\bullet$ On $\Omega_{+,k}^4$, one  has
	\begin{equation}
		\label{eq:omega-4-limit-plus}
		\frac{1}{\rho_k} \norm{\1_{\Omega_{+,k}^4} \omega_{+,k} }_{L^2(\Omega)} \to 0 \quad \text{as} \quad k \to \infty,
	\end{equation}
	corresponding to \eqref{eq:omega-4-limit}.

	\noindent $\bullet$ On $\Omega_{+,k}^3$, by using \eqref{eq:yk-greater-y-plus} and the same argument as for $\Omega_{-,k}^2$ we arrive at
	\begin{equation}
		\label{eq:omega-3-limit-plus}
		\frac{1}{\rho_k} \norm{\1_{\Omega_{+,k}^3} \omega_{+,k} }_{L^2(\Omega)} \to 0 \quad \text{as} \quad k \to \infty.
	\end{equation}
	This is comparable with \eqref{eq:omega-2-limit}.

	\noindent $\bullet$ On $\Omega_{+,k}^2$, we will exploit the technique similar to that for $\Omega_{-,k}^3$. Analogous to \eqref{eq:omega3-Bk-Dk}, there holds 
	\begin{align} 
		\omega_{+,k} & = d_1(\bar y_{+,k} + \rho_k \bar z_{+,k})  - d_2(\bar y_{+,k}) - \rho_k d_2'(\bar y_{+,k}) \bar z_{+,k} \notag \\
	%\begin{aligned}
			& = \underbrace{[d_1(\bar y_{+,k} + \rho_k \bar z_{+,k}) - d_2(\bar y_{+,k} + \rho_k \bar z_{+,k})]}_{=: B_{+,k}} \notag 
			\\ 
			& \qquad + \underbrace{ [d_2(\bar y_{+,k} + \rho_k \bar z_{+,k}) - d_2(\bar y_{+,k}) - \rho_k d_2'(\bar y_{+,k}) \bar z_+,k]}_{=: D_{+,k} } \label{eq:omega2-Bk-Dk-plus}
	%	\end{aligned} 
	\end{align}	
	a.a. on $\Omega_{+,k}^2$,
	as a result of \eqref{eq:omegak-def} and \eqref{eq:Omega-pm-sets}. For $D_{+,k}$, one has 
	\begin{equation}
		\label{eq:Dk-limit-plus}
		\frac{1}{\rho_k} \norm{ D_{+,k} }_{L^2(\Omega)} \to 0 \quad \text{as} \quad k \to \infty,
	\end{equation}
	analogous to \eqref{eq:Dk-limit}. Moreover, in the manner of \eqref{eq:Bk-omega3-esti}, one has
	\begin{multline*} 
		B_{+,k} = [d_1'(\bar t) - d_2'(\bar t)]\underbrace{(\bar y_{+,k} + \rho_k \bar z_{+,k}- \bar t)}_{=:E_{+,k}}  \\
		\begin{aligned}[b]
			& - [d_2(\bar y_{+,k} + \rho_k \bar z_{+,k} ) - d_2(\bar t) - d_2'(\bar t)(\bar y_{+,k}+ \rho_k \bar z_{+,k}- \bar t)] \\
			& +[d_1(\bar y_{+,k} + \rho_k \bar z_{+,k} ) - d_1(\bar t) - d_1'(\bar t)(\bar y_{+,k} + \rho_k \bar z_{+,k}- \bar t)],
		\end{aligned}
	\end{multline*}	
	which yields the following limit related to \eqref{eq:Bk-Ek-limit},
	\begin{equation*}
		%\label{eq:Bk-Ek-limit-plus}
%		\textcolor{blue}{
			\lim\limits_{k \to \infty}
%		}%
		\frac{1}{\rho_k} \norm{\1_{\Omega_{+,k}^2} \{B_{+,k} -[d_1'(\bar t) - d_2'(\bar t)] E_{+,k}\} }_{L^2(\Omega)} = 0.
	\end{equation*}	
%	\textcolor{blue}{%
	From this, \eqref{eq:omega-1-limit-plus}--\eqref{eq:Dk-limit-plus}, and \eqref{eq:sum-Omegak-i}, we arrive at 
	\begin{equation*}
		%\label{eq:omegak-limit-all-plus}
%		\textcolor{blue}{
			\lim\limits_{k \to \infty}
%		}%
		\frac{1}{\rho_k} \norm{\omega_{+,k} - \1_{\Omega_{+,k}^2}[d_1'(\bar t) - d_2'(\bar t)] E_{+,k} }_{L^2(\Omega)} = 0.
	\end{equation*}	
	Similarly to \eqref{eq:e-minus-H1-esti}, one therefore has 
	\begin{equation*}
		%\label{eq:e-plus-H1-esti}
		\norm{e_{+}}_{H^1(\Omega)} \leq C |d_1'(\bar t) - d_2'(\bar t)|\limsup\limits_{k \to \infty}\frac{1}{\rho_k}\norm{ \1_{ \Omega_{+,k}^2}  E_{+,k}}_{L^2(\Omega)}.
	\end{equation*}	
	This, together with the same argument using in the end of Step 1, gives \eqref{eq:e-plus-esti}.
\end{proof}

\medskip 
The following proposition states the  optimality conditions in terms of the left and right Bouligand generalized derivatives $\bar G_{\pm}$, which belong to  $\partial_B S(\bar u, \bar v)$, for the problem \eqref{eq:P}. 
%\textcolor{blue}{%
	In order to prove these optimality conditions, we shall exploit the definition of the sets $W_{\pm}^\sigma$ in \eqref{eq:W-pm-sets}.
%These optimality conditions are equivalent to the B-stationary, as explained in \cref{rem:primal-thm-B-stationary} below.	
%}%
%%%% The primal OCs
\begin{proposition}%[An extension of B-stationarity]
	\label{prop:primal-OCs}
	Let $(\bar u, \bar v)$ be a local minimizer of \eqref{eq:P} and $\bar G_{\pm}$  defined  in \eqref{eq:G-pm-operator}. 
%	\textcolor{blue}{%
		Then, there exists a constant $C_* >0$ independent of $(\bar u, \bar v)$ such that,	for any $\sigma \geq 0$ and
%	}%
	$(u,v) \in U_{ad}$, there hold:
	\begin{enumerate}[label=(\alph*)]
		\item \label{item:primal-OCs-minus}
		For $e_{-}$ 
%		\textcolor{blue}{%
			being the unique element in
%		}%
		$W_{-}^\sigma(\bar u, \bar v; u - \bar u, v - \bar v)$,
		\begin{multline}
			\label{eq:primal-OCs-minus}
			\scalarprod{\bar y - y_\Omega}{\bar G_{-}(u- \bar u, v - \bar v)}_{\Omega} +\alpha \scalarprod{\bar y - y_\Gamma}{\bar G_{-}(u- \bar u, v - \bar v)}_{\Gamma} + \kappa_\Omega \scalarprod{\bar u}{u- \bar u}_{\Omega} \\
			 + \kappa_\Gamma \scalarprod{\bar v}{v - \bar v}_{\Gamma} \geq 
	%		 \textcolor{blue}{%
			 - C_* \sigma
	%		}%			 
			  - \left[ \scalarprod{\bar y - y_\Omega}{e_{-}}_{\Omega} + \alpha \scalarprod{\bar y - y_\Gamma}{e_{-}}_{\Gamma} \right];
		\end{multline}
%		or, equivalently,
%		\begin{multline}
%			\label{eq:primal-OCs-minus-adjoint}
%			\scalarprod{\bar p_{-} + \kappa_\Omega \bar u}{u - \bar u}_{\Omega} + \scalarprod{\bar p_{-} + \kappa_\Gamma \bar v}{ v - \bar v }_{\Gamma} \geq - [d_1'(\bar t) - d_2'(\bar t)]\scalarprod{\bar p_{-}}{\chi_{-}}_{\Omega}
%		\end{multline}
%		with $\bar p_{-}$  satisfying 
%		\begin{equation} \label{eq:adjoint-minus-primal}
%			\left \{
%			\begin{aligned}
%				-\Delta \bar p_{-} + \bar a_{-} \bar p_{-} & =\bar y - y_\Omega && \text{in} \, \Omega, \\
%				\frac{\partial \bar p_{-}}{\partial \nuv}  + b(x)\bar p_{-} &= \alpha(\bar y - y_\Gamma)&& \text{on}\, \Gamma
%			\end{aligned}
%			\right.
%		\end{equation}
%		and		$\chi_{-}$, together with $e_{-}$, fulfilling \eqref{eq:W-set-minus-pde} and \eqref{eq:W-minus-elemement} associated with $\bar z_{-} := \bar G_{-}(u- \bar u, v - \bar v)$.
		
		\item \label{item:primal-OCs-plus} If, in addition, $(\bar u,\bar v) \neq (u_b, v_b)$, then one further has
		\begin{multline*}
			%\label{eq:primal-OCs-plus}
			\scalarprod{\bar y - y_\Omega}{\bar G_{+}(u- \bar u, v - \bar v)}_{\Omega} +\alpha \scalarprod{\bar y - y_\Gamma}{\bar G_{+}(u- \bar u, v - \bar v)}_{\Gamma} + \kappa_\Omega \scalarprod{\bar u}{u- \bar u}_{\Omega} \\
			+ \kappa_\Gamma \scalarprod{\bar v}{v - \bar v}_{\Gamma} \geq 
		%	 \textcolor{blue}{%
				- C_* \sigma
		%	}%	
		 - \left[ \scalarprod{\bar y - y_\Omega}{e_{+}}_{\Omega} + \alpha \scalarprod{\bar y - y_\Gamma}{e_{+}}_{\Gamma} \right]
		\end{multline*}
		for  $e_{+}$  
%				\textcolor{blue}{%
					being the unique element in $W_{+}^\sigma(\bar u,\bar v; u - \bar u, v - \bar v)$.
%		}%
%		or, equivalently,
%		\begin{multline}
%			\label{eq:primal-OCs-plus-adjoint}
%			\scalarprod{\bar p_{+} + \kappa_\Omega \bar u}{u - \bar u}_{\Omega} + \scalarprod{\bar p_{+} + \kappa_\Gamma \bar v}{ v - \bar v }_{\Gamma}
%			\geq [d_1'(\bar t) - d_2'(\bar t)]\scalarprod{\bar p_{+}}{\chi_{+}}_{\Omega}  
%		\end{multline}
%		for  $e_{+}$  
%		\textcolor{blue}{%
%			being the unique element in
%		}%
%		$W_{+}(\bar u,\bar v; u - \bar u, v - \bar v)$,
%		where $\bar p_{+}$  fulfills 
%		\begin{equation} \label{eq:adjoint-plus-primal}
%			\left \{
%			\begin{aligned}
%				-\Delta \bar p_{+} + \bar a_{+} \bar p_{+} & =\bar y - y_\Omega && \text{in} \, \Omega, \\
%				\frac{\partial \bar p_{+}}{\partial \nuv}  + b(x)\bar p_{+} &= \alpha(\bar y - y_\Gamma)&& \text{on}\, \Gamma
%			\end{aligned}
%			\right.
%		\end{equation}
%		and		$\chi_{+}$, as well as $e_{+}$, satisfies \eqref{eq:W-set-plus-pde} and \eqref{eq:W-plus-elemement} corresponding to $\bar z_{+} :=\bar G_{+}(u- \bar u, v - \bar v)$.
	\end{enumerate}
\end{proposition}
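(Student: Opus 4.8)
The plan is to test the local optimality of $(\bar u,\bar v)$ against perturbations of the G\^{a}teaux points $\bar w_{-,k}=(\bar u_{-,k},\bar v_{-,k})$ in the direction $(u-\bar u,v-\bar v)$, to expand $J$ there to second order, and to pass to the limit $k\to\infty$; assertion~\ref{item:primal-OCs-plus} will follow by the same reasoning with ``$+$'' replacing ``$-$'' throughout (this being available precisely because $(\bar u,\bar v)\neq(u_b,v_b)$). Fix $\sigma\ge0$, let $\{\epsilon_k\}$, $\{\rho_k\}$ be as in \eqref{eq:epsilonk-ukvk-minus}--\eqref{eq:rho-k-def}, put $(f,h):=(u-\bar u,v-\bar v)$, and set $(u_k,v_k):=(\bar u_{-,k}+\rho_k f,\ \bar v_{-,k}+\rho_k h)$. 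First I would verify $(u_k,v_k)\in U_{ad}$ for $k$ large: from \eqref{eq:uvk-bar-tilde-minus-expression}, $u_k=(1-\rho_k)\bar u+\rho_k u-\epsilon_k\1_\Omega\le u_b$ as soon as $\rho_k<1$ (trivially when $u_b=\infty$), and similarly $v_k\le v_b$. Since moreover $\norm{u_k-\bar u}_{L^2(\Omega)}+\norm{v_k-\bar v}_{L^2(\Gamma)}\le\epsilon_k(\norm{\1_\Omega}_{L^2(\Omega)}+\norm{\1_\Gamma}_{L^2(\Gamma)})+\rho_k(\norm{f}_{L^2(\Omega)}+\norm{h}_{L^2(\Gamma)})\to0$, the pair $(u_k,v_k)$ eventually lies in the $L^2$-neighbourhood on which $J$ attains its minimum at $(\bar u,\bar v)$; hence $J(u_k,v_k)\ge J(\bar u,\bar v)$, so that
\[
  \tfrac1{\rho_k}\big[J(u_k,v_k)-J(\bar w_{-,k})\big]\ \ge\ \tfrac1{\rho_k}\big[J(\bar u,\bar v)-J(\bar w_{-,k})\big]\qquad\text{for all large }k .
\]

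To pass to the limit on the left-hand side, I would apply the second-order difference identity of \cref{lem:objective-func-difference-G-der}~\ref{item:difference} to the pair $\big((u_k,v_k),\bar w_{-,k}\big)$, divide by $\rho_k$, and use $u_k-\bar u_{-,k}=\rho_k f$, $v_k-\bar v_{-,k}=\rho_k h$ together with the splitting $S(u_k,v_k)-\bar y_{-,k}=\rho_k\bar z_{-,k}+\bar\eta_{-,k}^{f,h}$ from \eqref{eq:abbreviations-k}; by \cref{lem:distance-y-epsilon-rho} all purely quadratic contributions are $O\big(\rho_k(\norm{f}_{L^2(\Omega)}+\norm{h}_{L^2(\Gamma)})^2\big)$ and drop out in the limit. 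What remains are the inner products of $\bar y_{-,k}-y_\Omega$ and $\alpha(\bar y_{-,k}-y_\Gamma)$ against $\bar z_{-,k}$ and against $\tfrac1{\rho_k}\bar\eta_{-,k}^{f,h}$, plus $\kappa_\Omega\scalarprod{\bar u_{-,k}}{f}_\Omega+\kappa_\Gamma\scalarprod{\bar v_{-,k}}{h}_\Gamma$. Passing to the limit by means of $\bar y_{-,k}\to\bar y$ in $H^1(\Omega)\cap C(\overline\Omega)$ and $\bar u_{-,k}\to\bar u$, $\bar v_{-,k}\to\bar v$ in $L^2$ (\cref{lem:distance-y-epsilon-rho}), the strong operator convergence $\bar z_{-,k}=S'(\bar w_{-,k})(f,h)\to\bar G_-(f,h)$ from \cref{prop:G-pm-belongto-Bouligand-diff}~\ref{item:G-minus} together with \eqref{eq:G-pm-bar-Guv}, and the strong $H^1(\Omega)$-convergence $\tfrac1{\rho_k}\bar\eta_{-,k}^{f,h}\to e_-$ (the unique element of $W_{-}^\sigma(\bar u,\bar v;f,h)$, cf. \eqref{eq:eta-k-minus-limit}) — boundedness of the trace $H^1(\Omega)\to L^2(\Gamma)$ and $C(\overline\Omega)$-convergence taking care of the boundary terms — the left-hand side tends to
\begin{multline*}
  \scalarprod{\bar y-y_\Omega}{\bar G_-(u-\bar u,v-\bar v)}_\Omega+\alpha\scalarprod{\bar y-y_\Gamma}{\bar G_-(u-\bar u,v-\bar v)}_\Gamma+\kappa_\Omega\scalarprod{\bar u}{u-\bar u}_\Omega \\
  {}+\kappa_\Gamma\scalarprod{\bar v}{v-\bar v}_\Gamma+\scalarprod{\bar y-y_\Omega}{e_-}_\Omega+\alpha\scalarprod{\bar y-y_\Gamma}{e_-}_\Gamma .
\end{multline*}

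For the right-hand side I would again invoke \cref{lem:objective-func-difference-G-der}~\ref{item:difference}, now for $\big((\bar u,\bar v),\bar w_{-,k}\big)$, using $\bar u-\bar u_{-,k}=\epsilon_k\tilde u_{-}$, $\bar v-\bar v_{-,k}=\epsilon_k\tilde v_{-}$ (see \eqref{eq:uvk-bar-tilde-minus-expression}): the quadratic terms are now of order $\epsilon_k\cdot\tfrac{\epsilon_k}{\rho_k}\to0$, while for the linear terms I would use $\tfrac1{\rho_k}(\bar u-\bar u_{-,k})=\tfrac{\epsilon_k}{\rho_k}\tilde u_{-}\to\sigma\tilde u_{-}$ (and likewise on $\Gamma$) from \eqref{eq:rho-k-def} together with the limit $\tfrac1{\rho_k}\big[S(\bar u,\bar v)-S(\bar w_{-,k})\big]\to\sigma\,\bar G_-(\tilde u_{-},\tilde v_{-})$ in $H^1(\Omega)$, which is exactly the computation already performed in the proof of \cref{lem:W-pm-sets-characterization}~\ref{item:W-set-minus-char}. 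Hence the right-hand side tends to $\sigma\ell_-$, where $\ell_-:=\scalarprod{\bar y-y_\Omega}{\bar G_-(\tilde u_{-},\tilde v_{-})}_\Omega+\alpha\scalarprod{\bar y-y_\Gamma}{\bar G_-(\tilde u_{-},\tilde v_{-})}_\Gamma+\kappa_\Omega\scalarprod{\bar u}{\tilde u_{-}}_\Omega+\kappa_\Gamma\scalarprod{\bar v}{\tilde v_{-}}_\Gamma$. Letting $k\to\infty$ in the displayed inequality, moving the two $e_-$-terms to the right, and using $\sigma\ell_-\ge-|\ell_-|\,\sigma$ yields \eqref{eq:primal-OCs-minus} for any $C_*\ge|\ell_-|$. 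Assertion~\ref{item:primal-OCs-plus} is obtained by the identical scheme with the ``$+$''-sequences of \eqref{eq:epsilonk-ukvk-minus} (via \eqref{eq:uvk-bar-tilde-plus-expression}, \cref{prop:G-pm-belongto-Bouligand-diff}~\ref{item:G-plus}, \cref{lem:W-pm-sets-characterization}~\ref{item:W-set-plus-char}, and the nonnegativity of $\tilde u_{+},\tilde v_{+}$ in \eqref{eq:variational-func-plus} combined with \eqref{eq:S-der-G-identity}), producing the analogous bound with a constant $m_+$; one then sets $C_*:=\max\{|\ell_-|,|m_+|\}$. That $C_*$ may be chosen independently of $(\bar u,\bar v)$ follows from the uniform operator bound of \cref{prop:Bouligand-simple-properties}~\ref{item:Bouligand-bounded}, the fact that $\tilde u_{\pm},\tilde v_{\pm}$ are controlled by the data, and the a priori estimate \eqref{eq:apriori-esti-state} for $\bar y$ (granted a bound $\norm{\bar u}_{L^2(\Omega)}+\norm{\bar v}_{L^2(\Gamma)}\le L$ as in \eqref{eq:uv-bound-assumption}).

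The main obstacle I foresee is the limit passage in the ``drift'' term $\tfrac1{\rho_k}[J(\bar u,\bar v)-J(\bar w_{-,k})]$: one must control simultaneously the moving base point $\bar w_{-,k}$, the two a priori unrelated scales $\epsilon_k$ and $\rho_k$ (linked only through $\epsilon_k/\rho_k\to\sigma$), and the fact that $S$ is merely directionally (not G\^{a}teaux) differentiable at $(\bar u,\bar v)$. This is exactly where the G\^{a}teaux-sequence construction of \cref{lem:countable-sets} and the identification of $W_{\pm}^\sigma(\bar u,\bar v;\cdot,\cdot)$ in \cref{lem:W-pm-sets-characterization} do the work, ensuring that the relevant difference quotients of $S$ and of $S'$ converge strongly rather than merely weakly.
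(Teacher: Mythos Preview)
Your proof is correct and follows essentially the same route as the paper: perturb the G\^{a}teaux points $\bar w_{-,k}$ in the admissible direction $(u-\bar u,v-\bar v)$, invoke local optimality, expand $J$ via \cref{lem:objective-func-difference-G-der}\ref{item:difference}, and pass to the limit using \cref{prop:G-pm-belongto-Bouligand-diff} and the identification of $e_-$ in \cref{lem:W-pm-sets-characterization}. The only (inessential) difference is in the drift term $\tfrac{1}{\rho_k}[J(\bar u,\bar v)-J(\bar w_{-,k})]$: the paper bounds its absolute value crudely by $C_*(\epsilon_k^2+\epsilon_k)/\rho_k\to C_*\sigma$, whereas you compute its exact limit $\sigma\ell_-$ before estimating---both yield the stated inequality, and the paper obtains the bound on $\|\bar u\|,\|\bar v\|$ (hence on $C_*$) directly from the minimality inequality $J(\bar u,\bar v)\le J(u_b,v_b)$ rather than assuming \eqref{eq:uv-bound-assumption}.
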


\begin{proof}
	Since the argument for \ref{item:primal-OCs-plus} is analogous to that of \ref{item:primal-OCs-minus}, we now provide the detailed argument showing \ref{item:primal-OCs-minus} only. To this end, we first observe that
%	\textcolor{blue}{%
	\begin{equation}
		\label{eq:uv-bar-bound}
		\frac{\kappa_\Omega}{2}\norm{\bar u}_{L^2(\Omega)}^2 + \frac{ \kappa_\Gamma}{2} \norm{\bar v}_{L^2(\Gamma)}^ 2 \leq J(u_b, v_b).
	\end{equation}
	This, in association with \eqref{eq:apriori-esti-state}, yields
	\begin{equation}
		\label{eq:y-bar-bound}
		\norm{\bar y}_{H^1(\Omega)} + \norm{\bar y}_{C(\overline\Omega)} \leq C_0
	\end{equation}
	for some constant $C_0$.
%	}%
%	It remains to show \eqref{eq:primal-OCs-minus}. 
	For any $(u,v) \in U_{ad}$, we set $\psi := (u -\bar u, v - \bar v)$. 
%	\textcolor{blue}{%
		Let $e_{-}$ be the unique element of $W_{-}^\sigma(\bar u, \bar v; u - \bar u, v - \bar v)$ due to \cref{lem:W-pm-sets-characterization}.
%	}%
	By \eqref{eq:epsilonk-ukvk-minus}--\eqref{eq:W-pm-sets}, 
	there exists a subsequence, denoted in the same way, of $\{k\}$ such that $\epsilon_k, \rho_{k} \to 0^+$, $\bar w_{-, k} = (\bar u_{-,k}, \bar v_{-,k}) \in D_S \cap U_{ad}$ and
	\begin{subequations}
		\label{eq:limits-wk-primal}
		\begin{align}
			& 
%			\textcolor{blue}{%
			\frac{\epsilon_k}{\rho_{k}} \to \sigma, 
%		}%
			\label{eq:epsilon-over-rho-k-primal} \\
			& \bar u_{-,k} \leq \bar u + \epsilon_k (u_b- \bar u), \quad  \bar v_{-,k} \leq \bar v + \epsilon_k (v_b- \bar v), \label{eq:uk-Uad} \\
			& \norm{\bar u_{-,k} - \bar u}_{L^2(\Omega)} + \norm{\bar v_{-,k} - \bar v}_{L^2(\Gamma)} \leq C_1\epsilon_k, \label{eq:uk-uv-primal} \\
			& \frac{1}{\rho_{k}} \bar \eta^{\psi}_{-,k} \quad \text{converges to} \quad e_{-} \quad \text{weakly in $H^1(\Omega)$}. \label{eq:eta-k-limit-primal}
		\end{align}
	\end{subequations}
%	\textcolor{blue}{%
	Here the constant $C_1$ does not depend on $(\bar u, \bar v)$, $(u, v)$,  $\sigma$, and $k$, due to \eqref{eq:uv-bar-bound} and \cref{lem:countable-sets}.
%	}%
	By definition of $\bar \eta^{\psi}_{-,k}$ (see \eqref{eq:abbreviations-k}), we have
	\[
		\bar \eta^{\psi}_{-,k} = S(\bar w_{-,k} + \rho_k \psi) - S(\bar w_{-,k}) - \rho_k S'(\bar w_{-,k}) \psi =\bar y^{\psi}_{-,k} - \bar y_{-,k} - \rho_k S'(\bar w_{-,k}) \psi.
	\]	
	%By using the Lipschitz continuity of $S$ (see \cref{prop:control-to-state-app}), there holds 
	It follows from \cref{lem:distance-y-epsilon-rho} and \eqref{eq:uv-bar-bound} that
	\begin{multline}
		\label{eq:yk-psi-esti}
		\norm{\bar y^{\psi}_{-,k} - \bar y_{-,k}}_{H^1(\Omega)} + \norm{\bar y^{\psi}_{-,k} - \bar y_{-,k}}_{C(\overline\Omega)} \\
		 \leq C_2 \rho_k \left(\norm{u - \bar u}_{L^2(\Omega)} +\norm{v - \bar v}_{L^2(\Gamma)}\right)
	\end{multline}
	and
	\begin{equation}
		\label{eq:yk-nopsi-esti}
		\norm{\bar y_{-,k} - \bar y}_{H^1(\Omega)} + \norm{\bar y_{-,k} - \bar y}_{C(\overline\Omega)} \leq C_2 \epsilon_k,
	\end{equation}
%	\textcolor{blue}{%
		where the constant $C_2$ is independent of $(\bar u, \bar v)$, $(u, v)$,  $\sigma$, and $k$.
%	}%
	On the other hand, as a result of \cref{prop:G-pm-belongto-Bouligand-diff} and the definition of $\bar G_{-}$ in \eqref{eq:G-pm-operator}, one has
	\begin{equation}
		\label{eq:G-minus-limit}
		S'(\bar w_{-,k}) \psi \to \bar G_{-}\psi \quad \text{strongly in} \quad H^1(\Omega).
	\end{equation}
	We now have
	\begin{equation} \label{eq:yk-psi-limit}
		\frac{1}{\rho_k}[\bar y^{\psi}_{-,k} - \bar y_{-,k}] = \frac{1}{\rho_k}[\bar y^{\psi}_{-,k} - \bar y_{-,k} - \rho_k S'(\bar w_{-,k})\psi] + S'(\bar w_{-,k})\psi \rightharpoonup e_{-} + \bar G_{-}\psi,
	\end{equation}
	in $H^1(\Omega)$,
	in view of \eqref{eq:eta-k-limit-primal} and \eqref{eq:G-minus-limit}.
	Using \eqref{eq:objective-difference} in \cref{lem:objective-func-difference-G-der}  yields
	\begin{multline*}
		%\label{eq:J-k-psi}
		J(\bar w_{-,k} + \rho_k \psi) - J(\bar w_{-,k}) = \frac{1}{2} \norm{\bar y^{\psi}_{-,k} - \bar y_{-,k}}_{L^2(\Omega)}^2 + \frac{\alpha}{2}\norm{\bar y^{\psi}_{-,k} - \bar y_{-,k}}^2_{L^2(\Gamma)}  \\
		\begin{aligned}[b]
			& + \frac{\kappa_\Omega}{2} \rho_{k}^2 \norm{u - \bar u}_{L^2(\Omega)}^2 + \frac{\kappa_\Gamma}{2} \rho_k^2\norm{v - \bar v}_{L^2(\Gamma)}^2 + \scalarprod{\bar y_{-,k} - y_\Omega}{\bar y^{\psi}_{-,k} -\bar y_{-,k}}_{\Omega}  \\
			& + \alpha \scalarprod{\bar y_{-,k} - y_\Gamma}{\bar y^{\psi}_{-,k} -\bar y_{-,k}}_{\Gamma} + \kappa_\Omega \rho_k \scalarprod{\bar u_{-,k}}{u - \bar u}_{\Omega} + \kappa_\Gamma  \rho_k \scalarprod{\bar v_{-,k}}{v - \bar v}_{\Gamma}.
		\end{aligned}
	\end{multline*}	
	From this, \eqref{eq:epsilon-over-rho-k-primal}, \eqref{eq:uk-uv-primal},  \eqref{eq:yk-psi-esti}, \eqref{eq:yk-nopsi-esti}, and \eqref{eq:yk-psi-limit}, one has
	\begin{multline}
		\label{eq:J-k-psi-limit}
		\frac{1}{\rho_k}[J(\bar w_{-,k} + \rho_k \psi) - J(\bar w_{-,k})] \to \scalarprod{\bar y - y_\Omega}{e_{-} + \bar G_{-} \psi}_{\Omega} \\
		\begin{aligned}[t]
			& + \alpha \scalarprod{\bar y - y_\Gamma}{e_{-} + \bar G_{-} \psi}_{\Gamma}  + \kappa_\Omega  \scalarprod{\bar u}{u - \bar u}_{\Omega} + \kappa_\Gamma   \scalarprod{\bar v}{v - \bar v}_{\Gamma}.
		\end{aligned}
	\end{multline}		
	Besides, we deduce from \eqref{eq:objective-difference} in \cref{lem:objective-func-difference-G-der} and \eqref{eq:uk-uv-primal}, as well as \eqref{eq:yk-nopsi-esti}  that
	\begin{equation}
		\label{eq:J-yk-bary}
		|J(\bar w_{-,k}) - J(\bar u, \bar v)| \leq C_{*}(\epsilon_k^2 + \epsilon_k)  \quad \text{for all} \quad k \geq 1.
	\end{equation}
%	\textcolor{blue}{%
		Here the constant $C_*$ is independent of  $(\bar u, \bar v)$, $(u, v)$,  $\sigma$, and $k$, thanks to  \eqref{eq:uv-bar-bound} and \eqref{eq:y-bar-bound}.
%	}%
	Furthermore, from \eqref{eq:uk-Uad} and the fact that $u, \bar u \leq u_b$ a.a. in $\Omega$, we have
	\begin{equation*}
		\bar u_{-,k} + \rho_k (u - \bar u) \leq  \bar u+ \epsilon_k(u_b - \bar u) + \rho_k (u_b - \bar u) \leq u_b
	\end{equation*}
	a.a. in $\Omega$ and for $k$ large enough. Similarly, there holds
	$\bar v_{-,k} + \rho_k (v - \bar v) \leq v_b$
	a.a. on $\Gamma$ and for $k$ large enough. We thus obtain
	\[
		\bar w_{-,k} + \rho_k \psi = (\bar u_{-,k},\bar v_{-,k}) + \rho_k (u - \bar u,v - \bar v) \in U_{ad}
	\]
	for sufficient large $k$. From this and the local optimality of $(\bar u, \bar v)$, we arrive at
	\begin{align*}
		0 & \leq \frac{1}{\rho_k}[J(\bar w_{-,k} + \rho_k \psi) - J(\bar u, \bar v)] \\
		& = \frac{1}{\rho_k}[J(\bar w_{-,k} + \rho_k \psi) - J(\bar w_{-,k})] + \frac{1}{\rho_{k}}[J(\bar w_{-,k}) - J(\bar u, \bar v)] \\
		&  \leq \frac{1}{\rho_k}[J(\bar w_{-,k} + \rho_k \psi) - J(\bar w_{-,k})]  + C_{*} \frac{\epsilon_k^2 + \epsilon_k}{\rho_k}
	\end{align*}
	for $k$ large enough, where we have exploited \eqref{eq:J-yk-bary} to derive the last inequality.
	Letting $k \to \infty$ and using the limits in \eqref{eq:epsilon-over-rho-k-primal} and in \eqref{eq:J-k-psi-limit} yields \eqref{eq:primal-OCs-minus}.
%	\qed
\end{proof}

%\medskip

%\begin{remark}
%	\label{rem:equivalence-prop-sigma-primalcondition}
%	Let us comment on the equivalence between the optimality conditions stated in \cref{prop:primal-OCs} and the primal optimality condition \eqref{eq:B-stationarity}.\todo{Show the equivalence or not?}
%\end{remark}

\medskip

%We now 
%\textcolor{blue}{%
%take $\sigma :=0$ in \eqref{eq:primal-OCs-minus} and \eqref{eq:primal-OCs-plus} 
%}%
%and thus combine the obtained estimates with \cref{lem:eta-epsilon-estimate} as well as the Cauchy--Schwarz inequality to derive the following optimality conditions of \eqref{eq:P}.
We finish this subsection by stating a consequence of \cref{prop:primal-OCs} and \cref{lem:eta-epsilon-estimate}, which will help us to prove \cref{thm:OCs-multiplier}.
\begin{corollary}
	\label{cor:primal-OCs}
	Let $(\bar u, \bar v)$ be a local minimizer of \eqref{eq:P} and $\bar G_{\pm}$ defined  in \eqref{eq:G-pm-operator}. 
	There exists a constant $C>0$ such that, for any $(u,v) \in U_{ad}$, there hold:
	\begin{enumerate}[label=(\alph*)]
		\item \label{item:primal-OCs-minus-not-W} For $z_{-} := \bar G_{-}(u- \bar u, v - \bar v)$,
%		\begin{multline}
%			\label{eq:primal-OCs-minus-not-W}
%			\scalarprod{\bar y - y_\Omega}{z_{-} }_{\Omega} +\alpha \scalarprod{\bar y - y_\Gamma}{z_{-} }_{\Gamma} + \kappa_\Omega \scalarprod{\bar u}{u- \bar u}_{\Omega} \\
%			+ \kappa_\Gamma \scalarprod{\bar v}{v - \bar v}_{\Gamma} \geq - C |d_1'(\bar t) - d_2'(\bar t)| \norm{\1_{\{ \bar y = \bar t\}} \1_{\{z_{-} \geq 0 \}} z_{-} }_{L^2(\Omega)}.
%		\end{multline}%		
				\begin{multline}
						\label{eq:primal-OCs-minus-adjoint}
				%		\textcolor{blue}{%
						\scalarprod{\bar p_{-} + \kappa_\Omega \bar u}{u - \bar u}_{\Omega} + \scalarprod{\bar p_{-} + \kappa_\Gamma \bar v}{ v - \bar v }_{\Gamma} 
				%	}%
						\\
						\geq - C |d_1'(\bar t) - d_2'(\bar t)| \norm{\1_{\{ \bar y = \bar t\}} \1_{\{z_{-} \geq 0 \}} z_{-} }_{L^2(\Omega)}
					\end{multline}
				with $\bar p_{-}$  satisfying 
				\begin{equation} \label{eq:adjoint-minus-primal}
						\left \{
						\begin{aligned}
								-\Delta \bar p_{-} + \bar a_{-} \bar p_{-} & =\bar y - y_\Omega && \text{in} \, \Omega, \\
								\frac{\partial \bar p_{-}}{\partial \nuv}  + b(x)\bar p_{-} &= \alpha(\bar y - y_\Gamma)&& \text{on}\, \Gamma;
							\end{aligned}
						\right.
					\end{equation}

		\item \label{item:primal-OCs-plus-not-W} If, in addition, $(\bar u,\bar v) \neq (u_b, v_b)$, then one further has
%		\begin{multline}
%			\label{eq:primal-OCs-plus-not-W}
%			\scalarprod{\bar y - y_\Omega}{z_{+}}_{\Omega} +\alpha \scalarprod{\bar y - y_\Gamma}{z_{+}}_{\Gamma} + \kappa_\Omega \scalarprod{\bar u}{u- \bar u}_{\Omega} \\
%			+ \kappa_\Gamma \scalarprod{\bar v}{v - \bar v}_{\Gamma} \geq  - C |d_1'(\bar t) - d_2'(\bar t)| \norm{\1_{\{ \bar y = \bar t\}} \1_{\{z_{+} \leq 0 \}} z_{+} }_{L^2(\Omega)}
%		\end{multline}
				\begin{multline}
						\label{eq:primal-OCs-plus-adjoint}
				%		\textcolor{blue}{%
						\scalarprod{\bar p_{+} + \kappa_\Omega \bar u}{u - \bar u}_{\Omega} + \scalarprod{\bar p_{+} + \kappa_\Gamma \bar v}{ v - \bar v }_{\Gamma}
				%	}%
						 \\
						\geq - C |d_1'(\bar t) - d_2'(\bar t)| \norm{\1_{\{ \bar y = \bar t\}} \1_{\{z_{+} \leq 0 \}} z_{+}  }_{L^2(\Omega)},
				\end{multline}
				where $\bar p_{+}$  fulfills 
				\begin{equation} \label{eq:adjoint-plus-primal}
						\left \{
						\begin{aligned}
								-\Delta \bar p_{+} + \bar a_{+} \bar p_{+} & =\bar y - y_\Omega && \text{in} \, \Omega, \\
								\frac{\partial \bar p_{+}}{\partial \nuv}  + b(x)\bar p_{+} &= \alpha(\bar y - y_\Gamma)&& \text{on}\, \Gamma
						\end{aligned}
						\right.
				\end{equation}			
		with  $z_{+} :=\bar G_{+}(u- \bar u, v - \bar v)$.
	\end{enumerate}
\end{corollary}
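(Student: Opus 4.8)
The plan is to obtain \cref{cor:primal-OCs} by specializing \cref{prop:primal-OCs} to the value $\sigma=0$, rewriting the terms involving $\bar G_{\pm}$ by duality with the adjoint states $\bar p_{\pm}$ defined through \eqref{eq:adjoint-minus-primal} resp.\ \eqref{eq:adjoint-plus-primal}, and then absorbing the remainder term carried by the unique element $e_{\pm}$ of $W_{\pm}^{0}(\bar u,\bar v;u-\bar u,v-\bar v)$ by means of \cref{lem:eta-epsilon-estimate}. Since the two assertions are proved in exactly the same manner, I would carry out the details for \ref{item:primal-OCs-minus-not-W} and merely indicate the obvious modifications for \ref{item:primal-OCs-plus-not-W}.

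First I would fix $(u,v)\in U_{ad}$, set $\psi:=(u-\bar u,v-\bar v)$ and $z_{-}:=\bar G_{-}(\psi)$, so that, by \eqref{eq:G-pm-operator} together with \eqref{eq:a-pm-auv-bar}, $z_{-}$ solves $-\Delta z_{-}+\bar a_{-}z_{-}=u-\bar u$ in $\Omega$ with $\partial_{\nuv}z_{-}+bz_{-}=v-\bar v$ on $\Gamma$. Testing the weak formulation of this equation with $\bar p_{-}$ and the weak formulation of \eqref{eq:adjoint-minus-primal} with $z_{-}$, and comparing the two identities --- both equations being governed by the same symmetric bilinear form $(\varphi,\phi)\mapsto\int_{\Omega}\big(\nabla\varphi\cdot\nabla\phi+\bar a_{-}\varphi\phi\big)\dx+\int_{\Gamma}b\varphi\phi\ds$, which is well defined since $\bar a_{-}\geq 0$ by the monotonicity of $d$ and $b\geq b_0>0$ --- yields
\[
	\scalarprod{\bar y-y_\Omega}{z_{-}}_{\Omega}+\alpha\scalarprod{\bar y-y_\Gamma}{z_{-}}_{\Gamma}=\scalarprod{\bar p_{-}}{u-\bar u}_{\Omega}+\scalarprod{\bar p_{-}}{v-\bar v}_{\Gamma}.
\]
Adding $\kappa_\Omega\scalarprod{\bar u}{u-\bar u}_{\Omega}+\kappa_\Gamma\scalarprod{\bar v}{v-\bar v}_{\Gamma}$ to both sides identifies the left-hand side of the inequality in \cref{prop:primal-OCs}~\ref{item:primal-OCs-minus} with the left-hand side of \eqref{eq:primal-OCs-minus-adjoint}.

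Next I would apply \cref{prop:primal-OCs}~\ref{item:primal-OCs-minus} with $\sigma:=0$; the term $-C_{*}\sigma$ then vanishes and $e_{-}$ becomes the unique element of $W_{-}^{0}(\bar u,\bar v;u-\bar u,v-\bar v)$, so that it only remains to bound $\scalarprod{\bar y-y_\Omega}{e_{-}}_{\Omega}+\alpha\scalarprod{\bar y-y_\Gamma}{e_{-}}_{\Gamma}$ from above in absolute value. By the Cauchy--Schwarz inequality and the continuity of the trace operator $H^1(\Omega)\hookrightarrow L^2(\Gamma)$, and since $\bar y$ --- being the state of the fixed minimizer $(\bar u,\bar v)$ --- is a fixed function, this quantity is $\leq C\norm{e_{-}}_{H^1(\Omega)}$ for some constant $C$. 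Then \eqref{eq:e-minus-esti} of \cref{lem:eta-epsilon-estimate}, applied with $f:=u-\bar u$, $h:=v-\bar v$ and $\sigma:=0$ --- for which $\bar z_{-}=\bar G_{-}(u-\bar u,v-\bar v)=z_{-}$ --- gives $\norm{e_{-}}_{H^1(\Omega)}\leq C|d_1'(\bar t)-d_2'(\bar t)|\norm{\1_{\{\bar y=\bar t\}}\1_{\{z_{-}\geq 0\}}z_{-}}_{L^2(\Omega)}$. Substituting this into the inequality furnished by \cref{prop:primal-OCs}~\ref{item:primal-OCs-minus} and using the rewriting from the previous paragraph yields \eqref{eq:primal-OCs-minus-adjoint}, with $\bar p_{-}$ the solution of \eqref{eq:adjoint-minus-primal}. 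Assertion \ref{item:primal-OCs-plus-not-W} follows by the identical computation, now using \cref{prop:primal-OCs}~\ref{item:primal-OCs-plus}, the coefficient $\bar a_{+}$ and the adjoint equation \eqref{eq:adjoint-plus-primal}, and the estimate \eqref{eq:e-plus-esti}.

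Since the proof merely assembles results already established, I do not expect any genuine obstacle; the only point requiring some care is the bookkeeping, namely verifying that the element $e_{-}$ produced by \cref{prop:primal-OCs}~\ref{item:primal-OCs-minus} at $\sigma=0$ is the same one to which \cref{lem:eta-epsilon-estimate} applies, and that the right-hand side of \eqref{eq:e-minus-esti} is expressed through $\bar z_{-}=\bar G_{-}(u-\bar u,v-\bar v)$, i.e.\ exactly the function $z_{-}$ appearing in the statement of the corollary.
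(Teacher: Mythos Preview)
Your proposal is correct and follows essentially the same approach as the paper: both set $\sigma=0$ in \cref{prop:primal-OCs}, bound the $e_{-}$-term via Cauchy--Schwarz together with \eqref{eq:e-minus-esti} from \cref{lem:eta-epsilon-estimate}, and convert the $\bar G_{-}$-terms into adjoint form by testing the PDE for $z_{-}$ with $\bar p_{-}$ and \eqref{eq:adjoint-minus-primal} with $z_{-}$. The only cosmetic difference is the order of presentation --- you perform the duality rewriting first, the paper does it last --- which has no mathematical impact.
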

\begin{proof}
	We prove assertion \ref{item:primal-OCs-minus-not-W} only, since the argument showing \ref{item:primal-OCs-plus-not-W} is similar. By taking $\sigma :=0$ in  \eqref{eq:primal-OCs-minus}, we thus combine the obtained estimate with \eqref{eq:e-minus-esti} in \cref{lem:eta-epsilon-estimate} as well as the Cauchy--Schwarz inequality to derive
			\begin{multline}
					\label{eq:primal-OCs-minus-not-W}
					\scalarprod{\bar y - y_\Omega}{z_{-} }_{\Omega} +\alpha \scalarprod{\bar y - y_\Gamma}{z_{-} }_{\Gamma} + \kappa_\Omega \scalarprod{\bar u}{u- \bar u}_{\Omega} \\
					+ \kappa_\Gamma \scalarprod{\bar v}{v - \bar v}_{\Gamma} \geq - C |d_1'(\bar t) - d_2'(\bar t)| \norm{\1_{\{ \bar y = \bar t\}} \1_{\{z_{-} \geq 0 \}} z_{-} }_{L^2(\Omega)}.
				\end{multline}%	
	Since $z_{-} = \bar G_{-}(u- \bar u, v - \bar v)$, there holds
	\begin{equation*} 
		\left \{
		\begin{aligned}
			-\Delta z_{-}+ \bar a_{-} z_{-} & =u - \bar u && \text{in} \, \Omega, \\
			\frac{\partial z_{-}}{\partial \nuv}  + b(x)z_{-}&= v - \bar v&& \text{on}\, \Gamma,
		\end{aligned}
		\right.
	\end{equation*}	
	as a result of the definition of $\bar G_{-}$ in \eqref{eq:G-pm-operator}.
	Testing now the above equation by $\bar p_{-}$ and the equation \eqref{eq:adjoint-minus-primal} by $z_{-}$, we therefore subtract the obtained identities to have
	\[
		\scalarprod{\bar y - y_\Omega}{z_{-} }_{\Omega} +\alpha \scalarprod{\bar y - y_\Gamma}{z_{-} }_{\Gamma} = \scalarprod{\bar p_{-} }{u - \bar u}_{\Omega} +  \scalarprod{\bar p_{-} }{ v - \bar v }_{\Gamma}.
	\]
	Inserting this into \eqref{eq:primal-OCs-minus-not-W} yields \eqref{eq:primal-OCs-minus-adjoint}.
\end{proof}

\subsection{Proofs of \cref{thm:OCs-multiplier,thm:control-ub-vb,thm:strong-B-equivalence}} \label{sec:proofsmain}

In this subsection, we shall prove the main results presented in \cref{sec:OCP} by mainly exploiting \cref{prop:primal-OCs} and \cref{cor:primal-OCs}.

%
%\medskip
%\noindent\textbf{Proof of \cref{thm:B-stationarity-extension}}.
%Since the proof showing \eqref{eq:B-stationarity-extension-plus} is analogous to that of \eqref{eq:B-stationarity-extension-minus}, we now prove  \eqref{eq:B-stationarity-extension-minus} only.
%To this end, by taking $\sigma \geq 0$ and $(u,v) \in U_{ad}$ arbitrarily, we deduce from \cref{lem:W-pm-sets-characterization} that
%\[
%	e = S'(\bar u, \bar v; u - \bar u - \sigma \tilde{u}_{-}, v - \bar v - \sigma \tilde{v}_{-}) - \bar G_{-}(u - \bar u - \sigma \tilde{u}_{-},v - \bar v - \sigma \tilde{v}_{-})
%\]
%for any $e \in W^\sigma_{-}(\bar u, \bar v; u - \bar u, v - \bar v)$. Inserting this into \eqref{eq:primal-OCs-minus} yields
%\begin{multline*}
%	%\label{eq:primal-OCs-minus}
%	\scalarprod{\bar y - y_\Omega}{S'(\bar u, \bar v; u - \bar u - \sigma \tilde{u}_{-}, v - \bar v - \sigma \tilde{v}_{-})}_{\Omega} \\
%	\begin{aligned}
%		& +\alpha \scalarprod{\bar y - y_\Gamma}{S'(\bar u, \bar v; u - \bar u - \sigma \tilde{u}_{-}, v - \bar v - \sigma \tilde{v}_{-})}_{\Gamma} \\ 
%		& + \kappa_\Omega \scalarprod{\bar u}{u- \bar u}_{\Omega} 
%		+ \kappa_\Gamma \scalarprod{\bar v}{v - \bar v}_{\Gamma} \\
%		& \quad \geq - C_* \sigma - \sigma \left[ \scalarprod{\bar y - y_\Omega}{\bar G_{-}(\tilde{u}_{-},\tilde{v}_{-})}_{\Omega} + \alpha \scalarprod{\bar y - y_\Gamma}{\bar G_{-}(\tilde{u}_{-},\tilde{v}_{-})}_{\Gamma} \right].
%	\end{aligned}
%\end{multline*}
%	From this and \eqref{eq:J-direc-der}, we have \eqref{eq:B-stationarity-extension-minus}.
%	\qed 

\medskip 

%%% The proof of \cref{thm:OCs-multiplier}
\noindent\textbf{Proof of \cref{thm:OCs-multiplier}}.
%\begin{proof}
%	\emph{The proof of \cref{thm:OCs-multiplier}.}
	We first put $M := \{ \bar y = \bar t\}$ and then divide the proof into two steps as follows.

	\medskip 
	
	\noindent\emph{Step 1: The existence of all desired functions with the subscript containing the minus symbol.}
	We first note for any function $g \in L^2(\Omega)$ that
	\begin{equation}
		\label{eq:distance-positive-cone}
		\norm{\1_{M}g^{+}}_{L^2(\Omega)} = \dist_{\mathcal{C}_M^{-}}(g),
	\end{equation}
	where
	\[
		\mathcal{C}_{M}^{-} := \{ q \in L^2(\Omega) \mid q \leq 0 \quad \text{a.a. in } M \},
	\]
	$\dist_{\mathcal{C}_{M}^{-}}(g)$ stands for 
%	\textcolor{blue}{%text}
	the $L^2(\Omega)$-distance 
%}
	from $g$ to $\mathcal{C}_{M}^{-}$, 
	and $g^+$ denotes the positive part of $g$, i.e., $g^+(x) = \max\{g(x),0 \}$ for a.a. $x \in \Omega$.	

	We now define the functional $F: L^2(\Omega) \times L^2(\Gamma) \to (-\infty, + \infty]$ given by
	\begin{multline*}
		F(w) := \scalarprod{\bar p_{-} + \kappa_\Omega \bar u}{u }_{\Omega} + \scalarprod{\bar p_{-} + \kappa_\Gamma \bar v}{ v  }_{\Gamma} \\
		 +C |d_1'(\bar t) - d_2'(\bar t)| \norm{\1_{\{ \bar y = \bar t\}} \1_{\{\bar G_{-}w \geq 0 \}} \bar G_{-}w }_{L^2(\Omega)} + \delta_{U_{ad} - (\bar u, \bar v)}(w),
	\end{multline*}
	for $w := (u,v) \in L^2(\Omega) \times L^2(\Gamma)$,
	where $\bar p_{-}$ is defined in \eqref{eq:adjoint-minus-primal}, $C$ is determined as in \cref{cor:primal-OCs}, and $\delta_{U}$ stands for the indicator function of a set $U$, i.e.,
	\[
		\delta_U(w) = \left\{
		\begin{aligned}
			0 && \text{if} \quad w \in U,\\
			+ \infty && \text{if} \quad w \notin U.
		\end{aligned}
		\right.
	\]
	Obviously, $F$ is convex and lower semicontinuous. 
	Thanks to assertion \ref{item:primal-OCs-minus-not-W} in \cref{cor:primal-OCs}, $F$ attains minimum in $w_* = (0,0)$ and consequently
	\begin{equation}
		\label{eq:F-optimality-condition}
		0 \in \partial F(w_*).
	\end{equation}
	%Here $\partial P$ stands for the subdifferential of a convex function $P$ in the sense of convex analysis.
	Moreover, the subdifferential of the indicator function $\delta_{U_{ad} - (\bar u, \bar v)}$ at any point $w = (u,v) \in U_{ad} - (\bar u, \bar v)$ coincides with the normal cone to the set $U_{ad} - (\bar u, \bar v)$ at $w$, that is,
	\begin{align*}
		& \partial \delta_{U_{ad} - (\bar u, \bar v)}(w) \\
		= &\Big \{(\zeta_\Omega, \zeta_\Gamma) \in L^2(\Omega) \times L^2(\Gamma) \mid \scalarprod{\zeta_\Omega}{\tilde{u} - u }_\Omega + \scalarprod{\zeta_\Gamma}{\tilde{v} - v }_\Gamma \leq 0 \, \forall (\tilde{u},\tilde{v}) \in U_{ad} - (\bar u, \bar v)  \Big\} \\
		 = & N(U_{ad} - (\bar u, \bar v); w);
	\end{align*}
	see, e.g. \cite{Ioffe,Bauschke2017}.
	A simple computation then gives
	\begin{multline} \label{eq:zeta-multiplier}
		\partial \delta_{U_{ad} - (\bar u, \bar v)}(w_*) = \Big \{(\zeta_\Omega, \zeta_\Gamma)  \in L^2(\Omega) \times L^2(\Gamma) \mid \zeta_\Omega \geq  0 \, \text{ a.a. on } \{\bar u = u_b  \}, \\
		\begin{aligned}[b]
			\zeta_\Omega = 0 \, \text{ otherwise and } \zeta_\Gamma \geq  0 \, \text{ a.a. on } \{\bar v = v_b  \}, \zeta_\Gamma = 0 \, \text{ otherwise}  \Big  \}.
		\end{aligned}
	\end{multline}	
	Define now the functions $F_i: L^2(\Omega) \times L^2(\Gamma) \to \R$, $i=1,2$, as follows
	\begin{align*}
		F_1(w) & := \scalarprod{\bar p_{-} + \kappa_\Omega \bar u}{u }_{\Omega} + \scalarprod{\bar p_{-} + \kappa_\Gamma \bar v}{ v  }_{\Gamma},\\
		\intertext{and}
		F_2(w) & := C |d_1'(\bar t) - d_2'(\bar t)| \norm{\1_{\{ \bar y = \bar t\}} \1_{\{\bar G_{-}w \geq 0 \}} \bar G_{-}w }_{L^2(\Omega)}
	\end{align*}
	for $w:= (u,v) \in L^2(\Omega) \times L^2(\Gamma)$. Easily, $F_1$ is an affine functional and its subdifferential in $w_*$ is defined as
	\begin{equation}
		\label{eq:F1-subdifferential}
		\partial F_1(w_*) =(\bar p_{-} + \kappa_\Omega \bar u, \bar p_{-} + \kappa_\Gamma \bar v).
	\end{equation}
	Moreover, according to the definition of $F_2$ and \eqref{eq:distance-positive-cone}, 
	we have
	\[
		F_2(w) = c_0 \dist_{\mathcal{C}_{M}^{-}}(\bar G_{-}w) \quad \text{with} \quad c_0 := C|d_1'(\bar t) - d_2'(\bar t)|.
	\]
	We then deduce from \cite[Thm.~2, Chap.~4]{Ioffe} (see, also, \cite[Thm.~16.37]{Bauschke2017}) that
	\[
		\partial F_2(w) = c_0 \bar G_{-}^* [\partial \dist_{\mathcal{C}_{M}^{-}}(\bar G_{-}w)].
	\]
	For $w = w_* = (0,0)$, we have $\bar G_{-}w_* = 0$ and thus
	\begin{equation}
		\label{eq:F2-subdifferential}
		\partial F_2(w_*) = c_0  \bar G_{-}^*[N(\mathcal{C}_{M}^{-};0) \cap \bar B_{L^2(\Omega)}];
	\end{equation}
	see, e.g. \cite[Exam.~16.49]{Bauschke2017}, where $\bar B_{L^2(\Omega)}$ stands for the closed unit ball in $L^2(\Omega)$. 
	%Here $N(\mathcal{C}_{M}^{-};0)$ is the normal cone to $\mathcal{C}_{M}^{-}$ at $0$, that is,
	%\[
	%	N(\mathcal{C}_{M}^{-};0) := \{ u^* \in L^2(\Omega) \mid \scalarprod{u^*}{u - 0}_{\Omega} \leq 0 \, \forall u \in \mathcal{C}_M \}.
	%\]
	A simple computation gives
	\begin{equation}
		\label{eq:F2-subdiff-2}
		N(\mathcal{C}_{M}^{-};0) = \{ \tilde{\mu} \in L^2(\Omega) \mid \tilde{\mu} = 0 \, \text{ a.a. in } \Omega \backslash M, \tilde{\mu} \geq 0 \, \text{ a.a. in } M \}.
	\end{equation}
	On the other hand, from the calculus of subdifferential of convex functions (see, e.g., \cite[Thm.~1, Chap.~4]{Ioffe} and \cite[Cor.~16.38]{Bauschke2017}), 
%	\textcolor{blue}{%text
		and from the fact that $\dom F_1 = \dom F_2 = L^2(\Omega) \times L^2(\Gamma)$,	
%	}%
	we deduce that
	\[
		\partial F(w_*) = \partial F_1 (w_*) + \partial F_2(w_*) + \partial \delta_{U_{ad} - (\bar u, \bar v)}(w_*).
	\]
	Combining this with \eqref{eq:F-optimality-condition}, \eqref{eq:F1-subdifferential}, and \eqref{eq:F2-subdifferential} yields
	\begin{equation}
		\label{eq:multiplier-sum}
		(0,0) = (\bar p_{-} + \kappa_\Omega \bar u, \bar p_{-} + \kappa_\Gamma \bar v) +  C|d_1'(\bar t) - d_2'(\bar t)| \bar G_{-}^*\tilde{\mu}_{-} + (\zeta_\Omega, \zeta_\Gamma)
	\end{equation}
	for some $\tilde{\mu}_{-} \in N(\mathcal{C}_{M}^{-};0)$ and $(\zeta_\Omega, \zeta_\Gamma) \in \partial \delta_{U_{ad} - (\bar u, \bar v)}(w_*)$. 
	Setting $\mu_{-} := C\tilde{\mu}_{-}$ and taking $\tilde{p}_{-}$ as the unique solution to \eqref{eq:adjoint-multiplier-minus} associated with $\mu_{-}$, we have
	\begin{equation}
		\label{eq:stationarity-esti}
		(\bar p_{-}, \gamma(\bar p_{-})) +C|d_1'(\bar t) - d_2'(\bar t)| \bar G_{-}^*\tilde{\mu}_{-} = (\tilde{p}_{-}, \gamma(\tilde{p}_{-})),
	\end{equation}
	as a result of \eqref{eq:adjoint-minus-primal} and \eqref{eq:G-adjoint}. 
	By setting $(\zeta_{-,\Omega}, \zeta_{-,\Gamma}) :=(\zeta_\Omega, \zeta_\Gamma)$, we then conclude from \eqref{eq:stationarity-esti} and \eqref{eq:multiplier-sum} that \eqref{eq:stationary-multiplier-uv-minus} is valid for multipliers with the minus symbol in the subscripts. Moreover, from \eqref{eq:zeta-multiplier}   and \eqref{eq:F2-subdiff-2}, we have \eqref{eq:complementarity-minus} and \eqref{eq:supp-condition-minus}.
	
	\medskip 
	
	\noindent\emph{Step 2: The existence of all desired functions with the subscript containing the plus symbol.} The argument in this step is similar to that in Step 1 with some slight modifications as follows. 
	Corresponding to \eqref{eq:distance-positive-cone}, we observe that
	\begin{equation*}
		\norm{\1_{M}g^{-}}_{L^2(\Omega)} = \dist_{\mathcal{C}_M^{+}}(g),
	\end{equation*}
	where
	\[
		\mathcal{C}_{M}^{+} := \{ q \in L^2(\Omega) \mid q \geq 0 \quad \text{a.a. in } M \}
	\]
	and $g^{-}$ denotes the negative part of $g$, i.e., $g^{-}(x) = \max\{-g(x),0 \}$ for a.a. $x \in \Omega$. 
	Similar to \eqref{eq:F2-subdiff-2}, we have
	\begin{equation*}
		N(\mathcal{C}_{M}^{+};0) = \{ - \tilde{\mu} \in L^2(\Omega) \mid \tilde{\mu} = 0 \, \text{a.a. in } \Omega \backslash M, \tilde{\mu} \geq 0 \, \text{a.a. in } M \}.
	\end{equation*}
	Finally, we now define the functional $\hat F: L^2(\Omega) \times L^2(\Gamma) \to (-\infty, + \infty]$ given by
	\begin{multline*}
		\hat F(w) := \scalarprod{\bar p_{+} + \kappa_\Omega \bar u}{u }_{\Omega} + \scalarprod{\bar p_{+} + \kappa_\Gamma \bar v}{ v  }_{\Gamma} \\
		+C |d_1'(\bar t) - d_2'(\bar t)| \norm{\1_{\{ \bar y = \bar t\}} \1_{\{\bar G_{+}w \leq 0 \}} \bar G_{+}w }_{L^2(\Omega)} + \delta_{U_{ad} - (\bar u, \bar v)}(w),
	\end{multline*}
	for $w := (u,v) \in L^2(\Omega) \times L^2(\Gamma)$, 	where $\bar p_{+}$ is given in \eqref{eq:adjoint-plus-primal} and $C$ is determined as in \cref{cor:primal-OCs}. From this and the same argument as in Step 1, we deduce the desired conclusions.
	\qed
%\end{proof}

\medskip 

%%% Proof of \cref{thm:control-ub-vb}
\noindent\textbf{Proof of \cref{thm:control-ub-vb}}.
%\begin{proof}
By 	assertion \ref{item:W-set-minus-zero} in \cref{cor:W-pm-zero}, there holds $W_{-}^\sigma(u_b, v_b; u - u_b,v -v_b) = \{0\}$ for all $(u,v) \in U_{ad}$ and $\sigma \geq 0$. The right-hand side in \eqref{eq:primal-OCs-minus} then  vanishes
%\textcolor{blue}{%
	when taking $\sigma := 0$.
%}%
Consequently, the right-hand side in \eqref{eq:primal-OCs-minus-adjoint} 
%\textcolor{blue}{%
	can be replaced by zero
%}%
and we thus have
\[
	\scalarprod{\bar p_{-} + \kappa_\Omega u_b}{u - u_b}_{\Omega} + \scalarprod{\bar p_{-} + \kappa_\Gamma v_b}{ v - v_b }_{\Gamma} \geq 0
\]
for all $u \in L^2(\Omega)$ and $v \in L^2(\Gamma)$ satisfying $u \leq u_b$ a.a. in $\Omega$ and $v \leq v_b$ a.a. in $\Gamma$, where $\bar p_{-}$ satisfies \eqref{eq:adjoint-minus-primal}.
By setting  $\bar p := \bar p_{-}$ and using \eqref{eq:adjoint-minus-primal}, we obtain \eqref{eq:adjoint-state-ubvb}. Moreover, the last variational inequality in combination with a standard argument; see, e.g. the proof of Lemma 2.26 in \cite{Troltzsch2010}, shows \eqref{eq:control-ub-vb-OC}.
\qed
%\end{proof}

\medskip 

%%% Proof of \cref{thm:control-ub-vb}
\noindent\textbf{Proof of \cref{thm:strong-B-equivalence}}.
\noindent\emph{Ad \ref{item:strong2B}}. 
Assume that there exist $\tilde{p}$ and $\tilde{a}$ satisfying \eqref{eq:OCs-multipliers-strong} and \eqref{eq:adjoint-multiplier-strong}. 
Without loss of generality, we can assume that 
\begin{equation}
	\label{eq:d1-der-less-d2}
	d_1'(\bar t) < d_2'(\bar t).
\end{equation}
This, in combination with \eqref{eq:OCs-multiplier-sign-condition} and \eqref{eq:Clarke-condition}, implies, respectively, that 
\begin{equation}
	\label{eq:adjoint-sign-levelset}
	\tilde{p} \leq 0 \, \text{a.a. in } \{\bar y = \bar t \} \quad \text{and} \quad \tilde{a}(x) \in [d'_1(\bar t), d'_2(\bar t)] \, \text{for a.a. } x \in \{\bar y = \bar t \} .
\end{equation}
Taking now $(u, v) \in U_{ad}$ arbitrarily, we then deduce from \eqref{eq:J-direc-der} and \eqref{eq:S-dir-der} that
\begin{multline}
	\label{eq:J-dir-der-delta}
	J'(\bar u, \bar v; u- \bar u, v - \bar v) = \scalarprod{\bar y - y_\Omega}{\delta}_\Omega + \alpha \scalarprod{\bar y - y_\Gamma}{\delta}_\Gamma \\
	+ \kappa_\Omega \scalarprod{\bar u}{u - \bar u}_\Omega + \kappa_\Gamma \scalarprod{\bar v}{v - \bar v}_{\Gamma},
\end{multline}
where $\delta$ fulfills
	\begin{equation}
	\label{eq:delta-pde}
	\left \{
	\begin{aligned}
		-\Delta \delta + d'(\bar y;\delta) & = u - \bar u && \text{in} \, \Omega, \\
		\frac{\partial \delta}{\partial \nuv}  + b(x) \delta &= v - \bar v && \text{on}\, \Gamma.
	\end{aligned}
	\right.
\end{equation}
Testing \eqref{eq:adjoint-multiplier-strong} by $\delta$ and \eqref{eq:delta-pde} by $\tilde{p}$, and then subtracting the obtained equations, we have
\begin{multline*}
	\scalarprod{\bar y - y_\Omega}{\delta}_\Omega + \alpha \scalarprod{\bar y - y_\Gamma}{\delta}_\Gamma  
	 - \left[ \scalarprod{\tilde{p}}{u - \bar u}_\Omega + \scalarprod{\tilde{p}}{v - \bar v}_\Gamma \right] \\ = \int_\Omega \tilde{a} \tilde{p} \delta - d'(\bar y; \delta) \tilde{p} dx.
\end{multline*}
By using \eqref{eq:directional-der-d-func} and \eqref{eq:Clarke-condition}, 
a simple computation gives
\begin{align*}
	\int_\Omega \tilde{a} \tilde{p} \delta - d'(\bar y; \delta) \tilde{p} dx & = \int_{\{\bar y = \bar t \} } \tilde{p} \delta [\tilde{a} - d'_1(\bar t) \1_{\{\delta <0\}} - d'_2(\bar t) \1_{\{\delta > 0\}}]dx \\
	& = \int_{\{\bar y = \bar t \} \cap \{ \delta < 0 \} } \tilde{p} \delta [\tilde{a} - d'_1(\bar t)]dx +   \int_{\{\bar y = \bar t \} \cap \{ \delta > 0 \} } \tilde{p} \delta [\tilde{a} - d'_2(\bar t)]dx.
\end{align*}
The last two integrands are nonnegative a.a. in $\{ \bar y = \bar t \}$, thanks to \eqref{eq:d1-der-less-d2} and \eqref{eq:adjoint-sign-levelset}. We then have
\[
	\int_\Omega \tilde{a} \tilde{p} \delta - d'(\bar y; \delta) \tilde{p} dx \geq 0,
\]
which then yields
\begin{equation*}
	\scalarprod{\bar y - y_\Omega}{\delta}_\Omega + \alpha \scalarprod{\bar y - y_\Gamma}{\delta}_\Gamma \geq
	\scalarprod{\tilde{p}}{u - \bar u}_\Omega + \scalarprod{\tilde{p}}{v - \bar v}_\Gamma.
\end{equation*}
From this and \eqref{eq:J-dir-der-delta}, there holds
\begin{align*}
	J'(\bar u, \bar v; u- \bar u, v - \bar v) & \geq  \scalarprod{\tilde{p} + \kappa_\Omega \bar u}{u - \bar u}_\Omega + \scalarprod{\tilde{p} + \kappa_\Gamma \bar v}{v - \bar v}_\Gamma \\
	& = - \scalarprod{\zeta_{ \Omega} }{u - \bar u}_\Omega - \scalarprod{\zeta_{ \Gamma}}{v - \bar v}_\Gamma,
\end{align*}
where we have used \eqref{eq:stationary-multiplier-uv-strong} to derive the last identity. 
Exploiting \eqref{eq:complementarity-strong}, one thus has  \eqref{eq:B-stationarity}.

\medskip 

\noindent\emph{Ad \ref{item:B2strong}}. Assume now that \eqref{eq:B-stationarity} and \eqref{eq:CQ} are fulfilled. 
We first consider the case, where $(\bar u, \bar v) = (u_b, v_b)$. In this situation, by \eqref{eq:CQ}, there holds
\[
	\meas_{\R^2}(\{ \bar y = \bar t \}) =0.
\]
The control-to-state operator is thus differentiable at $(\bar u, \bar v)$, in view of \cref{pro:S-Gateaux-diff-char}. 
From this and a standard argument, we have the desired conclusion.

It remains to consider the case that $(\bar u, \bar v) \neq (u_b, v_b)$.
By \cref{lem:W-pm-sets-characterization}, there holds
\[
	e_{-} := S'(\bar u, \bar v; u - \bar u, v -\bar v) - \bar G_{-} (u - \bar u, v -\bar v) \in W^{0}_{-}(\bar u, \bar v; u - \bar u, v -\bar v).
\]
Setting $z_{-} := \bar G_{-} (u - \bar u, v -\bar v)$, we conclude from \eqref{eq:J-direc-der} and \eqref{eq:B-stationarity} that
\begin{multline*}
	\scalarprod{\bar y - y_\Omega}{z_{-} }_{\Omega} +\alpha \scalarprod{\bar y - y_\Gamma}{z_{-} }_{\Gamma} + \kappa_\Omega \scalarprod{\bar u}{u- \bar u}_{\Omega} 
	+ \kappa_\Gamma \scalarprod{\bar v}{v - \bar v}_{\Gamma} \\
	 \geq - [\scalarprod{\bar y - y_\Omega}{e_{-} }_{\Omega} +\alpha \scalarprod{\bar y - y_\Gamma}{e_{-} }_{\Gamma}],
\end{multline*}%	
which, together with \eqref{eq:e-minus-esti} and Cauchy--Schwarz's inequality, gives
\begin{multline*}
	%\label{eq:primal-OCs-minus-not-W-}
	\scalarprod{\bar y - y_\Omega}{z_{-} }_{\Omega} +\alpha \scalarprod{\bar y - y_\Gamma}{z_{-} }_{\Gamma} + \kappa_\Omega \scalarprod{\bar u}{u- \bar u}_{\Omega} \\
	+ \kappa_\Gamma \scalarprod{\bar v}{v - \bar v}_{\Gamma} \geq - C |d_1'(\bar t) - d_2'(\bar t)| \norm{\1_{\{ \bar y = \bar t\}} \1_{\{z_{-} \geq 0 \}} z_{-} }_{L^2(\Omega)}.
\end{multline*}%	
This is identical to \eqref{eq:primal-OCs-minus-not-W}. From this and the proof of \cref{cor:primal-OCs}, we have \eqref{eq:primal-OCs-minus-adjoint} and \eqref{eq:adjoint-minus-primal}. 
Similarly, \eqref{eq:primal-OCs-plus-adjoint} and \eqref{eq:adjoint-plus-primal} follow.
By using the same argument in the proof of \cref{thm:OCs-multiplier}, we then have \eqref{eq:OCs-multipliers-minus}--\eqref{eq:adjoint-multiplier-plus}. The proof of \cref{thm:OC-strong} finally implies the existence of $\tilde{p}$ and $\tilde{a}$ satisfying \eqref{eq:OCs-multipliers-strong} and \eqref{eq:adjoint-multiplier-strong}.
\qed

\section{Conclusions} \label{sec:Conclusions}

We have investigated the distributed and boundary optimal control problems for a nonsmooth semilinear elliptic  partial differential equation with unilateral pointwise constraints on both distributed and boundary controls. For any admissible point, by introducing two associated convergent sequences of G\^{a}teaux differentiability admissible controls, we have defined  \emph{left} and \emph{right} Bouligand generalized derivatives of the control-to-state operator at this admissible point.
We have then established the novel optimality condition in terms of these two Bouligand generalized derivatives.
%
%\textcolor{blue}{%
%	This novel optimality condition generalizes the B-stationarity. Consequently, 
%	}% 
In addition, there exist two nonnegative multipliers, which  are associated with the left and right Bouligand generalized derivatives and have supports lying in the level set of the optimal state at the nonsmooth value.  
%These optimality conditions have been reduced to the classical ones if the control-to-state operator is  G\^{a}teaux differentiable at the considered minimizer. 
Under a so-called constraint qualification, this novel optimality condition has been applied in order to derive the corresponding strong stationarity, in which the adjoint state has the same sign on the set of all points at which the nonsmooth coefficient in the state equation is not differentiable. 
%\textcolor{blue}{%
	The equivalence between the strong and B- stationarities has also been shown.
%}%
%\textcolor{blue}{%
	Extension to bilateral constraints is possible by considering a modified lower perturbation function but is left to a follow-up work.	
%}

\section*{Acknowledgements}
%\begin{acknowledgements}
	{
		%This work is supported by the Vietnam Ministry of Education and Training and Vietnam Institute for Advanced Study in Mathematics under Grant B2022-CTT-05.
	The authors would like to thank Dr. Xuan Thanh Le for his comments, which greatly improve the obtained results of the paper.
	The authors also would like to thank the  anonymous reviewers for their constructive comments which led to notable improvements of the paper.
	}
%\end{acknowledgements}

%%% Appendix
\section*{Appendix}
%\appendix
\begin{lemma}
	\label{lem:Delta-vanishing-levelset}
	%\textcolor{blue}{%text}
	Let $\Omega$ be a bounded domain in $\R^N$, $N \geq 1$, and let $y \in H^1(\Omega)$ be a weak solution to Poisson's equation
	\begin{equation}
		\label{eq:Poisson}
		-\Delta y = f \quad \text{in } \Omega
	\end{equation}
	with a given function $f \in L^2(\Omega)$. Then, for any number $t \in \R$, there holds
	\[
		\Delta y = 0 \quad \text{a.a. in }  \{y = t \}.
	\]
%}%
\end{lemma}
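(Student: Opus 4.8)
The plan is to reduce the statement to the classical fact that if $w \in H^1(\Omega)$ then $\nabla w = 0$ a.e. on the set $\{w = c\}$ for any constant $c$, combined with the distributional identification $\Delta y = -f \in L^2(\Omega)$. Write $t$ for the fixed level. First I would note that, since $y$ is a weak solution of $-\Delta y = f$ with $f \in L^2(\Omega)$, we have $\Delta y = -f$ as an $L^2(\Omega)$-function, so it is meaningful to speak of $\Delta y(x)$ pointwise a.e. The goal is therefore to show $f(x) = 0$ for a.a.\ $x \in \{y = t\}$; equivalently $\Delta y = 0$ a.e.\ there.

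The key step is the following well-known lemma on Sobolev functions: for $w \in H^1(\Omega)$ and any constant $c$, $\nabla w = 0$ a.e.\ on $\{x \in \Omega : w(x) = c\}$; a convenient reference is Gilbarg--Trudinger \cite[Lem.~7.7]{GilbargTrudinger2021} or Evans--Gariepy. Apply this twice. First, with $w := y$ and $c := t$: $\nabla y = 0$ a.e.\ on $A := \{y = t\}$. Now I would like to differentiate once more, but $y$ need not be in $H^2$ globally; instead I exploit that $\nabla y \in H^1_{\mathrm{loc}}$ is false in general too. The cleaner route: apply the lemma to each component $w := \partial_i y$. By elliptic regularity, $f \in L^2(\Omega)$ gives $y \in H^2_{\mathrm{loc}}(\Omega)$ (interior $H^2$-regularity, \cite[Thm.~8.8]{GilbargTrudinger2021}, which is already cited in the paper), so $\partial_i y \in H^1_{\mathrm{loc}}(\Omega)$ for each $i$. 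Then the Sobolev-chain lemma applied to $\partial_i y$ gives $\nabla(\partial_i y) = 0$ a.e.\ on $\{\partial_i y = 0\}$. Since $A \subset \{\partial_i y = 0\}$ up to a null set (by the first application), we get $\partial_j \partial_i y = 0$ a.e.\ on $A$ for all $i,j$, hence $\Delta y = \sum_i \partial_i^2 y = 0$ a.e.\ on $A$. Combining with $\Delta y = -f$ yields $f = 0$ a.e.\ on $A$, which is the assertion.

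The main obstacle is making the ``second differentiation'' rigorous: $y$ is only $H^1(\Omega)$ globally, not $H^2(\Omega)$, so one must either localize via interior regularity (as above, using $y \in H^2_{\mathrm{loc}}$, which is harmless since the conclusion is a pointwise a.e.\ statement and $\Omega = \bigcup$ of relatively compact subdomains up to a null set) or invoke a direct result on the vanishing of the distributional Laplacian on level sets. I would take the localization route: fix an exhaustion $\Omega = \bigcup_{n} \Omega_n$ with $\overline{\Omega_n} \subset \Omega$ compact, prove $\Delta y = 0$ a.e.\ on $A \cap \Omega_n$ for each $n$ using $y \in H^2(\Omega_n)$, and then take the countable union. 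A subtle point to state carefully is that the Sobolev-chain lemma is being applied to $\partial_i y$, whose zero set contains $A$ only after discarding the null set from the first application; chaining these null sets is routine. No integration by parts against the boundary is needed since everything is interior.
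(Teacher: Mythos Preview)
Your proposal is correct and follows essentially the same route as the paper: interior $H^2$-regularity from \cite[Thm.~8.8]{GilbargTrudinger2021} to get $y\in H^2_{\mathrm{loc}}(\Omega)$, then an exhaustion $\Omega=\bigcup_n\Omega_n$ to pass from the local statement to the global one. The only difference is that where the paper cites \cite[Lem.~4.1]{ChristofMuller2021} as a black box for the vanishing of second-order weak derivatives on level sets, you unpack this by applying the first-order Stampacchia lemma (\cite[Lem.~7.7]{GilbargTrudinger2021}) twice---once to $y$ to get $\nabla y=0$ a.e.\ on $\{y=t\}$, and then to each $\partial_i y\in H^1_{\mathrm{loc}}$ on the larger set $\{\partial_i y=0\}$. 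This makes your argument slightly more self-contained; the paper's version, by contrast, finishes the exhaustion step with an explicit integral estimate $\int_{\{y=t\}}|\Delta y|\,\mathrm{d}x\le [\meas_{\R^N}(\Omega\setminus\Omega_n)]^{1/2}\norm{\Delta y}_{L^2(\Omega)}\to 0$ rather than simply taking the countable union, but both closures are routine.
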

\begin{proof}
	It follows from \cite[Thm.~8.8]{GilbargTrudinger2021} (see also \cite[Rem.~(ii), Thm.~1, Sec.~6.3]{Evans2010}) that $-\Delta y = f$ a.a. in $\Omega$, and we thus obtain
	$\Delta y \in L^2(\Omega)$.

	Let us now take a subdomain $\Omega' \subset \overline{\Omega'} \subset \Omega$ arbitrarily. By the interior $H^2$-regularity of weak solutions to \eqref{eq:Poisson} (see \cite[Thm.~8.8]{GilbargTrudinger2021} and \cite[Thm.~1, Sec.~6.3]{Evans2010}), we have
	$y \in H^2(\Omega')$. From this and the behavior of higher-order weak derivatives on level sets \cite[Lem.~4.1]{ChristofMuller2021}, there holds
	\begin{equation}
		\label{eq:vanish-subdomain}
		\Delta y = 0 \quad \text{a.a. in }  \{y = t \} \cap \Omega'.
	\end{equation}
	For any positive integer $n$, we set 
	\[
		\Omega_n := \{ x \in \Omega \mid \dist(x, \Gamma) > \frac{1}{n} \}.
	\]
	Here $\dist(x,\Gamma)$ stands for the distance from $x$ to the boundary $\Gamma$ of the domain $\Omega$.
	We then deduce from \eqref{eq:vanish-subdomain} that
	\[
		\Delta y = 0 \quad \text{a.a. in }  \{y = t \} \cap \Omega_n \quad \text{for all }n \geq 1.
	\]
	We therefore have
	\begin{align*}
		\int_{ \{y = t \}} |\Delta y| \dx & = \int_{ \{y = t \} \cap \Omega_n} |\Delta y| \dx + \int_{ \{y = t \} \cap (\Omega \backslash \Omega_n)} |\Delta y| \dx \\
		& = \int_{ \{y = t \} \cap (\Omega \backslash \Omega_n)} |\Delta y| \dx \\
		& \leq \int_{\Omega \backslash \Omega_n} |\Delta y|    \dx \\
		& = \int_\Omega \1_{\Omega \backslash \Omega_n} |\Delta y|    \dx.
	\end{align*}
	Applying the Cauchy--Schwarz inequality yields
	\[
		\int_{ \{y = t \}} |\Delta y| \dx \leq [\meas_{\R^N}(\Omega \backslash \Omega_n) ]^{1/2}\norm{\Delta y}_{L^2(\Omega)} \to 0 \quad \text{as } n \to \infty.
	\]
	We thus have $\int_{ \{y = t \}} |\Delta y| \dx = 0$ and then obtain the desired identity.
\end{proof}

%\nocite{*}
%\printbibliography
%\bibliographystyle{siamplain}
%\bibliography{lipschitzregularity}
%\section*{References}
%\bibliographystyle{spmpsci}

%\bibliographystyle{amsrefs}

\bibliographystyle{amsplain}
\bibliography{bouligandocs}

\providecommand{\bysame}{\leavevmode\hbox to3em{\hrulefill}\thinspace}
\providecommand{\MR}{\relax\ifhmode\unskip\space\fi MR }
% \MRhref is called by the amsart/book/proc definition of \MR.
\providecommand{\MRhref}[2]{%
  \href{http://www.ams.org/mathscinet-getitem?mr=#1}{#2}
}
\providecommand{\href}[2]{#2}
\begin{thebibliography}{10}

\bibitem{Barbu1984}
Viorel Barbu, \emph{Optimal {C}ontrol of {V}ariational {I}nequalities},
  Research Notes in Mathematics, vol. 100, Pitman (Advanced Publishing
  Program), Boston, MA, 1984.

\bibitem{Bauschke2017}
Heinz~H. Bauschke and Patrick~L. Combettes, \emph{Convex {A}nalysis and
  {M}onotone {O}perator {T}heory in {H}ilbert {S}paces}, 2 ed., CMS Books in
  Mathematics, Springer, New York, Dordrecht, Heidelberg, London, 2017.

\bibitem{Betz2019}
Livia Betz, \emph{Strong stationarity for optimal control of a nonsmooth
  coupled system: Application to a viscous evolutionary variational inequality
  coupled with an elliptic {PDE}}, SIAM Journal on Optimization \textbf{29}
  (2019), no.~4, 3069--3099.

\bibitem{Betz2023}
\bysame, \emph{Strong stationarity for a highly nonsmooth optimization problem
  with control constraints}, Mathematical Control and Related Fields
  \textbf{13} (2023), no.~4, 1500--1528.

\bibitem{BetzMeyer2015}
Thomas Betz and Christian Meyer, \emph{Second-order sufficient optimality
  conditions for optimal control of static elastoplasticity with hardening},
  ESAIM: Control, Optimisation and Calculus of Variations \textbf{21} (2015),
  no.~1, 271--300.

\bibitem{BonnansShapiro2000}
J.~Fr\'{e}d\'{e}ric Bonnans and Alexander Shapiro, \emph{Perturbation
  {A}nalysis of {O}ptimization {P}roblems}, 1st ed., Springer, New York, 2000.

\bibitem{Casas1993}
Eduardo Casas, \emph{Boundary control of semilinear elliptic equations with
  pointwise state constraints}, SIAM Journal on Control and Optimization
  \textbf{31} (1993), no.~4, 993--1006.

\bibitem{Casas2012}
\bysame, \emph{Second order analysis for bang-bang control problems of {PDE}s},
  SIAM Journal on Control and Optimization \textbf{50} (2012), no.~4,
  2355--2372.

\bibitem{CasasHerzogWachsmuth2012}
Eduardo Casas, Roland Herzog, and Gerd Wachsmuth, \emph{Optimality conditions
  and error analysis of semilinear elliptic control problems with {$L^1$} cost
  functional}, SIAM Journal on Optimization \textbf{22} (2012), no.~3,
  795--820.

\bibitem{CasasTroltzsch2014}
Eduardo Casas and Fredi Tr\"{o}ltzsch, \emph{Second-order and stability
  analysis for state-constrained elliptic optimal control problems with sparse
  controls}, SIAM Journal on Control and Optimization \textbf{52} (2014),
  no.~2, 1010--1033.

\bibitem{CasasTroltzsch2020}
Eduardo {Casas} and Fredi {Tr\"oltzsch}, \emph{State-constrained semilinear
  elliptic optimization problems with unrestricted sparse controls},
  Mathematical Control and Related Fields \textbf{10} (2020), no.~3, 527--546.

\bibitem{Chipot2009}
Michel Chipot, \emph{Elliptic {E}quations: {A}n {I}ntroductory {C}ourse},
  Birkh\"{a}user Verlag, Basel, 2009.

\bibitem{Constantin2018}
Constantin Christof, Christian Clason, Christian Meyer, and Stefan Walter,
  \emph{Optimal control of a non-smooth semilinear elliptic equation},
  Mathematical Control and Related Fields \textbf{8} (2018), no.~1, 247--276.

\bibitem{ChristofMuller2021}
Constantin Christof and Georg Müller, \emph{Multiobjective optimal control of
  a non-smooth semilinear elliptic partial differential equation}, ESAIM:
  Control, Optimisation and Calculus of Variations \textbf{27} (2021), S13.

\bibitem{Clarke1990}
Frank~H. Clarke, \emph{Optimization and {N}onsmooth {A}nalysis}, Classics in
  Applied Mathematics, vol.~5, Society for Industrial and Applied Mathematics,
  1990.

\bibitem{ClasonNhuRosch2021}
Christian Clason, Vu~Huu Nhu, and Arnd Rösch, \emph{Optimal control of a
  non-smooth quasilinear elliptic equation}, Mathematical Control and Related
  Fields \textbf{11} (2021), no.~3, 521--554.

\bibitem{ReyesMeyer2016}
Juan~Carlos De~los Reyes and Christian Meyer, \emph{Strong stationarity
  conditions for a class of optimization problems governed by variational
  inequalities of the second kind}, Journal of Optimization Theory and
  Applications \textbf{168} (2016), 375--409.

\bibitem{Dunford1988}
Nelson Dunford and Jacob~T. Schwartz, \emph{Linear {O}perators, {P}art {I}:
  {G}eneral {T}heory}, Wiley-Interscience, 1988.

\bibitem{Evans2010}
Lawrence~Craig Evans, \emph{Partial {D}ifferential {E}quations}, 2nd ed.,
  American Mathematical Society, Providence, Rhode Island, 2010.

\bibitem{Evans1992}
Lawrence~Craig Evans and Ronald~F. Gariepy, \emph{Measure {T}heory and {F}ine
  {P}roperties of {F}unction}, 4th ed., CRC Press, New York, 1992.

\bibitem{GilbargTrudinger2021}
David Gilbarg and Neil~S. Trudinger, \emph{Elliptic {P}artial {D}ifferential
  {E}quations of {S}econd {O}rder}, Springer-Verlag, Berlin, Heidelberg, 2001.

\bibitem{HerzogMeyerWachsmuth2013}
Roland Herzog, Christian Meyer, and Gerd Wachsmuth, \emph{B- and strong
  stationarity for optimal control of static plasticity with hardening}, SIAM
  Journal on Optimization \textbf{23} (2013), no.~1, 321--352.

\bibitem{Hintermuller2001}
Michael Hintermüller, \emph{Inverse coeﬃcient problems for variational
  inequalities: {O}ptimality conditions and numerical realization}, ESAIM:
  Mathematical Modelling and Numerical Analysis \textbf{35} (2001), no.~1,
  129--152.

\bibitem{Ioffe}
A.~D. Ioffe and V.~M. Tihomirov, \emph{Theory of {E}xtremal {P}roblems}, 1st
  ed., North-Holand Publishing Company, Amsterdam, New Jork, Oxford, 1979.

\bibitem{Kunisch2014}
Kazufumi Ito and Karl Kunisch, \emph{Optimal control with ${L}^p({\Omega})$, $p
  \in [0,1)$, control cost}, SIAM Journal on Control and Optimization
  \textbf{52} (2014), no.~2, 1251--1275.

\bibitem{KlatteKummer2002}
Diethard Klatte and Bernd Kummer, \emph{Nonsmooth {E}quations in
  {O}ptimization: {R}egularity, {C}alculus, {M}ethods and {A}pplications}, 1st
  ed., Nonconvex Optimization and Its Applications, Kluwer Academic Publishers,
  New York, Boston, Dordrecht, London, Moscow, 2002.

\bibitem{Leoni2017}
Giovanni Leoni, \emph{A {F}irst {C}ourse in {S}obolev {S}paces}, 2nd ed.,
  Graduate studies in mathematics, vol. 181, American Mathematical Society,
  Providence, Rhode Island, 2017.

\bibitem{MeyerSusu2017}
Christian Meyer and Livia~M. Susu, \emph{Optimal control of nonsmooth,
  semilinear parabolic equations}, SIAM Journal on Control and Optimization
  \textbf{55} (2017), no.~4, 2206--2234.

\bibitem{Mignot1976}
Fulbert Mignot, \emph{Contrôle dans les inéquations variationelles
  elliptiques}, Journal of Functional Analysis \textbf{22} (1976), 130--185.

\bibitem{MignotFulbert1984}
Fulbert Mignot and Jean~Pierre Puel, \emph{Optimal control in some variational
  inequalities}, SIAM Journal on Control and Optimization \textbf{22} (1984),
  no.~3, 466--476.

\bibitem{NeittaanmakiTiba1994}
Pekka Neittaanmäki and Dan Tiba, \emph{Optimal {C}ontrol of {N}onlinear
  {P}arabolic {S}ystems. {T}heory, {A}lgorithms and {A}pplications}, vol. 179,
  Marcel Dekker, 1994.

\bibitem{Nhu2021}
Vu~Huu Nhu, \emph{On the no-gap second-order optimality conditions for a
  non-smooth semilinear elliptic optimal control}, Optimization \textbf{71}
  (2022), no.~14, 4289--4319.

\bibitem{Outrata1998}
Ji{\v{r}}i Outrata, Michal Ko{\v{c}}vara, and Jochem Zowe, \emph{Nonsmooth
  {A}pproach to {O}ptimization {P}roblems with {E}quilibrium {C}onstraints},
  Nonconvex Optimization and its Applications, Kluwer Academic Publishers,
  Dordrecht, Boston, London, 1998.

\bibitem{Penot2013}
Jean-Paul Penot, \emph{Calculus {W}ithout {D}erivatives}, Graduate Texts in
  Mathematics, vol. 266, Springer, New York, 2013.

\bibitem{RaulsWachsmuth2020}
Anne-Therese Rauls and Gerd Wachsmuth, \emph{Generalized derivatives for the
  solution operator of the obstacle problem}, Set-Valued and Variational
  Analysis \textbf{28} (2020), 259--285.

\bibitem{Scholtes}
Stefan Scholtes, \emph{Introduction to {P}iecewise {D}ifferentiable
  {E}quations}, Springer Science \& Business Media, New York, 2012.

\bibitem{Stadler2009}
Georg Stadler, \emph{Elliptic optimal control problems with {$L^1$}-control
  cost and applications for the placement of control devices}, Computational
  Optimization and Applications \textbf{44} (2009), no.~159, 159--181.

\bibitem{Tiba1990}
Dan Tiba, \emph{Optimal {C}ontrol of {N}onsmooth {D}istributed {P}arameter
  {S}ystems}, Springer-Verlag, Berlin, Heidelberg, 1990.

\bibitem{Troltzsch2010}
Fredi Tr\"{o}ltzsch, \emph{Optimal {C}ontrol of {P}artial {D}ifferential
  {E}quations: {T}heory, {M}ethods and {A}pplications}, Graduate Studies in
  Mathematics, vol. 112, American Mathematical Society, Providence, Rhode
  Island, 2010.

\bibitem{WachsmuthWachsmuth2022}
Daniel Wachsmuth and Gerd Wachsmuth, \emph{Second-order conditions for
  non-uniformly convex integrands: quadratic growth in {$L^1$}}, Journal of
  Nonsmooth Analysis and Optimization \textbf{3} (2022), 8733.

\bibitem{Wachsmuth2014}
Gerd Wachsmuth, \emph{Strong stationarity for optimal control of the obstacle
  problem with control constraints}, SIAM Journal on Optimization \textbf{24}
  (2014), no.~4, 1914--1932.

\bibitem{WachsmuthWachsmuth2011}
Gerd Wachsmuth and Daniel Wachsmuth, \emph{Convergence and regularization
  results for optimal control problems with sparsity functional}, ESAIM:
  Control, Optimisation and Calculus of Variations \textbf{17} (2011),
  858--886.

\end{thebibliography}
\end{document}